\begin{document}

\renewcommand{\theenumi}{\rm (\roman{enumi})}
\renewcommand{\labelenumi}{\rm \theenumi}

\newtheorem{thm}{Theorem}[section]
\newtheorem{hypo}{Hypothesis}
\newtheorem{defi}[thm]{Definition}
\newtheorem{lem}[thm]{Lemma}
\newtheorem{prop}[thm]{Proposition}
\newtheorem{cor}[thm]{Corollary}
\newtheorem{exam}[thm]{Example}
\newtheorem{asmp}[thm]{Assumption}
\newtheorem{rem}[thm]{Remark}

\newcommand{\id}{\mathop{\text{\rm id}}}
\newcommand{\supp}{\mathop{\text{\rm supp}}}
\newcommand{\esssup}{\mathop{\rm ess\,sup}}
\newcommand{\dd}{\rm{d}}
\newcommand{\bsC}{\boldsymbol{C}}
\newcommand{\bsG}{\boldsymbol{G}}

\newcommand{\GFF}{\mathbb{X}}
\newcommand{\GMC}{\mathbb{M}}

\title{Stochastic quantization associated with the $\exp(\Phi)_2$-quantum 
field model driven by space-time white noise on the torus
\\
in the full $L^1$-regime
}

\author{Masato Hoshino\hspace{0.9mm}$^{a)}$, Hiroshi Kawabi\hspace{0.9mm}$^{b)}$ and 
Seiichiro Kusuoka\hspace{0.9mm}$^{c)}$
\vspace{5mm}\\
\normalsize $^{a)}$ Graduate School of Engineering Science, Osaka University,\\
\normalsize 1-3 Machikaneyama, Toyonaka, Osaka 560-8531, Japan\\
\normalsize e-mail address: {\tt{hoshino@sigmath.es.osaka-u.ac.jp}} \vspace{5mm}\\
\normalsize $^{b)}$ Department of Mathematics, Hiyoshi Campus, Keio University,\\
\normalsize  4-1-1 Hiyoshi, Kohoku-ku, Yokohama 223-8521, Japan \\
\normalsize e-mail address: {\tt{kawabi@keio.jp}} \vspace{5mm}\\
\normalsize $^{c)}$ Department of Mathematics, Graduate School of Science, Kyoto University,\\
\normalsize Kitashirakawa Oiwakecho, Sakyo-ku, Kyoto 606-8502, Japan\\
\normalsize e-mail address: {\tt{kusuoka@math.kyoto-u.ac.jp}}}
\maketitle
\begin{center}
{\it{Dedicated to Professor Shigeki Aida on the occasion of his 60th birthday}}
\end{center}
\begin{abstract}
The present paper is a continuation of our previous work \cite{HKK19}
on the stochastic quantization of the $\exp(\Phi)_2$-quantum 
field model
on the two-dimensional torus. 
Making use of key properties of
Gaussian multiplicative chaos and 
refining the method 
for singular SPDEs introduced
in the previous work,
 we construct a unique time-global solution to the corresponding parabolic 
stochastic quantization equation
in the full ``$L^{1}$-regime" $\vert\alpha\vert<\sqrt{8\pi}$ of 
the charge parameter $\alpha$.
We also identify the solution with an infinite-dimensional diffusion 
process constructed by the Dirichlet form approach.
\end{abstract}

{\bf{2010 Mathematics Subject Classification:}}~81S20, 60H17, 35R60,  60J46, 81T08, 81T40.

\vspace{2mm}

{\bf{Keywords:}}~Stochastic quantization, H\o egh-Krohn model, Gaussian multiplicative chaos,\\
\phantom{\indent{\bf{Keywords:}}}~Singular SPDE, Dirichlet form.
%
\section{Introduction}
\subsection{Background}
In the present paper, we study
stochastic quantization  
associated with space-time quantum fields with interactions of exponential type, called 
the {\it{$\exp(\Phi)_2$-quantum field model}} 
in the Euclidean quantum field theory, in finite volume. 
The $\exp(\Phi)_2$-quantum field (or the $\exp(\Phi)_2$-measure) $\mu^{(\alpha)}$
is a probability measure on ${\mathcal D}'(\Lambda)$, the space of distributions on
the two-dimensional torus
$\Lambda={\mathbb T}^{2}=
(\mathbb R/ 2\pi \mathbb Z)^{2}$, which is given by 
\begin{equation*}
\mu^{(\alpha)}
(d\phi)=\frac{1}{Z^{(\alpha)}}
\exp \Big (
- \int_{\Lambda} \exp^{\diamond} ( \alpha \phi)(x) 
\hspace{0.5mm} {\dd}x \Big )
\mu_{0}({\dd}\phi),
\label{exp-Gibbs}
\end{equation*}
where the massive Gaussian free field
$\mu_{0}$ is 
the Gaussian measure on ${\mathcal D}'(\Lambda)$
with zero mean and the covariance operator $(1-\triangle)^{-1}$,
$\triangle$ being the Laplacian in $L^{2}(\Lambda)$ 
with the periodic boundary conditions, 
$\alpha (\in \mathbb R)$ is called the {\it{charge parameter}},  
the {\it{Wick exponential}} 
$\exp^{\diamond} ( \alpha \phi)(x)$
is formally introduced by the expression
$$ \exp ^{\diamond} (\alpha \phi)(x)
\hspace{0.5mm}
=\exp \Big( \alpha \phi(x)
-\frac{\alpha^{2}}{2} {\mathbb E}^{\mu_{0}} [ \phi(x)^{2}] \Big), \qquad x\in \Lambda,
$$
and
\begin{equation}
Z^{(\alpha)}
=\int_{ {\mathcal D}'(\Lambda)} \exp 
\Big(-\int_{\Lambda} 
\exp ^{\diamond} (\alpha \phi)(x) 
\hspace{0.5mm} {\dd}x \Big) \mu_{0}({\dd}\phi) >0
\nonumber
\end{equation}
is the normalizing constant.
We remark that the diverging term ${\mathbb E}^{\mu_{0}} [ \phi(x)^{2}]$
plays a role of the Wick renormalization.
Since this quantum field model
was first introduced by H\o egh-Krohn \cite{Hoe71} in the ``$L^{2}$-regime"
$$\vert \alpha \vert<{\sqrt{4\pi}},$$
it is also called the {\it{H\o egh-Krohn model}}. 
For a physical background and related early works
of this model,
see e.g., \cite{AHk73, AHk74, Sim74} and references therein.
Kahane \cite{Kah85} constructed 
a random measure
$$\nu^{(\alpha)}_{\phi}({\dd}x):= 
\exp ^{\diamond} (\alpha \phi)(x) \hspace{0.5mm} {\dd}x, \qquad x\in \Lambda,$$
called the {\it{Gaussian mulptiplicative chaos}},
in the ``$L^{1}$-regime" 
$$\vert \alpha \vert<{\sqrt{8\pi}}.$$
It implies the existence of
the $\exp(\Phi)_2$-measure $\mu^{(\alpha)}$
under $\vert \alpha \vert<{\sqrt{8\pi}}$, which gives a generalization of
the early works mentioned above. 
After that, the relevance of both the Gaussian multiplicative chaos and 
the $\exp(\Phi)_2$-quantum field model has been received much attention 
by many people in connection with topics like the Liouville conformal field theory and the stochastic Ricci flow. 
See e.g., \cite{Kah85, DS11, RV14, Ber17, JM17, DS19, Bis20, Ber16} and references therein.
We should also mention that Kusuoka \cite{Kus92} independently
studied the $\exp(\Phi)_2$-quantum field model 
under $\vert \alpha \vert<{\sqrt{8\pi}}$.
\vspace{2mm}

By heuristic calculations, we observe that 
the $\exp(\Phi)_2$-measure $\mu^{(\alpha)}$ is 
an invariant measure of
the following two-dimensional parabolic stochastic
partial differential equation (SPDE in short) involving exponential nonlinearity:
\begin{equation}
\partial_{t}\Phi_{t}(x)=
\frac{1}{2}(\triangle-1) \Phi_{t}(x)
-\frac{\alpha}{2} 
\exp^{\diamond} \big( \alpha \Phi_{t}\big) (x)
+
{\dot W}_{t}(x), 
\qquad t>0,~x\in \Lambda,
\label{exp-SQE}
\end{equation}
where 
$( \dot W_{t} )_{t\geq 0}$ is an $\mathbb R$-valued Gaussian
space-time white noise, that is, the time derivative of 
a standard $L^{2}(\Lambda)$-cylindrical Brownian motion
$\{ W_{t}=(W_{t}(x) )_{x\in {\Lambda}}\}_{t\geq 0}$. 
We call (\ref{exp-SQE}) 
the $\exp(\Phi)_{2}$-{\it{stochastic quantization equation}} associated with $\mu^{(\alpha)}$.
Due to the singularity of the nonlinear drift term, 
the interpretation and 
construction of a solution to 
this {\it{singular}}-SPDE 
have been a challenging problem over the past years.
For a concise overview on stochastic quantization equations,
we refer to \cite{AMR15, AK20, ADG19, AKMR20} and references therein.
Albeverio and R\"ockner \cite{AR91} first solved 
(\ref{exp-SQE}) 
(in the case where $\Lambda$ is replaced by $\mathbb R^{2}$) 
weakly under $\vert \alpha \vert <{\sqrt{4\pi}}$ by using the Dirichlet form theory.
Inspired by recent quick developments of singular SPDEs
based on Hairer's groundbreaking work on {\it{regularity structures}} \cite{Hai14}
and the related work, called {\it{paracontrolled calculus}}, due to Gubinelli, Imkeller and Perkowski \cite{GIP15},
Garban \cite{Gar20} constructed a unique strong solution to 
(\ref{exp-SQE}) (for the case where $(\triangle-1)$ is replaced by $\triangle$, i.e., massless case) 
in a more restrictive condition
than $\vert \alpha \vert <{\sqrt{4\pi}}$.
In our previous paper \cite{HKK19}, we constructed the time-global and pathwise-unique solution to the 
SPDE (\ref{exp-SQE}) 
under $\vert \alpha \vert <{\sqrt{4\pi}}$
by 
splitting the original equation (\ref{exp-SQE}) into 
the Ornstein--Uhlenbeck process 
\begin{equation*}
\partial_{t}X_{t}(x)=
\frac{1}{2}(\triangle-1) X_{t}(x)+
{\dot W}_{t}(x), 
\label{OU-intro}
\end{equation*}
and the shifted equation
\begin{equation}
\partial_{t}Y_{t}(x)=
\frac{1}{2}(\triangle-1) Y_{t}(x)-\frac{\alpha}{2} \exp(\alpha Y_{t}(x))
\exp^{\diamond}(\alpha X_{t})(x) .
\label{shift-intro}
\end{equation}
This split is based on the idea of
Da Prato and Debussche \cite{DPD03}, which is
now called the {\it{Da Prato--Debussche trick}}.
By the 
uniqueness of the solution, we also obtained the identification with 
the limit of the solutions to the stochastic quantization equations generated by the approximating 
measures to the $\exp (\Phi )_2$-measure $\mu^{(\alpha)}$, and with the process obtained by 
the Dirichlet form approach. 
Our construction of the solution to the shifted equation (\ref{shift-intro}) is different from
the standard fixed-point argument applied in \cite{DPD03, Gar20}. To be more precise,
we proved 
convergence of solutions to approximating equations of 
(\ref{exp-SQE}) by using compact embedding, and then identified the limit as the solution. 
We should mention that,
after \cite{HKK19}, Oh, Robert and Wang \cite{ORW19} independently constructed the 
time-global unique solution to (\ref{exp-SQE}) 
in the same regime in \cite{HKK19}.
Later in \cite{ORTW20}, together with Tzvetkov, they
studied the massless case on two-dimensional compact Riemannian manifolds
in the $L^2$-regime.
Besides, elliptic SPDEs, which also
realize the $\exp (\Phi )_2$-quantum field model have been studied in e.g., \cite{ADG19}.
\vspace{2mm}

The main purpose of the present paper is 
to construct the time-global and pathwise-unique solution to 
the parabolic SPDE (\ref{exp-SQE}) in the full ``$L^1$-regime"
$\vert \alpha \vert <{\sqrt{8\pi}}$.
Although the present paper builds on our previous work \cite{HKK19}, we 
significantly improve the arguments of \cite{HKK19} in several ways.
To apply the Da Prato--Debussche trick, 
we need to
construct the Wick exponential of the Ornstein--Uhlenbeck process 
$\{ \exp^{\diamond}(\alpha X_{t}) \}_{t\geq 0}$ 
as a driving noise of the  
shifted equation (\ref{shift-intro}). Since the Gaussian free field $\mu_{0}$ is the stationary measure 
of the Ornstein--Uhlenbeck process $\{X_{t} \}_{t\geq 0}$, this problem is reduced to the construction
of the Wick exponential $\exp^{\diamond}(\alpha \phi)$. In \cite[Theorem 2.2]{HKK19}, 
we constructed it 
under $\vert \alpha \vert<{\sqrt{4\pi}}$
by combining the Wick calculus of the Gaussian free field $\mu_{0}$
with the standard Fourier expansion on a negative order
$L^{2}$-Sobolev space $H^{s}(\Lambda)$ ($s<0$). 
However, this kind of argument does not work beyond the $L^{2}$-regime. 
Refining
existing results on 
the convergence
of the Gaussian multiplicative chaos 
$\nu^{(\alpha)}_{\phi}({\dd}x)$ in 
\cite{DS11, RV14, Ber17},
we construct
the Wick exponential $\exp^{\diamond}(\alpha \phi)$ on 
a suitable Besov space 
under $\vert \alpha \vert<{\sqrt{8\pi}}$
(see Theorem \ref{thm:expwick almostsure}).
This is one of the important contributions of the present paper.
On the other hand, in this case, since the Wick exponential $\exp^{\diamond}(\alpha \phi)$ 
does not have $L^{2}$-integrability with respect to $\mu_{0}$ unlike 
the case of $\vert \alpha \vert <{\sqrt{4\pi}}$,
we need to modify our arguments mentioned above into 
$L^{p}$-setting for the construction of the time-global and
pathwise-unique solution to (\ref{exp-SQE}).
Besides, due to the lack of the $L^{2}$-integrability, we cannot follow
the argument as in
\cite{AR90, AR91, HKK19, AKMR20}
to show the closability of
the associated Dirichlet form. To overcome this difficulty, in Corollary \ref{log-derivative}, 
we prove that  the Wick exponential $\exp^{\diamond}(\alpha \phi)$
has 
the $L^{2}$-integrability 
with respect not to $\mu _{0}$, but to
$\mu^{(\alpha)}$.
This key property plays a significant role 
not only for the closability of the Dirichlet form, but
also for the identication of the diffusion process
obtained by the Dirichlet form approach with the solution to the SPDE (\ref{exp-SQE}).
\vspace{2mm}

We should mention here that our model
is closely connected with the {\it{sine-Gordon model}}
(or $\cos (\Phi)_{2}$-quantum field model), which has 
been studied for a long period by many authors. 
See e.g., \cite{Fro76, FS76, FP77, AHk79} for the early works.
Since the sine-Gordon model is formally obtained by replacing the nonlinearity $e^{\alpha \phi}$ by
$e^{{\sqrt{-1}}\alpha \phi}$, it has some similarities with 
the $\exp(\Phi)_{2}$-model. Indeed, it can be constructed rigorously in the same way as 
the $\exp(\Phi)_{2}$-model in the case
$\vert \alpha \vert< {\sqrt{4\pi}}$.
On the other hand, for large values of
$\vert \alpha \vert$ up to ${\sqrt{8\pi}}$, further renormalization by counter-terms 
is required (see \cite{BGN82, DH93} for details).
To make a rigorous meaning to stochastic quantization equations
associated with both the $\Phi^{4}_{3}$-model and the sine-Gordon model,
we require further renormalization procedures beyond the Wick renormalization, and 
recent developments of regularity structure and paracontrolled calculus
enable us to study such singular SPDEs rigorously. 
In \cite{HS16, CHS18}, Hairer, Shen and Chandra proved local well-posedness of 
(the massless version of) the sine-Gordon stochastic quantization equation 
by applying regularity structure.
Hence, at first sight, one might guess that regularity structure or paracontrolled calculus 
is applicable to the $\exp(\Phi)_{2}$-stochastic quantization equation (\ref{exp-SQE}) beyond the $L^{2}$-regime.
To apply such general theories, we usually assume that the inputs of the solution map
to the shifted equation of a given singular SPDE 
take values in a Besov space
$B^{s}_{\infty, \infty}(\Lambda)$ with some $s<0$.
 (We mention here that
the reconstruction theorem in $B_{p,q}^s ({\mathbb R}^d)$ was also
studied by Hairer and Labb\'{e} \cite{HL17}, but they considered only 
the models with $B_{\infty,\infty}^{s}$-type bounds.)
In contrast,
the Wick exponential of the Ornstein--Uhlenbeck process 
$\{ \exp^{\diamond}(\alpha X_{t}) \}_{t\geq 0}$, which 
plays a role of an input in our case,
belongs to $B^{s}_{p,p}(\Lambda)$ 
for some $p \in [1,2)$, but does not to $B^{s}_{\infty, \infty}(\Lambda)$ (see Theorem \ref{thm:expwick OU}).
Moreover, since the nonlinear term of 
the SPDE (\ref{exp-SQE}) has exponential growth, it is out of results by these general theories. Alternatively, by making use of
the nonnegativity of $\exp^{\diamond}(\alpha X_{t})$, we may define a product between two rough objects
$\exp(\alpha Y_{t})$ and $\exp^{\diamond}(\alpha X_{t})$ on the right-hand side of the shifted equation 
(\ref{shift-intro})
(see Theorem
\ref{replaced keythm}).
This is the most crucial point in our argument.
We remark that the nonnegativity of the Wick exponential is a remarkable and useful property, and is also applied in proofs of previous results (see e.g \cite{AHk74, Gar20, ORW19, ORTW20, HKK19}).

\vspace{2mm}

The organization of the rest of the present paper is as follows: In Section \ref{sec:mainthm}, we present
the framework and state the main theorems (Theorems \ref{mainthm1},
\ref{mainthm2} and \ref{mainthm3}). In Section \ref{subsec:Besov}, we fix some
notations and summarize several basic properties on Besov spaces.
In Section \ref{sec:Wick}, we introduce an approximation
of the Wick exponentials of the Gaussian free fields and 
show its almost-sure convergence in an appropriate Besov space (see Theorem
\ref{thm:expwick almostsure}).
For later use, we modify the argument of \cite{Ber17} to 
obtain a stronger estimate than existing results.
Moreover, we also prove that the $\exp(\Phi)_2$-measure $\mu^{(\alpha)}$ is well-defined and 
the Wick exponential $\exp^{\diamond}(\alpha \phi)$ has the $L^{2}$-integrability with respect to $\mu^{(\alpha)}$ (see Corollaries
\ref{cor:expmeas} and \ref{log-derivative}).
In Section \ref{sec:OU}, we prove the almost-sure convergence of the Wick exponential of the infinite-dimensional Ornstein--Uhlenbeck process (see Theorem \ref{thm:expwick OU}).
In Section \ref{sec:wellposed}, we prove Theorem \ref{mainthm1} using the result of Section \ref{sec:OU}.
In Sections \ref{sec:stationary} and \ref{sec:DF}, we prove Theorems \ref{mainthm2} and \ref{mainthm3}, respectively.
Since some parts of Sections \ref{sec:wellposed}, \ref{sec:stationary} and \ref{sec:DF} go in very similar ways 
to the arguments of the previous paper  \cite{HKK19}, we sometimes omit the details in the present paper.
Finally, in Appendix, we give several estimates on the approximation of the Green 
function of $(1-\triangle)$, which is used in Section \ref{sec:Wick}.
%
\subsection{Statement of the main theorems}\label{sec:mainthm}
We begin with introducing some notations and objects.
Let $\Lambda={\mathbb T}^{2}=(\mathbb R/2\pi {\mathbb Z})^{2}$ be the two-dimensional
torus equipped with the Lebesgue measure ${\dd}x$. Let 
$L^{p}(\Lambda)$ ($p\in[1,\infty]$) be
the usual real-valued Lebesgue space. 
In particular, $L^{2}(\Lambda)$ is a Hilbert space equipped with the usual inner product
$$ \langle f, g \rangle=\int_{\Lambda} f(x)g(x)\hspace{0.5mm}{\dd}x, \qquad 
f,g\in L^{2}(\Lambda).$$
Let $C^{\infty}(\Lambda)$ be the space of real-valued smooth functions on 
$\Lambda$
equipped with the topology given by the convergence $f_n\rightarrow f$ in $C^\infty (\Lambda )$:
$$
\sup _{(x_1, x_2) \in \Lambda} \Big| \frac{\partial ^{i+j} f_n}{\partial x_1^i \partial x_2^j}(x_1, x_2) 
- \frac{\partial ^{i+j} f}{\partial x_1^i \partial x_2^j}(x_1, x_2) \Big| \rightarrow 0
\quad \mbox{as } n\to \infty
$$
for all $i,j \in {\mathbb N}\cup \{ 0\}$.
We denote by ${\mathcal D}'(\Lambda)$ the topological dual space of 
$C^{\infty}(\Lambda)$.
We have $L^{p}(\Lambda) \subset {\mathcal D}'(\Lambda)$ for all $p\in[1,\infty]$ by identification of $f \in L^{p}(\Lambda)$ with the map
$C^{\infty}(\Lambda)\ni\varphi \mapsto
\int_{\Lambda} f(x) \varphi(x) \hspace{0.5mm} {\dd}x \in {\mathbb R}$.
Since 
$C^{\infty}(\Lambda) \subset L^{2}(\Lambda) \subset
{\mathcal D}'(\Lambda)$, the $L^{2}$-inner product $\langle \cdot, \cdot \rangle$ is naturally
extended to the pairing of $C^{\infty}(\Lambda)$ and its dual space 
${\mathcal D}'(\Lambda)$.

For ${k}=(k_{1}, k_{2}) \in {\mathbb Z}^{2}$ and $x=(x_{1}, x_{2}) \in \Lambda$,
we write $\vert k \vert=(k_1^2+k_2^2)^{1/2}$ and
$k\cdot x=k_{1}x_{1}+k_{2}x_{2}$.
Although we work in the
framework of real-valued functions, 
it is sometimes easier to do computations by using 
a system of complex-valued functions $\{ {\bf e}_{k}\}_{k \in \mathbb Z^{2}}$
defined by
$$
{\bf e}_{k}(x)=\frac{1}{2\pi}e^{{\sqrt{-1}}k\cdot x}, \qquad  k\in {\mathbb Z^{2}},~x\in \Lambda. 
$$
For $f\in {\mathcal D}'(\Lambda)$, we define the $k$-th Fourier coefficient 
$\hat f(k)$
($k\in \mathbb Z^{2}$) by
$$
\hat f(k):=
\langle f, {\bf e}_{-k} \rangle=
\int_\Lambda f(x)\overline{{\bf e}_k(x)}
\hspace{0.5mm}{\dd}x.
$$
Note that, since $f$ is real-valued, 
$\hat{f}(-k)=\overline{\hat{f}(k)}$ for $k\in\mathbb{Z}^2$.

For $s\in \mathbb R$, 
we define the real $L^{2}$-Sobolev space of order $s$ with periodic boundary condition
by
$$ 
H^{s}(\Lambda)=\left\{ u\in {\mathcal D}'(\Lambda) \, ; 
\|u\|_{H^s}^2:=\sum_{k \in {\mathbb Z}^{2}} (1+\vert k \vert^{2})^{s} 
\vert\hat{u}(k)\vert^2
<\infty
\right\}.
$$
This space is a Hilbert space equipped with the inner product
$$ { (}u,v{ )}_{H^{s}} :=
\sum_{k \in {\mathbb Z}^{2}} (1+\vert k \vert^{2})^{s}   
\hat{u}(k)\overline{\hat{v}(k)},
\qquad u,v \in H^{s}(\Lambda).$$
Note that $H^{0}(\Lambda)$ coincides with $L^{2}(\Lambda)$ and
we regard $H^{-s}(\Lambda)$ as the dual space of $H^{s}(\Lambda)$
through the standard chain $ H^{s}(\Lambda) \subset 
L^{2}(\Lambda) \subset 
H^{-s}(\Lambda)$ for $s \geq 0$.

We now define the massive Gaussian free field measure $\mu_{0}$ by the centered Gaussian measure 
on $\mathcal{D}'(\Lambda)$ with covariance $(1-\triangle)^{-1}$, that is, determined by the formula
\begin{equation}
\int_{\mathcal{D}'(\Lambda)}
\langle \phi,{\bf e}_k\rangle \overline{\langle \phi,{\bf e}_{\ell}\rangle}
\
\mu_0({\dd}\phi)
=(1+|k|^2)^{-1}\mathbf{1}_{k=\ell},
\qquad
k,\ell\in\mathbb{Z}^2,
\label{eq:def of mu_0}
\end{equation}
where $\triangle$ is the Laplacian acting on $L^{2}(\Lambda)$ with periodic boundary condition.
This formula implies
$$ \int_{\mathcal{D}'(\Lambda)}\|\phi\|_{H^{-\varepsilon}}^2\mu_0({\dd}\phi)<\infty
$$
for any $\varepsilon>0$, and thus the Gaussian free field measure $\mu_0$ has a full support on 
$H^{-\varepsilon}(\Lambda)$.
For a charge parameter $\alpha \in (-{\sqrt{8\pi}}, {\sqrt{8\pi}})$,
we then define the \emph{$\exp (\Phi) _2$-quantum field} (or the \emph{$\exp (\Phi) _2$-measure}) 
$\mu^{(\alpha)}$
on $\mathcal{D}'(\Lambda)$ by
\begin{equation*}\label{expphimeas}
\mu^{(\alpha)}({\dd}\phi)
:=\frac{1}{Z^{(\alpha)}} \exp \left ( - \int_\Lambda\exp^\diamond(\alpha \phi )(x)
\hspace{0.5mm} {\dd} x\right ) \mu _0 ( {\dd}\phi ),
\end{equation*}
where
$Z^{(\alpha)}>0$ is the normalizing constant and
$\exp^\diamond(\alpha\phi)$ is the \emph{Wick exponential} which will be rigorously
constructed in Section \ref{sec:Wick}.
We prove in 
Theorem \ref{thm:expwick almostsure}
that the function $\phi\mapsto\int_{\Lambda} 
\exp^\diamond(\alpha \phi )(x)\hspace{0.5mm} {\dd}x$ 
is a positive measurable function for all $\vert \alpha \vert
<{\sqrt{8\pi}}$.
Hence, we may also regard 
$\mu^{(\alpha)}$ as a probability measure on $\mathcal{D}'(\Lambda)$ (see Corollary \ref{cor:expmeas}).

In the present paper, we consider the stochastic quantization equation (\ref{exp-SQE})
associated with $\exp(\Phi)_2$-measure $\mu^{(\alpha)}$, that is a parabolic SPDE 
\begin{equation}
\partial _t \Phi_t (x) = \frac 12 (\triangle-1) \Phi_t (x) - \frac\alpha2 \exp^\diamond(\alpha \Phi_t)(x) + \dot W_t (x) , \qquad t>0,~x\in \Lambda,
\nonumber
\end{equation}
where $W=\{ W_t(x); t\geq 0, x\in \Lambda \}$ is an $L^{2}(\Lambda)$-cylindrical Brownian motion 
defined on a filtered probability space $(\Omega,\mathcal{F},({\mathcal F}_{t})_{t\geq 0},
\mathbb{P})$ and $(\dot W_t)_{t\geq 0}$ is its time derivative in weak sense. This driving noise has 
a convenient Fourier series representation
$$ 
W_{t}(x)=\sum_{k\in {\mathbb Z}^{2}} w^{(k)}_{t} e_{k}(x), \qquad t\geq 0,~x\in \Lambda,
$$
where $\{e_k\}_{k\in\mathbb{Z}^2}$ is a real-valued
complete orthonormal system (CONS) of $L^2(\Lambda)$ defined by
$e_{(0,0)}(x)=(2\pi)^{-1}$ and
\begin{equation}\label{real valued CONS}
e_{k}(x)
=
\frac{1}{{\sqrt 2}\pi}
\left\{
\begin{aligned}
&\cos(k\cdot x),& k\in\mathbb{Z}^2_+,\\
&\sin(k\cdot x),& k\in\mathbb{Z}^2_-,
\end{aligned}
\right.
\end{equation}
with ${\mathbb Z}^{2}_{+}=\{ (k_{1}, k_{2}) \in {\mathbb Z}^{2}\, ; k_{1}>0 \} 
\cup \{ (0, k_{2})\, ; k_{2}>0 \}$ and ${\mathbb Z}^{2}_{-}=-{\mathbb Z}^{2}_{+}$,
and $\{w^{(k)} \}_{k\in {\mathbb Z}^{2}}$ is a family of independent 
one-dimensional $({\mathcal F}_{t})_{t\geq 0}$-Brownian motions starting at origin.
See \cite[Chapter 4]{DZ92} for the precise definition of cylindrical Brownian motions.
For later use, we note here that
\begin{equation*}
e_{k}(x)=
\left\{
\begin{aligned}
&
\frac{\sqrt{2}}{2}\big( {\bf e}_{k}(x)+{\bf e}_{-k}(x) \big),&
k\in\mathbb{Z}^2_{+},
\\
&
\frac{\sqrt{2}}{2{\sqrt{-1}}}\big( {\bf e}_{k}(x)-{\bf e}_{-k}(x) \big),&
k\in\mathbb{Z}^2_{-}.
\end{aligned}
\right.
\end{equation*}

The exponential term of the SPDE \eqref{exp-SQE} 
is difficult to treat as it is, because
the solution $\Phi_t$ is expected to take values in 
$\mathcal{D}'(\Lambda) \setminus C(\Lambda)$.
For this reason, we need to 
give a rigorous meaning of this SPDE by the renormalization.
We assume some properties for the multiplier function.

\begin{hypo}\label{hypo on psi}
$\psi:\mathbb{R}^2\to[0,1]$ is a function satisfying the following properties:
\begin{enumerate}
\item $\psi(0)=1$ and $\psi(x)=\psi(-x)$ for any $x\in\mathbb{R}^2$.
\item $\sup_{x\in\mathbb{R}^2}|x|^{2+\kappa}|\psi(x)|<\infty$ for some $\kappa>0$.
\item $\sup_{x\in\mathbb{R}^2}|x|^{-\zeta}|\psi(x)-1|<\infty$ for some $\zeta>0$.
\end{enumerate}
\end{hypo}

Note that $\psi$ does not need to be continuous except the origin.
For a function $\psi$ satisfying Hypothesis \ref{hypo on psi}, we define the Fourier cut-off operator $P_N$ on ${\mathcal D}'(\Lambda )$ by
\begin{align}\label{def of P_N}
P_N f (x) = \sum _{k\in {\mathbb Z}^2} \psi (2^{-N} k) 
\hat{f}(k)
{\bf{e}}_k (x), \qquad N\in \mathbb N,~x\in \Lambda.
\end{align}
From Hypothesis \ref{hypo on psi}, we have the following.
\begin{itemize}
\item $P_N$ maps $H^{-1-\varepsilon}(\Lambda)$ to $H^{1+\varepsilon}(\Lambda)$ for small $\varepsilon>0$.
Since $H^{1+\varepsilon}(\Lambda) \subset C(\Lambda)$, the regularized cylindrical 
Brownian motion $(P_NW_t)_{t\ge0}$ is a continuous function almost surely.
\item $\lim_{N\to\infty}\|P_Nf-f\|_{H^s}=0$ for any $s\in\mathbb{R}$ and $f\in H^s(\Lambda)$.
\end{itemize}

By introducing approximating equations driven by the regularized white noise $(P_{N} \dot W_{t})_{t\geq 0}$, we obtain the following theorem in the 
full $L^{1}$-regime of the charge parameter $\alpha$. 
See Section \ref{subsec:Besov} below for the definition of the Besov space $B_{p,p}^{-\varepsilon}(\Lambda)$.

\begin{thm}\label{mainthm1}
Assume that $\psi$ satisfies Hypothesis {\rm{\ref{hypo on psi}}}.
Let $|\alpha|<\sqrt{8\pi}$, $p\in(1,\frac{8\pi}{\alpha^2}\wedge2)$, and $\varepsilon>0$.
For any $N\in\mathbb{N}$, consider the initial value problem
\begin{equation}\label{expsqe1}
\left\{
\begin{aligned}
\partial _t \Phi_t^N &= \frac 12 (\triangle-1) \Phi_t^N 
- \frac\alpha2 \exp\left(\alpha \Phi_t^N-\frac{\alpha^2}2 C_N\right) + P_N\dot W_t ,\qquad t>0,\\
\Phi_0^N&=P_N\phi,
\end{aligned}
\right.
\end{equation}
where $\phi\in\mathcal{D}'(\Lambda)$ and
$$
C_N:=\frac1{4\pi ^2}\sum_{k\in\mathbb{Z}^2}\frac{\psi(2^{-N}k)^2}{1+|k|^2}.
$$
Then for $\mu_0$-almost every $\phi\in\mathcal{D}'(\Lambda)$, the unique time-global classical solution $\Phi^N$ converges as $N\to\infty$ to a $B_{p,p}^{-\varepsilon}(\Lambda)$-valued stochastic process $\Phi$ in the space $C([0,T];B_{p,p}^{-\varepsilon}(\Lambda))$ 
for any $T>0$, $\mathbb P$-almost surely. Moreover, the limit $\Phi$ is independent of the choice of $\psi$.
\end{thm}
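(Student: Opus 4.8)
\section*{Proof proposal for Theorem \ref{mainthm1}}

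The plan is to run the Da Prato--Debussche decomposition $\Phi^N = X^N + Y^N$, where $X^N$ is the Ornstein--Uhlenbeck process solving $\partial_t X^N_t = \tfrac12(\triangle-1)X^N_t + P_N\dot W_t$ with $X^N_0 = P_N\phi$, and $Y^N := \Phi^N - X^N$ solves the shifted equation
\[
\partial_t Y^N_t = \tfrac12(\triangle-1)Y^N_t - \tfrac\alpha2\,e^{\alpha Y^N_t}\,\exp^\diamond(\alpha X^N_t),\qquad Y^N_0 = 0,
\]
where $\exp^\diamond(\alpha X^N_t):=\exp(\alpha X^N_t-\tfrac{\alpha^2}{2}C_N)$ and the constant $C_N$ is exactly the renormalization matching the variance of the stationary OU process when $\phi\sim\mu_0$. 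By Theorem \ref{thm:expwick OU}, for $\mu_0$-a.e.\ $\phi$ and $\mathbb P$-a.s.\ this approximation converges to the nonnegative object $\exp^\diamond(\alpha X_t)$ in $C([0,T];B^{-\varepsilon}_{p,p}(\Lambda))$ for every $T>0$, with $\sup_N\sup_{s\le T}\|\exp^\diamond(\alpha X^N_s)\|_{B^{-\varepsilon}_{p,p}}<\infty$; moreover the properties of $P_N$ coming from Hypothesis \ref{hypo on psi} make $X^N$ a genuine continuous process and $\Phi^N$ a classical solution for each fixed $N$. It therefore suffices to obtain bounds on $Y^N$ uniform in $N$, pass to a limit $Y$, show $Y$ solves the limiting shifted equation, prove that this limiting equation has a unique solution, and set $\Phi:=X+Y$. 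The parts overlapping with \cite{HKK19} (classical well-posedness of \eqref{expsqe1} for fixed $N$, routine Schauder estimates for $e^{\frac\tau2(\triangle-1)}$) I would only sketch.

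First I would establish uniform a priori bounds on $Y^N$. Since $\exp^\diamond(\alpha X^N_t)\ge0$, a comparison argument against the constant $0$ (a super- or sub-solution according to $\mathrm{sgn}\,\alpha$) with zero initial datum gives $\alpha Y^N_t\le0$, hence $0\le e^{\alpha Y^N_t}\le1$ pointwise and $[e^{\alpha Y^N_t}]_{C^\delta}\le|\alpha|\,[Y^N_t]_{C^\delta}$. Inserting $e^{\alpha Y^N_t}\le1$ into the mild formulation and using that $e^{\frac\tau2(\triangle-1)}$ is positivity-preserving yields $|Y^N_t|\le R^N_t:=\tfrac{|\alpha|}{2}\int_0^t e^{\frac{t-s}{2}(\triangle-1)}\exp^\diamond(\alpha X^N_s)\,ds$. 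Combining Schauder estimates (gaining up to two spatial derivatives on the $L^p$-Besov scale) with the multiplication estimate on $B^{-\varepsilon}_{p,p}$ --- valid because, by the previous display, $e^{\alpha Y^N_s}$ is controlled in $C^\delta$ with $\delta>\varepsilon$ as soon as $Y^N_s$ is controlled in $B^\sigma_{p,p}$ with $\sigma>2/p+\delta$, which is compatible with $\sigma<2-\varepsilon$ precisely because $p>1$ --- reduces the mild formulation to a singular Gronwall inequality for $\|Y^N_t\|_{B^\sigma_{p,p}}$. Using the uniform bound on $\|\exp^\diamond(\alpha X^N_s)\|_{B^{-\varepsilon}_{p,p}}$ from Theorem \ref{thm:expwick OU} this gives $\sup_N\sup_{t\le T}\|Y^N_t\|_{B^\sigma_{p,p}}<\infty$, and a similar estimate gives uniform H\"older continuity of $t\mapsto Y^N_t$ valued in $B^{\sigma'}_{p,p}$ for $\sigma'<\sigma$. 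By Arzel\`a--Ascoli and the compact embedding $B^\sigma_{p,p}(\Lambda)\hookrightarrow\hookrightarrow B^{\sigma'}_{p,p}(\Lambda)$, the family $\{Y^N\}$ is relatively compact in $C([0,T];B^{\sigma'}_{p,p}(\Lambda))$, and we may take $\sigma'\in(2/p,\sigma)$ so that this space embeds into $C([0,T]\times\Lambda)$.

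Next, extract a subsequence $Y^{N_j}\to Y$ in $C([0,T];B^{\sigma'}_{p,p})$, hence uniformly on $[0,T]\times\Lambda$; interpolating with the uniform bound in $B^\sigma_{p,p}$ gives $Y^{N_j}\to Y$ in $C([0,T];C^\delta(\Lambda))$ for $\delta<\sigma-2/p$. The crucial step is passing to the limit in the nonlinear term. Splitting
\[
e^{\alpha Y^{N_j}_s}\exp^\diamond(\alpha X^{N_j}_s)-e^{\alpha Y_s}\exp^\diamond(\alpha X_s)
=(e^{\alpha Y^{N_j}_s}-e^{\alpha Y_s})\exp^\diamond(\alpha X^{N_j}_s)+e^{\alpha Y_s}\bigl(\exp^\diamond(\alpha X^{N_j}_s)-\exp^\diamond(\alpha X_s)\bigr),
\]
the first summand is bounded in $B^{-\varepsilon}_{p,p}$ by $\|e^{\alpha Y^{N_j}_s}-e^{\alpha Y_s}\|_{C^\delta}\,\|\exp^\diamond(\alpha X^{N_j}_s)\|_{B^{-\varepsilon}_{p,p}}\to0$ uniformly in $s$, and the second by $\|e^{\alpha Y_s}\|_{C^\delta}\,\|\exp^\diamond(\alpha X^{N_j}_s)-\exp^\diamond(\alpha X_s)\|_{B^{-\varepsilon}_{p,p}}\to0$ uniformly in $s$ by Theorem \ref{thm:expwick OU}; here the bound $e^{\alpha Y}\le1$ and the nonnegativity of $\exp^\diamond(\alpha X)$ are exactly what make the product well defined and continuous in the sense of Theorem \ref{replaced keythm}. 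Dominated convergence in the time integral (dominating kernel $(t-s)^{-(\sigma'+\varepsilon)/2}$, integrable) then shows that $Y$ satisfies $Y_t=-\tfrac\alpha2\int_0^t e^{\frac{t-s}{2}(\triangle-1)}[e^{\alpha Y_s}\exp^\diamond(\alpha X_s)]\,ds$ with $\alpha Y_t\le0$ and $|Y_t|\le R_t:=\tfrac{|\alpha|}{2}\int_0^t e^{\frac{t-s}{2}(\triangle-1)}\exp^\diamond(\alpha X_s)\,ds$.

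Finally I would prove uniqueness of the limiting equation. If $Y,\widetilde Y$ are two solutions with $\alpha Y_\cdot,\alpha\widetilde Y_\cdot\le0$ and $|Y_\cdot|,|\widetilde Y_\cdot|\le R_\cdot\in L^\infty$, then $|e^{\alpha Y_s}-e^{\alpha\widetilde Y_s}|\le|\alpha|\,|Y_s-\widetilde Y_s|$, and since $\exp^\diamond(\alpha X_s)$ is a nonnegative measure and $e^{\frac\tau2(\triangle-1)}$ is positivity-preserving,
\[
\|Y_t-\widetilde Y_t\|_{L^\infty}\le\tfrac{\alpha^2}{2}\int_0^t\|e^{\frac{t-s}{2}(\triangle-1)}\exp^\diamond(\alpha X_s)\|_{L^\infty}\,\|Y_s-\widetilde Y_s\|_{L^\infty}\,ds.
\]
As $\|e^{\frac\tau2(\triangle-1)}\exp^\diamond(\alpha X_s)\|_{L^\infty}\lesssim\tau^{-1/p-(\varepsilon+\delta)/2}\|\exp^\diamond(\alpha X_s)\|_{B^{-\varepsilon}_{p,p}}$ with $1/p+(\varepsilon+\delta)/2<1$ for $\delta$ small (again because $p>1$), a singular Gronwall argument forces $Y=\widetilde Y$. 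Uniqueness upgrades the subsequential limit to convergence of the whole sequence $Y^N\to Y$, and we set $\Phi:=X+Y$, which lies in $C([0,T];B^{-\varepsilon}_{p,p}(\Lambda))$ because $X$ does and $Y$ lies in the more regular space $C([0,T];B^{\sigma'}_{p,p}(\Lambda))$; since $X$ and $\exp^\diamond(\alpha X)$ are independent of $\psi$ (Theorems \ref{thm:expwick almostsure} and \ref{thm:expwick OU}), so is $Y$ by uniqueness, hence so is $\Phi$. The main obstacle is the tight interplay in the second and third steps: the Gaussian multiplicative chaos lies only in $B^{-\varepsilon}_{p,p}$ with $p<\tfrac{8\pi}{\alpha^2}\wedge2$, so the heat flow recovers barely enough regularity both to multiply it by $e^{\alpha Y}$ via a paraproduct estimate and to integrate the resulting singular kernels, and it is precisely the nonnegativity of $\exp^\diamond(\alpha X)$ --- forcing $e^{\alpha Y}\le1$ and turning the product into a genuine finite-measure object --- that closes all of these estimates.
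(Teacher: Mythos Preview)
Your overall strategy (Da Prato--Debussche split, comparison giving $\alpha Y^N\le 0$, compactness, passage to the limit, uniqueness) matches the paper's, but there is a genuine gap at the level of the function spaces, and it propagates through the whole argument.

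You claim that Theorem \ref{thm:expwick OU} gives convergence of $\exp^\diamond(\alpha X^N)$ in $C([0,T];B_{p,p}^{-\varepsilon})$ together with $\sup_N\sup_{s\le T}\|\exp^\diamond(\alpha X^N_s)\|_{B_{p,p}^{-\varepsilon}}<\infty$. It does not: the Wick exponential of the OU process is only controlled in $L^p([0,T];B_{p,p}^{-\beta})$, both for the uniform bound and for the almost-sure convergence. There is no sup-in-time control and no uniform-in-$s$ convergence. This matters because $p<2$. If you feed an $L^p$-in-time input into the mild formulation and try to close a bound on $\sup_t\|Y^N_t\|_{B_{p,p}^\sigma}$, H\"older forces $(\sigma+\varepsilon)\,p'/2<1$, i.e.\ $\sigma<2/p'-\varepsilon$, which is strictly less than $2/p$ since $p<2$. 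So you cannot obtain $\sigma>2/p$ uniformly in $t$; in particular you do not get $Y^N$ uniformly in $C([0,T];C^\delta(\Lambda))$, and the ``uniformly in $s$'' convergence claims in your limit step are unjustified. The same arithmetic kills your Gronwall uniqueness: your kernel is $(t-s)^{-\gamma}\|\exp^\diamond(\alpha X_s)\|_{B_{p,p}^{-\varepsilon}}$ with $\gamma=1/p+(\varepsilon+\delta)/2$, and pairing with an $L^p$-in-time GMC requires $\gamma p'<1$, i.e.\ $2/p+(\varepsilon+\delta)/2<1$, which is impossible for $p<2$.

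The paper avoids all of this by never asking for sup-in-time control of the noise. The solution space is $\mathscr{Y}_T=L^p([0,T];C(\Lambda))\cap C([0,T];L^p)\cap\{e^{\alpha\Upsilon}\in L^\infty_tC_x\}$, the a priori estimate (Lemma \ref{lem:existence}) lands $\Upsilon$ in $L^p([0,T];B_{p,p}^{2/p+\delta})\cap C([0,T];B_{p,p}^{\delta})$ with $\delta<2/p'-\beta$ (note the $C_t$-part has regularity below $2/p$), compactness uses Simon's lemma rather than Arzel\`a--Ascoli in $C_tC^\delta_x$, the passage to the limit in the nonlinearity uses the continuity of $(Y,\mathcal X)\mapsto\mathcal M(f(Y),\mathcal X)$ from $L^1_tC_x\times L^r_tB_{p,p}^{-\beta,+}$ to $L^{r'}_tB_{p,p}^{-\beta}$ with $r'<r$ (Theorem \ref{replaced keythm}), and uniqueness (Lemma \ref{lem:unique}) is proved by an $L^p$-energy argument testing against regularized convex functions of $Z=\Upsilon-\Upsilon'$, not by Gronwall. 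If you rewrite your proof with $L^p$-in-time spaces throughout and replace the uniqueness step by an energy estimate of this type, the argument goes through.
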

In this paper we call this ${\Phi}$ 
the \emph{strong solution} of the SPDE \eqref{exp-SQE}, because in view of Theorem \ref{mainthm1} we have the mapping from the initial value $\phi$ and the driving noise $\dot W_t$ to the process ${\Phi}$.

\begin{rem}
The key ingredient of the proof is Theorem {\rm{\ref{thm:expwick almostsure}}} below is the almost-sure convergence of Gaussian multiplicative chaos (GMC in short).
The law of GMC was first constructed by Kahane {\rm{\cite{Kah85}}}, and Robert and Vargas {\rm{\cite{RV10}}} extended it for general convolution approximations of the covariance kernel.
Although these results give only convergence in law, some stronger convergence results were also obtained: almost-sure convergence for the circle average and standard Fourier projection {\rm{\cite{DS11}}} and the convergence in probability for general convolution approximations {\rm{\cite{Ber17}}}.
See {\rm{\cite{RV14, Ber16}}} for the reviews of these theories.
Our proof of Theorem {\rm{\ref{thm:expwick almostsure}}} is a modification of {\rm{\cite{Ber17}}}.
We remark that Hypothesis {\rm{\ref{hypo on psi}}} is prepared for the main theorems on singular SPDEs (for e.g. Theorem {\rm{\ref{mainthm1}}}), and the circle average approximation contained in {\rm{\cite{Ber17}}} does not satisfy Hypothesis {\rm{\ref{hypo on psi}}}.
However, our construction of Wick exponentials of the Gaussian free field in Section {\rm{\ref{sec:Wick}}} includes the case of the approximations by averaging treated in {\rm{\cite{Ber17}}}, in particular the circle average approximation, because the estimates \eqref{estimate:GN1} and \eqref{estimate:GN2} below hold also for the approximations by averaging (see Section {\rm{\ref{sec:average}}}).
See Section {\rm{\ref{sec:Wick}}} for our construction of Wick exponentials (GMC).
\end{rem}

\begin{rem}
Since the $\exp(\Phi)_2$-measure $\mu^{(\alpha)}$ is absolutely continuous with respect to $\mu_0$ 
(see Corollary {\rm{\ref{cor:expmeas}}}), 
``$\mu_0$-almost every $\phi$" can be replaced by ``$\mu^{(\alpha)}$-almost every $\phi$".
\end{rem}

\begin{rem}
We can refine the state space of the strong solution obtained in Theorem {\rm{\ref{mainthm1}}}.
Precisely, the strong solution is in $C([0,T];H^{-\varepsilon}(\Lambda))$ almost surely (see 
Corollary {\rm{\ref{maincor}}} for detail).
\end{rem}

To introduce another approach to the SPDE \eqref{exp-SQE}, we define the regularized ${\rm exp}(\Phi)_{2}$-measure 
\begin{align}\label{expNmeas}
\mu_N^{(\alpha)}({\dd} \phi):=\frac{1}{Z^{(\alpha)}_N} \exp \left\{ - \int_\Lambda\exp\left(\alpha P_N\phi (x)-\frac{\alpha^2}{2}C_N\right){\dd}x\right\} \mu _0 ( {\dd}\phi ),
\qquad N\in\mathbb{N},
\end{align}
where $Z^{(\alpha)}_N>0$ is the normalizing constant, and consider the SPDE associated with this measure.
The sequence $\{\mu_N^{(\alpha)}\}_{N\in\mathbb{N}}$ of probability measures weakly converges to 
$\mu^{(\alpha)}$
(see Corollary \ref{cor:expmeas}).

\begin{hypo}\label{hypo on P}
The operators $P_N$ defined by \eqref{def of P_N} satisfy the following properties.
\begin{enumerate}
\item $P_N$ is nonnegative, that is, $P_Nf\ge0$ if $f\ge0$.
\item For any $p\in(1,2)$, $s\in\mathbb{R}$, 
there exists a constant $C>0$ such that
$$
\sup_{N\in\mathbb{N}}\|P_Nf\|_{B_{p,p}^s}\le C\|f\|_{B_{p,p}^s},\qquad
\lim_{N\to\infty}\|P_Nf-f\|_{B_{p,p}^s}=0
$$
for any $f\in B_{p,p}^s(\Lambda)$.
\end{enumerate}
\end{hypo}

If $\psi$ is a Schwartz function and the inverse Fourier transform of $\psi$ is a nonnegative function, then Hypothesis \ref{hypo on P} holds. See e.g., \cite[Proposition 2.78]{BCD11}.

\begin{thm}\label{mainthm2}
Assume that $\psi$ satisfies Hypotheses {\rm{\ref{hypo on psi}}} and {\rm{\ref{hypo on P}}}.
Let $|\alpha|<\sqrt{8\pi}$ and $\varepsilon>0$.
For any $N\in\mathbb{N}$, consider the solution $\widetilde{\Phi}^N=\widetilde{\Phi}^N(\phi)$ of the SPDE
\begin{equation}\label{expsqe2}
\left\{
\begin{aligned}
\partial_t\widetilde{\Phi}_t^N&=\frac12(\triangle-1)\widetilde{\Phi}_t^N
-\frac\alpha2 P_N\exp\left(\alpha P_N\widetilde{\Phi}_t^N-\frac{\alpha^2}2C_N\right)+\dot{W}_t,
\qquad t>0,\\
\widetilde{\Phi}_0^N&=\phi\in\mathcal{D}'(\Lambda).
\end{aligned}
\right.
\end{equation}
Let $\xi_N$ be a random variable with the law $\mu_N^{(\alpha)}$ independent of $W$.
Then $\widetilde{\Phi}^{N,{\rm stat}}=\widetilde{\Phi}^N(\xi_N)$ is a stationary process and 
converges in law as $N\to\infty$ to the strong solution ${\Phi}^{\rm stat}$ of \eqref{exp-SQE} with an initial law $\mu^{(\alpha)}$, on the space $C([0,T];H^{-\varepsilon}(\Lambda))$ for any $T>0$.
Moreover, the law of the random variable ${\Phi}^{\rm stat}_t$ is $\mu^{(\alpha)}$ for any $t\ge0$.
\end{thm}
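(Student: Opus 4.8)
The plan is to follow the strategy of \cite{HKK19}: establish stationarity of $\widetilde{\Phi}^{N,\mathrm{stat}}$, prove tightness of its laws on $C([0,T];H^{-\varepsilon}(\Lambda))$, and identify every subsequential limit with the strong solution of \eqref{exp-SQE} started from $\mu^{(\alpha)}$ via the pathwise uniqueness of Theorem \ref{mainthm1}. First I would check that $\mu_N^{(\alpha)}$ is an invariant measure of \eqref{expsqe2}: since $P_N$ maps $\mathcal{D}'(\Lambda)$ into $C(\Lambda)$ and is symmetric and nonnegative (Hypothesis \ref{hypo on P}), equation \eqref{expsqe2} is the Langevin dynamics with bounded continuous density $\frac{1}{Z_N^{(\alpha)}}\exp\big(-\int_\Lambda\exp(\alpha P_N\phi(x)-\frac{\alpha^2}{2}C_N)\,{\dd}x\big)$ with respect to $\mu_0$; invariance follows by a finite-dimensional Galerkin approximation and an It\^o/Fokker--Planck computation exactly as in \cite{HKK19}. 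Consequently $\widetilde{\Phi}^{N,\mathrm{stat}}=\widetilde{\Phi}^N(\xi_N)$ is a stationary process whose one-time marginal law is $\mu_N^{(\alpha)}$ for every $t\ge0$.

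For tightness, I would use the Da Prato--Debussche decomposition $\widetilde{\Phi}^{N,\mathrm{stat}}_t=X^N_t+\widetilde{Y}^N_t$, where $X^N$ is the Ornstein--Uhlenbeck process $\partial_tX^N=\frac12(\triangle-1)X^N+\dot W$ with $X^N_0=\xi_N$, and $\widetilde{Y}^N_t=-\frac\alpha2\int_0^te^{\frac{t-s}2(\triangle-1)}P_N\big(\exp(\alpha P_N\widetilde{Y}^N_s)\cdot\exp^{\diamond}_N(\alpha X^N_s)\big)\,{\dd}s$ with $\exp^{\diamond}_N(\alpha X^N_s):=\exp(\alpha P_NX^N_s-\frac{\alpha^2}{2}C_N)$. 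Since $\mu_N^{(\alpha)}$ is absolutely continuous with respect to $\mu_0$ with density bounded by $1/Z_N^{(\alpha)}$, the almost-sure convergence $\exp^{\diamond}_N(\alpha X^N)\to\exp^{\diamond}(\alpha X)$ in $C([0,T];B^{-\varepsilon}_{p,p}(\Lambda))$ (Theorem \ref{thm:expwick OU} together with Hypothesis \ref{hypo on P}) holds for $\xi_N$-initial data as well, and Corollary \ref{log-derivative} together with stationarity gives a bound on $\mathbb{E}^{\mu_N^{(\alpha)}}\big[\|\exp^{\diamond}_N(\alpha\,\cdot\,)\|_{B^{-\varepsilon}_{p,p}}^q\big]$ uniform in $N$ for a suitable $q>1$. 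Feeding these into the Besov--Schauder estimates for the remainder equation --- using the nonnegativity of $\exp^{\diamond}_N$ and Theorem \ref{replaced keythm} to make sense of and bound the product $\exp(\alpha P_N\widetilde{Y}^N)\cdot\exp^{\diamond}_N(\alpha X^N)$ --- yields uniform-in-$N$ bounds on $\widetilde{Y}^N$ in $C([0,T];C(\Lambda))$ and a H\"older-in-time estimate. Combined with the $N$-independent regularity of $X^N$ and the uniform tightness of $\{\mu_N^{(\alpha)}\}$ in $H^{-\varepsilon/2}(\Lambda)\hookrightarrow\hookrightarrow H^{-\varepsilon}(\Lambda)$ (from $\mu_N^{(\alpha)}\Rightarrow\mu^{(\alpha)}$, Corollary \ref{cor:expmeas}), this gives tightness of $\{\mathrm{Law}(\widetilde{\Phi}^{N,\mathrm{stat}})\}$ on $C([0,T];H^{-\varepsilon}(\Lambda))$.

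To identify the limit, I would invoke the Skorokhod representation theorem to pass, along a subsequence, to almost-sure convergence $(\widetilde{\Phi}^{N,\mathrm{stat}},X^N,\widetilde{Y}^N,W)\to(\Psi,\widetilde{X},\widetilde{Y},\widetilde{W})$ on a new probability space, where $\widetilde{X}$ is a stationary OU process for $\widetilde{W}$, $\widetilde{Y}=\Psi-\widetilde{X}$, and $\Psi_0$ has law $\mu^{(\alpha)}$. Passing to the limit in the mild formulation is the heart of the argument: $\exp^{\diamond}_N(\alpha X^N)\to\exp^{\diamond}(\alpha\widetilde{X})$ in $C([0,T];B^{-\varepsilon}_{p,p})$, $\exp(\alpha P_N\widetilde{Y}^N)\to\exp(\alpha\widetilde{Y})$ in $C([0,T];C(\Lambda))$, their product converges by Theorem \ref{replaced keythm}, and $P_N$ is removed using Hypothesis \ref{hypo on P}; hence $\Psi$ solves \eqref{exp-SQE} driven by $\widetilde{W}$ with $\Psi_0\sim\mu^{(\alpha)}$. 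By the pathwise uniqueness in Theorem \ref{mainthm1}, $\Psi=\Phi^{\mathrm{stat}}$ in law, so the whole sequence converges. Finally, since $\widetilde{\Phi}^{N,\mathrm{stat}}_t\sim\mu_N^{(\alpha)}\Rightarrow\mu^{(\alpha)}$ and $\widetilde{\Phi}^{N,\mathrm{stat}}_t\to\Phi^{\mathrm{stat}}_t$ in law, the time-marginal of $\Phi^{\mathrm{stat}}_t$ is $\mu^{(\alpha)}$. The main obstacle is the tightness and identification of the nonlinear term in the full $L^1$-regime: because $\exp^{\diamond}(\alpha X_t)$ only has $L^p$-integrability for some $p<2$ with respect to $\mu_0$, the uniform estimates on $\widetilde{Y}^N$ must be extracted through the $L^2(\mu^{(\alpha)})$-integrability of Corollary \ref{log-derivative} rather than naive $L^2(\mu_0)$ bounds, and the passage to the limit in the product $\exp(\alpha P_N\widetilde{Y}^N)\cdot\exp^{\diamond}_N(\alpha X^N)$ genuinely relies on the nonnegativity-based product estimate of Theorem \ref{replaced keythm}.
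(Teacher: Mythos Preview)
Your overall architecture (stationarity, tightness, identification via Skorokhod and uniqueness) matches the paper's, but the tightness step in the $H^{-\varepsilon}$ topology does not go through as you describe it, and this is precisely the point where the $L^1$-regime bites.

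You propose to bound $\widetilde{Y}^N$ by feeding the product $\exp(\alpha P_N\widetilde{Y}^N)\cdot\exp_N^\diamond(\alpha X^N)$ into the Besov--Schauder machinery of Theorems~\ref{thm:stableM}--\ref{replaced keythm} and the a~priori estimate~\eqref{eq:aprioriUps}. But those estimates take place in $B_{p,p}$-scales with $p\in(1,8\pi/\alpha^2\wedge 2)$, and for $|\alpha|$ close to $\sqrt{8\pi}$ the exponent $p$ is forced close to $1$. The estimate~\eqref{eq:aprioriUps} then yields $\widetilde{Y}^N$ bounded only in $C([0,T];B_{p,p}^\delta)\cap C^{\delta/2}([0,T];L^p)$ with $\delta<2(p-1)/p$; neither space embeds into $C([0,T];C(\Lambda))$ (that would require $\delta>2/p$, impossible for $p<2$), nor into $C([0,T];H^{-\varepsilon})$ for small $\varepsilon$ (the Besov embedding gives at best $H^{-(2/p-1)}$). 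So your route produces tightness only in $C([0,T];B_{p,p}^{-\varepsilon})$, not in $C([0,T];H^{-\varepsilon})$ as the theorem claims. Relatedly, your invocation of Corollary~\ref{log-derivative} is garbled: it gives $\sup_N\int\|\exp_N^\diamond(\alpha\phi)\|_{H^{-s}}^2\,\mu_N^{(\alpha)}(d\phi)<\infty$, i.e.\ an $L^2(\mu_N^{(\alpha)};H^{-s})$ bound, not an $L^q(\mu_0;B_{p,p}^{-\varepsilon})$ bound.

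The paper's fix is to \emph{avoid} the product decomposition at the tightness stage. Writing $\widetilde{\Phi}^{N,\mathrm{stat}}=X(\xi_N)+\mathbf{Y}^N$ with $\partial_t\mathbf{Y}^N=\tfrac12(\triangle-1)\mathbf{Y}^N-\tfrac\alpha2 P_N\exp_N^\diamond(\alpha\widetilde{\Phi}^{N,\mathrm{stat}})$, one uses stationarity of the \emph{full} process $\widetilde{\Phi}^{N,\mathrm{stat}}$ under $\mu_N^{(\alpha)}$ to compute
\[
\mathbb{E}\big[\|\exp_N^\diamond(\alpha\widetilde{\Phi}^{N,\mathrm{stat}})\|_{L^2([0,T];H^{-s})}^2\big]
=T\int\|\exp_N^\diamond(\alpha\phi)\|_{H^{-s}}^2\,\mu_N^{(\alpha)}(d\phi)\lesssim 1
\]
by Corollary~\ref{log-derivative}; since $s<1$, Schauder (Proposition~\ref{prop:Schauder}) then gives a uniform bound on $\mathbf{Y}^N$ in $C^\delta([0,T];L^2)$, which is enough for tightness in $C([0,T];H^{-\varepsilon})$ for every $\varepsilon>0$. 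The paper then proves convergence in law in the weaker space $C([0,T];B_{p,p}^{-\varepsilon})$ (using your product/solution-map argument, now with $\xi_N\to\xi$ a.s.\ via Skorokhod on the initial data and Lemma~\ref{lem:contiexpOU}), and combines this with the $H^{-\varepsilon}$-tightness via a Lusin--Souslin measurability step to upgrade to convergence in $C([0,T];H^{-\varepsilon})$. The essential missing idea in your proposal is thus: do not split the nonlinearity as $e^{\alpha\widetilde{Y}^N}\cdot\mathcal{X}^N(\xi_N)$ for the $H^{-\varepsilon}$-tightness; instead use stationarity of $\widetilde{\Phi}^{N,\mathrm{stat}}$ to access the $L^2(\mu_N^{(\alpha)};H^{-s})$ bound of Corollary~\ref{log-derivative} directly on $\exp_N^\diamond(\alpha\widetilde{\Phi}^{N,\mathrm{stat}})$.
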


\begin{cor}\label{maincor}
The strong solution $\Phi$ of the SPDE \eqref{exp-SQE} belongs to the space 
$C([0,T];H^{-\varepsilon}(\Lambda))$, $\mathbb{P}$-almost surely, for 
$\mu_0$-almost every (or $\mu^{(\alpha)}$-almost every) initial value $\phi\in\mathcal{D}'(\Lambda)$.
\end{cor}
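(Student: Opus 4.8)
The plan is to read Corollary~\ref{maincor} off Theorem~\ref{mainthm2} and then transport the conclusion from $\mu^{(\alpha)}$ to $\mu_0$ using the equivalence of these two measures. By Theorem~\ref{mainthm2}, the stationary strong solution $\Phi^{\mathrm{stat}}$, which is by definition $\Phi(\xi,W)$ for the solution map $\Phi(\,\cdot\,,W)$ of Theorem~\ref{mainthm1} and a $\mu^{(\alpha)}$-distributed random variable $\xi$ independent of $W$, has trajectories in $C([0,T];H^{-\varepsilon}(\Lambda))$, $\mathbb P$-almost surely; note that this is finer than, and compatible with, the state space $C([0,T];B_{p,p}^{-\varepsilon}(\Lambda))$ of Theorem~\ref{mainthm1}, since $H^{-\varepsilon}(\Lambda)=B_{2,2}^{-\varepsilon}(\Lambda)\hookrightarrow B_{p,p}^{-\varepsilon'}(\Lambda)$ for $p\in(1,2)$ and $\varepsilon'>\varepsilon$. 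In other words, $\mathbb P\big(\Phi(\xi,W)\in C([0,T];H^{-\varepsilon}(\Lambda))\big)=1$.

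Next I would disintegrate over the initial datum. Since $\xi$ and $W$ are independent and the map $(\phi,W)\mapsto\Phi(\phi,W)$ is measurable, Fubini's theorem gives
\[
1=\mathbb P\big(\Phi(\xi,W)\in C([0,T];H^{-\varepsilon}(\Lambda))\big)
=\int_{\mathcal D'(\Lambda)}\mathbb P\big(\Phi(\phi,W)\in C([0,T];H^{-\varepsilon}(\Lambda))\big)\,\mu^{(\alpha)}({\dd}\phi),
\]
hence the integrand equals $1$ for $\mu^{(\alpha)}$-almost every $\phi$. To pass to $\mu_0$-almost every $\phi$, recall from Corollary~\ref{cor:expmeas} that $\mu^{(\alpha)}\ll\mu_0$ with density $(Z^{(\alpha)})^{-1}\exp\big(-\int_\Lambda\exp^\diamond(\alpha\phi)(x)\,{\dd}x\big)$; by Theorem~\ref{thm:expwick almostsure} the integral in the exponent is $\mu_0$-a.s. finite, so this density is $\mu_0$-a.s. strictly positive, whence $\mu_0\ll\mu^{(\alpha)}$ as well. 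Thus the exceptional set above is also $\mu_0$-null, which proves the corollary in both formulations.

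The argument is soft; the only things needing care are the joint measurability of the solution map used in the Fubini step (which comes for free with the construction underlying Theorem~\ref{mainthm1} and Section~\ref{sec:wellposed}) and the strict positivity of the Radon--Nikodym density, i.e. the $\mu_0$-a.s. finiteness of $\int_\Lambda\exp^\diamond(\alpha\phi)(x)\,{\dd}x$, which is precisely the $L^1$-regime content of Theorem~\ref{thm:expwick almostsure}. Should one wish to avoid invoking Theorem~\ref{mainthm2}, there is a direct route through the Da Prato--Debussche decomposition $\Phi=X+Y$ of Section~\ref{sec:wellposed}: the Ornstein--Uhlenbeck part $X$ belongs to $C([0,T];H^{-\varepsilon}(\Lambda))$ for $\mu_0$-almost every $\phi$ --- the stochastic convolution by the classical regularity of the massive heat semigroup on $\Lambda$, and the term $e^{\frac t2(\triangle-1)}\phi$ because $\mu_0$ is carried by $H^{-\varepsilon}(\Lambda)$ --- while the remainder $Y$ solving the shifted equation \eqref{shift-intro}, whose forcing $\exp(\alpha Y)\exp^\diamond(\alpha X)$ lies in $C([0,T];B_{p,p}^{-\varepsilon}(\Lambda))$ by Theorems~\ref{thm:expwick OU} and \ref{replaced keythm}, gains almost two spatial derivatives via the heat semigroup and is therefore far smoother than $H^{-\varepsilon}(\Lambda)$; summing the two contributions gives $\Phi\in C([0,T];H^{-\varepsilon}(\Lambda))$.
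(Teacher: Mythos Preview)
Your main argument is correct and coincides with the paper's own proof: both invoke Theorem~\ref{mainthm2} to get $\mathbb{P}\big(\Phi^{\rm stat}\in C([0,T];H^{-\varepsilon})\big)=1$, disintegrate over the initial datum via Fubini to obtain the conclusion for $\mu^{(\alpha)}$-a.e.\ $\phi$, and then pass to $\mu_0$-a.e.\ $\phi$ by the mutual absolute continuity of Corollary~\ref{cor:expmeas}. Your optional alternative via the Da~Prato--Debussche decomposition is not in the paper, but your primary route matches it exactly.
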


Finally, we discuss a connection between the SPDE (\ref{exp-SQE})
and the Dirichlet form theory.
Let $s \in(0,1)$ be an exponent fixed later (see Corollary \ref{log-derivative}) and set 
$H=L^{2}(\Lambda)$ and $E=H^{-s}(\Lambda)$.
Recall that $\{e_k\}_{k\in\mathbb{Z}^2}$ is a real-valued CONS of $H$ defined by \eqref{real valued CONS}.
We then denote by ${\mathfrak F}C_{b}^{\infty}$ the space of all smooth cylinder functions $F:E\to\mathbb{R}$ having the form
$$
F(\phi)=f(\langle \phi, l_{1} \rangle , \dots , \langle \phi, l_{n} \rangle), \qquad \phi \in E,
$$
with $n\in {\mathbb N}$, $f
\in C^{\infty}_{b}({\mathbb R}^{n}; {\mathbb R})$ 
and $l_{1}, \dots , l_{n} \in {\rm Span}\{ e_{k}; k\in {\mathbb Z}^{2} \}$.
Since supp$(\mu^{(\alpha)})=E$, two different functions in
${\mathfrak F}C_{b}^{\infty}$ are also different in $L^{p}(\mu^{(\alpha)})$-sense.
Moreover, ${\mathfrak F}C_{b}^{\infty}$ is dense in $L^{p}(\mu^{(\alpha)})$ for all $p\geq 1$.
For $F \in {\mathfrak F}C_{b}^{\infty}$, we define the $H$-derivative 
$D_{H}F:E\to H$ by
$$
D_{H}F(\phi):=\sum_{j=1}^{n}{\partial_{j}} {f}
\big(
\langle \phi,l_{1} \rangle , \ldots,
\langle \phi,l_{n} \rangle
\big)l_{j}, \qquad \phi\in E.
$$
We then consider a pre-Dirichlet form $({\cal E},{\mathfrak F}C_{b}^{\infty})$
which is given by 
\begin{equation}
{\cal E}(F,G)=
\frac{1}{2} 
\int_{E} \big( D_{H}F(\phi), D_{H}G(\phi) \big)_{H}
 \mu ^{(\alpha )}({\dd}\phi),\qquad F,G\in {\mathfrak F}C_{b}^{\infty},
 \label{DF-intro}
\end{equation}
where $(\cdot,\cdot)_H$ is the inner product of $H$.
Applying the integration by parts formula for $\mu^{(\alpha)}$,
we obtain that
$({\cal E},{\mathfrak F}C_{b}^{\infty})$ is closable on $L^{2}(\mu^{(\alpha)})$
(see Proposition \ref{Prop-IbP} below for detail),
so we can define ${\cal D(E)}$ as the completion of ${\mathfrak F}C_{b}^{\infty}$
with respect to ${\cal E}_{1}^{1/2}$-norm. Thus, by directly applying the general methods in the theory of 
Dirichlet forms (cf. \cite{MR92, CF12}), we can prove quasi-regularity of $({\cal E}, {\cal D(E)})$ and 
the existence of a diffusion process 
${\mathbb M}=(\Theta, {\mathcal G}, ( {\mathcal G}_{t})_{t\geq 0},  (\Psi_{t})_{t\geq 0}, ( {\mathbb Q}_{\phi} )_{\phi \in E})$
properly associated with $({\cal E}, {\cal D(E)})$.
\vspace{2mm}

The following theorem says that the diffusion process
$\Psi=(\Psi_{t})_{t\geq 0}$
coincides with the strong solution $\Phi$ obtained in Theorem \ref{mainthm1}.

\begin{thm}\label{mainthm3}
Let $|\alpha|<\sqrt{8\pi}$. Then for $\mu^{(\alpha)}$-almost every $\phi$, the diffusion process
$\Psi$ coincides ${\mathbb Q}_\phi$-almost surely with the strong solution $\Phi$ of the SPDE \eqref{exp-SQE} 
with the initial value $\phi$, driven by some 
$L^{2}(\Lambda)$-cylindrical 
$({\mathcal G}_{t})_{t\ge0}$-Brownian motion ${\mathcal W}=({\mathcal W}_{t})_{t\geq 0}$.
\end{thm}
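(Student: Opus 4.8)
\medskip
\noindent\textbf{Proof proposal.}
The plan is to treat the stationary case first and then remove the stationarity. Under $\mathbb Q_{\mu^{(\alpha)}}:=\int_E\mathbb Q_\phi\,\mu^{(\alpha)}({\dd}\phi)$ I would realise $\Psi$ as a weak solution of \eqref{exp-SQE} by a Fukushima decomposition, read off the driving cylindrical Brownian motion from its martingale part, upgrade this to the pathwise identification with the strong solution via the Da Prato--Debussche decomposition and the pathwise uniqueness of the shifted equation, and finally disintegrate in the initial datum. Throughout I use Corollary \ref{cor:expmeas}: $\mu^{(\alpha)}$ is absolutely continuous with respect to $\mu_0$ with density bounded by $1/Z^{(\alpha)}$, so $L^q(\mu_0)\subset L^q(\mu^{(\alpha)})$ for every $q\geq1$, each coordinate $\langle\cdot,e_k\rangle$ belongs to $\mathcal D(\mathcal E)$ with $D_H\langle\cdot,e_k\rangle=e_k$, and $\mathcal E$-exceptional sets are $\mu^{(\alpha)}$-null.

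First I would run the Fukushima decomposition. By the integration-by-parts formula for $\mu^{(\alpha)}$ (Proposition \ref{Prop-IbP}), which is available precisely because $\exp^\diamond(\alpha\phi)$ is $L^2(\mu^{(\alpha)})$-integrable (Corollary \ref{log-derivative}), the logarithmic derivative of $\mu^{(\alpha)}$ along $e_k$ equals $\beta_k(\phi)=-(1+|k|^2)\langle\phi,e_k\rangle-\alpha\langle e_k,\exp^\diamond(\alpha\phi)\rangle\in L^2(\mu^{(\alpha)})$, hence $\langle\cdot,e_k\rangle$ lies in the domain of the generator $L$ of $(\mathcal E,\mathcal D(\mathcal E))$ with $L\langle\cdot,e_k\rangle=\tfrac12\beta_k$. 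Fukushima's decomposition, strictly under $\mathbb Q_\phi$ for quasi-every $\phi$ and hence under $\mathbb Q_{\mu^{(\alpha)}}$, then gives
\begin{equation*}
\langle\Psi_t,e_k\rangle=\langle\Psi_0,e_k\rangle+\mathcal W_t^{(k)}
-\frac12\int_0^t(1+|k|^2)\langle\Psi_s,e_k\rangle\,{\dd}s
-\frac\alpha2\int_0^t\langle e_k,\exp^\diamond(\alpha\Psi_s)\rangle\,{\dd}s,\qquad k\in\mathbb Z^2,
\end{equation*}
where $\mathcal W^{(k)}$ is the martingale additive functional of $\langle\cdot,e_k\rangle$, with brackets $\langle\mathcal W^{(k)},\mathcal W^{(\ell)}\rangle_t=t\,(e_k,e_\ell)_H=t\,\delta_{k\ell}$. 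By Lévy's characterization $\mathcal W_t:=\sum_k\mathcal W_t^{(k)}e_k$ is then an $L^2(\Lambda)$-cylindrical $(\mathcal G_t)$-Brownian motion, and the time integrals above converge since $\Psi_s\sim\mu^{(\alpha)}$ for all $s$ and $s\mapsto\|\exp^\diamond(\alpha\Psi_s)\|_{B^{-\varepsilon}_{p,p}}\in L^2([0,T])$ a.s.\ by Corollary \ref{log-derivative} and Fubini. In particular $\Psi$ is, under $\mathbb Q_{\mu^{(\alpha)}}$, a weak solution of \eqref{exp-SQE} driven by $\mathcal W$.

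Next I would identify $\Psi$ with the strong solution. Let $X$ be the Ornstein--Uhlenbeck process driven by $\mathcal W$ issuing from $\Psi_0$ and set $Z:=\Psi-X$; subtracting the mild equations one gets $Z_t=-\tfrac\alpha2\int_0^t e^{\frac{t-s}{2}(\triangle-1)}\exp^\diamond(\alpha\Psi_s)\,{\dd}s$ with $Z_0=0$. Using the nonnegativity of $\exp^\diamond(\alpha\Psi_s)$, the $L^2([0,T])$ bound of the previous step and the smoothing of $e^{\frac t2(\triangle-1)}$, a bootstrap as in the proof of Theorem \ref{mainthm1} shows that $Z$ is function-valued and continuous in space-time; then the shift property of the Wick exponential from Section \ref{sec:Wick} gives $\exp^\diamond(\alpha\Psi_s)=e^{\alpha Z_s}\exp^\diamond(\alpha X_s)$ with $\exp^\diamond(\alpha X)$ the Wick exponential of the Ornstein--Uhlenbeck process (Theorem \ref{thm:expwick OU}), so $Z$ solves the shifted equation \eqref{shift-intro} with vanishing initial value. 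By the pathwise uniqueness of \eqref{shift-intro} obtained in proving Theorem \ref{mainthm1}, $Z$ coincides with the remainder of the strong solution, whence $\Psi=X+Z=\Phi(\Psi_0,\mathcal W)$, $\mathbb Q_{\mu^{(\alpha)}}$-a.s. (Alternatively, this equality in law can be reached by identifying $\widetilde\Phi^N$ of \eqref{expsqe2} with the diffusion of the Dirichlet form of $\mu_N^{(\alpha)}$, establishing Mosco convergence of these forms to $(\mathcal E,\mathcal D(\mathcal E))$ --- using the closability of Proposition \ref{Prop-IbP} and the density convergence of Corollary \ref{cor:expmeas} --- and comparing the limit with Theorem \ref{mainthm2}.) The genuinely delicate point is the a priori regularity: since $\Psi$ is only known to be $\mathcal D'(\Lambda)$-valued with marginals $\mu^{(\alpha)}$, the nonlinearity $\exp^\diamond(\alpha\Psi_s)=e^{\alpha(\Psi_s-X_s)}\exp^\diamond(\alpha X_s)$ has no pointwise meaning until $\Psi_s-X_s$ has been shown to be function-valued, and extracting this from the mild equation for $Z$ rests on the nonnegativity of the Gaussian multiplicative chaos together with the $L^2(\mu^{(\alpha)})$-integrability of its mass from Corollary \ref{log-derivative} --- the substitute, in the full $L^1$-regime, for the $L^2(\mu_0)$-integrability available when $|\alpha|<\sqrt{4\pi}$.

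Finally I would remove the stationarity. The identity $\Psi=\Phi(\Psi_0,\mathcal W)$, with $\mathcal W$ built from the strict martingale additive functionals $\mathcal W^{(k)}=M^{[\langle\cdot,e_k\rangle]}$ and $\Phi$ the measurable solution map of Theorem \ref{mainthm1}, holds $\mathbb Q_{\mu^{(\alpha)}}$-a.s.; disintegrating $\mathbb Q_{\mu^{(\alpha)}}=\int_E\mathbb Q_\phi\,\mu^{(\alpha)}({\dd}\phi)$ and using that $\Psi_0=\phi$ under $\mathbb Q_\phi$ while $\mathcal W$ remains an $L^2(\Lambda)$-cylindrical $(\mathcal G_t)$-Brownian motion under $\mathbb Q_\phi$ for quasi-every $\phi$, one obtains that for $\mu^{(\alpha)}$-a.e.\ $\phi$ the diffusion $\Psi$ coincides $\mathbb Q_\phi$-a.s.\ with the strong solution of \eqref{exp-SQE} with initial value $\phi$, driven by $\mathcal W$. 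Since $\mu^{(\alpha)}$ charges no $\mathcal E$-exceptional set, this is the assertion.
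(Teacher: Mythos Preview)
Your proposal is correct and follows essentially the same route as the paper: extract a cylindrical Brownian motion $\mathcal W$ from the Fukushima decomposition of the coordinate functions (the paper phrases this step as an application of \cite[Lemma 6.1, Theorem 6.2]{AR91} and \cite[Theorem 13]{Ond04}, which gives the mild equation for $\Psi$ directly under $\mathbb Q_\phi$), then perform the Da Prato--Debussche split $\Psi=\mathfrak X+\mathfrak Y$, use the Schauder estimate together with the $L^2(\mu^{(\alpha)};H^{-s})$-integrability of Corollary \ref{log-derivative} to show $\mathfrak Y\in\mathscr Y_T$, and invoke the shift identity $\exp^\diamond(\alpha\Psi_t)=e^{\alpha\mathfrak Y_t}\exp^\diamond(\alpha\mathfrak X_t)$ plus the uniqueness Lemma \ref{lem:unique} to conclude $\mathfrak Y=\mathcal S(0,\exp^\diamond(\alpha\mathfrak X))$.

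The only presentational difference is that you work under the stationary law $\mathbb Q_{\mu^{(\alpha)}}$ and disintegrate at the end, whereas the paper argues directly under $\mathbb Q_\phi$ for quasi-every $\phi$; and the shift identity you cite from ``Section \ref{sec:Wick}'' is actually formulated as Lemma \ref{lem:wexplaw} in Section \ref{sec:DF}, where the paper makes explicit the measurability argument (via the invariance of $\mu^{(\alpha)}$ under $\Psi$ and of $\mu_0$ under $\mathfrak X$) ensuring that $\Psi_t,\mathfrak X_t$ lie in the good set $E_0$ and $\mathfrak Y_t\in H^{1+\kappa}$ for a.e.\ $t$ --- precisely the ``delicate point'' you flagged.
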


\subsection{Notations and preliminaries}\label{subsec:Besov}
Throughout this paper, we use the notation $A\lesssim B$ for two functions $A = A(\lambda)$ and $B = B(\lambda)$ of a variable $\lambda$, if there exists a constant $c>0$ independent of $\lambda$ such that $A(\lambda)\le cB(\lambda)$ for any $\lambda$. We write $A\asymp B$ if $A\lesssim B$ and $B\lesssim A$. We write $A\lesssim_\mu B$ if we want to emphasize that the constant $c$ depends on another variable $\mu$.

For a measure space $(\mathfrak{M},m)$ and a Banach space $B$, denote by $L^p(\mathfrak{M},m;B)$ the usual $L^p$-space, where $\mathfrak{M}$ or $m$ may be omitted if they are obvious in the context.
If $B=\mathbb{R}$, then we write it by $L^p(\mathfrak{M},m)$ simply.
If $\mathfrak{M}$ is a compact topological space, denote by $C(\mathfrak{M};B)$ the space of continuous functions with the supremum norm.
\vspace{2mm}

We collect several basic facts on function spaces used through this paper.
Below we usually denote $L^{p}(\Lambda)$, $H^{s}(\Lambda)$ and $B^{s}_{p,q}(\Lambda)$ by
$L^{p}$, $H^{s}$ and $B^{s}_{p,q}$, respectively, for the sake of brevity.
Denote by $\mathcal{S}(\mathbb{R}^2)$ for the space of real-valued Schwartz functions on $\mathbb{R}^2$ 
and denote its dual by $\mathcal{S}'(\mathbb R^{2})$, which is the space of tempered distributions.
The Fourier transform $\mathcal F$ is defined by
$$
(\mathcal{F}f)(\xi):=\frac1{2\pi}\int_{\mathbb{R}^2} f(x)e^{-\sqrt{-1}x\cdot\xi}
\hspace{0.5mm}{\dd}x,
\qquad f\in {\mathcal S}(\mathbb R^{2}),~\xi\in\mathbb{R}^2,
$$
and so the inverse Fourier transform is given by ${\mathcal F}^{-1}f(z)={\mathcal F}f(-z)$ ($z\in \mathbb R^{2}$).
Also for the distribution $f\in {\mathcal S}'(\mathbb R^{2})$, the usual generalization of the Fourier transform is considered.

Let $(\chi,\rho)$ be a dyadic partition of unity, that is, they satisfy the following:
\begin{itemize}
\item $\chi, \rho: \mathbb R^{2} \to [0,1]$ are smooth radial functions,
\item ${\rm{supp}}(\chi) \subset B(0, 4/3)$,~${\rm{supp}}(\rho) \subset B(0,8/3) \setminus B(0,3/4)$,
\item ${\displaystyle{\chi(\xi)+\sum_{j=0}^\infty\rho(2^{-j}\xi)=1}}$ for any $\xi\in\mathbb{R}^2$,
\end{itemize}
where $B(x,r)$ stands for the open ball in $\mathbb R^{2}$ centered at $x$ and with radius $r$.
We then set $\rho_{-1}:=\chi$ and $\rho_j:=\rho(2^{-j}\cdot)$ for $j\ge0$.
We define the {\it{Littlewood-Paley blocks}} (or the Littlewood-Paley operator) $\{ \Delta_{j} \}_{j=-1}^{\infty}$ by
$$
(\Delta_jf)(x):=\sum_{k\in\mathbb{Z}^2}\rho_j(k)
\hat{f}(k){\bf e}_k(x), \qquad
f\in {\mathcal D}'(\Lambda),~x\in\Lambda.
$$
We then define the inhomogeneous Besov norm $\Vert \cdot \Vert_{B^{s}_{p,q}}$ 
and the Besov space 
$B^{s}_{p,q}(\Lambda)$ 
($s\in \mathbb{R}, p,q\in[1,\infty]$) by
\begin{equation*}
\Vert f\Vert_{B_{p,q}^s}:=
\left\{
\begin{aligned}
&\Big( \sum_{j=-1}^{\infty} 2^{jsq} \Vert \Delta_{j} f \Vert_{L^{p}}^{q} \Big)^{1/q},
&\qquad&q\in[1,\infty),\\
&\sup_{j\geq -1} \big( 2^{js} \Vert \Delta_{j} f \Vert_{L^{p}} \big),
&\qquad&q=\infty,
\end{aligned}
\right.
\end{equation*}
and
$$ B_{p,q}^s=B^{s}_{p,q}(\Lambda):=\{ f \in {\mathcal D}'(\Lambda)\,; \Vert f \Vert_{B^{s}_{p,q}}<\infty \},$$
respectively.
The Besov space $B^{s}_{p,q}$ is a Banach space.
Moreover, $B_{p,q}^s$ is separable if $q<\infty$ (see \cite[Lemma 2.73]{BCD11}).

We recall mainly from \cite{BCD11} some basic properties of Besov spaces.
We remark that the setting in \cite{BCD11} is not on a torus but on the Euclidean spaces.
However, it is known that most results in \cite{BCD11} also follow in the case of function spaces on a torus, and are proved by a parallel argument or by extending functions on a torus to those on the Euclidean spaces periodically (see e.g. \cite[Appendix A]{GIP15}).
In view of this fact we refer associate results in \cite{BCD11} below, though there is a difference between a torus and the Euclidean spaces.
The following embeddings are immediate consequences of the definition.
\begin{itemize}
\item If $s_1\le s_2$, then $B_{p,q}^{s_2}\subset B_{p,q}^{s_1}$.
\item If $p_1\le p_2$, then $B_{p_2,q}^s\subset B_{p_1,q}^s$.
\item If $q_1\le q_2$, then $B_{p,q_2}^s\supset B_{p,q_1}^s$. However, $B_{p,q_2}^s\subset B_{p,q_1}^{s-\varepsilon}$ for any $\varepsilon>0$.
\end{itemize}
It is important to note that $B_{2,2}^s$ coincides with the Sobolev space $H^s$ for any $s\in\mathbb{R}$, and $B_{\infty,\infty}^s$ coincides with the H\"older space $C^s(\Lambda)$ for any $s\in\mathbb{R}\setminus\mathbb{N}$ with the equivalent norms (\cite[Page 99]{BCD11}). The second and third properties above implies that $H^s\subset B_{p,p}^{s-\varepsilon}$ for any $p\in[1,2]$ and $\varepsilon>0$.

The following is an immediate consequence of the interpolations of $L^p$-spaces and of $\ell^p$-spaces.

\begin{prop}\label{prop:interpolation besov}
Let $s_1,s_2\in\mathbb{R}$ and $p_1,p_2,q_1,q_2\in[1,\infty]$. Let $\theta\in[0,1]$ and set $s=(1-\theta)s_1+\theta s_2$, $\frac1p=\frac{1-\theta}{p_1}+\frac{\theta}{p_2}$, and $\frac1q=\frac{1-\theta}{q_1}+\frac{\theta}{q_2}$. Then one has
$$
\|f\|_{B_{p,q}^s}\le\|f\|_{B_{p_1,q_1}^{s_1}}^{1-\theta}\|f\|_{B_{p_2,q_2}^{s_2}}^{\theta}.
$$
\end{prop}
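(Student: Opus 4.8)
The statement to prove is Proposition~\ref{prop:interpolation besov}, the bilinear interpolation inequality for Besov norms.

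\medskip

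The plan is to prove the inequality by reducing it to two classical interpolation facts: H\"older's inequality in $L^p$-spaces and H\"older's inequality in $\ell^q$-spaces. First I would recall that the Littlewood--Paley blocks $\Delta_j f$ are the same building blocks for every Besov norm appearing in the statement, so the only thing that changes is how one aggregates the quantities $\|\Delta_j f\|_{L^{p_i}}$ in the index $j$. The first step is therefore to control a single block: for fixed $j$, apply the interpolation inequality for Lebesgue spaces to $\Delta_j f \in L^{p_1}\cap L^{p_2}$, which gives
$$
\|\Delta_j f\|_{L^p} \le \|\Delta_j f\|_{L^{p_1}}^{1-\theta}\,\|\Delta_j f\|_{L^{p_2}}^{\theta},
$$
valid precisely because $\tfrac1p = \tfrac{1-\theta}{p_1} + \tfrac{\theta}{p_2}$. (If $f$ fails to lie in either of the two source spaces the right-hand side of the claimed inequality is infinite and there is nothing to prove, so one may assume finiteness throughout.)

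\medskip

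The second step is to insert the weight $2^{js}$ and split it according to $s = (1-\theta)s_1 + \theta s_2$, writing $2^{js} = (2^{js_1})^{1-\theta}(2^{js_2})^{\theta}$. Combining with the block estimate from Step~1 yields, for every $j\ge -1$,
$$
2^{js}\|\Delta_j f\|_{L^p} \le \bigl(2^{js_1}\|\Delta_j f\|_{L^{p_1}}\bigr)^{1-\theta}\bigl(2^{js_2}\|\Delta_j f\|_{L^{p_2}}\bigr)^{\theta}.
$$
Now set $a_j := 2^{js_1}\|\Delta_j f\|_{L^{p_1}}$ and $b_j := 2^{js_2}\|\Delta_j f\|_{L^{p_2}}$, so that the sequences $(a_j)_{j\ge -1}$ and $(b_j)_{j\ge -1}$ lie in $\ell^{q_1}$ and $\ell^{q_2}$ respectively, with norms $\|a\|_{\ell^{q_1}} = \|f\|_{B_{p_1,q_1}^{s_1}}$ and $\|b\|_{\ell^{q_2}} = \|f\|_{B_{p_2,q_2}^{s_2}}$. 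The third step is to apply H\"older's inequality on sequence spaces: since $\tfrac1q = \tfrac{1-\theta}{q_1} + \tfrac{\theta}{q_2}$, we have $a^{1-\theta}b^{\theta} \in \ell^{q}$ with
$$
\bigl\|\,a^{1-\theta}b^{\theta}\,\bigr\|_{\ell^q} \le \|a\|_{\ell^{q_1}}^{1-\theta}\,\|b\|_{\ell^{q_2}}^{\theta},
$$
which is exactly $\|f\|_{B_{p,q}^s} \le \|f\|_{B_{p_1,q_1}^{s_1}}^{1-\theta}\|f\|_{B_{p_2,q_2}^{s_2}}^{\theta}$ after recognizing the left-hand side as the Besov norm of $f$. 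One should handle the cases $q_1 = \infty$ or $q_2 = \infty$ (and the endpoint $\theta \in \{0,1\}$) separately but trivially, since then the corresponding $\ell^q$-H\"older step degenerates into a supremum bound.

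\medskip

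There is essentially no serious obstacle here; the proposition is a soft consequence of two-parameter bilinear interpolation applied blockwise. The only point requiring a little care is bookkeeping the exponent arithmetic ($p$, $q$, and $s$ all being affine-in-reciprocal or affine combinations with the same weight $\theta$) and making sure the $q=\infty$ endpoints are treated correctly; both are routine. In fact one could alternatively cite the Riesz--Thorin / complex interpolation machinery directly, but the elementary blockwise argument above is self-contained and is presumably what the authors intend.
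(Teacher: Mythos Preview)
Your proof is correct and matches exactly the approach the paper has in mind: the authors state the proposition as ``an immediate consequence of the interpolations of $L^p$-spaces and of $\ell^p$-spaces'' without further details, and your blockwise H\"older-in-$L^p$ followed by H\"older-in-$\ell^q$ argument is precisely that.
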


\begin{prop}[{\cite[Proposition 2.71]{BCD11}}]\label{prop:besov embedding}
For any $s\in\mathbb{R}$, $p_1,p_2,q_1,q_2\in[1,\infty]$ such that $p_1\le p_2$ and $q_1\le q_2$, one has the embedding
$$
B_{p_1,q_1}^s \hookrightarrow B_{p_2,q_2}^{s-2(1/p_1-1/p_2)}.
$$
In particular, the space $B_{p,p}^s$ is embedded into $C(\Lambda)$ if $s>\frac2p$.
\end{prop}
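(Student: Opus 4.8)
This is the standard Besov embedding theorem; since \cite{BCD11} works on $\mathbb{R}^2$ rather than on the torus $\Lambda=\mathbb{T}^2$, I indicate how the proof goes here. It rests on two ingredients: a Bernstein-type inequality for the Littlewood--Paley blocks $\Delta_j$, and the elementary inclusion $\|(a_j)_j\|_{\ell^{q_2}}\le\|(a_j)_j\|_{\ell^{q_1}}$ for $q_1\le q_2$.

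The plan is first to establish the \emph{Bernstein inequality}: for every $f\in\mathcal{D}'(\Lambda)$, every $j\ge-1$, and $1\le p_1\le p_2\le\infty$,
$$
\|\Delta_j f\|_{L^{p_2}}\lesssim 2^{2j(1/p_1-1/p_2)}\|\Delta_j f\|_{L^{p_1}},
$$
with a constant independent of $j$ and $f$. To prove this, fix a smooth compactly supported $\widetilde\rho:\mathbb{R}^2\to[0,1]$ which equals $1$ on a neighbourhood of $\mathrm{supp}(\chi)\cup\mathrm{supp}(\rho)$, put $\widetilde\rho_{-1}=\widetilde\rho$, $\widetilde\rho_j=\widetilde\rho(2^{-j}\cdot)$ for $j\ge0$, and set $K_j(x)=\sum_{k\in\mathbb{Z}^2}\widetilde\rho_j(k){\bf e}_k(x)$. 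Since $\widetilde\rho_j\equiv1$ on $\mathrm{supp}(\rho_j)$ we have $\Delta_j f=K_j*\Delta_j f$ (convolution on $\Lambda$), so Young's inequality gives $\|\Delta_j f\|_{L^{p_2}}\le\|K_j\|_{L^r}\|\Delta_j f\|_{L^{p_1}}$ with $1/r=1-(1/p_1-1/p_2)\in[0,1]$. The periodic kernel $K_j$ is the $2\pi\mathbb{Z}^2$-periodisation of the rescaled Schwartz function $2^{2j}(\mathcal{F}^{-1}\widetilde\rho)(2^j\cdot)$, and comparing the sum over translates with the corresponding integral (the transference argument of \cite[Appendix A]{GIP15}) yields $\|K_j\|_{L^r(\Lambda)}\lesssim 2^{2j(1-1/r)}=2^{2j(1/p_1-1/p_2)}$, uniformly in $j$ (for $j=-1,0$ the bound is simply a constant). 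This proves the inequality.

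Next, writing $\sigma=s-2(1/p_1-1/p_2)$, I would multiply by $2^{j\sigma}$ to obtain $2^{j\sigma}\|\Delta_j f\|_{L^{p_2}}\lesssim 2^{js}\|\Delta_j f\|_{L^{p_1}}$ for all $j$, then take the $\ell^{q_2}$-norm in $j$ and use $\ell^{q_1}\hookrightarrow\ell^{q_2}$:
$$
\|f\|_{B_{p_2,q_2}^{\sigma}}=\big\|\big(2^{j\sigma}\|\Delta_j f\|_{L^{p_2}}\big)_{j\ge-1}\big\|_{\ell^{q_2}}\lesssim\big\|\big(2^{js}\|\Delta_j f\|_{L^{p_1}}\big)_{j\ge-1}\big\|_{\ell^{q_2}}\le\big\|\big(2^{js}\|\Delta_j f\|_{L^{p_1}}\big)_{j\ge-1}\big\|_{\ell^{q_1}}=\|f\|_{B_{p_1,q_1}^{s}},
$$
which is the asserted embedding (the cases $q_1=\infty$ or $q_2=\infty$ being identical with suprema in place of sums). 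For the last assertion, apply the embedding with $p_1=q_1=p$ and $p_2=q_2=\infty$ (legitimate since $p\le\infty$) to get $B_{p,p}^s\hookrightarrow B_{\infty,\infty}^{s-2/p}$; when $s>2/p$ the exponent $s-2/p$ is positive, so for $f\in B_{p,p}^s$ the bound $\sum_{j\ge-1}\|\Delta_j f\|_{L^\infty}\le\big(\sum_{j\ge-1}2^{-j(s-2/p)}\big)\|f\|_{B_{\infty,\infty}^{s-2/p}}<\infty$ shows that the series $\sum_j\Delta_j f$ of trigonometric polynomials converges uniformly; its sum is continuous and, by the Fourier characterisation, equals $f$. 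Hence $B_{p,p}^s\hookrightarrow C(\Lambda)$.

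The only nonroutine point — and hence the main obstacle — is the kernel estimate $\|K_j\|_{L^r(\Lambda)}\lesssim 2^{2j(1/p_1-1/p_2)}$, i.e.\ checking that passing from $\mathbb{R}^2$ to the torus preserves the correct scaling in $j$; this is dealt with by the standard periodisation/transference argument, everything else being bookkeeping with $\ell^q$- and $L^p$-norms.
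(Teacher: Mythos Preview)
The paper does not give its own proof of this proposition; it simply cites \cite[Proposition 2.71]{BCD11} and, in the surrounding discussion, remarks that results in \cite{BCD11} carry over to the torus via the transference argument of \cite[Appendix A]{GIP15}. Your proof is correct and is precisely the standard argument (Bernstein inequality plus $\ell^{q_1}\hookrightarrow\ell^{q_2}$) that underlies the cited reference, with the periodisation step made explicit.
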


The following equivalence of norms plays an important role in Corollary \ref{log-derivative}.
\begin{prop}[{\cite[Theorem 9 and Remark 26]{Tri88}}]
\label{prop:heat-besov}
For any $s>0$ and $p,q\in[1,\infty]$,
$$
\|\xi\|_{B_{p,q}^{-s}}\asymp\|e^{\triangle}\xi\|_{L^p}+\left\|t^{s/2}\|e^{t\triangle}\xi\|_{L^p}\right\|_{L^q((0,1],t^{-1}
{\dd}t)},
$$
where $e^{t\triangle}$ denotes the heat semigroup of the Laplacian $\triangle$ on $\Lambda$.
\end{prop}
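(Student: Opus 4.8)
\emph{Proof idea (plan).} This is the Gaussian (heat-semigroup) characterization of Besov spaces of negative smoothness, essentially \cite[Theorem 9 and Remark 26]{Tri88}; the plan is to reproduce the standard argument, transferred from $\mathbb{R}^2$ to the torus by periodization as in \cite[Appendix A]{GIP15}. The proof consists of the two inequalities hidden in the symbol $\asymp$, and the Littlewood--Paley decomposition $\xi=\sum_{j\ge-1}\Delta_j\xi$ is used throughout; I would write $a_j:=2^{-js}\|\Delta_j\xi\|_{L^p}$, so that $\|\xi\|_{B_{p,q}^{-s}}=\|(a_j)_{j\ge-1}\|_{\ell^q}$.

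For the bound $\|e^{\triangle}\xi\|_{L^p}+\|t^{s/2}\|e^{t\triangle}\xi\|_{L^p}\|_{L^q((0,1],t^{-1}{\dd}t)}\lesssim\|\xi\|_{B_{p,q}^{-s}}$, I would first record the frequency-localized heat estimate $\|e^{t\triangle}\Delta_j\xi\|_{L^p}\lesssim e^{-ct2^{2j}}\|\Delta_j\xi\|_{L^p}$ for some $c>0$ (a routine consequence of writing $e^{t\triangle}\Delta_j\xi$ as a convolution of $\Delta_j\xi$ with a kernel whose symbol is $e^{-t|k|^2}$ cut off to the annulus $|k|\asymp2^j$, and bounding the $L^1$-norm of that kernel by scaling). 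Summing over $j$ gives $t^{s/2}\|e^{t\triangle}\xi\|_{L^p}\lesssim\sum_{j\ge-1}\big(t^{s/2}2^{js}e^{-ct2^{2j}}\big)a_j$. Then, for fixed $t\in(0,1]$, I would set $N=N(t)$ with $2^{2N}\asymp t^{-1}$ and split the sum at $j=N$: for $j\le N$ one has $t^{s/2}2^{js}e^{-ct2^{2j}}\lesssim 2^{(j-N)s}$, while for $j>N$ one has $t^{s/2}2^{js}e^{-ct2^{2j}}\lesssim 2^{(j-N)s}e^{-c'2^{2(j-N)}}$; since $s>0$, the sequence $\beta_m$ equal to $2^{ms}$ for $m\le0$ and $2^{ms}e^{-c'2^{2m}}$ for $m>0$ lies in $\ell^1(\mathbb{Z})$, so $t^{s/2}\|e^{t\triangle}\xi\|_{L^p}\lesssim(\beta*a)_{N(t)}$. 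Because $\int_{\{N(t)=N\}}t^{-1}{\dd}t\asymp1$, the $L^q((0,1],t^{-1}{\dd}t)$-norm is comparable to $\|(\beta*a)_N\|_{\ell^q}$, and Young's convolution inequality for sequences finishes the bound: $\lesssim\|\beta\|_{\ell^1}\|a\|_{\ell^q}$ (the case $q=\infty$ uses $\esssup$ in place of the integral, with the same argument). The term $\|e^{\triangle}\xi\|_{L^p}$ is handled the same way, using only $t=1$ together with $\|(a_j)\|_{\ell^\infty}\le\|(a_j)\|_{\ell^q}$.

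For the reverse bound $\|\xi\|_{B_{p,q}^{-s}}\lesssim\|e^{\triangle}\xi\|_{L^p}+\|t^{s/2}\|e^{t\triangle}\xi\|_{L^p}\|_{L^q((0,1],t^{-1}{\dd}t)}$, I would recover each block from the semigroup at the matching time $t_j:=2^{-2j}$ ($j\ge0$): on $\supp(\rho_j)$ the symbol $\rho_j(k)e^{t_j|k|^2}$ is a smooth bump at frequency scale $2^j$ with symbol bounds uniform in $j$, hence the corresponding Fourier multiplier is bounded on $L^p$ uniformly in $j$ (a Mikhlin-type estimate on the torus). Since $\Delta_j\xi$ is this multiplier applied to $e^{t_j\triangle}\xi$, one gets $\|\Delta_j\xi\|_{L^p}\lesssim\|e^{t\triangle}\xi\|_{L^p}$ for all $t\in[2^{-2j-1},2^{-2j}]$, and therefore $2^{-js}\|\Delta_j\xi\|_{L^p}\lesssim\big(\int_{2^{-2j-1}}^{2^{-2j}}(t^{s/2}\|e^{t\triangle}\xi\|_{L^p})^q\,t^{-1}{\dd}t\big)^{1/q}$ (with $\esssup$ when $q=\infty$). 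The intervals $[2^{-2j-1},2^{-2j}]$, $j\ge0$, are disjoint and contained in $(0,1]$, so summing the $q$-th powers over $j\ge0$ controls $\sum_{j\ge0}a_j^q$ by $\int_0^1(t^{s/2}\|e^{t\triangle}\xi\|_{L^p})^q\,t^{-1}{\dd}t$; and $\|\Delta_{-1}\xi\|_{L^p}\lesssim\|e^{\triangle}\xi\|_{L^p}$ because $\chi(k)e^{|k|^2}$ has Schwartz inverse Fourier transform. Adding the two contributions yields the claim.

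The main obstacle is the bookkeeping in the first inequality: one must correctly match the summation index $j$ with the logarithmic $t$-scale and upgrade an apparent $\ell^1$-bound to the sharp $\ell^q$-bound via the convolution structure. It is precisely here that $s>0$ is essential, and it enters twice — once for the finiteness of $\int_0^\infty u^{s/2}e^{-cu}\,u^{-1}{\dd}u$ (equivalently, of the single-frequency norm $\|t^{s/2}e^{-ct2^{2j}}\|_{L^q((0,1],t^{-1}{\dd}t)}$ after rescaling $u=t2^{2j}$) and once for the summability of the convolution kernel $(\beta_m)$; the separate term $\|e^{\triangle}\xi\|_{L^p}$ is forced by the low-frequency block $\Delta_{-1}$, on which $e^{-t|k|^2}$ does not decay. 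The multiplier bounds used in the second inequality are standard and transfer from $\mathbb{R}^2$ to $\Lambda$ as in \cite[Appendix A]{GIP15}.
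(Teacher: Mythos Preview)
The paper does not give a proof of this proposition; it is quoted directly from \cite[Theorem~9 and Remark~26]{Tri88} and used as a black box (only once, in the proof of Corollary~\ref{log-derivative}). Your sketch is a faithful and correct rendition of the standard Littlewood--Paley argument behind that result, including the convolution-with-$\ell^1$ structure that upgrades the pointwise-in-$t$ estimate to the sharp $\ell^q$ bound, and the multiplier recovery of $\Delta_j\xi$ from $e^{t_j\triangle}\xi$ for the converse. There is nothing to compare against in the paper itself.
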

A distribution $\xi\in\mathcal{D}'(\Lambda)$ is said to be \emph{nonnegative} if $\xi(\varphi)=\langle \xi, \varphi
\rangle \ge0$ for any nonnegative $\varphi\in\mathcal{D}(\Lambda)$.
Let $B^{s,+}_{p,q}$ be the set of all nonnegative elements in $B^{s}_{p,q}$.
Thanks to the following theorem, a nonnegative distribution is regarded as a nonnegative Borel measure. This fact plays a crucial role in Section \ref{sec:wellposed}.

\begin{thm}[{\cite[Theorem 6.22]{LL01}}]\label{thm:LL}
For any nonnegative $\xi\in\mathcal{D}'(\Lambda)$, there exists a unique nonnegative Borel measure $\mu_\xi$ such that
$$
\xi(\varphi)=\int_{\Lambda}\varphi(x)\mu_\xi({\dd}x),\qquad
\varphi\in C^\infty (\Lambda).
$$
Consequently, the domain of $\xi$ can be extended to whole $C(\Lambda)$.
\end{thm}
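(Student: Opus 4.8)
The plan is to reduce the statement to the Riesz--Markov--Kakutani representation theorem, the only genuine work being to extend $\xi$ to a positive bounded linear functional on all of $C(\Lambda)$. The crucial observation is that nonnegativity of $\xi$ already forces a uniform bound. Since $\Lambda$ is compact, the constant function $\mathbf{1}$ belongs to $C^\infty(\Lambda)=\mathcal{D}(\Lambda)$, and for every real-valued $\varphi\in C^\infty(\Lambda)$ the two functions $\|\varphi\|_{L^\infty}\mathbf{1}\pm\varphi$ are pointwise nonnegative; applying $\xi$ and using linearity gives $|\langle\xi,\varphi\rangle|\le\langle\xi,\mathbf{1}\rangle\,\|\varphi\|_{L^\infty}$, with $\langle\xi,\mathbf{1}\rangle\in[0,\infty)$. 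Hence $\xi$ is continuous on $C^\infty(\Lambda)$ for the supremum norm.

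Next I would invoke the density of $C^\infty(\Lambda)$ in $(C(\Lambda),\|\cdot\|_{L^\infty})$ --- for instance via mollification on the torus (convolution with a smooth, nonnegative, compactly supported approximate identity) or via Fej\'er summation of Fourier series. Together with the bound above, $\xi$ extends uniquely to a bounded linear functional $\widetilde\xi$ on $C(\Lambda)$. Moreover $\widetilde\xi$ stays positive: any nonnegative $\varphi\in C(\Lambda)$ is a uniform limit of its mollifications $\varphi_n$, which are nonnegative and smooth, so $\widetilde\xi(\varphi)=\lim_{n\to\infty}\langle\xi,\varphi_n\rangle\ge0$.

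Applying the Riesz--Markov--Kakutani theorem to the positive functional $\widetilde\xi$ on $C(\Lambda)$, with $\Lambda$ a compact metric space, then produces a unique finite regular Borel measure $\mu_\xi$ on $\Lambda$ with $\widetilde\xi(\varphi)=\int_\Lambda\varphi(x)\,\mu_\xi({\dd}x)$ for all $\varphi\in C(\Lambda)$, in particular for all $\varphi\in C^\infty(\Lambda)$; note $\mu_\xi(\Lambda)=\widetilde\xi(\mathbf{1})=\langle\xi,\mathbf{1}\rangle<\infty$. This gives existence, and the asserted extension of the domain of $\xi$ to $C(\Lambda)$ is exactly the map $\varphi\mapsto\int_\Lambda\varphi\,{\dd}\mu_\xi$. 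For uniqueness, if $\nu$ is another nonnegative Borel measure with $\int_\Lambda\varphi\,{\dd}\nu=\langle\xi,\varphi\rangle$ for every $\varphi\in C^\infty(\Lambda)$, then the density argument upgrades this to $\int_\Lambda\varphi\,{\dd}\nu=\int_\Lambda\varphi\,{\dd}\mu_\xi$ for all $\varphi\in C(\Lambda)$, and two finite Borel measures on a compact metric space that integrate every continuous function to the same value necessarily coincide.

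I do not expect a serious obstacle here: the argument is a routine packaging of (i) the a priori $L^\infty$-bound forced by positivity, (ii) density of smooth functions, and (iii) the classical representation theorem. If one prefers to avoid citing Riesz--Markov, one can instead build $\mu_\xi$ directly from $\widetilde\xi$ by a Daniell--Stone outer-measure procedure, but that is strictly longer and yields the same conclusion; the single step most worth isolating in the write-up is the uniform bound $|\langle\xi,\varphi\rangle|\le\langle\xi,\mathbf{1}\rangle\,\|\varphi\|_{L^\infty}$, since everything downstream of it is standard.
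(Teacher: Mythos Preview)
Your argument is correct and is exactly the standard route to this result. Note that the paper does not supply its own proof of this theorem; it simply quotes it from \cite[Theorem 6.22]{LL01}, whose proof proceeds along the same lines you outline (positivity forces the $L^\infty$-bound, density of smooth functions gives the extension, and Riesz--Markov--Kakutani yields the measure).
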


\section{Wick exponentials of GFFs}\label{sec:Wick}

In this section, we construct
the Wick exponentials of Gaussian free fields (GFFs in short) on $\Lambda$, that is, the so-called 
Gaussian multiplicative chaos (see \cite{Kah85, DS11, RV14, Ber17, JM17, DS19, Bis20, Ber16}). 
For some specific approximations for Gaussian multiplicative chaos 
(e.g., usual Fourier cut-off and circle average), the almost-sure convergence 
were obtained in \cite{DS11, Ber16}. In the present paper, we consider the approximation 
with general Fourier multiplier operators as in 
(\ref{def of P_N}).
Since we need a stronger convergence 
for our purpose, we give a self-contained proof of the construction in this section.

As mentioned in Remark \ref{rem:approx} below, our arguments work more general approximations than previous results.

\subsection{GFFs and Wick exponentials}\label{sec:Wickmain}
Recall that $\mu_0$ is the centered Gaussian measure on $\mathcal{D}'(\Lambda)$ with covariance $(1-\triangle)^{-1}$.
On the probability space $(\Omega,\mathcal{F},\mathbb{P})$, a $\mathcal{D}'(\Lambda)$-valued random variable $\GFF$ with the law $\mu_0$ is called a (massive) Gaussian free field.
Recalling \eqref{eq:def of mu_0}, we have the covariance formula of the random field $\GFF$:
\begin{align}\label{eq:def of GFF}
\mathbb{E}\left[\GFF(x)\GFF(y)\right]
=\frac1{2\pi}\sum_{k\in\mathbb{Z}^2}\frac1{1+|k|^2}{\bf e}_k(x-y)
=G_\Lambda(x,y),\qquad x,y\in\Lambda,
\end{align}
where $G_\Lambda$ stands for the Green function of $1-\triangle$ on $\Lambda$.
Since $G_\Lambda$ depends on only the difference $x-y$, the law of $\GFF$ is shift invariant, that is, $\GFF\overset{d}{=}\GFF(\cdot+h)$ for any fixed $h\in\Lambda$.

The aim of this section is to define the formal exponential
$$
\exp(\alpha \GFF)
$$
for any GFF $\GFF$ and any $\alpha$ with $|\alpha|<\sqrt{8\pi}$. Since $\GFF$ is $\mathcal{D}'(\Lambda)$-valued, we need a renormalization procedure to give a rigorous meaning to it.
Recall that $\psi$ satisfies Hypothesis \ref{hypo on psi}, and the Fourier cut-off operator $P_N$ on $\mathcal{D}'(\Lambda)$ is defined by \eqref{def of P_N}:
$$
P_N f(x) = \sum _{k\in {\mathbb Z}^2} \psi _N(k) \hat{f}(k)
{\bf e}_k (x),
$$
where $\psi _N:= \psi (2^{-N}\cdot )$.
Since $P_N$ maps $H^{-1-\varepsilon}$ to $C(\Lambda)$ for small $\varepsilon>0$ as mentioned before (after Hypothesis \ref{hypo on psi}), the approximation $\GFF_N:=P_N\GFF$ is a continuous function, so the exponential $\exp(\alpha \GFF_N)$ is well-defined. 
However, to take a limit as $N\to\infty$, we need an approximation with renormalization
\begin{align}\label{eq:def of M_N}
\exp_{N}^{\diamond}(\alpha \GFF)(x):=\exp\left(\alpha \GFF_N(x)-\frac{\alpha^2}2C_N\right),\qquad N\in\mathbb{N},
\end{align}
where
$$
C_N:=\mathbb{E}[\GFF_N(x)^2]=\frac1{4\pi^2}\sum_{k\in\mathbb{Z}^2}\frac{\psi_N(k)^2}{1+|k|^2}.
$$
The following is the main theorem of this section.

\begin{thm}\label{thm:expwick almostsure}
Assume that $\psi$ satisfies Hypothesis {\rm{\ref{hypo on psi}}}.
Let $|\alpha|<\sqrt{8\pi}$ and choose parameters $p,\beta$ such that
\begin{align}\label{def p beta}
p\in \left(1,\frac{8\pi}{\alpha^2}\wedge2\right),
\qquad
\beta\in\left(\frac{\alpha^2}{4\pi}(p-1),\frac2{p}(p-1)\right).
\end{align}
Then the sequence $\{ \exp_{N}^{\diamond}(\alpha \GFF) \} _{N\in {\mathbb N}}$
converges in the space $B_{p,p}^{-\beta}$, $\mathbb{P}$-almost surely and in $L^p(\mathbb P)$.
Moreover, by regarding $\exp_{N}^{\diamond}(\alpha \GFF)$ as the random nonnegative Borel measure 
$\exp_{N}^{\diamond}(\alpha \GFF)(x)\hspace{0.5mm}{\dd}x$ 
on $\Lambda$ for $N\in {\mathbb N}$, one has the weak convergence of 
$\{ \exp_{N}^{\diamond}(\alpha \GFF)\} _{N\in {\mathbb N}}$ almost surely.
The limits obtained by different $\psi$'s coincide with each other almost surely.
\end{thm}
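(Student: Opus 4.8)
The plan is to reduce everything to an $L^p(\mathbb{P})$ Cauchy estimate for the approximations in the Besov space $B^{-\beta}_{p,p}$, and then upgrade to almost-sure convergence by a Borel--Cantelli/chaining argument along a suitable subsequence. First I would recall that $\exp^{\diamond}_N(\alpha\GFF)$ is a smooth nonnegative function, and rewrite the Besov norm via the heat-kernel characterization (Proposition \ref{prop:heat-besov}) so that controlling $\|\exp^{\diamond}_N(\alpha\GFF) - \exp^{\diamond}_M(\alpha\GFF)\|_{B^{-\beta}_{p,p}}$ in $L^p(\mathbb{P})$ amounts to estimating $\mathbb{E}\big[\|e^{t\triangle}(\exp^{\diamond}_N(\alpha\GFF) - \exp^{\diamond}_M(\alpha\GFF))\|_{L^p}^p\big]$ with the right power of $t$. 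The core computation is then a moment estimate: because $p \in (1, \tfrac{8\pi}{\alpha^2}\wedge 2)$, the $p$-th moment of the GMC is finite (this is exactly the $L^1$-regime threshold), and one expands $\mathbb{E}[\exp^{\diamond}_N(\alpha\GFF)(x)\exp^{\diamond}_M(\alpha\GFF)(y)] = \exp(\alpha^2 \mathbb{E}[\GFF_N(x)\GFF_M(y)])$ using Gaussianity, where the covariance $\mathbb{E}[\GFF_N(x)\GFF_M(y)]$ behaves like $-\tfrac{1}{2\pi}\log|x-y|$ up to bounded corrections controlled by the Appendix estimates on the Green function approximation (the estimates labelled \eqref{estimate:GN1} and \eqref{estimate:GN2} referenced in the remarks). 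The constraint $\beta > \tfrac{\alpha^2}{4\pi}(p-1)$ is precisely what makes the resulting singular integral $\int\int |x-y|^{-\alpha^2(p-1)/(2\pi)}\,dx\,dy$ (or its heat-smoothed analogue) converge, while $\beta < \tfrac{2}{p}(p-1)$ keeps the Besov exponent in the usable range; I would extract from the two-parameter expansion a quantitative rate, something like $\mathbb{E}[\|\exp^{\diamond}_N - \exp^{\diamond}_M\|_{B^{-\beta}_{p,p}}^p] \lesssim 2^{-\delta(N\wedge M)}$ for some $\delta > 0$, following the structure of Berestycki's argument in \cite{Ber17} but in the Besov norm rather than just in measure.

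Granting this geometric-rate $L^p$ estimate, almost-sure convergence in $B^{-\beta}_{p,p}$ follows along the full sequence: by Chebyshev and Borel--Cantelli the partial differences are summable a.s., so $\{\exp^{\diamond}_N(\alpha\GFF)\}$ is a.s. Cauchy in the Banach space $B^{-\beta}_{p,p}$ and hence converges to a limit I will call $\exp^{\diamond}(\alpha\GFF)$; simultaneously it converges in $L^p(\mathbb{P})$, since $L^p$-Cauchy is immediate from the same estimate. For the weak convergence of the associated measures: each $\exp^{\diamond}_N(\alpha\GFF)(x)\,dx$ is a nonnegative Borel measure, and $B^{-\beta}_{p,p}$-convergence gives convergence of the pairings $\langle \exp^{\diamond}_N(\alpha\GFF), \varphi\rangle$ for every $\varphi \in C^\infty(\Lambda)$ (since $\varphi$ lies in the dual $B^{\beta}_{p',p'}$); the limit distribution is nonnegative as a limit of nonnegative ones, so by Theorem \ref{thm:LL} it is represented by a nonnegative Borel measure $\nu$, and testing against continuous functions (approximated by smooth ones, using tightness from the uniformly bounded total masses $\nu_N(\Lambda) = \langle\exp^{\diamond}_N(\alpha\GFF),\mathbf{1}\rangle$, which converge) yields weak convergence $\nu_N \Rightarrow \nu$ a.s.

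For independence of the limit from the choice of $\psi$: given two admissible multipliers $\psi^{(1)}, \psi^{(2)}$, I would run the same two-parameter moment computation on the \emph{mixed} difference $\exp^{\diamond,(1)}_N(\alpha\GFF) - \exp^{\diamond,(2)}_N(\alpha\GFF)$. The point is that the difference of the two covariance kernels $\mathbb{E}[\GFF^{(1)}_N(x)\GFF^{(2)}_N(y)]$ against the pure ones still has the same leading logarithmic singularity $-\tfrac{1}{2\pi}\log|x-y|$ (both multipliers are normalized by $\psi(0)=1$ with the same decay/regularity conditions in Hypothesis \ref{hypo on psi}), so the leading singular terms cancel in the expansion of the second moment of the difference and one is left with something that vanishes as $N\to\infty$ at a geometric rate — again Borel--Cantelli finishes it. The main obstacle, and the place where real work beyond \cite{Ber17} is needed, is getting the moment bounds in the $B^{-\beta}_{p,p}$ norm with an explicit decaying rate rather than merely proving convergence in probability: this requires carefully tracking, via the heat-semigroup characterization, how the Littlewood--Paley frequency truncation interacts with the logarithmic covariance and the sharp integrability threshold $\alpha^2 < 8\pi$, and it is here that Hypothesis \ref{hypo on psi}(ii)--(iii) and the Appendix Green-function estimates must be used quantitatively rather than qualitatively.
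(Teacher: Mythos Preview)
Your plan has a genuine gap in the regime $\alpha^2\ge 4\pi$, which is the whole point of the theorem. The moment identity you write, $\mathbb{E}[\exp^\diamond_N(\alpha\GFF)(x)\exp^\diamond_M(\alpha\GFF)(y)]=\exp(\alpha^2\,\mathbb{E}[\GFF_N(x)\GFF_M(y)])$, is a \emph{second}-moment formula, and the resulting kernel behaves like $|x-y|^{-\alpha^2/2\pi}$, which is not locally integrable once $\alpha^2\ge 4\pi$; so the direct $L^2$-based Cauchy estimate you propose simply diverges there. You cannot rescue this by working with the $p$-th moment for $p\in(1,2)$ directly, since $p$ is not an integer and there is no product expansion; the singular exponent $\alpha^2(p-1)/(2\pi)$ you write down does not arise from any moment identity --- the correct exponent $\alpha^2(p-1)/(4\pi)$ comes instead from Kahane's convexity inequality and the multifractal scaling (Proposition~\ref{thm:Mscale} and Corollary~\ref{cor:unif int M}), which you never invoke. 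Saying you will ``follow the structure of Berestycki's argument'' is not enough: the essential mechanism in \cite{Ber17} is the decomposition of $\GMC_N^{(\alpha)}$ into a ``good'' part supported away from $\alpha$-thick points (which \emph{does} have finite second moment) and a ``bad'' part that is small in $L^1$, and your proposal shows no trace of this.

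The paper's route is genuinely different from what you describe. It first establishes the uniform $L^p(\mathbb{P};B_{p,p}^{-\beta})$ bound via Kahane's inequality (Corollary~\ref{cor:unif int M}), then proves an $L^1$ exponential rate $\mathbb{E}[|\langle f,\GMC_{N+1}^{(\alpha)}-\GMC_N^{(\alpha)}\rangle|]\lesssim\|f\|_{C(\Lambda)}2^{-cN}$ against test functions via a refined thick-points decomposition with Lipschitz cutoffs and a moving threshold $n_0=\delta^3 N$ (Theorem~\ref{thm:asconv}), and finally \emph{interpolates} between the crude $B_{1,1}^{-\gamma}$ rate and the uniform $B_{p',p'}^{-\beta'}$ bound to get the $B_{p,p}^{-\beta}$ rate needed for Borel--Cantelli. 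For uniqueness across different $\psi$, the paper does not do a mixed second-moment computation (which would fail for the same reason) but instead uses a martingale argument: conditioning $\langle f,\GMC_N^{(\alpha)}\rangle$ on the $\sigma$-field generated by sharp Fourier truncations recovers the reference approximation, and the $L^1$-convergence already established lets you pass to the limit (Corollary~\ref{cor:weak limit M}).
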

\begin{rem}

The conclusion of Theorem {\rm{\ref{thm:expwick almostsure}}} holds under the estimates \eqref{estimate:GN1} and \eqref{estimate:GN2} in Proposition {\rm{\ref{hypo on G}}} below, even without Hypothesis {\rm{\ref{hypo on psi}}}.
See Remark {\rm{\ref{rem:approx}}} below for details.
In most references, approximations with continuous parameter are used for the convergence 
in probability and in $L^p(\mathbb P)$.
It is associated to adopt $\psi _\varepsilon := \psi (\varepsilon \cdot )$ instead of $\psi _N$ 
for the approximation.
For almost-sure convergence we need discretization of the approximation parameter 
and sufficiently high speed of the approximation with respect to the parameter in order to 
control the $\mathbb P$-null sets.
This is the reason why we choose approximation with discrete parameter as appeared in 
the definition of $\psi _N$ in Theorem {\rm{\ref{thm:expwick almostsure}}}.
Here, we remark that for the convergence in $L^p(\mathbb P)$ (in particular the convergence in probability), we do not need to discretize the approximation parameter.
Furthermore we remark that we choose the exponential speed $2^{-N}$ for the definition $\psi _N$ because of the simplicity of the proof, and $N^{-r}$ with sufficiently large $r>0$ instead of $2^{-N}$ is also sufficient for the almost-sure convergence.
See the proof of Theorem 
{\rm{\ref{thm:expwick almostsure}}} in the last part of Section {\rm{\ref{sec:asconv}}}.
\end{rem}
We denote the ($\mathbb{P}$-almost-sure) unique limit by
$$
\exp^\diamond(\alpha \GFF).
$$
When the probability space $(\Omega,\mathbb{P})$ is $(\mathcal{D}'(\Lambda),\mu_0)$, the 
canonical map $\phi\mapsto\phi$ is obviously a GFF. We denote by $\exp^\diamond(\alpha\phi)$
the associated Wick exponential.
Since the approximation \eqref{eq:def of M_N} is nonnegative, we can define the $\exp(\Phi)_2$-measure $\mu^{(\alpha)}$ as follows.
\begin{cor}\label{cor:expmeas}
On the Borel probability space $(\mathcal{D}'(\Lambda),\mu_0)$, the probability measure
\begin{align*}
\mu^{(\alpha)}({\dd}\phi)
=\frac1{Z^{(\alpha)}}\exp\left(-\int_\Lambda\exp^\diamond(\alpha\phi)(x)
\hspace{0.5mm}{\dd}x\right)\mu_0({\dd}\phi)
\end{align*}
is defined as the limit of the approximating measures $\{\mu_N^{(\alpha)}\}_{N\in\mathbb{N}}$ given by \eqref{expNmeas} in weak topology.
Moreover, the following holds.
\begin{enumerate}
\item The Radon-Nikodym derivatives 
$\Big \{\frac{{\dd}\mu_N^{(\alpha)}}{{\dd}\mu_0}\Big\}_{N\in \mathbb N}$ are uniformly bounded.
\item $\frac{{\dd}\mu^{(\alpha)}}{{\dd}\mu_0}$ is bounded and strictly positive 
$\mu_0$-almost everywhere. Hence $\mu^{(\alpha)}$ and $\mu_0$ are 
absolutely continuous with respect to each other.
\end{enumerate}
\end{cor}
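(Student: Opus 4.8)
The plan is to identify $\mu^{(\alpha)}$ by passing to the limit directly in the (unnormalised) densities with respect to $\mu_0$. I would write
$$
D_N(\phi):=\exp\Big(-\int_\Lambda\exp_N^\diamond(\alpha\phi)(x)\,{\dd}x\Big),
\qquad
D(\phi):=\exp\Big(-\int_\Lambda\exp^\diamond(\alpha\phi)(x)\,{\dd}x\Big),
$$
so that $\frac{{\dd}\mu_N^{(\alpha)}}{{\dd}\mu_0}=D_N/Z_N^{(\alpha)}$ with $Z_N^{(\alpha)}=\int_{\mathcal{D}'(\Lambda)}D_N\,{\dd}\mu_0$. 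The first point to record is that, since $\GFF_N=P_N\GFF$ is a continuous function $\mathbb{P}$-almost surely, $\exp_N^\diamond(\alpha\GFF)$ is a strictly positive continuous function, so $D_N(\GFF)\in(0,1]$ and hence $Z_N^{(\alpha)}\in(0,1]$. Applying Theorem~\ref{thm:expwick almostsure} to the canonical GFF on $(\mathcal{D}'(\Lambda),\mu_0)$, the random nonnegative Borel measures $\exp_N^\diamond(\alpha\GFF)(x)\,{\dd}x$ converge weakly, $\mathbb{P}$-almost surely, to the finite nonnegative Borel measure associated, via Theorem~\ref{thm:LL}, with $\exp^\diamond(\alpha\GFF)\in B_{p,p}^{-\beta}$. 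Testing this convergence against the constant function $1\in C(\Lambda)$, the total masses converge,
$$
\int_\Lambda\exp_N^\diamond(\alpha\GFF)(x)\,{\dd}x\ \longrightarrow\ \int_\Lambda\exp^\diamond(\alpha\GFF)(x)\,{\dd}x<\infty,
\qquad\mathbb{P}\text{-a.s.},
$$
and therefore $D_N(\GFF)\to D(\GFF)\in(0,1]$, $\mathbb{P}$-almost surely.

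From here everything is a chain of dominated/bounded convergence arguments. First I would use bounded convergence (the $D_N$ are uniformly bounded by $1$) to obtain $Z_N^{(\alpha)}\to Z^{(\alpha)}:=\int_{\mathcal{D}'(\Lambda)}D\,{\dd}\mu_0$, and observe that $Z^{(\alpha)}>0$ because $D>0$ holds $\mu_0$-almost everywhere. Since each $Z_N^{(\alpha)}>0$ and $Z_N^{(\alpha)}\to Z^{(\alpha)}>0$, we get $c:=\inf_N Z_N^{(\alpha)}>0$, and hence $\frac{{\dd}\mu_N^{(\alpha)}}{{\dd}\mu_0}=D_N/Z_N^{(\alpha)}\le c^{-1}$ uniformly in $N$, which is assertion~(i). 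For the weak convergence $\mu_N^{(\alpha)}\to\mu^{(\alpha)}$ I would take any bounded measurable $F$ on $H^{-\varepsilon}(\Lambda)$ (on which $\mu_0$, and hence every $\mu_N^{(\alpha)}$, is supported) and write $\int F\,{\dd}\mu_N^{(\alpha)}=(Z_N^{(\alpha)})^{-1}\mathbb{E}[F(\GFF)D_N(\GFF)]$; since $F(\GFF)D_N(\GFF)\to F(\GFF)D(\GFF)$ $\mathbb{P}$-a.s. with $|F(\GFF)D_N(\GFF)|\le\|F\|_\infty$, bounded convergence together with $Z_N^{(\alpha)}\to Z^{(\alpha)}$ gives $\int F\,{\dd}\mu_N^{(\alpha)}\to(Z^{(\alpha)})^{-1}\mathbb{E}[F(\GFF)D(\GFF)]=\int F\,{\dd}\mu^{(\alpha)}$ for the measure $\mu^{(\alpha)}({\dd}\phi)=(Z^{(\alpha)})^{-1}D(\phi)\,\mu_0({\dd}\phi)$, which is precisely the asserted formula (and in fact this argument yields setwise convergence, a fortiori the claimed weak convergence); the independence of the limit from the choice of $\psi$ is inherited from the corresponding assertion in Theorem~\ref{thm:expwick almostsure}. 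Finally, for~(ii): $\frac{{\dd}\mu^{(\alpha)}}{{\dd}\mu_0}=D/Z^{(\alpha)}$ is bounded by $(Z^{(\alpha)})^{-1}$ (using $D\le1$) and strictly positive $\mu_0$-almost everywhere, so $\mu^{(\alpha)}\ll\mu_0$ is immediate and $\mu_0\ll\mu^{(\alpha)}$ follows from the $\mu_0$-a.e.\ positivity of the density; hence $\mu^{(\alpha)}$ and $\mu_0$ are mutually absolutely continuous.

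The only ingredient that is not purely soft is the $\mathbb{P}$-almost sure finiteness of $\int_\Lambda\exp^\diamond(\alpha\GFF)(x)\,{\dd}x$ together with the almost sure convergence of the total masses, which is where I expect the (modest) difficulty to lie; but this is already delivered by Theorem~\ref{thm:expwick almostsure}: the limiting object is a nonnegative element of $B_{p,p}^{-\beta}$, hence by Theorem~\ref{thm:LL} a finite nonnegative Borel measure on the compact torus $\Lambda$, so its total mass is finite and the almost sure weak convergence of measures legitimately allows testing against the constant $1$. Everything else reduces to dominated/bounded convergence, made possible by the uniform bound $D_N\le1$ coming from the nonnegativity of the Wick exponential.
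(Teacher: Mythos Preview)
Your proof is correct and follows essentially the same approach as the paper, deriving everything from Theorem~\ref{thm:expwick almostsure} via the uniform bound $D_N\le 1$ and dominated convergence. The only minor technical differences are that the paper obtains the uniform lower bound on $Z_N^{(\alpha)}$ directly via Jensen's inequality (yielding the explicit bound $Z_N^{(\alpha)}\ge e^{-|\Lambda|}$) rather than from the convergence $Z_N^{(\alpha)}\to Z^{(\alpha)}>0$, and establishes the $\mu_0$-a.e.\ finiteness of $\int_\Lambda\exp^\diamond(\alpha\phi)\,{\dd}x$ by a Chebyshev argument using the $L^p(\mu_0;B_{p,p}^{-\beta})$ bound rather than by your (equally valid) observation that the a.s.\ limit is a finite Borel measure on the compact torus.
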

\begin{proof}
Denote $M_{\phi,N}^{(\alpha)}=\exp_N^\diamond(\alpha\phi)$ and $M_\phi^{(\alpha)}=\exp^\diamond(\alpha\phi)$ in short, and regard them as the corresponding random nonnegative Borel measures on $\Lambda$, according to Theorem \ref{thm:LL}.

Although the proof of (i) is completely the same as \cite[Corollary 2.3]{HKK19}, we note the fact on the uniform positivity of the normalizing constants
$$ 
Z^{(\alpha)}_{N}:=
\int_{\mathcal{D}'(\Lambda)}\exp \big( -M_{\phi,N}^{(\alpha)}(\Lambda) \big) \mu_0({\dd}\phi),
\qquad N\in \mathbb N,
$$
which is used in the next corollary.
By Jensen's inequality,
\begin{align*}
Z^{(\alpha)}_{N}
&\ge
\exp \Big( - \int_{\mathcal{D}'(\Lambda)}
M_{\phi,N}^{(\alpha)}(\Lambda)\mu_{0}({\dd}\phi) \Big )
=\exp
\Big( -\int_\Lambda {\dd}x \Big)>0.
\end{align*}
Here we used the fact that $\int_{
\mathcal{D}'(\Lambda)}
M_{\phi,N}^{(\alpha)}(x)\mu_0({\dd}\phi)=1$ for any $x\in \Lambda$, 
which follows from the definition.

Next we show (ii). Let $p$ and $\beta$ be as in Theorem \ref{thm:expwick almostsure}. For any $n\in\mathbb{N}$, we have
\begin{align*}
\mu_0\left(M_{\phi}^{(\alpha)}(\Lambda)\ge n\right)
&\le\frac1{n^p}\int_{\mathcal{D}'(\Lambda)}\big(M_{\phi}^{(\alpha)}(\Lambda)\big)^p\mu_0({\dd}\phi)\\
&\lesssim 
\frac1{n^p}\int_{\mathcal{D}'(\Lambda)}\big\|M_{\phi}^{(\alpha)}\big\|_{B_{p,p}^{-\beta}}^p\mu_0({\dd}\phi),
\end{align*}
since ${\bf1}_\Lambda\in C^\infty(\Lambda)\subset B_{p',p'}^\beta$ ($1/p+1/p'=1$) and $B_{p',p'}^\beta$ is a dual space of $B_{p,p}^{-\beta}$ (see e.g., \cite[Proposition 2.76]{BCD11}).
Letting $n\to\infty$, we have $\mu_0(M_{\phi}^{(\alpha)}(\Lambda)=\infty)=0$. Since 
$Z^{(\alpha)}:=\int_{\mathcal{D}'(\Lambda)}
\exp\big(-M_\phi^{(\alpha)}(\Lambda)\big)\mu_0({\dd}\phi)$ is positive by the above estimate of 
$Z_N^{(\alpha)}$ and the dominated convergence theorem, this implies 
$\frac{{\dd}\mu^{(\alpha)}}{{\dd}\mu_0}$ is bounded and strictly positive $\mu_0$-almost everywhere.
\end{proof}
Even though Theorem \ref{thm:expwick almostsure} and Corollary \ref{cor:expmeas} imply that the random variable $\phi\mapsto\exp^\diamond(\alpha\phi)$ belongs to $L^p(\mu^{(\alpha)};B_{p,p}^{-\beta})$,
the state space can be chosen smaller. The following fact plays a crucial role in 
Sections \ref{sec:stationary} and \ref{sec:DF}.
\begin{cor} \label{log-derivative}
If $|\alpha|<\sqrt{8\pi}$, then there exists an exponent $s\in (0,1)$ such that
\begin{align}\label{log-derivative conclusion}
\sup_{N\in\mathbb{N}}\int_{\mathcal{D}'(\Lambda)}\|\exp_N^\diamond(\alpha\phi)\|_{H^{-s}}^2
\mu_N^{(\alpha)}({\dd}\phi)<\infty.
\end{align}
Moreover, the random variable $\phi\mapsto\exp^\diamond(\alpha\phi)$ belongs to $L^2(\mu^{(\alpha)};H^{-s})$.
\end{cor}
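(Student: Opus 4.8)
The plan is to exploit the crucial gain coming from the $\exp(\Phi)_2$-measure: although $\exp_N^\diamond(\alpha\phi)$ only has $L^p$-integrability (with $p<2$) under $\mu_0$, the weight $\exp(-\int_\Lambda\exp_N^\diamond(\alpha\phi)(x)\,{\dd}x)$ controls exactly the object we want to square-integrate. First I would use Proposition \ref{prop:heat-besov} with $q=2$ to write, for an exponent $s\in(0,1)$ to be chosen,
\begin{align*}
\|\exp_N^\diamond(\alpha\phi)\|_{H^{-s}}^2
\asymp\|e^{\triangle}\exp_N^\diamond(\alpha\phi)\|_{L^2}^2
+\int_0^1 t^{s-1}\|e^{t\triangle}\exp_N^\diamond(\alpha\phi)\|_{L^2}^2\,{\dd}t.
\end{align*}
Since $\exp_N^\diamond(\alpha\phi)$ is a nonnegative function, $e^{t\triangle}$ applied to it is given by integration against the (nonnegative) heat kernel $p_t^\Lambda$, so $\|e^{t\triangle}\exp_N^\diamond(\alpha\phi)\|_{L^2}^2=\int_{\Lambda^2}p_t^\Lambda(x,y)\,\mu_{\phi,N}({\dd}x)\,\mu_{\phi,N}({\dd}y)$, where $\mu_{\phi,N}$ denotes the measure $\exp_N^\diamond(\alpha\phi)(x)\,{\dd}x$ and I used that $\int p_t^\Lambda(x,z)p_t^\Lambda(y,z)\,{\dd}z=p_{2t}^\Lambda(x,y)$ (up to a harmless relabelling of $t$). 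Integrating in $\phi$ against $\mu_N^{(\alpha)}$ and using the uniform bound on the normalizing constants $Z_N^{(\alpha)}\ge e^{-|\Lambda|}$ from the proof of Corollary \ref{cor:expmeas}, the task reduces to bounding, uniformly in $N$,
\begin{align*}
\int_{\mathcal D'(\Lambda)}e^{-\mu_{\phi,N}(\Lambda)}
\left(\int_{\Lambda^2}p_{2t}^\Lambda(x,y)\,\mu_{\phi,N}({\dd}x)\,\mu_{\phi,N}({\dd}y)\right)\mu_0({\dd}\phi),
\end{align*}
together with its $t=0$ analogue; then multiply by $t^{s-1}$ and integrate $t$ over $(0,1)$.

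The key mechanism is the elementary inequality $u^2 e^{-u}\lesssim 1$ (more generally $u^a e^{-u}\lesssim_a 1$ for $u\ge 0$), which lets the damping factor $e^{-\mu_{\phi,N}(\Lambda)}$ absorb \emph{two} powers of mass. Concretely, $p_{2t}^\Lambda(x,y)\lesssim t^{-1}e^{-c|x-y|^2/t}$ on the torus (I would prove this standard Gaussian bound, or cite the Appendix estimates on the Green function), and the double integral is dominated by $t^{-1}\mu_{\phi,N}(\Lambda)^2$; combining with $e^{-\mu_{\phi,N}(\Lambda)}\mu_{\phi,N}(\Lambda)^2\lesssim 1$ gives a clean bound $\lesssim t^{-1}$ pointwise in $\phi$. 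That alone is not integrable in $t$ against $t^{s-1}$ near $0$, so I would instead split: on the diagonal region $|x-y|\lesssim\sqrt t$ use the Gaussian decay to see $\int_\Lambda p_{2t}^\Lambda(x,y)\,\mu_{\phi,N}({\dd}y)$ is controlled by the local mass of balls of radius $\sqrt t$, whose expectation under $\mu_0$ decays like a power $t^{\theta}$ with $\theta>0$ by the moment bounds on Gaussian multiplicative chaos (this is where the condition $|\alpha|<\sqrt{8\pi}$ and the $L^p$-estimate from Theorem \ref{thm:expwick almostsure} enter, via $\mathbb E^{\mu_0}[\mu_{\phi,N}(B(x,r))^p]\lesssim r^{2p-\alpha^2 p(p-1)/(4\pi)}$ or the corresponding first-moment scaling after using the $e^{-u}$ damping to trade integrability); on the off-diagonal region the Gaussian factor is exponentially small. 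Summing these, I expect a bound of the shape $\int_0^1 t^{s-1}\cdot t^{-1+\theta}\,{\dd}t<\infty$ provided $s<\theta$, which forces the smallness restriction on $s$ and is precisely why $s$ is chosen close to $0$; the constraint $|\alpha|<\sqrt{8\pi}$ is what guarantees $\theta>0$.

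With the uniform bound \eqref{log-derivative conclusion} in hand, the final assertion is a soft consequence: by Corollary \ref{cor:expmeas} the densities $\tfrac{{\dd}\mu_N^{(\alpha)}}{{\dd}\mu_0}$ are uniformly bounded and converge, and by Theorem \ref{thm:expwick almostsure} $\exp_N^\diamond(\alpha\phi)\to\exp^\diamond(\alpha\phi)$ almost surely in $B_{p,p}^{-\beta}$, hence (passing to a subsequence if needed) in $H^{-s}$ on the almost-sure event; Fatou's lemma applied to $\int\|\exp_N^\diamond(\alpha\phi)\|_{H^{-s}}^2\,\mu_N^{(\alpha)}({\dd}\phi)$, after rewriting the integral against $\mu_0$ with the bounded densities, yields $\int_{\mathcal D'(\Lambda)}\|\exp^\diamond(\alpha\phi)\|_{H^{-s}}^2\,\mu^{(\alpha)}({\dd}\phi)<\infty$. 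The main obstacle is the $t\to 0$ (equivalently high-frequency) analysis in the heat-semigroup characterization: making the power gain $t^\theta$ rigorous requires combining the nonnegativity of the chaos, the exponential damping from the Gibbs weight, and the sharp small-ball moment estimates for Gaussian multiplicative chaos in the full $L^1$-regime, and getting the exponents to close so that some admissible $s\in(0,1)$ works is the delicate point.
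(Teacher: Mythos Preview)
Your overall strategy---use nonnegativity to express things via the heat semigroup, and let the Gibbs weight $e^{-\mu_{\phi,N}(\Lambda)}$ absorb excess powers of mass through $u^a e^{-u}\lesssim 1$---is exactly the mechanism the paper uses. However, the paper's execution is much shorter and avoids the ``delicate point'' you flag.

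Rather than expanding $\|M_{\phi,N}^{(\alpha)}\|_{H^{-s}}^2$ via the heat-semigroup characterization and then appealing to small-ball GMC moment estimates, the paper uses Besov interpolation (Proposition~\ref{prop:interpolation besov}) directly:
\[
\|M_{\phi,N}^{(\alpha)}\|_{H^{-s}}
\le \|M_{\phi,N}^{(\alpha)}\|_{B_{p,p}^{-\beta}}^{p/2}\,
\|M_{\phi,N}^{(\alpha)}\|_{B_{\infty,\infty}^{-2}}^{1-p/2},
\qquad -s=-\tfrac{p}{2}\beta-2\bigl(1-\tfrac{p}{2}\bigr),
\]
which lands in $(-1,0)$ precisely because of the upper constraint $\beta<\tfrac{2}{p}(p-1)$ in \eqref{def p beta}. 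Nonnegativity plus the heat kernel bound give $\|e^{t\triangle}M_{\phi,N}^{(\alpha)}\|_{L^\infty}\lesssim t^{-1}M_{\phi,N}^{(\alpha)}(\Lambda)$, so by Proposition~\ref{prop:heat-besov} one has $\|M_{\phi,N}^{(\alpha)}\|_{B_{\infty,\infty}^{-2}}\lesssim M_{\phi,N}^{(\alpha)}(\Lambda)$. Squaring and multiplying by the Gibbs weight,
\[
\|M_{\phi,N}^{(\alpha)}\|_{H^{-s}}^2 e^{-M_{\phi,N}^{(\alpha)}(\Lambda)}
\lesssim \|M_{\phi,N}^{(\alpha)}\|_{B_{p,p}^{-\beta}}^{p}\cdot
\bigl(M_{\phi,N}^{(\alpha)}(\Lambda)\bigr)^{2-p}e^{-M_{\phi,N}^{(\alpha)}(\Lambda)}
\lesssim \|M_{\phi,N}^{(\alpha)}\|_{B_{p,p}^{-\beta}}^{p},
\]
and the right-hand side is uniformly $\mu_0$-integrable by Theorem~\ref{thm:expwick almostsure}. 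This closes the argument in one line, without any small-ball or $t\to 0$ dyadic analysis: the multifractal information you propose to extract is already packaged in the $B_{p,p}^{-\beta}$ bound. In effect your on/off-diagonal splitting, done carefully with the pointwise interpolation $\|f\|_{L^2}^2\le\|f\|_{L^\infty}^{2-p}\|f\|_{L^p}^p$ for $f=e^{t\triangle}M\ge 0$, would reproduce the same exponent $s$; the paper just interpolates at the level of Besov norms.

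One minor correction to your Fatou step: convergence in $B_{p,p}^{-\beta}$ does not imply convergence in $H^{-s}$ (the embedding goes the wrong way). The paper instead uses almost-sure convergence of the Fourier coefficients $\langle M_{\phi,N}^{(\alpha)},{\bf e}_k\rangle\to\langle M_\phi^{(\alpha)},{\bf e}_k\rangle$ and applies Fatou's lemma to the series defining $\|\cdot\|_{H^{-s}}^2$.
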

\begin{proof}
Recall that $H^{-s} = B_{2,2}^{-s}$ for $s\in {\mathbb R}$.
By the interpolation between Besov spaces (Proposition \ref{prop:interpolation besov}),
$$
\|M_{\phi,N}^{(\alpha)}\|_{H^{-s}}\le\|M_{\phi,N}^{(\alpha)}\|_{B_{p,p}^{-\beta}}^{p/2}\|M_{\phi,N}^{(\alpha)}\|_{B_{\infty,\infty}^{-2}}^{1-p/2}
$$
for $p,\beta$ in \eqref{def p beta}, and $-s:=-\beta\frac{p}2-2(1-\frac{p}2)>-1$.
Since $M_{\phi,N}^{(\alpha)}$ is nonnegative, we have 
$$\|e^{t\triangle}M_{\phi,N}^{(\alpha)}\|_{L^\infty}\lesssim t^{-1}M_{\phi,N}^{(\alpha)}(\Lambda),
\quad t\in (0,1]
$$
by the bound of the heat kernel in spacial component.
By Proposition \ref{prop:heat-besov} we have
$$
\|M_{\phi,N}^{(\alpha)}\|_{B_{\infty,\infty}^{-2}}\lesssim M_{\phi,N}^{(\alpha)}(\Lambda).
$$
Since the function $x^{2-p}e^{-x}$ is bounded on $x\in(0,\infty)$,
\begin{align*}
\int_{\mathcal{D}'(\Lambda)}\|M_{\phi,N}^{(\alpha)}\|_{H^{-s}}^2e^{-M_{\phi,N}^{(\alpha)}(\Lambda)}
\mu_0({\dd}\phi)
&\lesssim\int_{\mathcal{D}'(\Lambda)}\|M_{\phi,N}^{(\alpha)}\|_{B_{p,p}^{-\beta}}^p
\big( M_{\phi,N}^{(\alpha)}(\Lambda) \big)^{2-p}
e^{-M_{\phi,N}^{(\alpha)}(\Lambda)}\mu_0({\dd}\phi)\\
&\lesssim\int_{\mathcal{D}'(\Lambda)}\|M_{\phi,N}^{(\alpha)}\|_{B_{p,p}^{-\beta}}^p
\mu_0({\dd}\phi).
\end{align*}
Since $\{M_{\phi,N}^{(\alpha)}\}_{N\in\mathbb{N}}$ are bounded in the space $L^p(\mu_0;B_{p,p}^{-\beta})$ as in Theorem \ref{thm:expwick almostsure}, 
and $\{Z_N^{(\alpha)}\}_{N\in\mathbb{N}}$ are uniformly positive as stated in the proof of Corollary \ref{cor:expmeas}, we have the uniform bound \eqref{log-derivative conclusion}.
Since $\langle M_{\phi,N}^{(\alpha)},{\bf e}_k\rangle\to\langle M_\phi^{(\alpha)},{\bf e}_k\rangle$ for any $k\in\mathbb{Z}^2$ almost everywhere,
by using Fatou's lemma we have
$$
\int_{\mathcal{D}'(\Lambda)}\big\|M_{\phi}^{(\alpha)}\big\|_{H^{-s}}^2\mu^{(\alpha)}({\dd}\phi)
\le
\liminf_{N\to\infty}\int_{\mathcal{D}'(\Lambda)}\big\|M_{\phi,N}^{(\alpha)}\big\|_{H^{-s}}^2
\mu_N^{(\alpha)}({\dd}\phi)<\infty.
$$
Thus we complete the proof.
\end{proof}

Below, we give a self-contained proof of Theorem \ref{thm:expwick almostsure}.
For the proof we prepare a lot of technical results, and in the end of Section \ref{sec:Wick},
Theorem \ref{thm:expwick almostsure} is proved.

\subsection{Approximation of the Green function}\label{sec:estimate of G}
By definition, the random field $\GFF_N=P_N\GFF$ has the covariance function
\begin{align*}
G_{M,N}(x,y):=\mathbb{E}[\GFF_M(x)\GFF_N(y)]
&=\frac1{2\pi}\sum_{k\in\mathbb{Z}^2}\frac{\psi_M(k)\psi_N(k)}{1+|k|^2}{\bf e}_k(x-y)
\end{align*}
for any $M,N\in\mathbb{N}$. Recall that $\psi_N=\psi(2^{-N}\cdot)$. By definition, $C_N=\mathbb{E}[\mathbb{X}_N^2(x)]=G_{N,N}(x,x)$.
The function $G_{M,N}$ approximates the Green function $G_\Lambda$ defined by \eqref{eq:def of GFF}.
In the following proposition, we summarize the properties of the function $G_{M,N}$ used in the proof of Theorem \ref{thm:expwick almostsure}.
We regard $G_{M,N}$ as a periodic function on $\mathbb{R}^2\times\mathbb{R}^2$, rather than a function on $\Lambda\times\Lambda$.

\begin{prop}\label{hypo on G}
Assume that $\psi$ satisfies Hypothesis {\rm{\ref{hypo on psi}}}.
Then for any $x,y\in\mathbb{R}^2$ with $|x-y|<1$ and any $M,N\in\mathbb{N}$,
\begin{align}\label{estimate:GN1}
G_{M,N}(x,y)&=-\frac1{2\pi}\log\left(|x-y|\vee2^{-M}\vee2^{-N}\right)+R_{M,N}(x,y),
\end{align}
where the remainder term $R_{M,N}(x,y)$ is uniformly bounded over $x,y,M,N$.
Moreover, there exist constants $C>0$ and $\theta>0$ such that, for any $M,N\in\mathbb{N}$,
\begin{align}\label{estimate:GN2}
\iint_{\Lambda\times\Lambda} \big|G_{M,N+1}(x,y)-G_{M,N}(x,y) \big|
\hspace{0.5mm} {\dd}x{\hspace{0.2mm}}{\dd}y\le C2^{-\theta N}.
\end{align}
\end{prop}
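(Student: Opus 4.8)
The plan is to reduce everything to concrete estimates on the Fourier series, treating $G_{M,N}$ as a periodization of a nice kernel on $\mathbb{R}^2$. First I would write $G_{M,N}(x,y) = \frac{1}{2\pi}\sum_{k\in\mathbb{Z}^2}\frac{\psi_M(k)\psi_N(k)}{1+|k|^2}{\bf e}_k(x-y)$ and compare it, term by term, with the known expansion of the Green function $G_\Lambda$. By Poisson summation, $G_\Lambda(x,y)$ differs from the whole-space kernel $(2\pi)^{-1}K_0(|x-y|)$ (the modified Bessel function, the Green function of $1-\triangle$ on $\mathbb{R}^2$) by a smooth periodic correction, and $K_0(r) = -\log r + O(1)$ as $r\to 0$, bounded for $r$ away from $0$. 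So the statement \eqref{estimate:GN1} is really about the regularizing effect of the cut-offs $\psi_M,\psi_N$: they should replace the logarithmic singularity at $|x-y|=0$ by a logarithm saturating at scale $2^{-M}\vee 2^{-N}$. To see this I would split the Fourier sum at the frequency $|k|\sim 2^{M\wedge N}$: for $|k|\lesssim 2^{M\wedge N}$, Hypothesis \ref{hypo on psi}(iii) gives $\psi_M(k)\psi_N(k) = 1 + O(|2^{-M}k|^\zeta + |2^{-N}k|^\zeta)$, and summing $(1+|k|^2)^{-1}$ over this range produces $-\frac{1}{2\pi}\log(2^{-M}\vee 2^{-N})$ up to a bounded error (the error from the $O(\cdot)$ terms is a convergent sum); for $|k|\gtrsim 2^{M\wedge N}$, Hypothesis \ref{hypo on psi}(ii) gives $|\psi_M(k)\psi_N(k)|\lesssim (2^{-M}|k|)^{-(2+\kappa)/2}\cdot(\dots)$, actually it is cleaner to keep $|\psi|\le 1$ and use decay of just one factor, so this tail contributes $O(1)$. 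Meanwhile the difference between $-\frac{1}{2\pi}\log(|x-y|\vee 2^{-M}\vee 2^{-N})$ and the corresponding regularized sum — i.e. replacing $|x-y|$ by $|x-y|\vee 2^{-M\wedge N}$ — is handled by noting that for $|x-y|\gtrsim 2^{-M\wedge N}$ the oscillation of ${\bf e}_k(x-y)$ already cuts the sum at scale $|x-y|^{-1}$, a standard stationary-phase / summation-by-parts estimate. Making the appendix's Green-function estimates do this bookkeeping is exactly what is referred to, so I would invoke those.

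For \eqref{estimate:GN2} the idea is that $G_{M,N+1}-G_{M,N}$ has Fourier multiplier $\frac{1}{2\pi}\frac{\psi_M(k)(\psi_{N+1}(k)-\psi_N(k))}{1+|k|^2}$, and the factor $\psi_{N+1}(k)-\psi_N(k)$ is supported (effectively) on $|k|\gtrsim 2^N$ — more precisely, by Hypothesis \ref{hypo on psi}(iii) it is $O(|2^{-N}k|^\zeta)$ for $|k|\lesssim 2^N$ and $O(|2^{-N}k|^{-(2+\kappa)})$ (using (ii) on both $\psi_{N+1}$ and $\psi_N$) for $|k|\gtrsim 2^N$, so it is everywhere $\lesssim (2^{-N}|k|)^{\zeta}\wedge (2^{-N}|k|)^{-(2+\kappa)}$. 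I do not even need the $L^\infty$ or pointwise bound: integrating $|G_{M,N+1}(x,y)-G_{M,N}(x,y)|$ over $\Lambda\times\Lambda$, I would use $\iint|f(x-y)|\,{\dd}x\,{\dd}y = (2\pi)^2\int_\Lambda|f|$ and then Parseval-type bounds, or most simply just bound the integral by $\sum_k \frac{|\psi_M(k)||\psi_{N+1}(k)-\psi_N(k)|}{1+|k|^2}\cdot(2\pi)^2$ via the triangle inequality on the series (the ${\bf e}_k$ have $L^1(\Lambda)$-norm $O(1)$, in fact $(2\pi)^{-1}\cdot(2\pi)^2$). That sum is $\lesssim \sum_{|k|\lesssim 2^N}\frac{(2^{-N}|k|)^\zeta}{1+|k|^2} + \sum_{|k|\gtrsim 2^N}\frac{(2^{-N}|k|)^{-(2+\kappa)}}{1+|k|^2}$; the first piece is $\lesssim 2^{-N\zeta}\sum_{|k|\lesssim 2^N}|k|^{\zeta-2}\lesssim 2^{-N\zeta}\cdot 2^{N\zeta} \cdot$ wait — I need to be careful: $\sum_{1\le|k|\le 2^N}|k|^{\zeta-2}\asymp 2^{N\zeta}$ if $\zeta>0$, giving $O(1)$, which is not decaying. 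The fix is to split at a slightly lower scale $2^{(1-\delta)N}$: for $|k|\le 2^{(1-\delta)N}$ the first bound gives $2^{-N\zeta}\cdot 2^{(1-\delta)N\zeta} = 2^{-\delta N\zeta}$, and for $2^{(1-\delta)N}\le|k|\le 2^N$ one uses instead that $\psi_{N+1}(k)-\psi_N(k)$ is bounded, times $\sum_{|k|\ge 2^{(1-\delta)N}}|k|^{-2}\lesssim 2^{-(1-\delta)N}$ (using only $|k|^{-2}$ summability beyond $|k|\sim 2^{(1-\delta)N}$, which is a genuinely small factor since the sum of $|k|^{-2}$ over $|k|\ge R$ is $O(\log R/R^{?})$ — actually $\sum_{|k|\ge R}|k|^{-2}$ diverges logarithmically per dyadic shell, so again care is needed). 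The clean route is: $\sum_{|k|\ge R}\frac{|k|^{-\eta}}{1+|k|^2}\lesssim R^{-\eta}$ for any $\eta>0$, and $\sum_{|k|\le R}\frac{|k|^\zeta}{1+|k|^2}\lesssim R^{\zeta-?}$ only if $\zeta<0$... I am going in circles, so let me just say: the correct decomposition is to bound the difference-multiplier by $C\min\{(2^{-N}|k|)^{\zeta'},\,1\}\cdot\frac{1}{1+|k|^2}$ with a tiny $\zeta'>0$ where it matters; then $\sum_k \min\{(2^{-N}|k|)^{\zeta'},1\}(1+|k|^2)^{-1}$, split at $|k|=2^N$, gives $2^{-N\zeta'}\sum_{1\le|k|\le 2^N}|k|^{\zeta'-2} + \sum_{|k|>2^N}|k|^{-2}$. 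The second term: $\sum_{|k|>2^N}|k|^{-2} \asymp \sum_{j>N} 2^{2j}\cdot 2^{-2j} = \sum_{j>N}1 = \infty$ — so this naive bound fails and one genuinely must use the $(2+\kappa)$ decay on the tail, not just $|k|^{-2}$: with $|k|^{-(2+\kappa)}(2^{-N})^{?}$ the tail converges to $O(2^{-N\kappa'})$. So the honest statement is: use (iii) near $|k|\le 2^N$ to get the $2^{-\theta N}$ gain from a fractional power of $2^{-N}|k|$ integrated against $|k|^{\zeta'-2}$ over a ball of radius $2^N$ — which, since $\int_{1}^{2^N} r^{\zeta'-1}\,dr \asymp 2^{N\zeta'}$, cancels and leaves $O(1)$ unless one instead gains from integrability, i.e. one should integrate $|k|^{-\zeta'}$ (decay) not $|k|^{\zeta'}$.

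I will stop second-guessing the arithmetic: the structurally correct argument is that $\psi_{N+1}-\psi_N$, evaluated at $2^{-N}k$, is bounded by $C(|2^{-N}k|^{\zeta}\wedge |2^{-N}k|^{-\kappa})$ — small both for $|k|\ll 2^N$ (by (iii), since $\psi\to 1$) and for $|k|\gg 2^N$ (by (ii), polynomial decay) — and is $O(1)$ in the transition zone $|k|\sim 2^N$ which has measure $\asymp 2^{2N}$; weighting by $(1+|k|^2)^{-1}\asymp 2^{-2N}$ there gives $O(1)$ from the transition zone too, which is still not decaying, so the decay $2^{-\theta N}$ must come from the full integral $\iint_{\Lambda\times\Lambda}$ being genuinely better than the crude termwise $\ell^1$ bound — one should instead estimate the $L^1$-norm of the periodic kernel via its pointwise description from \eqref{estimate:GN1}: $G_{M,N+1}-G_{M,N} = -\frac{1}{2\pi}\log\frac{|x-y|\vee 2^{-M}\vee 2^{-N-1}}{|x-y|\vee 2^{-M}\vee 2^{-N}} + (R_{M,N+1}-R_{M,N})$, and the logarithm is supported on $\{|x-y|\lesssim 2^{-N}, 2^{-M}\lesssim 2^{-N}\}$ where it is $O(1)$, so its $L^1(\Lambda\times\Lambda)$-norm is $\lesssim 2^{-2N}$ — but one needs the same decay for the remainder difference, so I would prove a quantitative version of \eqref{estimate:GN1} with $R_{M,N}$ Hölder in the parameter. \textbf{The main obstacle}, then, is exactly this: upgrading the qualitative boundedness of $R_{M,N}$ in \eqref{estimate:GN1} to a quantitative Hölder-type modulus $|R_{M,N+1}(x,y)-R_{M,N}(x,y)|\lesssim 2^{-\theta' N}$ (at least after integration in $x,y$), which is what forces the appendix computations and where Hypothesis \ref{hypo on psi}(iii)'s exponent $\zeta$ enters to fix $\theta$. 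Once that quantitative kernel estimate is in hand, \eqref{estimate:GN2} follows by integrating the explicit pointwise bound over $\Lambda\times\Lambda$.
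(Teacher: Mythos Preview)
Your sketch for \eqref{estimate:GN1} is in the right spirit --- Poisson summation to pass to the whole-space Green function, then split frequencies at scale $2^{M\wedge N}$ using (ii) and (iii) --- and the paper carries this out via a slightly different organization: it writes $G_{M,N}$ as the periodization of $K*\rho_{M,N}$, where $K$ is the whole-plane Green function of $1-\triangle$ and $\rho_{M,N}=\frac{1}{2\pi}\mathcal{F}^{-1}(\psi_M\psi_N)$, and then proves a general convolution lemma $K*\rho_L(x)=-\frac{1}{2\pi}\log(|x|\vee 2^{-L})+O(1)$ for any approximate identity $\rho_L=2^{2L}\rho(2^L\cdot)$ with sufficient decay. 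For non-smooth $\psi$ it first dominates by a smooth $\bar\psi$ satisfying the same hypotheses and compares the two kernels by direct Fourier-series estimates. So your outline would work but needs the convolution bookkeeping made precise.

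For \eqref{estimate:GN2} there is a genuine gap. You correctly discover that the termwise $\ell^1$ bound on the Fourier series does \emph{not} decay (the transition zone $|k|\sim 2^N$ contributes $O(1)$), and that trying to read off the $L^1$-norm from the pointwise expansion \eqref{estimate:GN1} forces you to control $R_{M,N+1}-R_{M,N}$ quantitatively, which is at least as hard as the original problem. The missing idea is to abandon both $\ell^1$ and pointwise estimates and use Plancherel in a positive-order Sobolev space instead: by translation invariance it suffices to bound $\|G_{M,N+1}(\cdot,0)-G_{M,N}(\cdot,0)\|_{H^{1-\zeta}}$, and
\[
\|G_{M,N+1}(\cdot,0)-G_{M,N}(\cdot,0)\|_{H^{1-\zeta}}^2
\lesssim \sum_{k\in\mathbb{Z}^2}\frac{|\psi_{N+1}(k)-\psi_N(k)|^2}{(1+|k|^2)^{1+2(1-\zeta)}}.
\]
Now use only (iii), globally: $|\psi(2^{-N}k)-1|+|\psi(2^{-N-1}k)-1|\lesssim (2^{-N}|k|)^{\zeta}$, so the sum is $\lesssim 2^{-2N\zeta}\sum_k |k|^{2\zeta}(1+|k|^2)^{-(2-2\zeta)}$, which converges for $\zeta$ small enough. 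The extra factor $(1+|k|^2)^{-(1-\zeta)}$ coming from the Sobolev weight is exactly what your $\ell^1$ attempt was missing. Finally the Sobolev embedding $H^{1-\zeta}\hookrightarrow L^p$ (for $\zeta\le 2/p$ in two dimensions) gives the $L^p$ bound on the compact torus, hence the $L^1$ bound \eqref{estimate:GN2} with $\theta=\zeta$. Note that the decay property (ii) is not even needed for \eqref{estimate:GN2}; only (iii) is used.
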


Since the proof of Proposition \ref{hypo on G} is long and technical, we provide it in Appendix \ref{hypo on G app}.
We remark that \eqref{estimate:GN2} can be improved by $L^p$-estimate for all $p\in [1,\infty )$ (see Proposition \ref{propA.G}).

\begin{rem}\label{rem:approx}
Theorem {\rm{\ref{thm:expwick almostsure}}} holds true for any multiplier $\psi$ 
such that the function $G_{M,N}$ defined from $\psi$ satisfies the 
estimates \eqref{estimate:GN1} and \eqref{estimate:GN2}.
Indeed, in the proof of Theorem {\rm{\ref{thm:expwick almostsure}}} after Proposition {\rm{\ref{hypo on G}}}, we use only \eqref{estimate:GN1} and \eqref{estimate:GN2}, but do not use Hypothesis {\rm{\ref{hypo on psi}}}.
The class of approximations satisfying \eqref{estimate:GN1} and \eqref{estimate:GN2} is quite large, and includes the approximations by averaging, treated in {\rm{\cite{Ber17}}}, in particular the circle average approximation (see Section {\rm{\ref{sec:average}}}).
Moreover, our proofs would go similarly even if we replace the torus $\Lambda$ with the 
Lebesgue measure ${\dd}x$ and the Gaussian field $\GFF$ generated by free field measure, 
by a two-dimensional compact Riemannian manifold ${\mathcal M}$ with its volume measure $\sigma$ 
and a Gaussian random field $\GFF ^{\mathcal M}$ on ${\mathcal M}$ with covariance function $G_{\mathcal M}$ satisfying \eqref{estimate:GN1} and \eqref{estimate:GN2} with replacement of $|x-y|$ by the metric $d(x,y)$ in ${\mathcal M}$, respectively.
However, in the case of ${\mathcal M}$ and $\GFF_N^{\mathcal M}$, $C_N (x) := \mathbb{E}[\GFF_N^{\mathcal M}(x)^2]$ appeared in \eqref{eq:def of M_N} for renormalization, which is a constant in the case of the torus with the Lebesgue measure ${\dd}x$, will depend on $x\in {\mathcal M}$ generally.
We are also able to extend it to compact Riemannian manifold with other dimensions.
In the case the range of the charge constant $\alpha$ should be changed according to the dimension.
Even though we have such extensions, for simplicity, we discuss our problem only on the torus 
$\Lambda$ with the Lebesgue measure ${\dd}x$ in the present paper.
\end{rem}

\subsection{Uniform integrability}\label{sec:constWick}
Using the first property \eqref{estimate:GN1} of Proposition \ref{hypo on G}, we first prove the uniform bound of $\{\exp_N^\diamond(\alpha \GFF)\}_{N\in\mathbb{N}}$ in 
$L^p(\mathbb{P};B_{p,p}^{-\beta})$.  
Below, we usually denote
$$
\GMC^{(\alpha)}_{N}=\exp_N^\diamond(\alpha \GFF)
$$
in short.
At the beginning, we present {\it{Kahane's convexity inequality}} (cf. \cite{Kah85}), which plays a significant role in the proof. 
\begin{lem}
[see {\cite[Proposition 5.6]{Bis20}}]
\label{lemma Kahane}
Let $D$ be an open and bounded subset of $\mathbb{R}^2$.
Let $\varphi_1,\varphi_2$ be continuous Gaussian random fields on $D$ with mean zero and with covariance functions $C_1,C_2:D\times D\to\mathbb{R}$, respectively. If $C_1(x,y)\le C_2(x,y)$ for any $x,y\in D$, then one has
$$
\mathbb{E}\left[\left\{\int_D \exp\left({\varphi_1(x)-\frac12C_1(x,x)}\right){\dd}x\right\}^p\right]
\le \mathbb{E}\left[\left\{\int_D \exp\left({\varphi_2(x)-\frac12C_2(x,x)}\right){\dd}x\right\}^p\right]
$$
for any $p\in[1,\infty)$.
\end{lem}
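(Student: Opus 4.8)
The plan is to reduce the statement to its well-known discrete version and then pass to the limit. First I would introduce the interpolating family of Gaussian fields $\varphi_t := \sqrt{t}\,\varphi_2 + \sqrt{1-t}\,\varphi_1$ for $t\in[0,1]$, with $\varphi_1,\varphi_2$ taken to be independent (this does not change the laws of the two measures appearing in the statement), so that $\varphi_t$ has covariance $C_t := tC_2 + (1-t)C_1$. For a fixed finite set of points $x_1,\dots,x_m\in D$ and positive weights $a_1,\dots,a_m$, set
\begin{equation*}
F(t) := \mathbb{E}\left[\left(\sum_{i=1}^m a_i\exp\!\Big(\varphi_t(x_i)-\tfrac12 C_t(x_i,x_i)\Big)\right)^{\!p}\right].
\end{equation*}
The key computation is to differentiate $F$ in $t$, using the Gaussian integration-by-parts formula (Stein's lemma) together with the normalization $\mathbb{E}[\exp(\varphi_t(x_i)-\frac12 C_t(x_i,x_i))]=1$. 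After the routine manipulation one finds
\begin{equation*}
F'(t) = \frac{p(p-1)}{2}\,\mathbb{E}\left[\Big(\sum_i a_i e^{\varphi_t(x_i)-\frac12 C_t(x_i,x_i)}\Big)^{\!p-2}\sum_{i,j} a_i a_j\big(C_2(x_i,x_j)-C_1(x_i,x_j)\big)\,e^{\varphi_t(x_i)+\varphi_t(x_j)-\frac12C_t(x_i,x_i)-\frac12 C_t(x_j,x_j)}\right],
\end{equation*}
and since $C_2-C_1\ge 0$ pointwise, the inner double sum is a nonnegative quadratic form, hence $F'(t)\ge 0$. Therefore $F(0)\le F(1)$, which is exactly Kahane's inequality for finite sums.

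The second step is the passage from finite sums to the integral $\int_D$. I would take a sequence of Riemann-type partitions of $D$ with meshes tending to zero, write the integral $\int_D e^{\varphi_t(x)-\frac12 C_t(x,x)}\,dx$ as an $L^p(\mathbb{P})$-limit of the corresponding Riemann sums $\sum_i |D_i|\, e^{\varphi_t(y_i)-\frac12 C_t(y_i,y_i)}$ over cells $D_i$, and then invoke the inequality $F(0)\le F(1)$ just established for each finite sum, letting the mesh go to zero. The convergence of the Riemann sums in $L^p$ is where one uses that $\varphi_1,\varphi_2$ are \emph{continuous} Gaussian fields on the bounded set $D$: sample-path continuity gives pointwise convergence of the integrand, and a uniform $L^p$ bound (e.g.\ via $\esssup$ of the continuous covariances and Fernique-type estimates, or simply dominated convergence using the a.s.\ boundedness of continuous fields on the compact $\overline D$) upgrades this to $L^p$-convergence, so the inequality is preserved in the limit.

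The main obstacle I expect is not conceptual but bookkeeping: justifying the differentiation under the expectation in the computation of $F'(t)$ (interchanging $\frac{d}{dt}$ and $\mathbb{E}$, which requires a locally uniform integrable bound on the $t$-derivative of the integrand — again supplied by continuity and the resulting uniform boundedness of $\varphi_t$ on $\overline D$), and the clean execution of the Gaussian integration by parts when $p<2$, since then $x\mapsto x^{p-2}$ is singular at $0$; one sidesteps this by first proving the inequality for $p\ge 2$, or by adding a small constant $\delta>0$ inside the bracket, differentiating, and then letting $\delta\downarrow 0$. Since the statement is quoted from \cite{Bis20}, I would in fact keep this argument brief and refer to \cite[Proposition~5.6]{Bis20} and the original work of Kahane \cite{Kah85} for the full details, only indicating the interpolation identity $\varphi_t=\sqrt t\,\varphi_2+\sqrt{1-t}\,\varphi_1$ and the sign of $F'(t)$ as the heart of the matter.
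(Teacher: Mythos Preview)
The paper does not give a proof of this lemma at all: it simply states the result and cites \cite[Proposition~5.6]{Bis20}. Your sketch is the standard interpolation proof of Kahane's inequality (independent coupling, $\varphi_t=\sqrt{t}\,\varphi_2+\sqrt{1-t}\,\varphi_1$, Gaussian integration by parts to show $F'\ge0$, then Riemann-sum approximation), and it is correct; your closing remark that one should just refer to \cite{Bis20} and \cite{Kah85} is precisely what the paper does.
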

The following estimate is useful to determine the regularity of $\GMC_N^{(\alpha)}$.
The estimate is called a multifractal property and is proved also in previous results (see e.g. \cite[Theorem 3.23]{Ber16}, \cite[Proposition 3.9]{Gar20} and \cite[Theorem 2.14]{RV14}).
As mentioned in Remark \ref{rem:approx}, our arguments work in the case of more general approximations than those treated in the previous results.
\begin{prop}\label{thm:Mscale}
For any $\alpha\in\mathbb{R}$ and $p\in[1,\infty)$ there exists a constant $C>0$ such that, for any $N\in\mathbb{N}$ and $\lambda \in (0,1]$,
\begin{align*}
& \mathbb{E}\left[ \left( \int _{B(0, \lambda/2)} \GMC_N^{(\alpha)} (x) \hspace{0.5mm} {\dd}x \right) ^p\right] 
\leq C\lambda^{2p-\alpha^2p(p-1)/4\pi}
\mathbb{E}\left[ \left( \int _{\Lambda} \GMC_N^{(\alpha)} (x) \hspace{0.5mm} {\dd}x \right) ^p\right] .
\end{align*}
\end{prop}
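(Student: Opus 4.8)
\textbf{Proof strategy for Proposition \ref{thm:Mscale}.}
The plan is to exploit the approximate scale-invariance of the log-correlated field via Kahane's convexity inequality (Lemma \ref{lemma Kahane}), comparing the restriction of $\GFF_N$ to the small ball $B(0,\lambda/2)$ with a field that is exactly a rescaled copy of $\GFF_N$ on $B(0,1/2)$, up to an independent Gaussian shift. First I would change variables $x = \lambda u$ in the integral $\int_{B(0,\lambda/2)} \GMC_N^{(\alpha)}(x)\,\dd x = \lambda^2 \int_{B(0,1/2)} \exp(\alpha\GFF_N(\lambda u) - \tfrac{\alpha^2}{2}C_N)\,\dd u$, so that the $\lambda^{2p}$ prefactor appears immediately after raising to the $p$-th power and taking expectations. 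The field $u\mapsto \GFF_N(\lambda u)$ on $B(0,1/2)$ is a continuous centered Gaussian field with covariance $G_{N,N}(\lambda u,\lambda v)$, and by \eqref{estimate:GN1} we have, for $|u-v|<1$ (so $|\lambda u - \lambda v|<1$),
\begin{equation*}
G_{N,N}(\lambda u,\lambda v) = -\frac{1}{2\pi}\log\bigl(\lambda|u-v|\vee 2^{-N}\bigr) + R_{N,N}(\lambda u,\lambda v).
\end{equation*}

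The key algebraic step is to split the logarithm: $-\tfrac{1}{2\pi}\log(\lambda|u-v|\vee 2^{-N}) \le -\tfrac{1}{2\pi}\log\lambda - \tfrac{1}{2\pi}\log(|u-v|\vee 2^{-N})$, valid since $\lambda\le 1$ implies $\lambda|u-v|\vee 2^{-N} \ge \lambda(|u-v|\vee 2^{-N})$. Therefore, using \eqref{estimate:GN1} once more in the reverse direction and boundedness of the remainder terms,
\begin{equation*}
G_{N,N}(\lambda u,\lambda v) \le -\frac{1}{2\pi}\log\lambda + G_{N,N}(u,v) + 2\|R\|_\infty =: \frac{1}{2\pi}\log\frac1\lambda + \widetilde G_N(u,v),
\end{equation*}
where $\widetilde G_N(u,v) = G_{N,N}(u,v) + 2\|R\|_\infty$ is the covariance of $\GFF_N$ plus a positive constant, hence itself a valid covariance (add an independent $N(0,2\|R\|_\infty)$ Gaussian). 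Now the right-hand side is the covariance of $\Omega + \GFF_N(u) + Z$ where $\Omega\sim N(0,\tfrac{1}{2\pi}\log\tfrac1\lambda)$ and $Z\sim N(0,2\|R\|_\infty)$ are independent of each other and of $\GFF_N$. Kahane's inequality then gives
\begin{equation*}
\mathbb{E}\Bigl[\Bigl(\int_{B(0,1/2)}\!\!\exp\bigl(\alpha\GFF_N(\lambda u)-\tfrac{\alpha^2}{2}G_{N,N}(\lambda u,\lambda u)\bigr)\dd u\Bigr)^p\Bigr] \le \mathbb{E}\Bigl[\Bigl(\int_{B(0,1/2)}\!\!\exp\bigl(\alpha(\Omega+\GFF_N(u)+Z)-\tfrac{\alpha^2}{2}(\tfrac{1}{2\pi}\log\tfrac1\lambda + \widetilde G_N(u,u))\bigr)\dd u\Bigr)^p\Bigr].
\end{equation*}

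The final step is bookkeeping. Pull the deterministic and independent-Gaussian factors out: the $\Omega$ and $Z$ terms contribute $\mathbb{E}[e^{p\alpha\Omega}]e^{-\frac{p\alpha^2}{2}\cdot\frac{1}{2\pi}\log\frac1\lambda}\cdot\mathbb{E}[e^{p\alpha Z}]e^{-\frac{p\alpha^2}{2}\cdot 2\|R\|_\infty}$, and since $\mathbb{E}[e^{p\alpha\Omega}] = e^{\frac{p^2\alpha^2}{2}\cdot\frac{1}{2\pi}\log\frac1\lambda} = \lambda^{-p^2\alpha^2/4\pi}$, the combined $\lambda$-dependence from $\Omega$ is $\lambda^{-p^2\alpha^2/4\pi + p\alpha^2/4\pi} = \lambda^{-\alpha^2 p(p-1)/4\pi}$; the $Z$-factor and the constant $\|R\|_\infty$ terms are just an $N$-independent constant $C$. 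Absorbing the residual $\widetilde G_N$ back (it differs from $G_{N,N}$ only by the additive constant already accounted for) and using $B(0,1/2)\subset\Lambda$ to bound $\int_{B(0,1/2)}$ by $\int_\Lambda$ leaves
\begin{equation*}
\mathbb{E}\Bigl[\Bigl(\int_{B(0,\lambda/2)}\GMC_N^{(\alpha)}(x)\,\dd x\Bigr)^p\Bigr] \le C\lambda^{2p}\cdot\lambda^{-\alpha^2 p(p-1)/4\pi}\,\mathbb{E}\Bigl[\Bigl(\int_\Lambda\GMC_N^{(\alpha)}(x)\,\dd x\Bigr)^p\Bigr],
\end{equation*}
which is the claim. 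The main obstacle I anticipate is the careful handling of the torus periodicity: $G_{N,N}$ is a periodic function and \eqref{estimate:GN1} is only asserted for $|x-y|<1$, so one must check that for $\lambda\le 1$ and $u,v\in B(0,1/2)$ all the pairs $(\lambda u,\lambda v)$ and $(u,v)$ stay within the $|x-y|<1$ regime where the logarithmic asymptotics apply — which they do, since $|u-v|<1$ and $|\lambda u-\lambda v|\le|u-v|<1$ — and that the comparison field $\widetilde G_N$ is genuinely a covariance (nonnegative-definite), which holds because adding a constant to a covariance preserves nonnegative-definiteness. A secondary technical point is ensuring all fields involved are continuous so that Lemma \ref{lemma Kahane} applies; this is immediate since $\GFF_N = P_N\GFF$ takes values in $C(\Lambda)$ for each fixed $N$.
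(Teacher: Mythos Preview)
Your proposal is correct and follows essentially the same route as the paper: change variables $x=\lambda u$, use \eqref{estimate:GN1} to obtain the covariance inequality $G_{N,N}(\lambda u,\lambda v)\le G_{N,N}(u,v)-\tfrac{1}{2\pi}\log\lambda+c$, dominate by $\GFF_N$ plus an independent Gaussian, and apply Kahane's inequality. The only cosmetic differences are that the paper uses a single auxiliary Gaussian $Y_\lambda$ with variance $-\tfrac{1}{2\pi}\log\lambda+c$ (rather than your two-piece split $\Omega+Z$) and performs the change of variables at the end rather than the beginning; your extra care about the $|x-y|<1$ regime and continuity of the fields is well placed but not an issue.
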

\begin{proof}
Consider the random field $x\mapsto\GFF_N(\lambda x)$.
The inequality
$$
\log\left(|\lambda x|\vee2^{-N}\right)
\ge\log\left(|x|\vee2^{-N}\right)+\log\lambda
$$
is easily checked by considering the three cases separately; $\lambda|x|<|x|\le2^{-N}$, $\lambda|x|\le2^{-N}<|x|$, and $2^{-N}<\lambda|x|<|x|$.
By the estimate \eqref{estimate:GN1}, for $x,y\in\mathbb{R}^2$ with $|x|\vee|y|<1/2$,
\begin{align*}
\mathbb{E}[\GFF_N(\lambda x)\GFF_N(\lambda y)]
&=-\frac1{2\pi}\log\left(|\lambda (x-y)|\vee2^{-N}\right)+O(1)\\
&\le-\frac1{2\pi}\log\left(|x-y|\vee2^{-N}\right)-\frac1{2\pi}\log\lambda+O(1)\\
&\le \mathbb{E}[\GFF_N(x)\GFF_N(y)]-\frac1{2\pi}\log\lambda+c
\end{align*}
for some constant $c>0$ independent of $\lambda$, $x$, $y$ and $N$. Hence by introducing a centered Gaussian random variable $Y_\lambda$ with variance $- (1/2\pi )\log \lambda + c$, independent of $\GFF$, we have
\begin{align*}
\mathbb{E}[\GFF_N(\lambda x)\GFF_N(\lambda y)]
\le \mathbb{E}[(\GFF_N(x)+Y_\lambda)(\GFF_N(y)+Y_\lambda)].
\end{align*}
Then Lemma \ref{lemma Kahane} yields
\begin{align*}
\mathbb{E}\left[ \left( \int _{|x|<1/2} \GMC_N^{(\alpha)} (\lambda x) \hspace{0.5mm} {\dd}x \right) ^p\right] 
&\leq \mathbb{E}\left[\exp \left( \alpha p Y_\lambda - \frac{\alpha ^2p}{2} \mathbb{E}[Y_\lambda ^2] \right) \right] \mathbb{E}\left[ \left( \int _{|x|<1/2} \GMC_N^{(\alpha)} (x) \hspace{0.5mm} {\dd}x \right) ^p\right]\\
&=C\exp \left( -\frac{\alpha ^2 p(p-1)}{4\pi} \log \lambda  \right)
\mathbb{E}\left[ \left( \int _{|x|<1/2} \GMC_N^{(\alpha)} (x) \hspace{0.5mm} {\dd}x \right) ^p\right]
\end{align*}
for some constant $C>0$.
By changing the variable $y=\lambda x$ we obtain the assertion.
\end{proof}

The following lemmas are useful to show the uniform integrability of 
$\int_{\Lambda} \GMC_N^{(\alpha)}(x)\hspace{0.5mm} {\dd}x$.

\begin{lem}\label{lem:expWick12}
For $\alpha\in\mathbb{R}$ and $p\in[1,2]$ there exists a constant $C>0$ such that, for any $N\in {\mathbb N}$ and $\delta \in (0,1/4]$,
\begin{align*}
& \mathbb{E}\left[ \left( \iint _{|x|\vee|y|<1/2,\, |x-y|<\delta} 
\GMC_N^{(\alpha)} (x) \GMC_N^{(\alpha)} (y) \hspace{0.5mm} {\dd}x 
\hspace{0.2mm} {\dd}y\right) ^{p/2} \right] \\
& \leq C\delta ^{(2-\alpha ^2p/4\pi)(p-1)} 
\mathbb{E}\left[ \left( \int _{\Lambda} \GMC_N^{(\alpha)} (x) \hspace{0.5mm} {\dd}x \right) ^p\right] .
\end{align*}
\end{lem}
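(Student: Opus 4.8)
The target is Lemma \ref{lem:expWick12}, which bounds the $L^{p/2}$-norm of a "diagonal" double integral of the GMC against the $L^p$-norm of the full mass. The natural strategy is to \emph{dyadically decompose the diagonal region} $\{|x-y|<\delta\}$ into annuli $\{2^{-j-1}\le |x-y|<2^{-j}\}$ for $j$ ranging from roughly $\log_2(1/\delta)$ to $\infty$, cover each annular piece by $O(1)$ balls of radius comparable to $2^{-j}$ (using that $|x|\vee|y|<1/2$ keeps everything in a bounded region), and on each such ball invoke the scaling estimate of Proposition \ref{thm:Mscale}. Since $p/2\le 1$, the map $t\mapsto t^{p/2}$ is subadditive, so the $L^{p/2}$ "norm" of a sum is bounded by the sum of the $L^{p/2}$ norms; this is exactly why the lemma is phrased with exponent $p/2$ rather than $p$, and it is what lets the dyadic sum be controlled termwise.

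\medskip

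\textbf{Key steps in order.}
\begin{enumerate}
\item Write $\iint_{|x|\vee|y|<1/2,\,|x-y|<\delta}=\sum_{j\ge j_0}\iint_{|x|\vee|y|<1/2,\,2^{-j-1}\le|x-y|<2^{-j}}$ where $2^{-j_0}\asymp\delta$; by subadditivity of $t\mapsto t^{p/2}$,
\begin{align*}
\mathbb{E}\!\left[\left(\iint_{\cdots,\,|x-y|<\delta}\GMC_N^{(\alpha)}(x)\GMC_N^{(\alpha)}(y)\,{\dd}x{\dd}y\right)^{p/2}\right]
\le\sum_{j\ge j_0}\mathbb{E}\!\left[\left(\iint_{\cdots,\,2^{-j-1}\le|x-y|<2^{-j}}\GMC_N^{(\alpha)}(x)\GMC_N^{(\alpha)}(y)\,{\dd}x{\dd}y\right)^{p/2}\right].
\end{align*}
\item For fixed $j$, cover $\{|x|<1/2\}$ by $O(1)$ balls $B(z_i,2^{-j})$ (actually the number of balls needed to see the pair constraint is $O(2^{2j})$, but after the scaling estimate the factor $2^{2j}$ from the count will be beaten by the power gained per ball — this bookkeeping is the part that needs care); on the slab $2^{-j-1}\le|x-y|<2^{-j}$ with $x\in B(z_i,2^{-j})$ one also has $y\in B(z_i, C2^{-j})$, so the $x$- and $y$-integrals each live on a ball of radius $\asymp 2^{-j}$.
\item Apply Cauchy--Schwarz in the two-fold integral over such a ball to reduce $\iint \GMC_N^{(\alpha)}(x)\GMC_N^{(\alpha)}(y)$ on $B\times B$ to $(\int_B\GMC_N^{(\alpha)})^2$, then use Proposition \ref{thm:Mscale} with $\lambda\asymp 2^{-j}$ to get $\mathbb{E}[(\int_{B}\GMC_N^{(\alpha)})^p]\lesssim 2^{-j(2p-\alpha^2p(p-1)/4\pi)}\mathbb{E}[(\int_\Lambda\GMC_N^{(\alpha)})^p]$; taking the $p/2$-th power and combining the two factors gives a single ball contribution $\lesssim 2^{-j(2p-\alpha^2p(p-1)/4\pi)}\,\mathbb{E}[(\int_\Lambda\GMC_N^{(\alpha)})^p]^{1}$ — wait, more precisely one tracks that $(\int_B\GMC_N^{(\alpha)})^{p/2}\cdot(\int_B\GMC_N^{(\alpha)})^{p/2}$ and a Hölder/Cauchy--Schwarz step in the expectation yields the needed power of $2^{-j}$ and a single copy of $\mathbb{E}[(\int_\Lambda\GMC_N^{(\alpha)})^p]$ up to constants.
\item Sum the geometric-type series $\sum_{j\ge j_0} 2^{-j\eta}$ with exponent $\eta=(2-\alpha^2 p/4\pi)(p-1)>0$ (this positivity is exactly the constraint $p<8\pi/\alpha^2$ baked into \eqref{def p beta}), obtaining $\lesssim 2^{-j_0\eta}\asymp\delta^{(2-\alpha^2p/4\pi)(p-1)}$, which is the claimed bound.
\end{enumerate}

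\medskip

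\textbf{Main obstacle.} The delicate point is the \emph{bookkeeping of the covering count versus the scaling gain}: naively covering $\{|x|<1/2\}$ by balls of radius $2^{-j}$ needs $\asymp 2^{2j}$ balls, and one must check that the per-ball power $2^{-j(2p-\alpha^2p(p-1)/4\pi)}$ raised to $p/2$, together with the subadditivity-induced summation, still leaves a net negative exponent in $j$ after multiplying by the $2^{2j}$ balls. Equivalently one must verify $\tfrac{p}{2}\big(2p-\tfrac{\alpha^2 p(p-1)}{4\pi}\big) - 2 = (2-\tfrac{\alpha^2 p}{4\pi})(p-1)$ is positive under \eqref{def p beta} — a short computation, but getting the constants to line up exactly with the stated exponent $(2-\alpha^2 p/4\pi)(p-1)$ is where the proof must be careful, and it is presumably why the lemma is stated with precisely this exponent. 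A secondary technicality is handling the pair constraint $|x-y|<\delta$ cleanly when $x$ is near the boundary $|x|=1/2$ (so that $y$ may slightly exceed $1/2$), which is harmless since the scaling estimate is applied on balls that can be taken slightly larger, absorbing such boundary effects into the implicit constant.
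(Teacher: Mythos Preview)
Your core ingredients---a covering by small balls, subadditivity of $t\mapsto t^{p/2}$ for $p\le 2$, and the scaling estimate of Proposition~\ref{thm:Mscale}---are exactly what the paper uses. However, the paper's argument is considerably simpler: it uses a \emph{single} cover of $B(0,1/2)$ by $n_\delta\lesssim\delta^{-2}$ balls of radius $\delta$, observes that if $x\in B(x_i,\delta)$ and $|x-y|<\delta$ then $y\in B(x_i,2\delta)$, so the whole diagonal double integral is bounded by $\sum_i\bigl(\int_{B(x_i,2\delta)}\GMC_N^{(\alpha)}\bigr)^2$, and then applies subadditivity plus shift invariance plus Proposition~\ref{thm:Mscale} once. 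No dyadic decomposition over scales $2^{-j}$ is needed: there is nothing to be gained by resolving the region $\{|x-y|<\delta\}$ into finer annuli, since the scaling bound at scale $\delta$ already controls everything below it.

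Your step 3 contains a genuine bookkeeping error. Since on a ball $B$ one has $\iint_{B\times B}\GMC_N^{(\alpha)}(x)\GMC_N^{(\alpha)}(y)\,{\dd}x\,{\dd}y\le\bigl(\int_B\GMC_N^{(\alpha)}\bigr)^2$, taking the $p/2$-th power gives $\bigl(\int_B\GMC_N^{(\alpha)}\bigr)^p$ directly, and Proposition~\ref{thm:Mscale} applies as stated. The per-ball gain is therefore $\lambda^{2p-\alpha^2p(p-1)/4\pi}$ and, after multiplying by $\lambda^{-2}$ balls, the net exponent is
\[
2p-\frac{\alpha^2p(p-1)}{4\pi}-2=(p-1)\Bigl(2-\frac{\alpha^2p}{4\pi}\Bigr),
\]
not $\tfrac{p}{2}\bigl(2p-\tfrac{\alpha^2p(p-1)}{4\pi}\bigr)-2$; your displayed identity is false for $p\neq 2$ (check $p=1$). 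With the correct exponent the single-scale argument already yields the claimed $\delta^{(2-\alpha^2p/4\pi)(p-1)}$.

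A secondary issue: your dyadic sum $\sum_{j\ge j_0}2^{-j\eta}$ only converges when $\eta=(2-\alpha^2p/4\pi)(p-1)>0$, but the lemma is stated for arbitrary $\alpha\in\mathbb{R}$ and $p\in[1,2]$, with no such constraint. The paper's single-scale argument works regardless of the sign of $\eta$. (When $\eta\le 0$ the bound is trivial anyway, but you would need to say so.)
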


\begin{proof}
For any $\delta \in (0,1/4]$ we can choose $\{ x_i ; i=1,2,\dots , n_\delta\}$ such that
\[
B(0,1/2) \subset \bigcup _{i=1}^{n_\delta } B(x_i, \delta) , \qquad n_\delta \leq c \delta ^{-2},
\]
where $c$ is an absolute constant.
Since
\begin{align*}
&\iint _{|x|\vee|y|<1/2,\, |x-y|<\delta} \GMC_N^{(\alpha)}(x) \GMC_N^{(\alpha)}(y) \hspace{0.5mm} {\dd}x 
\hspace{0.2mm} {\dd}y \\
&\le \int _{|x|<1/2} \GMC_N^{(\alpha)}(x) \left( \int _{B(x,\delta )} \GMC_N^{(\alpha)}(y) \hspace{0.5mm}
{\dd}y\right) {\dd}x \\
&\le \sum _{i=1}^{n_\delta} \int _{B(x_i, \delta)} \GMC_N^{(\alpha)}(x) \left( \int _{B(x,\delta )} 
\GMC_N^{(\alpha)}(y) \hspace{0.5mm} {\dd}y\right) {\dd}x \\
&\leq \sum _{i=1}^{n_\delta} \left( \int _{B(x_i, \delta)} \GMC_N^{(\alpha)}(x) \hspace{0.5mm} {\dd}x \right) 
\left( \int _{B(x_i, 2\delta )} \GMC_N^{(\alpha)}(y) \hspace{0.5mm} {\dd}y\right) \\
&\leq \sum _{i=1}^{n_\delta} \left( \int _{B(x_i, 2\delta)} \GMC_N^{(\alpha)}(x) \hspace{0.5mm} {\dd}x \right) ^2,
\end{align*}
we have by the elementary inequality $(a+b)^{p/2}\le a^{p/2}+b^{p/2}$ for $a,b\ge0$ and the shift invariance of the law of $\GMC_N^{(\alpha)}$,
\begin{equation*}
\mathbb{E}\left[ \left( \iint _{|x|\vee|y|<1/2,\,|x-y|<\delta} \GMC_N^{(\alpha)}(x) 
\GMC_N^{(\alpha)}(y) \hspace{0.5mm} {\dd}x \hspace{0.2mm} {\dd}y\right) ^{p/2} \right]  
\leq c\delta ^{-2} \mathbb{E}\left[ \left( \int _{B(0, 2\delta)} \GMC_N^{(\alpha)}(x) 
\hspace{0.5mm} {\dd}x \right) ^p\right] .
\end{equation*}
Hence Proposition \ref{thm:Mscale} yields the conclusion.
\end{proof}
\begin{lem}\label{lem:expWick13}
For any $\alpha\in\mathbb{R}$ there exists a constant $C>0$ such that, 
for any $N\in\mathbb{N}$ and $\delta \in (0,1/4]$,
$$
\mathbb{E}\left[ \iint _{|x|\vee|y|<1/2,\,|x-y|\ge\delta} \GMC_N^{(\alpha)}(x) \GMC_N^{(\alpha)}(y) 
\hspace{0.5mm} {\dd}x \hspace{0.2mm} {\dd}y \right] 
\leq C(1+ \delta^{2-\alpha^2/2\pi}).
$$
\end{lem}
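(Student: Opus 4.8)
\textbf{Proof proposal for Lemma~\ref{lem:expWick13}.}
The plan is to estimate the expectation by Fubini's theorem, reducing matters to a two-point correlation bound. Since $\GMC_N^{(\alpha)}(x)=\exp(\alpha\GFF_N(x)-\frac{\alpha^2}{2}C_N)$ with $\GFF_N$ a centered continuous Gaussian field, for fixed $x,y$ one has the exact identity
\[
\mathbb{E}\big[\GMC_N^{(\alpha)}(x)\GMC_N^{(\alpha)}(y)\big]
=\exp\big(\alpha^2\,\mathbb{E}[\GFF_N(x)\GFF_N(y)]\big)
=\exp\big(\alpha^2 G_{N,N}(x,y)\big),
\]
using $\mathbb{E}[\GFF_N(x)^2]=\mathbb{E}[\GFF_N(y)^2]=C_N$ so that the renormalization constants cancel the diagonal Gaussian contributions and leave only the off-diagonal covariance. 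First I would invoke Fubini to write the left-hand side as $\iint_{|x|\vee|y|<1/2,\,|x-y|\ge\delta}\exp(\alpha^2 G_{N,N}(x,y))\,{\dd}x\,{\dd}y$.

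Next I would apply the key estimate \eqref{estimate:GN1} from Proposition~\ref{hypo on G}: for $|x-y|<1$ (which holds on our domain since $|x|\vee|y|<1/2$ forces $|x-y|<1$),
\[
G_{N,N}(x,y)=-\frac1{2\pi}\log\big(|x-y|\vee2^{-N}\big)+R_{N,N}(x,y),
\qquad \sup_{x,y,N}|R_{N,N}(x,y)|<\infty.
\]
Substituting and using the uniform bound on the remainder, I obtain
\[
\exp\big(\alpha^2 G_{N,N}(x,y)\big)
\lesssim \big(|x-y|\vee2^{-N}\big)^{-\alpha^2/2\pi}
\le |x-y|^{-\alpha^2/2\pi},
\]
where the last inequality holds precisely on the region $|x-y|\ge\delta$ when we then restrict further—more carefully, on $\{|x-y|\ge 2^{-N}\}$ we get $|x-y|^{-\alpha^2/2\pi}$ directly, and on $\{\delta\le|x-y|<2^{-N}\}$ we get $(2^{-N})^{-\alpha^2/2\pi}\le |x-y|^{-\alpha^2/2\pi}$ since $\alpha^2/2\pi>0$. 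Hence on the whole integration domain the integrand is bounded by a constant multiple of $|x-y|^{-\alpha^2/2\pi}$, uniformly in $N$.

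Finally I would bound the resulting deterministic integral. Splitting according to whether $|x-y|^{-\alpha^2/2\pi}$ is integrable near the diagonal: if $\alpha^2/2\pi<2$, then $\iint_{|x|\vee|y|<1/2}|x-y|^{-\alpha^2/2\pi}\,{\dd}x\,{\dd}y<\infty$, giving the bound by an absolute constant; and the explicit $\delta$-dependence $\delta^{2-\alpha^2/2\pi}$ in the statement comes from retaining the cutoff—namely $\iint_{|x-y|\ge\delta,\,|x|\vee|y|<1/2}|x-y|^{-\alpha^2/2\pi}\,{\dd}x\,{\dd}y$, which by integrating in polar coordinates around $x$ is $\lesssim \int_\delta^1 r^{-\alpha^2/2\pi}\,r\,{\dd}r\lesssim 1+\delta^{2-\alpha^2/2\pi}$ (the additive $1$ absorbing the $r$-integral near $r\asymp1$ and covering also the case $\alpha^2/2\pi\ge2$ where the singularity is genuinely present and the $\delta^{2-\alpha^2/2\pi}$ term dominates). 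This yields exactly $C(1+\delta^{2-\alpha^2/2\pi})$. I do not expect any serious obstacle here: the only mild care needed is the bookkeeping of the $2^{-N}\vee\delta$ interplay in the logarithmic term, but since the exponent $\alpha^2/2\pi$ is positive, replacing $|x-y|\vee2^{-N}$ by $|x-y|$ only enlarges the integrand, so the estimate is uniform in $N$ as required.
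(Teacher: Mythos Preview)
Your proof is correct and follows essentially the same route as the paper: compute the two-point function $\mathbb{E}[\GMC_N^{(\alpha)}(x)\GMC_N^{(\alpha)}(y)]=e^{\alpha^2 G_{N,N}(x,y)}$, apply the estimate \eqref{estimate:GN1} to bound this by $|x-y|^{-\alpha^2/2\pi}$ uniformly in $N$, and integrate in polar coordinates. Your additional remarks on the $|x-y|\vee 2^{-N}$ bookkeeping are a harmless elaboration of what the paper leaves implicit.
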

\begin{proof}
By the estimate \eqref{estimate:GN1},
\begin{align*}
&\mathbb{E}\left[ \iint _{|x|\vee|y|<1/2,\,|x-y|\ge\delta} \GMC_N^{(\alpha)}(x) \GMC_N^{(\alpha)}(y) 
\hspace{0.5mm} {\dd}x \hspace{0.2mm} {\dd}y \right] \\
&=e^{- \alpha^2 C_N} \iint _{|x|\vee|y|<1/2,\,|x-y|\ge\delta} \mathbb{E}
\big[ \exp\left(\alpha (\GFF_N(x) + \GFF_N(y)) \right) \big] \hspace{0.5mm} {\dd}x 
\hspace{0.2mm} {\dd}y \\
&=\iint _{|x|\vee|y|<1/2,\,|x-y|\ge\delta} e^{\alpha^2G_{N,N}(x,y)} 
\hspace{0.5mm} {\dd}x \hspace{0.2mm} {\dd}y
\\
&\lesssim \iint _{|x|\vee|y|<1/2,\,|x-y|\ge\delta} |x-y|^{-\alpha^2/2\pi}
\hspace{0.5mm} {\dd}x \hspace{0.2mm} {\dd}y
\lesssim 1+ \delta^{2-\alpha^2/2\pi}.
\end{align*}
\end{proof}

By using above estimates, we prove $L^p$-boundedness, in particular the uniform integrability, of $\int_\Lambda\GMC_N^{(\alpha)}(x){\dd}x$.
It has also proved in previous results (see e.g. \cite[Theorem 3.26]{Ber16} and \cite[Proposition 3.5]{RV10}).
As mentioned in Remark \ref{rem:approx}, our arguments  work in the case of more general approximations than those treated in the previous results.

\begin{prop}\label{prop:Mintegrable}
For any $|\alpha|<\sqrt{8\pi}$ and $p \in (1, 8\pi /\alpha ^2 )\cap(1,2]$,
\begin{equation*}\label{eq:propMintegrable1}
\sup _{N\in {\mathbb N}} \mathbb{E}\left[ \left( \int _\Lambda \GMC_N^{(\alpha)}(x) 
\hspace{0.5mm} {\dd}x \right) ^p\right] < \infty .
\end{equation*}
\end{prop}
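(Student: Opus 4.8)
The plan is to prove the uniform $L^p$-bound by a dyadic decomposition of the diagonal of $\Lambda\times\Lambda$, exploiting that for $p\in(1,2]$ the map $a\mapsto a^{p/2}$ is subadditive so we only ever need second-moment-type estimates, and these are precisely controlled by Proposition \ref{hypo on G} via Lemmas \ref{lem:expWick12} and \ref{lem:expWick13} together with the multifractal scaling from Proposition \ref{thm:Mscale}. Writing $I_N:=\int_\Lambda \GMC_N^{(\alpha)}(x)\,{\dd}x$, the first step is to localize: by the shift invariance of the law of $\GMC_N^{(\alpha)}$ and a finite covering of $\Lambda$ by balls of fixed radius $1/2$, it suffices to bound $\mathbb{E}[(\int_{B(0,1/2)}\GMC_N^{(\alpha)}(x)\,{\dd}x)^p]$ by a constant times the same quantity plus lower-order terms — and in fact one wants a self-improving inequality whose only unknown is $\mathbb{E}[I_N^p]$ itself.

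The core step is as follows. Fix $q=p/2\in(1/2,1]$ and decompose
\begin{align*}
\left(\iint_{|x|\vee|y|<1/2}\GMC_N^{(\alpha)}(x)\GMC_N^{(\alpha)}(y)\,{\dd}x\,{\dd}y\right)^{p/2}
&\le \left(\iint_{|x-y|<\delta}\cdots\right)^{p/2}+\left(\iint_{|x-y|\ge\delta}\cdots\right)^{p/2}
\end{align*}
using $(a+b)^{p/2}\le a^{p/2}+b^{p/2}$. Taking expectations, Lemma \ref{lem:expWick12} controls the first term by $C\delta^{(2-\alpha^2p/4\pi)(p-1)}\mathbb{E}[I_N^p]$, and Lemma \ref{lem:expWick13} together with Jensen's inequality (since $p/2\le1$) controls the second term by $C(1+\delta^{2-\alpha^2/2\pi})^{p/2}$, which is bounded uniformly in $\delta\in(0,1/4]$ (note $2-\alpha^2/2\pi>2-4>-2$, and even if this exponent is negative the bound is finite for each fixed $\delta$). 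Meanwhile the left-hand side equals $\mathbb{E}[(\int_{B(0,1/2)}\GMC_N^{(\alpha)})^p]$. Summing the finitely many translates of $B(0,1/2)$ needed to cover $\Lambda$, and using $(\sum_i a_i)^p\le n_0^{p-1}\sum_i a_i^p$ with $n_0$ an absolute constant, gives
\begin{align*}
\mathbb{E}[I_N^p]\le C_1\,\delta^{(2-\alpha^2p/4\pi)(p-1)}\,\mathbb{E}[I_N^p]+C_2(\delta),
\end{align*}
with $C_1$ independent of $\delta$ and $N$, and $C_2(\delta)<\infty$ for each $\delta$.

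To close the argument I would choose $\delta$ small enough that $C_1\delta^{(2-\alpha^2p/4\pi)(p-1)}<1/2$; this is possible precisely because the exponent $(2-\alpha^2p/4\pi)(p-1)$ is \emph{strictly positive}, which is exactly the condition $p<8\pi/\alpha^2$ (and $p>1$). Fixing such a $\delta$, absorb the first term to obtain $\mathbb{E}[I_N^p]\le 2C_2(\delta)$, uniformly in $N$ — but this requires knowing a priori that $\mathbb{E}[I_N^p]<\infty$ for each fixed $N$, which holds because $\GFF_N=P_N\GFF$ is a smooth (hence bounded on $\Lambda$) Gaussian field, so $\GMC_N^{(\alpha)}$ is a bounded random function with Gaussian-type moments. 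The main obstacle is conceptual rather than computational: getting the self-improving inequality with a constant $C_1$ genuinely independent of both $\delta$ and $N$, which forces one to route everything through the scaling estimate Proposition \ref{thm:Mscale} so that the $\delta$-dependence is entirely explicit and factorizes off the universal quantity $\mathbb{E}[I_N^p]$; the role of the hypothesis $|\alpha|<\sqrt{8\pi}$ is then visible as exactly the range in which some $p\in(1,2]$ makes the decisive exponent positive.
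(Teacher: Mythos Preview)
Your proposal is correct and follows essentially the same route as the paper: localize to a half-ball by shift invariance and a finite cover, split the double integral at scale $\delta$, apply Lemma~\ref{lem:expWick12} to the near-diagonal part and Lemma~\ref{lem:expWick13} (with Jensen for $p/2\le1$) to the far-diagonal part, and absorb the resulting small multiple of $\mathbb{E}[I_N^p]$ using $p<8\pi/\alpha^2$. The only cosmetic differences are that the paper performs the covering step first rather than last, and that your opening mention of a ``dyadic decomposition'' is not actually used---a single scale $\delta$ suffices, as your argument shows.
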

\begin{proof}
Choosing finite points $\{x_i\}$ such that $\Lambda=[-\pi,\pi)^2\subset \bigcup_iB(x_i,1/2)$ and using the shift invariance of the law of $\GMC_N^{(\alpha)}$,
\begin{equation*}
\mathbb{E}\left[ \left( \int _{\Lambda} \GMC_N^{(\alpha)}(x) 
\hspace{0.5mm}
{\dd}x \right) ^p \right] 
\leq C^p \mathbb{E}\left[ \left( \int _{B(0,1/2)} \GMC_N^{(\alpha)}(x) 
\hspace{0.5mm}
{\dd}x \right) ^p \right] 
\end{equation*}
for an absolute constant $C>0$. Let $\delta \in (0,1/4]$ and we decompose
\begin{align*}
\mathbb{E}\left[ \left( \int _{B(0,1/2)} \GMC_N^{(\alpha)}(x) 
\hspace{0.5mm} {\dd}x \right) ^p \right] 
&\leq \mathbb{E}\left[ \left( \iint _{|x|\vee|y|<1/2,\, |x-y|<\delta} 
\GMC_N^{(\alpha)}(x) \GMC_N^{(\alpha)}(y) 
\hspace{0.5mm} {\dd}x \hspace{0.2mm} {\dd}y
\right) ^{p/2} \right] \\
&\quad+ \mathbb{E}\left[ \left( \iint _{|x|\vee|y|<1/2,\, |x-y|\ge\delta} 
\GMC_N^{(\alpha)}(x) \GMC_N^{(\alpha)}(y)
\hspace{0.5mm} {\dd}x \hspace{0.2mm} {\dd}y
\right) ^{p/2} \right]\\
&\leq \mathbb{E}\left[ \left( \iint _{|x|\vee|y|<1/2,\, |x-y|<\delta} 
\GMC_N^{(\alpha)}(x) \GMC_N^{(\alpha)}(y) 
\hspace{0.5mm} {\dd}x \hspace{0.2mm} {\dd}y
\right) ^{p/2} \right] \\
&\quad+ \mathbb{E}\left[ \iint _{|x|\vee|y|<1/2,\, |x-y|\ge\delta} 
\GMC_N^{(\alpha)}(x) \GMC_N^{(\alpha)}(y)
\hspace{0.5mm} {\dd}x \hspace{0.2mm} {\dd}y
\right]^{p/2}.
\end{align*}
In the second inequality, we use $p\le2$ and the nonnegativity of $\GMC_N^{(\alpha)}$.
Applying Lemmas \ref{lem:expWick12} and \ref{lem:expWick13}, we have
\[
\mathbb{E}\left[ \left( \int _{\Lambda} \GMC_N^{(\alpha)}(x) \hspace{0.5mm} {\dd}x \right) ^p \right] 
\leq C'\delta ^{(2-\alpha ^2 p/4\pi )(p-1)}  
\mathbb{E}\left[ \left( \int _{\Lambda} \GMC_N^{(\alpha)}(x) \hspace{0.5mm} {\dd}x \right) ^p\right] 
+C'\delta^{p(1-\alpha^2/4\pi)},
\]
where the constant $C'$ is independent of $N$ and $\delta$.
Since $\alpha^2p<8\pi$, by choosing sufficiently small $\delta$, we complete the proof.
\end{proof}
\begin{cor}\label{cor:unif int M}
For any parameters $p$ and $\beta$ as in \eqref{def p beta}, one has
$$
\sup_{N\in\mathbb{N}}\mathbb{E} 
\left[ \big \Vert \GMC_N^{(\alpha)} \big \Vert_{B_{p,p}^{-\beta}}^p \right]<\infty.
$$
\end{cor}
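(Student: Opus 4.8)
\textbf{Proof proposal for Corollary \ref{cor:unif int M}.}

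The plan is to estimate the Besov norm $\|\GMC_N^{(\alpha)}\|_{B_{p,p}^{-\beta}}$ block by block, reducing everything to the single $L^p(\mathbb{P})$-bound already furnished by Proposition \ref{prop:Mintegrable}. Writing $\GMC_N^{(\alpha)}$ as a nonnegative Borel measure (via Theorem \ref{thm:LL}), I would first observe that for each Littlewood--Paley block one has the pointwise bound $|\Delta_j\GMC_N^{(\alpha)}(x)|\le \int_\Lambda |K_j(x-y)|\,\GMC_N^{(\alpha)}(\dd y)$, where $K_j$ is the convolution kernel of $\Delta_j$; standard estimates give $\|K_j\|_{L^1}\lesssim 1$ and $\|K_j\|_{L^\infty}\lesssim 2^{2j}$, hence by interpolation $\|K_j\|_{L^{p'}}\lesssim 2^{2j/p}$ with $1/p+1/p'=1$. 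Then by Young's inequality (for a measure convolved against an $L^{p'}$ kernel) I get $\|\Delta_j\GMC_N^{(\alpha)}\|_{L^p}\lesssim \|K_j\|_{L^{p'}}\,\GMC_N^{(\alpha)}(\Lambda)\lesssim 2^{2j/p}\,\GMC_N^{(\alpha)}(\Lambda)$. This crude bound already handles the high-frequency blocks, but summing $2^{jsq}$ against it requires $-\beta + 2/p < 0$, i.e.\ $\beta > 2/p$, which is \emph{false} under \eqref{def p beta}; so the crude bound alone is not enough and the multifractal scaling has to be exploited.

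To do better I would instead estimate $\Delta_j\GMC_N^{(\alpha)}$ using the localization of $K_j$ at scale $2^{-j}$: split $\Lambda$ into a grid of cubes of side $\asymp 2^{-j}$, so that on each cube $Q$ the contribution is controlled by $\GMC_N^{(\alpha)}$ of a slightly enlarged cube, and the rapid decay of $K_j$ makes the off-diagonal tails summable. Taking the $L^p$ norm over $x$ and using $p\le 2$ together with nonnegativity of $\GMC_N^{(\alpha)}$, one arrives at an estimate of the shape
\[
\mathbb{E}\big[\|\Delta_j\GMC_N^{(\alpha)}\|_{L^p}^p\big]
\lesssim 2^{2j}\,2^{-2j}\sum_{Q}\mathbb{E}\big[\GMC_N^{(\alpha)}(Q^\ast)^p\big]
\lesssim 2^{2j}\cdot 2^{-2j}\cdot 2^{2j}\cdot \big(2^{-j}\big)^{2p-\alpha^2 p(p-1)/4\pi}\,\mathbb{E}\big[\GMC_N^{(\alpha)}(\Lambda)^p\big],
\]
where in the last step the number of cubes is $\asymp 2^{2j}$, each cube $Q^\ast$ has side $\asymp 2^{-j}$, and Proposition \ref{thm:Mscale} (after rescaling) supplies the factor $(2^{-j})^{2p-\alpha^2p(p-1)/4\pi}$; the surviving power of $2^j$ is then $2^{2j} \cdot 2^{-j(2p-\alpha^2p(p-1)/4\pi)} = 2^{j(2-2p+\alpha^2p(p-1)/4\pi)}$. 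Combining with the defining series of the Besov norm,
\[
\mathbb{E}\big[\|\GMC_N^{(\alpha)}\|_{B_{p,p}^{-\beta}}^p\big]
= \sum_{j\ge -1} 2^{-j\beta p}\,\mathbb{E}\big[\|\Delta_j\GMC_N^{(\alpha)}\|_{L^p}^p\big]
\lesssim \Big(\sup_N\mathbb{E}\big[\GMC_N^{(\alpha)}(\Lambda)^p\big]\Big)\sum_{j\ge -1} 2^{j(2-2p-\beta p+\alpha^2 p(p-1)/4\pi)},
\]
and the geometric series converges precisely when $2-2p-\beta p+\alpha^2p(p-1)/4\pi<0$, i.e.\ $\beta > \frac{2}{p}(1-p) + \frac{\alpha^2(p-1)}{4\pi} = \frac{\alpha^2(p-1)}{4\pi} - \frac2p(p-1)$. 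Actually I need to be careful with the exact exponent bookkeeping, but the upshot is that the admissible window for $\beta$ produced this way should match (or contain) the interval $\left(\frac{\alpha^2}{4\pi}(p-1),\,\frac2p(p-1)\right)$ prescribed in \eqref{def p beta}; the lower endpoint $\beta>\frac{\alpha^2}{4\pi}(p-1)$ is exactly what makes the diagonal (small $|x-y|$) contribution summable, which is the heart of the matter. Finally, $\sup_N\mathbb{E}[\GMC_N^{(\alpha)}(\Lambda)^p]<\infty$ by Proposition \ref{prop:Mintegrable}, since $p\in(1,\frac{8\pi}{\alpha^2}\wedge 2)$, completing the bound uniformly in $N$.

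The main obstacle I anticipate is the careful decomposition of $\|\Delta_j\GMC_N^{(\alpha)}\|_{L^p}$ at the dyadic scale $2^{-j}$ and the precise matching of exponents: one must control the off-diagonal tails of the Littlewood--Paley kernel (losing no more than a fixed polynomial factor that can be absorbed into the rapidly decaying kernel), verify that the grid-cube count is $\asymp 2^{2j}$, and apply Proposition \ref{thm:Mscale} after translating each cube to be centered at the origin (using shift invariance of the law of $\GMC_N^{(\alpha)}$). Getting the geometric-series exponent to land in the open interval \eqref{def p beta} — in particular that the constraint $\beta>\frac{\alpha^2}{4\pi}(p-1)$ is exactly the convergence threshold — is where all the hypotheses on $p$ and $\beta$ get used, and is the delicate computational point; everything else is routine harmonic analysis on the torus plus the already-established $L^p$ moment bound.
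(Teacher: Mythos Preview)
Your approach is essentially the paper's: expand the Besov norm blockwise, use shift invariance of the law of $\GMC_N^{(\alpha)}$, feed in the multifractal scaling (Proposition~\ref{thm:Mscale}), and close with Proposition~\ref{prop:Mintegrable}. The paper organizes it slightly more cleanly by first reducing to the single point $x=0$ via shift invariance, writing $\Delta_j\GMC_N^{(\alpha)}(0)=\int(\mathcal{F}^{-1}\rho)(y)\,\GMC_N^{(\alpha)}(2^{-j}y)\,{\dd}y$, and then applying Minkowski plus the Schwartz decay of $\mathcal{F}^{-1}\rho$; this avoids the explicit spatial grid and lands directly on $\|\Delta_j\GMC_N^{(\alpha)}(0)\|_{L^p(\mathbb P)}\lesssim (2^{-j})^{-\alpha^2(p-1)/4\pi}$.

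One concrete slip in your bookkeeping: the kernel $L^\infty$ bound $\|K_j\|_{L^\infty}\lesssim 2^{2j}$ enters to the $p$th power when you take $\|\cdot\|_{L^p}^p$, so the prefactor should be $2^{2jp}\cdot 2^{-2j}$ (volume of a cube), not $2^{2j}\cdot 2^{-2j}$. With this correction the block estimate is $\mathbb{E}\big[\|\Delta_j\GMC_N^{(\alpha)}\|_{L^p}^p\big]\lesssim 2^{j\alpha^2 p(p-1)/4\pi}$ and the series converges exactly when $\beta>\frac{\alpha^2}{4\pi}(p-1)$, which is the lower endpoint in \eqref{def p beta} --- not the shifted threshold $\frac{\alpha^2(p-1)}{4\pi}-\frac{2}{p}(p-1)$ your display produced. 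You already flagged the exponents as needing care; once fixed, your argument and the paper's coincide.
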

\begin{proof}
By definition of the Besov norm,
\begin{align*}
\mathbb{E} \left [ \big \Vert \GMC_N^{(\alpha)} \big \Vert_{B_{p,p}^{-\beta}}^p \right ]
=\sum_{j=-1}^\infty2^{-j\beta p}\mathbb{E} \left [ \big \| \Delta_j \GMC_N^{(\alpha)} \big \|_{L^p}^p \right ]
=\sum_{j=-1}^\infty2^{-j\beta p}\int_\Lambda\mathbb{E} \left [ \big | \Delta_j \GMC_N^{(\alpha)}(x) \big |^p \right ]  {\dd}x.
\end{align*}
By the shift invariance of the law of $\GMC_N^{(\alpha)}$, it is sufficient to consider $\mathbb{E}\big [|\Delta_j \GMC_N^{(\alpha)} (0)|^p \big]$. 
The bounds for $j=-1,0$ are obvious in view of Proposition \ref{prop:Mintegrable}.
For $j\ge1$, by using Mikowski's inequality, rapid decay of the Schwartz function $\mathcal{F}^{-1}\rho$, and the shift invariance of the law of $\GMC_N^{(\alpha)}$,
\begin{align*}
\mathbb{E}\left [ \big | \Delta_j \GMC_N^{(\alpha)}(0) \big |^p \right ]^{1/p}
&=
\left\|\int_{\mathbb{R}^2}(\mathcal{F}^{-1}\rho)(x)\GMC_N^{(\alpha)}(2^{-j}x)
{\hspace{0.5mm}}{\dd}x \right\|_{L^p(\mathbb{P})}
\\
&\lesssim\sum_{k \in\mathbb{Z}^2} (1+|k|)^{-3} \left\|
 \int_{B(k,1)} \GMC_N^{(\alpha)}(2^{-j}x)
 \hspace{0.5mm} {\dd}x \right\|_{L^p(\mathbb{P})}\\
&\lesssim \left\|
 \int_{B(0,1)} \GMC_N^{(\alpha)}(2^{-j}x)
 \hspace{0.5mm} {\dd}x \right\|_{L^p(\mathbb{P})}.
\end{align*}
Hence by Proposition \ref{thm:Mscale},
$$
\mathbb{E}\left [ \big | \Delta_j \GMC_N^{(\alpha)}(0) \big |^p \right ]^{1/p}
\lesssim(2^{-j})^{-\alpha^2(p-1)/4\pi}.
$$
Therefore, we obtain $\mathbb{E}\big [ \big \| \GMC_N^{(\alpha)} \big \|_{B_{p,p}^{-\beta}}^p
\big] \lesssim1$ for $\beta>\alpha^2(p-1)/4\pi$.
\end{proof}
\subsection{Almost-sure convergence}\label{sec:asconv}
In this subsection, we show the almost-sure weak convergence of 
$\GMC_N^{(\alpha)}$ as $N\to\infty$ in the space of positive Borel measures on $\Lambda$,
and we finally complete the proof of Theorem \ref{thm:expwick almostsure}.
We apply the following proposition several times, which follows from direct computation.

\begin{prop}\label{prop:Gauss1}
Let $X$ be an $n$-dimensional centered Gaussian random vector with a covariance matrix $V$.
Then, for $a\in {\mathbb R}^n$ and a Borel function $f$ on ${\mathbb R}^n$,
\[
\mathbb{E}\left[ e^{a\cdot X} f(X) \right] = e^{a\cdot (Va)/2} 
\mathbb{E}\left[f(X+Va)\right].
\]
\end{prop}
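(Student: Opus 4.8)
The claim is the Gaussian identity
$$
\mathbb{E}\left[ e^{a\cdot X} f(X) \right] = e^{a\cdot (Va)/2}\, \mathbb{E}\left[ f(X+Va) \right]
$$
for a centered Gaussian vector $X$ with covariance matrix $V$, any $a\in\mathbb{R}^n$, and any bounded Borel $f$ (or $f\ge 0$). The plan is to reduce everything to the case where $V$ is nondegenerate and then do a direct change of variables in the Gaussian density, recognizing the completed square as a translation of the mean.

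First I would dispose of the degenerate case. If $V$ is singular, write $\mathbb{R}^n = \ker V \oplus (\ker V)^\perp$; the law of $X$ is supported on $(\ker V)^\perp$, and on that subspace $V$ restricts to an invertible symmetric operator. Since $Va\in(\ker V)^\perp$ and both $a\cdot X$ and $a\cdot(Va)$ depend on $a$ only through its orthogonal projection onto $(\ker V)^\perp$ (because $X$ lives there and $V$ kills the kernel part), the statement for general $V$ follows from the nondegenerate statement applied on the subspace. So assume $V$ is positive definite.

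Next, the main computation. With $V>0$, $X$ has density $p(x) = (2\pi)^{-n/2}(\det V)^{-1/2}\exp\!\big(-\tfrac12 x\cdot V^{-1}x\big)$, so
$$
\mathbb{E}\left[ e^{a\cdot X} f(X) \right]
= \frac{1}{(2\pi)^{n/2}(\det V)^{1/2}} \int_{\mathbb{R}^n} f(x)\, \exp\!\Big( a\cdot x - \tfrac12 x\cdot V^{-1}x \Big)\, \mathrm{d}x.
$$
The standard square-completion $a\cdot x - \tfrac12 x\cdot V^{-1}x = \tfrac12 a\cdot(Va) - \tfrac12 (x-Va)\cdot V^{-1}(x-Va)$ turns the integral into $e^{a\cdot(Va)/2}$ times the integral of $f(x)$ against the density of a Gaussian with mean $Va$ and covariance $V$; substituting $y = x - Va$ (Jacobian $1$) identifies that as $e^{a\cdot(Va)/2}\,\mathbb{E}[f(X+Va)]$. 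All manipulations are justified by Fubini/Tonelli when $f\ge 0$, and hence for bounded $f$ by splitting into positive and negative parts (or by a truncation and dominated convergence argument, using that $e^{a\cdot X}\in L^1$ since $X$ is Gaussian).

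There is no serious obstacle here — this is a routine Gaussian computation — and the only point that needs a word of care is the reduction in the degenerate case, so that the formula makes sense even when $V^{-1}$ does not exist; alternatively one could avoid this entirely by first proving the identity for nondegenerate $V$ and then passing to the limit $V + \varepsilon I \to V$, using that both sides are continuous in $V$ for fixed bounded continuous $f$ and then extending the class of $f$ by a monotone class argument.
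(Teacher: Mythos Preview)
Your proof is correct and matches the paper's approach: the paper simply states that the proposition ``follows from direct computation'' without giving details, and your completing-the-square argument is exactly that computation. Your handling of the degenerate covariance case is a nice extra detail that the paper omits.
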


The following theorem plays a crucial role to prove Theorem \ref{thm:expwick almostsure}.

\begin{thm}\label{thm:asconv}
Let $|\alpha|<\sqrt{8\pi}$.
Then, there exist positive constants $c$ and $C$ such that
\begin{equation}\label{eq:thmasconv-00}
\mathbb{E}
\big [ \big | \big \langle f,\GMC_{N+1}^{(\alpha)} \big \rangle 
- \big \langle f,\GMC_N^{(\alpha)} \big \rangle \big | 
\big]
\le C \|f\|_{C(\Lambda)}2^{-c N}
\end{equation}
for any $N\in {\mathbb N}$ and $f\in C(\Lambda )$.
\end{thm}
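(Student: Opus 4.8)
\smallskip
\noindent\emph{Strategy.}
Since $\mathbb{E}[\langle f,\GMC^{(\alpha)}_{N+1}\rangle]=\mathbb{E}[\langle f,\GMC^{(\alpha)}_{N}\rangle]=\int_\Lambda f\,{\dd}x$, the left-hand side of \eqref{eq:thmasconv-00} is a variance-type quantity; the crux is that in the genuinely $L^1$-range $4\pi\le\alpha^2<8\pi$ one has $\mathbb{E}[\langle f,\GMC^{(\alpha)}_N\rangle^2]\to\infty$, and moreover $\mathbb{E}[\,|\GMC^{(\alpha)}_{N+1}(x)-\GMC^{(\alpha)}_N(x)|\,]$ does \emph{not} tend to $0$ pointwise (it is $\Theta(1)$), so the bound must be produced by spatial cancellation. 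The plan, in the spirit of \cite{Ber17}, is to insert a truncation of $\GMC^{(\alpha)}_N$ to ``non-thick'' points, for which a second-moment computation converges in the full regime, but to set it up with a \emph{receding} coarse cutoff so that the truncation error still decays in $N$. Since $\mu_0$ is symmetric we have $-\GFF\overset{d}{=}\GFF$, so we may assume $\alpha>0$; by homogeneity we take $\|f\|_{C(\Lambda)}=1$.

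\smallskip
\noindent\emph{Truncation and the truncation error.}
Fix $\lambda\in\bigl(\tfrac{\alpha\log 2}{2\pi},\tfrac{\alpha\log 2}{\pi}\bigr)$ and exponents $0<\theta_1<\theta_2<1$ to be chosen, put $L:=\lfloor\theta_1 N\rfloor$, $m:=\lfloor\theta_2 N\rfloor$, and for $x\in\Lambda$ set
\[
T_N(x):=\bigl\{\GFF_j(x)\le\lambda j\ \text{for all integers}\ j\in[L,m]\bigr\},\qquad
\GMC^{(\alpha),\flat}_N(x):=\GMC^{(\alpha)}_N(x)\,\mathbf{1}_{T_N(x)}.
\]
Because $T_N$ constrains only dyadic scales coarser than $2^{-m}\ (\ge 2^{-N})$, the same event serves $\GMC^{(\alpha),\flat}_N$ and $\GMC^{(\alpha),\flat}_{N+1}$, and we split
\[
\langle f,\GMC^{(\alpha)}_{N+1}-\GMC^{(\alpha)}_N\rangle
=\bigl\langle f,\mathbf{1}_{T_N(\cdot)}\bigl(\GMC^{(\alpha)}_{N+1}-\GMC^{(\alpha)}_N\bigr)\bigr\rangle
+\bigl\langle f,\bigl(\GMC^{(\alpha)}_{N+1}-\GMC^{(\alpha)}_N\bigr)\mathbf{1}_{T_N(\cdot)^c}\bigr\rangle.
\]
For the second (truncation-error) term I would bound $|\langle f,\mu\rangle|\le\|f\|_{C(\Lambda)}\|\mu\|_{TV}$ and $\GMC^{(\alpha),\flat}_a\le\GMC^{(\alpha)}_a$; by the Cameron--Martin identity of Proposition \ref{prop:Gauss1}, tilting by $\GMC^{(\alpha)}_a(x)$ shifts $\GFF_j(x)$ by $\alpha G_{j,a}(x,x)=\tfrac{\alpha j\log 2}{2\pi}+O(1)$ (by \eqref{estimate:GN1} on the diagonal), so this term is controlled by $\int_\Lambda\mathbb{P}\bigl(\exists\,j\in[L,m]:\ \GFF_j(x)>(\lambda-\tfrac{\alpha\log 2}{2\pi})j-O(1)\bigr)\,{\dd}x$. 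A union bound over $j$ and a Gaussian tail estimate (recall $\GFF_j(x)\sim\mathcal{N}(0,\tfrac{j\log 2}{2\pi}+O(1))$ and $\lambda>\tfrac{\alpha\log 2}{2\pi}$) bound this by $\lesssim 2^{-c_1L}=2^{-c_1\theta_1 N}$, uniformly in $N$; the receding cutoff $L=\lfloor\theta_1N\rfloor\to\infty$ is exactly what makes it decay.

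\smallskip
\noindent\emph{The bulk term.}
Writing $\Delta_N:=\GMC^{(\alpha)}_{N+1}-\GMC^{(\alpha)}_N$, I would estimate $\mathbb{E}[\langle f,\mathbf{1}_{T_N(\cdot)}\Delta_N\rangle^2]$ by expanding the square and applying Proposition \ref{prop:Gauss1} to each double tilt $e^{\alpha\GFF_a(x)+\alpha\GFF_b(y)}$, obtaining
\[
\mathbb{E}\bigl[\langle f,\mathbf{1}_{T_N(\cdot)}\Delta_N\rangle^2\bigr]
=\iint_{\Lambda^2}f(x)f(y)\!\!\sum_{a,b\in\{N,N+1\}}\!\!(-1)^{\mathbf{1}_{a=N}+\mathbf{1}_{b=N}}\,
e^{\alpha^2 G_{a,b}(x,y)}\,\mathbb{P}\bigl(\widetilde T^{a,b}_N(x)\cap\widetilde T^{a,b}_N(y)\bigr)\,{\dd}x{\dd}y,
\]
where $\widetilde T^{a,b}_N(x)$ is $T_N(x)$ with $\GFF_j(x)$ shifted by $\alpha G_{j,a}(x,x)+\alpha G_{j,b}(x,y)$. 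I would split $\Lambda^2$ into $\{|x-y|<2^{-m}\}$, $\{2^{-m}\le|x-y|\le 2^{-L}\}$, $\{|x-y|>2^{-L}\}$ and use four ingredients: \emph{(i)} for $\lambda$ close enough to $\tfrac{\alpha\log 2}{2\pi}$ the two-point tilt shifts $\GFF_j(x)$ by $\approx\tfrac{\alpha j\log 2}{\pi}>\lambda j$ at scales $2^{-j}\gtrsim|x-y|$, so a Gaussian barrier estimate gives $\mathbb{P}(\widetilde T^{a,b}_N(x)\cap\widetilde T^{a,b}_N(y))\lesssim|x-y|^{\eta}$ with $\eta$ arbitrarily close to $\tfrac{\alpha^2}{4\pi}$, hence $e^{\alpha^2 G_{a,b}}\mathbb{P}(\cdots)\lesssim|x-y|^{-\alpha^2/2\pi+\eta}$ which is integrable on $\{2^{-m}\le|x-y|\}$ precisely because $\alpha^2<8\pi$ forces $\eta>\tfrac{\alpha^2}{2\pi}-2$; \emph{(ii)} the $L^q$-strengthening of \eqref{estimate:GN2} (Proposition \ref{propA.G}), $\|G_{a,b}-G_{N,N}\|_{L^q(\Lambda^2)}\lesssim 2^{-\theta_q N}$ for all $q<\infty$, which controls the difference-of-$e^{\alpha^2 G}$ contributions by H\"older against $|x-y|^{-\alpha^2/2\pi+\eta}\in L^{q'}$; \emph{(iii)} since $\widetilde T^{a,b}_N$ uses no scale finer than $2^{-m}$, Hypothesis \ref{hypo on psi}(ii),(iii) give $\sup_{j\le m}\|G_{j,N+1}-G_{j,N}\|_{C(\Lambda\times\Lambda)}\lesssim 2^{-(1-\theta_2)\zeta' N}$ for some $\zeta'>0$ depending on $\kappa,\zeta$, so the four events differ only by a barrier shift of that order and a density bound yields $|\mathbb{P}(\widetilde T^{a,b}_N\cap\cdots)-\mathbb{P}(\widetilde T^{N,N}_N\cap\cdots)|\lesssim 2^{-(1-\theta_2)\zeta' N}|x-y|^{\eta-\varepsilon}$; \emph{(iv)} on $\{|x-y|<2^{-m}\}$ the truncation already forces $\mathbb{P}(\widetilde T^{a,b}_N(x)\cap\cdots)\lesssim 2^{-\eta m}$ while $e^{\alpha^2 G_{a,b}}\lesssim 2^{N\alpha^2/2\pi}$ on a region of area $\asymp 2^{-2m}$, so this part is $\lesssim 2^{N(\alpha^2/2\pi-(\eta+2)\theta_2)}$, a negative power of $2$ once $\theta_2>\tfrac{\alpha^2/2\pi}{\eta+2}$ --- compatible with $\theta_2<1$ exactly because $\alpha^2<8\pi$. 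Using $\sum_{a,b}(-1)^{\mathbf{1}_{a=N}+\mathbf{1}_{b=N}}=0$ to cancel the leading $e^{\alpha^2 G_{N,N}}\mathbb{P}^{N,N}$ term and bounding the remaining differences by (i)--(iv) gives $\mathbb{E}[\langle f,\mathbf{1}_{T_N(\cdot)}\Delta_N\rangle^2]\lesssim 2^{-c_2N}$ for suitable $\theta_1<\theta_2$, $\lambda$, $q$; by Cauchy--Schwarz the bulk term is $\lesssim 2^{-c_2N/2}$ in $L^1(\mathbb{P})$, and together with the truncation-error bound this yields \eqref{eq:thmasconv-00}.

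\smallskip
\noindent\emph{Main obstacle.}
The real difficulty is a tension in the design of the truncation. To make the second moment of the truncated object finite in the full regime $\alpha^2<8\pi$ one must cut at fine scales and keep the slope $\lambda$ below $\tfrac{\alpha\log 2}{\pi}$ (Berestycki's trick); but to make the truncation \emph{error} decay one needs the coarse cutoff $2^{-L}$ to recede to $0$, i.e. $L=L(N)\to\infty$, and this reintroduces a growing factor $\asymp 2^{L(\alpha^2/2\pi-2)^+}$ in the off-diagonal part of the $L^2$-integral, where the truncation does not bite, that must be beaten by the rate $2^{-\theta_q N}$ coming from \eqref{estimate:GN2}. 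Balancing these through the two-parameter scaling $L\asymp\theta_1 N\ll m\asymp\theta_2 N$, together with the $L^q$-improvement of \eqref{estimate:GN2} for large $q$ and the scale-separation bookkeeping of the four Girsanov-shifted truncation events (controllable only because none involves a scale finer than $2^{-\theta_2 N}$), is the technical heart of the argument.
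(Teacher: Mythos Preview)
Your overall architecture --- receding coarse cutoff, Girsanov for the truncation error, $L^2$ on the truncated bulk via the two-point Girsanov and a near-/far-diagonal split --- is exactly the paper's strategy, which in turn refines \cite{Ber17}. But there is a genuine gap in step~(iii), and it is precisely the point the paper singles out as requiring a modification of Berestycki's argument.

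With indicator truncation, your claim
\[
\bigl|\mathbb{P}\bigl(\widetilde T^{a,b}_N(x)\cap\widetilde T^{a,b}_N(y)\bigr)-\mathbb{P}\bigl(\widetilde T^{N,N}_N(x)\cap\widetilde T^{N,N}_N(y)\bigr)\bigr|\lesssim 2^{-(1-\theta_2)\zeta'N}\,|x-y|^{\eta-\varepsilon}
\]
does not follow from ``a density bound''. A union bound over scales $j\in[L,m]$ plus the Gaussian density gives only $|\mathbb{P}^{a,b}-\mathbb{P}^{N,N}|\lesssim m\cdot 2^{-(1-\theta_2)\zeta'N}$, \emph{without} the barrier factor $|x-y|^{\eta-\varepsilon}$. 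To keep the barrier decay you would need a conditional density bound for $\GFF_j(x)$ given the remaining $(\GFF_{j'}(x),\GFF_{j'}(y))_{j'\neq j}$, and for general multipliers $\psi$ the family $\{\GFF_j\}$ is correlated across $j$ with no lower bound on conditional variances. Without the $|x-y|^{\eta-\varepsilon}$ factor, the contribution of step~(iii) over $\{2^{-m}\le|x-y|\le 2^{-L}\}$ picks up $\int_{|x-y|\ge 2^{-m}}|x-y|^{-\alpha^2/2\pi}\asymp 2^{\theta_2 N(\alpha^2/2\pi-2)^+}$; since your step~(iv) forces $\theta_2>\tfrac{2\alpha^2}{\alpha^2+8\pi}\to 1$ as $\alpha^2\uparrow 8\pi$, this growing factor is not beaten by $2^{-(1-\theta_2)\zeta'N}$.

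The paper avoids this in two coupled ways. First, it replaces the indicator by a Lipschitz cutoff $\chi_\delta$; the product difference then telescopes and each summand is bounded \emph{pointwise} by $\delta^{-1}|G_{m_0,N+1}(x,y)-G_{m_0,N}(x,y)|$, so no conditional density estimate is needed and the $L^1$ bound \eqref{estimate:GN2} suffices. Second, it takes the truncation all the way to scale $i\in\{N,N+1\}$ while putting the coarse cutoff at $\delta^3 N$ with $\delta$ \emph{small}: in the near-diagonal region each $I_{i,j}$ already decays (barrier at scale $n_{x,y}$), and in the far region $\{|x-y|\ge 2^{-\delta^3N}\}$ the singularity $|x-y|^{-\alpha^2/2\pi}\lesssim 2^{\delta^3N\alpha^2/2\pi}$ is mild enough to be killed by $2^{-\theta N}$ without any barrier input. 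The Lipschitz substitution is listed explicitly in the paper as one of the two essential modifications of \cite{Ber17}; your ``density bound'' is exactly the step it is designed to replace.
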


\begin{proof}
Our proof is based on the same spirit as \cite[Sections 3 and 4]{Ber17}.
It is well-known that the limiting measure $\GMC^{(\alpha)}$ must be supported on the points $x$ such that 
$$
\lim_{N\to\infty}\frac{\GFF_N(x)}{C_N}=\alpha,
$$
called {\it{$\alpha$-thick points}}.
An essential point of \cite{Ber17} is to decompose $\GMC_N^{(\alpha)}$ into two parts:
\begin{align*}
\GMC_N^{<}(x):=\GMC_N^{(\alpha)}(x)\prod_{n_0\le n\le N}{\bf 1}_{\{\GFF_n(x)\le\alpha(1+\delta)C_n\}},\qquad
\GMC_N^{>}(x):=\GMC_N^{(\alpha)}(x)-\GMC_N^{<}(x),
\end{align*}
for some fixed $n_0\in\mathbb{N}$ and $\delta>0$.
Then, $L^1$ contribution of $\GMC_N^{>}$ can be eliminated, while $\GMC_N^{<}$ has a good control in $L^2$ depending on the choice of $n_0$ and $\delta$.
However, we need the following modifications to obtain the stronger estimate \eqref{eq:thmasconv-00}.
\begin{itemize}
\item Let $n_0=\delta^3N$ be a \emph{variable} depending on $\delta$ and $N$.
\item Replace the indicator function ${\bf1}$ with some Lipschitz function.
\end{itemize}

Now we start the proof of this theorem.
Denote by $\tilde{B}(x,r)$ the open ball in $\Lambda$ centered at $x$ and with radius $r$ under the canonical metric of $\Lambda$.
It is sufficient to show \eqref{eq:thmasconv-00} for $f\in C(\Lambda)$ with ${\rm supp}f \subset \tilde{B}(0, 1/2)$.
Indeed, we obtain the assertion for general $f\in C(\Lambda)$, once we apply the finite decomposition $f=\sum_kf_k$ with $f_k$ supported in some ball $\tilde{B}(x_k,1/2)$ and the shift invariance of the law of $\GMC_N^{(\alpha)}$.
Hence we assume $|x|\vee|y|<1/2$ throughout this proof.

As introduced in Section \ref{sec:estimate of G}, we set $G_{M,N}(x,y)=\mathbb{E}[\GFF_M(x)\GFF_N(y)]$ and set $C_{M,N}=G_{M,N}(x,x)$
for $M,N \in \mathbb N$.
By the estimate \eqref{estimate:GN1}, for any $x,y\in\mathbb{R}^2$ with $|x|\vee|y|<1/2$ and any $M,N\in\mathbb{N}$ with $M\le N$, we have 
\begin{align*}
G_{M,N}(x,y)=-\frac1{2\pi}\log\left(|x-y|\vee2^{-M}\right)+O(1),\qquad
C_{M,N}=\frac{M}{2\pi}\log2+O(1).
\end{align*}
These yield the following: for any sufficiently small $\delta>0$, there exists an integer $N_\delta'$ depending on $\delta$ such that, for any $N_\delta'\le M\le N$ and $|x|\vee|y|<1/2$
\begin{align}\label{eq:aymp c and g 00}
\frac1{\tilde C_M}\le\delta^3,\qquad
\left|\frac{C_{M,N}-\tilde C_M}{\tilde C_M}\right|\le \delta^3,\qquad
\left|\frac{G_{M,N}(x,y)-\tilde G_M(x,y)}{\tilde C_M}\right|\le \delta^3,
\end{align}
where
\begin{align*}
\tilde C_M:=\frac{M}{2\pi}\log2,\qquad
\tilde G_M(x,y):=-\frac1{2\pi}\log\left(|x-y|\vee2^{-M}\right) .
\end{align*}
The parameter $\delta$ is to be chosen later, as a sufficiently small number compared with $1-\alpha^2/8\pi$ and the exponent $\theta$ in the estimate \eqref{estimate:GN2}.

Furthermore, let $\chi _\delta$ be a function on $\mathbb{R}$ such that
\begin{align*}
\chi _\delta (\tau)=
\left\{
\begin{aligned}
&1,&\qquad&\tau \le\delta,\\
&-\tau/\delta+2,&\qquad&\delta\le\tau\le2\delta,\\
&0,&\qquad&\tau \ge2\delta.
\end{aligned}
\right.
\end{align*}
Then we define for each $N,i\in\mathbb{N}$ such that $N\le i$ (actually we will let $i=N$ or $N+1$),
\begin{align*}
\GMC_{N,i}^<(x)&:=\GMC_{i}^{(\alpha)}(x)
\prod_{\delta^3N \le n\le i}\chi _\delta \left(\frac{\GFF_n(x)-\alpha C_{n,i}}{\alpha \tilde C_n}\right),
\qquad
\GMC_{N,i}^>(x):=\GMC_i^{(\alpha)} (x)
-\GMC_{N,i}^<(x).
\end{align*}
Let $N_\delta$ be an integer such that $N_\delta\ge N_\delta'/\delta^3$.
From \eqref{eq:aymp c and g 00} we have that, if $N\ge N_\delta$, then for any integers $m,n$ with $\delta^3N\le m\le n$ and $|x|\vee|y|<1/2$,
\begin{align}\label{eq:aymp c and g}
\frac1{\tilde C_m}\le\delta^3,\qquad
\left|\frac{C_{m,n}-\tilde C_m}{\tilde C_m}\right|\le \delta^3,\qquad
\left|\frac{G_{m,n}(x,y)-\tilde G_m(x,y)}{\tilde C_m}\right|\le \delta^3.
\end{align}
We assume $N\ge N_\delta$ throughout this proof, and decompose
$$
\GMC_{N+1}^{(\alpha)}-\GMC_N^{(\alpha)}
=(\GMC_{N,N+1}^<-\GMC_{N,N}^<)+(\GMC_{N,N+1}^>-\GMC_{N,N}^>).
$$

\medskip

\noindent
{\bf(1) The terms $\GMC_{N,N+1}^>$ and $\GMC_{N,N}^>$.}
For any fixed $i\in\{N,N+1\}$ and $x\in\Lambda$, we apply Proposition \ref{prop:Gauss1} to the $(i-[\delta^3N]+1)$-dimensional random vector $X=(\GFF_n(x))_{\delta^3N\le n\le i}$ and a fixed vector $a=(0,\dots,0,\alpha)$. Then, since $Va=(\alpha C_{n,i})_{\delta^3N\le n\le i}$, we have
\begin{align*}
\mathbb{E}[\GMC_{N,i}^>(x)]
&=\mathbb{E}\Bigg[e^{a\cdot X-a\cdot Va/2}
\Bigg\{1-\prod_{\delta^3N \le n\le i}\chi _\delta \left(\frac{\GFF_n(x)-\alpha C_{n,i}}{\alpha \tilde C_n}\right)\Bigg\}\Bigg]\\
&=\mathbb{E}\Bigg[1-\prod_{\delta^3N \le n\le i}\chi _\delta \left(\frac{\GFF_n(x)}{\alpha \tilde C_n}\right)\Bigg]\\
&\leq \sum_{\delta^3N\le n\le i}\mathbb{E}\left[1-\chi _\delta \left(\frac{\GFF_n(x)}{\alpha \tilde C_n}\right)\right]\\
&\le \sum_{\delta^3N\le n\le i}\mathbb{P}\left(\GFF_n(x)\ge\delta\alpha \tilde C_n\right),
\end{align*}
where we used the elementary inequality 
$$
1-\prod_{n=1}^Ka_n\le\sum_{n=1}^K(1-a_n), \qquad a_1,\dots,a_K\in[0,1].
$$
Since $\GFF_n(x)$ has a variance $C_{n,n}$ and \eqref{eq:aymp c and g} implies $C_{n,n}=(1+o(\delta))\tilde C_n$,
we have by the tail estimate of the normal distribution,
\begin{align*}
\mathbb{E}[\GMC_{N,i}^>(x)]
&\le \sum_{\delta^3N\le n\le i}
\mathbb{P}\left(\frac{\GFF_n(x)}{\sqrt{C_{n,n}}}\ge \alpha(\delta+o(\delta))\sqrt{{\tilde C}_n} \right)\\
&\le C_\delta\sum_{\delta^3N\le n\le i} e^{-\alpha^2(\delta+o(\delta))^2\tilde C_n/2}
\le C_\delta'2^{-\alpha^2(\delta+o(\delta))^2 \delta^3N/4\pi}
\end{align*}
for some positive constants $C_\delta$ and $C_\delta'$ depending on $\delta$.
Therefore, we obtain the exponential decay \eqref{eq:thmasconv-00} for $\mathbb{E}\left[\vert\langle f,\GMC_{N,N+1}^>\rangle-\langle f,\GMC_{N,N}^>\rangle\vert\right]$.
\medskip

\noindent
{\bf(2) The difference $\GMC_{N,N+1}^<-\GMC_{N,N}^<$.}
We actually show the stronger estimate
\begin{equation*}\label{eq:thmasconv-00L2}
\mathbb{E} \left [
\left| \left\langle f, \GMC_{N,N+1}^< \right\rangle 
- \left\langle f, \GMC_{N,N}^<  \right\rangle \right|^2 \right ]
\le C_\delta\|f\|_{C(\Lambda)}^22^{-c_\delta N}
\end{equation*}
than \eqref{eq:thmasconv-00} with replacement of $\GMC_{N+1}^{(\alpha )}$ and $\GMC _N^{(\alpha )}$ by $\GMC_{N,N+1}^<$ and $\GMC_{N,N}^<$, respectively.
We write the expectation as the form $\iint_{\Lambda^2}f(x)f(y)\mathfrak{M}_N(x,y)
\hspace{0.5mm} {\dd}x \hspace{0.2mm} {\dd}y$, where
$$
\mathfrak{M}_N(x,y)
=\mathbb{E}\left[ 
\left( \GMC_{N,N+1}^<(x)-\GMC_{N,N}^<(x) \right) 
\left( \GMC_{N,N+1}^<(y)-\GMC_{N,N}^<(y)  \right) \right],
$$
and consider the integral
\begin{equation}\label{eq:thmasconv-00M}
\iint_{|x|\vee|y|<1/2}\left|\mathfrak{M}_N(x,y)\right|
\hspace{0.5mm} {\dd}x \hspace{0.2mm} {\dd}y.
\end{equation}
Moreover, we decompose the integrand by
\begin{align*}
\mathfrak{M}_N(x,y)=I_{N+1,N+1}(x,y) - I_{N+1,N}(x,y) -I_{N,N+1}(x,y) +I_{N,N}(x,y),
\end{align*}
where $I_{i,j}(x,y):=\mathbb{E}[\GMC_{N,i}^<(x)\GMC_{N,j}^<(y)]$ ($i,j=N, N+1$).
For any fixed $x,y\in\Lambda$, we apply Proposition \ref{prop:Gauss1} to the multidimensional Gaussian random variable
\begin{align*}
\mathfrak{X}=\Big((\GFF_n(x))_{\delta^3N\le n\le i},(\GFF_m(y))_{\delta^3N\le m\le j}\Big)
\end{align*}
and a fixed vector $a\in\mathbb{R}^{(i-[\delta^3N] +1)+(j-[\delta^3N] +1)}$ such that $a\cdot\mathfrak{X}=\alpha(\GFF_i(x)+\GFF_j(y))$. Since the covariance matrix $V$ of $\mathfrak{X}$ is given by
\begin{align*}
Va&=\alpha\Big((C_{n,i}+G_{n,j}(x,y))_{\delta^3N\le n\le i},(G_{m,i}(x,y)+C_{m,j})_{\delta^3N\le m\le j}\Big),\\
a\cdot Va&=\alpha^2 (C_{i,i}+C_{j,j}+2G_{i,j}(x,y)),
\end{align*}
Proposition \ref{prop:Gauss1} yields
\begin{align*}
&I_{i,j}(x,y)\\
&=e^{\alpha^2G_{i,j}(x,y)}\mathbb{E}\bigg[
e^{a\cdot\mathfrak{X}-a\cdot Va/2}
\prod_{\delta^3N \le n\le i}\chi _\delta \left(\frac{\GFF_n(x)-\alpha C_{n,i}}{\alpha \tilde C_n}\right)
\prod_{\delta^3N \le m\le j}\chi _\delta \left(\frac{\GFF_m(x)-\alpha C_{m,j}}{\alpha \tilde C_m}\right)
\bigg]\\
&=e^{\alpha^2G_{i,j}(x,y)}
\mathbb{E} \bigg [ 
\prod_{\delta^3N\le n\le i}\chi _\delta \left(\frac{\GFF_n(x)+\alpha G_{n,j}(x,y)}{\alpha \tilde C_n}\right)
\prod_{\delta^3N\le m\le j}\chi _\delta \left(\frac{X_m(y)+\alpha G_{m,i}(x,y)}{\alpha \tilde C_m}\right)
\bigg ].
\end{align*}
We decompose the integral \eqref{eq:thmasconv-00M} into the two regions
$$
|x-y|<2^{-\delta^3N},\qquad
2^{-\delta^3N}\le |x-y|<1.
$$

\medskip

\noindent
{\bf (2-1) The integral over $|x-y|<2^{-\delta^3N}$.}
We estimate each $I_{i,j}$ ($i,j=N,N+1$) separately. Assume $i\le j$ without loss of generality. 
We further decompose the integral into two regions
$$
|x-y|<2^{-i},\qquad
2^{-i}\le |x-y|<2^{-\delta^3N}.
$$

\noindent
{\bf (2-1-1) The integral over $|x-y|<2^{-i}$.}
Since $\chi_\delta\le1$,
\begin{align*}
I_{i,j}(x,y)\le e^{\alpha^2G_{i,j}(x,y)}\mathbb{E}\left[\chi _\delta \left(\frac{\GFF_i(x)+\alpha G_{i,j}(x,y)}{\alpha \tilde C_i}\right)\right].
\end{align*}
Since $|x-y|<2^{-i}$, \eqref{eq:aymp c and g} implies that 
$$
G_{i,j}(x,y)=\tilde G_i(x,y)+o(\delta)\tilde C_i=(1+o(\delta))\tilde C_i.
$$
Hence we have
\begin{align*}
\mathbb{E}\left[\chi _\delta \left(\frac{\GFF_i(x)+\alpha G_{i,j}(x,y)}{\alpha \tilde C_i}\right)\right]
&\le\mathbb{P}\left(\GFF_i(x)\le(-1+2\delta+o(\delta))\alpha \tilde C_i\right)\\
&\lesssim e^{-(1+O(\delta))\alpha^2\tilde C_i/2}
\lesssim2^{-(1+O(\delta))\alpha^2N/4\pi}.
\end{align*}
Since $e^{\alpha^2G_{i,j}(x,y)}\lesssim |x-y|^{-\alpha^2/2\pi} \lesssim 2^{\alpha^2N/2\pi}$ by the estimate \eqref{estimate:GN1},
we obtain
\begin{align*}
\iint_{|x|\vee|y|<1/2,\, |x-y|<2^{-i}}I_{i,j}(x,y)
\hspace{0.5mm} {\dd}x \hspace{0.2mm} {\dd}y
&\lesssim \iint_{|x|\vee|y|<1/2,\, |x-y|<2^{-i}}2^{(1+O(\delta))\alpha^2N/4\pi}
\hspace{0.5mm} {\dd}x \hspace{0.2mm} {\dd}y
\\
&\lesssim 2^{N\left({\alpha^2}/{4\pi}-2+O(\delta)\right)}.
\end{align*}
This decays exponentially if $\alpha^2<8\pi$ and $\delta$ is chosen sufficiently small.
\medskip

\noindent
{\bf (2-1-2) The integral over $2^{-i}\le|x-y|<2^{-\delta^3N}$.}
The argument is similar to {\bf (2-1-1)}.
For any $x,y$ in this region, 
there exists an integer $n_{x,y}\in[\delta^3N,i]$ satisfying $2^{-n_{x,y}}\le|x-y|<2^{-n_{x,y}+1}$.
For such $n_{x,y}$, we have
\begin{align*}
I_{i,j}(x,y)\le e^{\alpha^2G_{i,j}(x,y)}\mathbb{E}\left[\chi _\delta \left(\frac{\GFF_{n_{x,y}}(x)+\alpha G_{n_{x,y},j}(x,y)}{\alpha \tilde C_{n_{x,y}}}\right)\right].
\end{align*}
Since \eqref{eq:aymp c and g} implies
\begin{align*}
G_{n_{x,y},j}(x,y)&=\tilde G_{n_{x,y}}(x,y)+o(\delta)\tilde C_{n_{x,y}}
=(1+o(\delta))\tilde C_{n_{x,y}},
\end{align*}
similarly to the argument in {\bf (2-1-1)} we have
\begin{align*}
\mathbb{E}\bigg[\chi _\delta \bigg(\frac{\GFF_{n_{x,y}}(x)+\alpha G_{{n_{x,y}},j}(x,y)}{\alpha \tilde C_{n_{x,y}}}\bigg)
\bigg]
&\le\mathbb{P}\left(\GFF_{n_{x,y}}(x)\le(-1+2\delta+o(\delta)) \alpha\tilde C_{n_{x,y}}\right)\\
&\lesssim e^{-(1+O(\delta))\alpha^2\tilde C_{n_{x,y}}/2}\\
&\lesssim2^{-(1+O(\delta))\alpha^2n_{x,y}/4\pi}
\asymp |x-y|^{(1+O(\delta))\alpha^2/4\pi}.
\end{align*}
On the other hand, by the estimate \eqref{estimate:GN1}, $e^{\alpha^2G_{i,j}(x,y)}\lesssim |x-y|^{-\alpha^2/2\pi}$.
Hence we have
\begin{align*}
&
\iint_{|x|\vee|y|<1/2,\,2^{-i}\le|x-y|<2^{-\delta^3N}}
I_{i,j}(x,y)
\hspace{0.5mm} {\dd}x \hspace{0.2mm} {\dd}y
\\
&\lesssim \iint_{|x|\vee|y|<1/2,\,2^{-i}\le|x-y|<2^{-\delta^3N}}
|x-y|^{-{\alpha^2}/{4\pi}+O(\delta)}
\hspace{0.5mm} {\dd}x \hspace{0.2mm} {\dd}y
\\
&\lesssim \int_{2^{-i}\le|x|<2^{-\delta^3N}}|x|^{-\alpha^2/4\pi+O(\delta)}
\hspace{0.5mm} {\dd}x \\
&\lesssim \int _{2^{-i}}^{2^{-\delta^3N}} r^{-{\alpha^2}/{4\pi}+1+O(\delta)} 
\hspace{0.5mm} {\dd}r
\\
&\lesssim 2^{\delta^3N\left({\alpha^2}/{4\pi}-2+O(\delta)\right)}
\end{align*}
if $\alpha^2<8\pi$.
This decays exponentially if $\delta$ is chosen sufficiently small.
\medskip

\noindent
{\bf (2-2) The integral over $|x-y|\ge2^{-\delta^3N}$.}
We have to consider combinations of $I$ terms.
We consider only $I_{N+1,N}-I_{N,N}$, since the other difference $I_{N+1,N+1}-I_{N,N+1}$ is estimated by a similar way.
For simplicity, we write
$$
\chi_n^j(x)=\chi _\delta \left(\frac{\GFF_n(x)+\alpha G_{n,j}(x,y)}{\alpha \tilde C_n}\right),\quad
\chi_m^i(y)=\chi _\delta \left(\frac{\GFF_m(y)+\alpha G_{m,i}(x,y)}{\alpha \tilde C_n}\right) .
$$
Now we decompose
\begin{align*}
&I_{N+1,N}(x,y)-I_{N,N}(x,y)\\
&=e^{\alpha^2G_{N+1,N}(x,y)}\mathbb{E}\bigg[\prod_{\delta^3N\le n\le N+1}\chi_n^N(x)
\prod_{\delta^3N\le m\le N}\chi_m^{N+1}(y)\bigg]\\
&\quad-e^{\alpha^2G_{N,N}(x,y)}\mathbb{E}\bigg[\prod_{\delta^3N\le n\le N}\chi_n^N(x)
\prod_{\delta^3N\le m\le N}\chi_m^{N}(y)\bigg]\\
&=\left(e^{\alpha^2G_{N+1,N}(x,y)}-e^{\alpha^2G_{N,N}(x,y)}\right)
\mathbb{E}\bigg[\prod_{\delta^3N\le n\le N+1}\chi_n^{N}(x)\prod_{\delta^3N\le m\le N}\chi_m^{N+1}(y)\bigg]\\
&\quad+e^{\alpha^2G_{N,N}(x,y)}
\mathbb{E}\bigg[\left(\chi_{N+1}^N(x)-1\right)\prod_{\delta^3N\le n\le N}\chi_n^{N}(x)\prod_{\delta^3N\le m\le N}\chi_m^{N+1}(y)\bigg]\\
&\quad+e^{\alpha^2G_{N,N}(x,y)}
\mathbb{E}\bigg [ \sum_{\delta^3N\le m_0\le N}
\Big \{ 
\Big(\prod_{\delta^3N\le n\le N}\chi_n^{N}(x)\Big)
\left(\chi_{m_0}^{N+1}(y)-\chi_{m_0}^N(y)\right)
\\
&\hspace{50mm}
\times
\prod_{\delta^3N\le m<m_0}\chi_m^{N}(y)
\prod_{m_0<m\le N}\chi_m^{N+1}(y)
\Big \}  \bigg ]\\
&=:J_1(x,y)+J_2(x,y)+J_3(x,y).
\end{align*}
In the region $|x-y|\ge2^{-\delta^3N}$, we have no choice but to do
\begin{align*}
\prod_{\delta^3N\le n\le N}\chi_n^N(z)\le1.
\end{align*}
However, we can use the estimate \eqref{estimate:GN2}. Indeed,
\begin{align*}
&\big|e^{\alpha^2G_{N+1,N}(x,y)}-e^{\alpha^2G_{N,N}(x,y)}\big|\\
&\lesssim |G_{N+1,N}(x,y)-G_{N,N}(x,y)|
\left(e^{\alpha^2G_{N+1,N}(x,y)}\vee e^{\alpha^2G_{N,N}(x,y)}\right)\\
&\lesssim |G_{N+1,N}(x,y)-G_{N,N}(x,y)|\, \cdot |x-y|^{-\alpha^2/2\pi}
\end{align*}
and
\begin{align*}
&\left|\chi^{N+1}_{m_0}(y)-\chi^N_{m_0}(y)\right|\\
&=\left|\chi _\delta \left(\frac{\GFF_{m_0}(y)+\alpha G_{m_0,N+1}(x,y)}{\alpha \tilde C_{m_0}}\right)
-\chi _\delta \left(\frac{\GFF_{m_0}(y)+\alpha G_{m_0,N}(x,y)}{\alpha \tilde C_{m_0}}\right)\right|
\\ &
\lesssim_\delta \left|G_{m_0,N+1}(x,y)-G_{m_0,N}(x,y)\right|.
\end{align*}
Hence by the estimate \eqref{estimate:GN2} we have
\begin{align*}
&\iint_{|x|\vee|y|<1/2,\,|x-y|\ge2^{-\delta^3N}}\left(|J_1(x,y)|+|J_3(x,y)|\right)
 {\dd}x \hspace{0.2mm} {\dd}y
\\
&\lesssim \sum_{\delta^3N\le m_0\le N}
\iint_{|x|\vee|y|<1/2,\,|x-y|\ge2^{-\delta^3N}}
|G_{m_0,N+1}(x,y)-G_{m_0,N}(x,y)|\,|x-y|^{-{\alpha^2}/{2\pi}}
\hspace{0.5mm} {\dd}x \hspace{0.2mm} {\dd}y
\\
&\lesssim 2^{\delta^3N\alpha^2/2\pi}\sum_{\delta^3N\le m_0\le N}
\iint_{|x|\vee|y|<1/2}|G_{m_0,N+1}(x,y)-G_{m_0,N}(x,y)|
\hspace{0.5mm} {\dd}x \hspace{0.2mm} {\dd}y
\\
&\lesssim N2^{N(\delta^3\alpha^2/2\pi-\theta)}.
\end{align*}
Since $\theta>0$, this decays exponentially if $\delta$ is chosen sufficiently small.

Finally we consider $J_2$.
The estimate \eqref{estimate:GN1} implies that for $2^{-\delta^3N}\le|x-y|\le1$,
\begin{align*}
\left|\frac{G_{N+1,N}(x,y)}{\tilde{C}_{N+1}}\right|
&\le\frac1{{\tilde C}_{N+1}}\left(-\frac1{2\pi}\log|x-y|+O(1)\right)\\
&\le\frac1{{\tilde C}_{N+1}}\left(\frac{\delta^3N}{2\pi}\log2 + O(1)\right)
\\
&
=o(\delta).
\end{align*}
Hence we have
\begin{align*}
\mathbb{E}\Big [ \left|\chi_{N+1}^{N}(x)-1\right| \Big ]
&=\mathbb{E}\bigg [ \left |
\chi _\delta \left (\frac{\GFF_{N+1}(x)+\alpha G_{N+1,N}(x,y)}{\alpha \tilde C_{N+1}}\right)
-1\right |
\bigg ]
\\
&\le \mathbb{P}\left(\GFF_{N+1}(x)+\alpha G_{N+1,N}(x,y)\ge \delta\alpha\tilde{C}_{N+1}\right)\\
&\le \mathbb{P}\left(\GFF_{N+1}(x)\ge(\delta+o(\delta))\alpha \tilde C_{N+1}\right) \\
&\lesssim_\delta e^{-(\delta+o(\delta))^2\alpha^2\tilde C_{N+1}/2} \\
&
\lesssim 2^{-(\delta+o(\delta))^2N\alpha^2/4\pi}.
\end{align*}
Therefore,
\begin{align*}
&\iint_{|x|\vee|y|<1/2,\, |x-y|\ge2^{-\delta^3N}} |J_2(x,y)|
\hspace{0.5mm} {\dd}x \hspace{0.2mm} {\dd}y
\\
&\lesssim \iint_{|x|\vee|y|<1/2,\, |x-y|\ge2^{-\delta^3N}}
|x-y|^{-\alpha^2/2\pi}2^{-(\delta+o(\delta))^2N\alpha^2/4\pi}
\hspace{0.5mm} {\dd}x \hspace{0.2mm} {\dd}y
\\
&
\lesssim 2^{-c_\delta N}
\end{align*}
with $c_\delta=(\delta+o(\delta))^2\alpha^2/4\pi-\delta^3\alpha^2/2\pi$. Since $c_\delta$ is positive for sufficiently small $\delta$, this completes the proof.
\end{proof}

\begin{cor}\label{cor:weak limit M}
For any $f\in C(\Lambda)$, the sequence $\{ \langle f,\GMC_N^{(\alpha)}\rangle \}_{N\in \mathbb N}$ 
converges almost surely and in $L^1({\mathbb P})$.
This limit is independent to the choice of $\psi$.
\end{cor}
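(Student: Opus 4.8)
The plan is to derive both assertions of Corollary \ref{cor:weak limit M} from the quantitative one-step bound of Theorem \ref{thm:asconv}. Fix $f\in C(\Lambda)$ and abbreviate $a_N:=\langle f,\GMC_N^{(\alpha)}\rangle$. By Theorem \ref{thm:asconv},
\[
\sum_{N=1}^{\infty}\mathbb{E}\big[\,|a_{N+1}-a_N|\,\big]
\le C\|f\|_{C(\Lambda)}\sum_{N=1}^{\infty}2^{-cN}<\infty ,
\]
so by Tonelli's theorem $\sum_{N\ge1}|a_{N+1}-a_N|<\infty$ $\mathbb P$-almost surely, and writing $a_N=a_1+\sum_{M=1}^{N-1}(a_{M+1}-a_M)$ shows that $(a_N)_N$ converges $\mathbb P$-almost surely. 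Since moreover $\mathbb{E}[\,|a_M-a_N|\,]\le C\|f\|_{C(\Lambda)}\sum_{k\ge M\wedge N}2^{-ck}$, the sequence $(a_N)_N$ is Cauchy in $L^1(\mathbb P)$, hence converges there as well, and by passing to a subsequence one sees that the $L^1$-limit coincides $\mathbb P$-almost surely with the pointwise one. This settles the convergence statements.

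For the independence of the limit on the choice of $\psi$, let $\psi^{(1)},\psi^{(2)}$ both satisfy Hypothesis \ref{hypo on psi}, write $\GFF_N^{(j)}:=P_N^{(j)}\GFF$, $C_N^{(j)}:=\mathbb{E}[\GFF_N^{(j)}(x)^2]$ and $\GMC_N^{(\alpha),j}:=\exp\big(\alpha\GFF_N^{(j)}-\tfrac{\alpha^2}{2}C_N^{(j)}\big)$ for $j=1,2$, and let $\GMC^{(\alpha),1},\GMC^{(\alpha),2}$ be the corresponding almost-sure limits obtained above. It suffices to prove that
\[
\lim_{N\to\infty}\mathbb{E}\big[\,\big|\langle f,\GMC_N^{(\alpha),1}\rangle-\langle f,\GMC_N^{(\alpha),2}\rangle\big|\,\big]=0
\qquad\text{for every }f\in C(\Lambda),
\]
since then $\langle f,\GMC^{(\alpha),1}\rangle=\langle f,\GMC^{(\alpha),2}\rangle$ $\mathbb P$-almost surely for each fixed $f$. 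To establish the displayed convergence I would rerun the proof of Theorem \ref{thm:asconv} with the single field $\GFF_n$ replaced by the pair $(\GFF_n^{(1)},\GFF_n^{(2)})$ and with the cutoff factors entering the thick-point decomposition $\GMC_{N,i}^{<}+\GMC_{N,i}^{>}$ built from $\GFF^{(1)}$ in the $x$-variable and from $\GFF^{(2)}$ in the $y$-variable. That proof uses only two structural facts, both of which have exact cross-analogues: (a) the logarithmic expansion \eqref{estimate:GN1}, valid for each mixed covariance $G_{M,N}^{ab}(x,y):=\mathbb{E}[\GFF_M^{(a)}(x)\GFF_N^{(b)}(y)]$, $a,b\in\{1,2\}$ (the argument of Appendix \ref{hypo on G app} invokes Hypothesis \ref{hypo on psi} only for each $\psi^{(a)}$ separately); and (b) a bound $\iint_{\Lambda\times\Lambda}\big|G_{M,N}^{a1}(x,y)-G_{M,N}^{a2}(x,y)\big|\,{\dd}x\,{\dd}y\le C2^{-cN}$, uniform over $M\le N$, playing the role of \eqref{estimate:GN2}---this holds because $\psi^{(1)}-\psi^{(2)}$ vanishes to order $|x|^{\zeta}$ at the origin while both $\psi^{(j)}$ decay like $|x|^{-2-\kappa}$ at infinity, so the low-frequency part of $G_{M,N}^{a1}-G_{M,N}^{a2}$ is $O(2^{-cN})$ and its high-frequency part has $L^1(\Lambda)$-mass $O(2^{-N})$ by Cauchy--Schwarz. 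Feeding (a) and (b) into steps (1), (2-1) and (2-2) of the proof of Theorem \ref{thm:asconv}, where the exponential gain is produced by the same Gaussian tail estimates for the $\chi_\delta$-factors via Proposition \ref{prop:Gauss1}, yields $\mathbb{E}[\,|\langle f,\GMC_N^{(\alpha),1}\rangle-\langle f,\GMC_N^{(\alpha),2}\rangle|\,]\le C\|f\|_{C(\Lambda)}2^{-cN}$, which is stronger than what we need.

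The substantive work thus sits entirely in Theorem \ref{thm:asconv}; granting that, the corollary is immediate, and the only point requiring genuine care is the robustness of the $\alpha$-thick-point comparison when the two approximating families come from different multipliers. I do not anticipate a new obstruction there: both families are smoothings of the \emph{same} Gaussian free field $\GFF$ with covariances governed by the same logarithmic singularity, so the Gaussian identity of Proposition \ref{prop:Gauss1}, the tail bounds for the $\chi_\delta$-factors, and the estimate \eqref{estimate:GN2} all carry over to the mixed setting, and the exponential rate survives the substitution.
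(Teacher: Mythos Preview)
Your argument for almost-sure and $L^1$ convergence is correct and matches the paper's: summing the one-step bound of Theorem \ref{thm:asconv} gives both statements immediately.

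For the independence on $\psi$, however, you take a substantially different and harder route than the paper. You propose to rerun the entire (long and delicate) proof of Theorem \ref{thm:asconv} in a mixed setting involving two multipliers simultaneously, verifying mixed analogues of \eqref{estimate:GN1} and \eqref{estimate:GN2}. This is plausible in outline, but it duplicates a lot of technical work and you do not actually carry it out---you only assert that no new obstruction appears. The paper instead uses a short martingale argument: fix the \emph{reference} multiplier $\bar\psi={\bf 1}_{B(0,1)}$, let $\mathcal{F}_n$ be the $\sigma$-algebra generated by $\{\hat{\GFF}(k)\}_{|k|<2^n}$, and observe that since $(1-\bar P_n)\GFF_N$ is independent of $\mathcal{F}_n$ one has $\mathbb{E}[\langle f,\GMC_N^{(\alpha)}\rangle\mid\mathcal{F}_n]=\langle f,\bar{\GMC}_{N,n}^{(\alpha)}\rangle$ for an explicit approximation $\bar{\GMC}_{N,n}^{(\alpha)}$ built from $\bar P_n\GFF_N$. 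Sending first $N\to\infty$ (using that $\bar P_n\GFF_N\to\bar P_n\GFF$ uniformly for each fixed $n$) and then $n\to\infty$ identifies $\langle f,\GMC_\infty^{(\alpha)}\rangle$ with $\langle f,\bar{\GMC}_\infty^{(\alpha)}\rangle$, a limit that does not depend on $\psi$. This conditional-expectation trick is both shorter and more robust than repeating the thick-point decomposition for pairs of multipliers; it exploits the special structure of the sharp Fourier cutoff (namely that $\bar P_n$ projects onto an $\mathcal{F}_n$-measurable finite-dimensional subspace) in a way your direct comparison does not.
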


\begin{proof}
Almost-sure convergence follows from Theorem \ref{thm:asconv}. Denote by $\langle f,\GMC_\infty^{(\alpha)} \rangle$ the limit. The uniqueness follows completely in the same way as the argument in \cite[Section 5]{Ber17}, but we provide a sketch of the proof for readers' convenience.
Let $\bar\psi={\bf 1}_{B(0,1)}$, the indicator function of the ball $B(0,1)$, and define $\bar P_N$ and $\bar{\GMC}_N^{(\alpha)}$ in a similarly way to $P_N$ and ${\GMC}_N^{(\alpha )}$, respectively, by $\bar\psi$ instead of $\psi$.
Since $\bar\psi$ satisfies Hypothesis \ref{hypo on psi}, there exists an almost-sure and $L^1$-limit $\langle f,\bar{\GMC}_\infty^{(\alpha)} \rangle$. Denote by $\mathcal{F}_n$ the filtration generated by $\{\hat{\GFF}(k)\}_{|k|<2^n}$.
Since $(1-\bar{P}_n)\GFF_N$ is independent of $\mathcal{F}_n$, we have
$$
\mathbb{E}[\langle f,\GMC_N^{(\alpha)}\rangle|\mathcal{F}_n]=\langle f,\bar{\GMC}_{N,n}^{(\alpha)}\rangle,
$$
where
\[
\bar{\GMC}_{N,n}^{(\alpha)}:= \exp \left( \alpha \bar{P}_n\GFF_N - \frac{\alpha ^2}{2}\bar{C}_{N,n}\right) , \quad \bar{C}_{N,n}=\mathbb{E}[(\bar{P}_n\GFF_N(x))^2] .
\]
Since $\bar{P}_n\GFF_N$ converges as $N\to\infty$ to $\bar{P}_n\GFF$ uniformly in $x\in\Lambda$ almost surely for each $n$, we have
$$
\langle f,\bar{\GMC}_n^{(\alpha)}\rangle
=\lim_{N\to\infty}
\langle f,\bar{\GMC}_{N,n}^{(\alpha)}\rangle
=\lim_{N\to\infty}
\mathbb{E}[\langle f,\GMC_N^{(\alpha)}\rangle|\mathcal{F}_n]
=\mathbb{E}[\langle f,\GMC_\infty^{(\alpha)}\rangle|\mathcal{F}_n].
$$
Letting $n\to\infty$, we have $\langle f,\bar{\GMC}_\infty^{(\alpha)}\rangle=\langle f,\GMC_\infty^{(\alpha)}\rangle$ almost surely.
\end{proof}

\begin{cor}\label{cor:weak limit M2}
Regard $\GMC_N^{(\alpha)}$ as a measure as in Theorem {\rm{\ref{thm:expwick almostsure}}}.
Then, the sequence $\{ \GMC_N^{(\alpha)}\}_{N\in \mathbb N}$ converges in the weak topology, almost surely.
\end{cor}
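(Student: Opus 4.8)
The plan is to promote the ``one test function at a time'' almost-sure convergence of Corollary \ref{cor:weak limit M} to a single almost-sure statement valid for all test functions at once, exploiting that $C(\Lambda)$ is separable ($\Lambda$ being a compact metric space). First I would fix a countable set $\{f_m\}_{m\in\mathbb{N}}\subset C(\Lambda)$ that is dense for the supremum norm and contains the constant function ${\bf 1}_\Lambda$. By Corollary \ref{cor:weak limit M}, for each $m$ there is a $\mathbb{P}$-null event $\mathcal{N}_m$ off which $\langle f_m,\GMC_N^{(\alpha)}\rangle$ converges as $N\to\infty$; the event $\mathcal{N}:=\bigcup_{m\in\mathbb{N}}\mathcal{N}_m$ is again $\mathbb{P}$-null, and on $\Omega\setminus\mathcal{N}$ every numerical sequence $(\langle f_m,\GMC_N^{(\alpha)}\rangle)_N$ converges. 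Taking $f_m={\bf 1}_\Lambda$, the total masses $\GMC_N^{(\alpha)}(\Lambda)=\langle {\bf 1}_\Lambda,\GMC_N^{(\alpha)}\rangle$ converge on $\Omega\setminus\mathcal{N}$ and are therefore bounded there, say by $K=K(\omega)<\infty$.

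Next I would run the standard approximation argument. Fixing $\omega\in\Omega\setminus\mathcal{N}$, $f\in C(\Lambda)$ and $\varepsilon>0$, choose $m$ with $\|f-f_m\|_{C(\Lambda)}<\varepsilon$; since every $\GMC_N^{(\alpha)}$ is a nonnegative Borel measure of total mass at most $K$, one has
$$
\big|\langle f,\GMC_N^{(\alpha)}\rangle-\langle f,\GMC_{N'}^{(\alpha)}\rangle\big|
\le 2K\varepsilon+\big|\langle f_m,\GMC_N^{(\alpha)}\rangle-\langle f_m,\GMC_{N'}^{(\alpha)}\rangle\big|,
$$
and the last term tends to $0$ as $N,N'\to\infty$. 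Letting $\varepsilon\downarrow0$ shows that $(\langle f,\GMC_N^{(\alpha)}\rangle)_N$ is Cauchy in $\mathbb{R}$, hence convergent; write $L(f):=\lim_{N\to\infty}\langle f,\GMC_N^{(\alpha)}\rangle$ for $f\in C(\Lambda)$.

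Finally I would identify the limit with a measure. The functional $L:C(\Lambda)\to\mathbb{R}$ is linear, positive (if $f\ge0$ then $L(f)\ge0$, as each $\GMC_N^{(\alpha)}\ge0$), and bounded with $|L(f)|\le K\|f\|_{C(\Lambda)}$, so by the Riesz representation theorem it is represented by a finite nonnegative Borel measure $\GMC_\infty^{(\alpha)}=\GMC_\infty^{(\alpha)}(\omega)$ on $\Lambda$; then $\langle f,\GMC_N^{(\alpha)}\rangle\to\langle f,\GMC_\infty^{(\alpha)}\rangle$ for every $f\in C(\Lambda)$ is precisely the asserted weak convergence on $\Omega\setminus\mathcal{N}$. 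Independence of $\GMC_\infty^{(\alpha)}$ from the choice of $\psi$ is inherited from Corollary \ref{cor:weak limit M} through the same approximation argument. I do not expect a genuine obstacle here: the analytic content is entirely contained in the quantitative bound of Theorem \ref{thm:asconv} already used to prove Corollary \ref{cor:weak limit M}, and the only point needing care is the familiar one that the exceptional null set must be chosen \emph{before} the test function is fixed, which is exactly what the separability of $C(\Lambda)$ buys.
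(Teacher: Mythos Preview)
Your proposal is correct and follows essentially the same route as the paper: choose a countable dense family in $C(\Lambda)$ containing the constant function, use Corollary~\ref{cor:weak limit M} to obtain a single null set off which integrals against this family converge, deduce uniform boundedness of total masses, and then extend to all of $C(\Lambda)$ by approximation to obtain a positive bounded linear functional (hence a measure by Riesz). The paper omits the explicit appeal to the Riesz representation theorem and does not discuss independence of $\psi$ here (that is handled in the proof of Theorem~\ref{thm:expwick almostsure}), but otherwise the arguments coincide.
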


\begin{proof}
Let $D$ be a countable dense set in $C(\Lambda )$ which includes the constant function $1$.
Then, by Corollary \ref{cor:weak limit M} we have
\[
{\mathbb P}\left( \lim _{N\rightarrow \infty} \langle f, \GMC _N^{(\alpha )} \rangle \ \mbox{exists for all}\ f\in D \right) =1 .
\]
From now, in order to clarify the dependence of the randomness $\omega \in \Omega$, we denote $\GMC _N^{(\alpha )}$ with a sample $\omega \in \Omega$ by $\GMC _N^{(\alpha )}(\omega )$.
Let ${\mathcal N}\in {\mathcal F}$ be the event that $\lim _{N\rightarrow \infty} \langle f, \GMC _N^{(\alpha )} (\omega ) \rangle$ does not exists for some $f\in D$.
For each $\omega \in \Omega \setminus {\mathcal N}$, define an operator $\mathring \GMC_\infty ^{(\alpha )} (\omega )$ on $C(\Lambda )$ with domain $D$ by 
$$\mathring \GMC_\infty ^{(\alpha )} (\omega ) (f) := \lim _{N\rightarrow \infty} \langle f, \GMC _N^{(\alpha )} (\omega )\rangle, \quad f\in D.
$$
Then, for $\omega \in \Omega \setminus {\mathcal N}$, it is easy to see that $\mathring \GMC_\infty ^{(\alpha )} (\omega )$ can be extended to a linear operator on the space linearly spanned by $D$.
Moreover, since $D$ includes the constant function $1$,
\begin{equation}\label{eq:thmasconv05}
\sup _{N\in {\mathbb N}} \int _\Lambda \GMC _N^{(\alpha )}  (\omega ) \hspace{0.5mm} {\dd}x < \infty ,
\end{equation}
and hence, for $f\in D$
\[
\left| \mathring \GMC_\infty ^{(\alpha )} (\omega ) (f) \right| = \lim _{N\rightarrow \infty} \left| \langle f, \GMC _N^{(\alpha )} (\omega ) \rangle \right| \leq \| f\|_{C(\Lambda )} \sup _{N\in {\mathbb N}} \int _\Lambda \GMC _N^{(\alpha )}  (\omega ) \hspace{0.5mm} {\dd}x \lesssim \| f\|_{C(\Lambda )} .
\]
In view of these facts, for $\omega \in \Omega \setminus {\mathcal N}$, $\mathring \GMC_\infty ^{(\alpha )} (\omega )$ is extended to a bounded linear operator $\GMC_\infty ^{(\alpha )} (\omega )$ on $C(\Lambda )$.
By the denseness of $D$ in $C(\Lambda )$ and (\ref{eq:thmasconv05}), we have for $\omega \in \Omega \setminus {\mathcal N}$, $\GMC_\infty ^{(\alpha )} (\omega ) (f) = \lim _{N\rightarrow \infty} \langle f, \GMC _N^{(\alpha )} (\omega ) \rangle$ for $f\in C(\Lambda )$.
Nonnegativity of $\GMC_\infty ^{(\alpha )}$ follows from that of $\{ \GMC _N^{(\alpha )}\} _{N\in {\mathbb N}}$.
\end{proof}

\begin{proof}[{\bfseries Proof of Theorem \ref{thm:expwick almostsure}}]
Since convergence of the corresponding measures follows from Corollary \ref{cor:weak limit M2}, we prove convergence in the Besov space and independence of the limit in $\psi$.

First we show the convergence of $\GMC_N^{(\alpha)}$ in $B_{p,p}^{-\beta}$.
By Theorem \ref{thm:asconv}, for small $\delta>0$ and any $N\ge N_\delta$,
\begin{align*}
\mathbb{E}\left [ \big \|\Delta_j\GMC_{N+1}^{(\alpha)}-\Delta_j\GMC_N^{(\alpha)} \big \|_{L^1} \right ]
&=\int_{\Lambda} \mathbb{E}
\big [
\big | \big \langle \check\rho_j(x-\cdot),\GMC_{N+1}^{(\alpha)}-\GMC_N^{(\alpha)} \big \rangle \big |
\big]
\hspace{0.5mm} {\dd}x\\
&\lesssim C_\delta 2^{2j}2^{-c_\delta N},
\end{align*}
where $\check\rho_j=\sum_{k\in\mathbb{Z}^2}(\mathcal{F}^{-1}\rho_j)(\cdot+2\pi k)$.
This means
$$
\mathbb{E}\left[\big\|\GMC_{N+1}^{(\alpha)}-\GMC_N^{(\alpha)}\big\|_{B_{1,1}^{-\gamma}}\right]\lesssim C_\delta2^{-c_\delta N}
$$
for any $\gamma>2$.
On the other hand, by Corollary \ref{cor:unif int M}, for any parameters $p',\beta'$ satisfying \eqref{def p beta},
$$
\sup_{N\in\mathbb{N}}\mathbb{E}\left[\big\|\GMC_{N+1}^{(\alpha)}-\GMC_N^{(\alpha)}\big\|_{B_{p',p'}^{-\beta'}}^{p'}\right]<\infty.
$$
Fix parameters $p$ and $\beta$ satisfying \eqref{def p beta}. For any $\varepsilon\in(0,1)$, let $p_\varepsilon$ and $\beta_\varepsilon$ be parameters defined by $1/p=\varepsilon+(1-\varepsilon)/p_\varepsilon$ and $\beta=\varepsilon\gamma+(1-\varepsilon)\beta_\varepsilon$.
Since $p_\varepsilon\to p$ and $\beta_\varepsilon\to\beta$ as $\varepsilon\to0$, $p_\varepsilon$ and $\beta_\varepsilon$ satisfy \eqref{def p beta} for small $\varepsilon>0$. For such $\varepsilon$, by the interpolation (Proposition \ref{prop:interpolation besov}), we have
\begin{align*}
\mathbb{E}\left[\big\|\GMC_{N+1}^{(\alpha)}-\GMC_N^{(\alpha)}\big\|_{B_{p,p}^{-\beta}}^{p}\right]
&\le \mathbb{E}\left[\big\|\GMC_{N+1}^{(\alpha)}-\GMC_N^{(\alpha)}\big\|_{B_{1,1}^{-\gamma}}\right]^{\varepsilon p}
\mathbb{E}\left[\big\|\GMC_{N+1}^{(\alpha)}-\GMC_N^{(\alpha)}\big\|_{B_{p_\varepsilon,p_\varepsilon}^{-\beta_\varepsilon}}^{p_\varepsilon}\right]^{(1-\varepsilon)p/p_\varepsilon}\\
&\le C_\delta'2^{-c_\delta'N}
\end{align*}
for some constants $C_\delta',c_\delta'>0$ depending on $p,\beta$, and $\varepsilon$.
This implies the $L^p(\mathbb{P})$-convergence of $\{ \GMC_N^{(\alpha)} \}$ in $B_{p,p}^{-\beta}(\Lambda)$. 
Moreover, since
$$
\sum_{N=N_\delta}^{\infty} \mathbb{E}\big [ \big \| \GMC_{N+1}^{(\alpha)} -\GMC_N^{(\alpha)} \big \|_{B_{p,p}^{-\beta}} \big ]<\infty,
$$
by the Borel-Cantelli lemma we obtain the almost-sure convergence of $\{ \GMC_N^{(\alpha)} \}_{N\in\mathbb{N}}$.

Finally we show the uniqueness of the limit. Consider two multipliers $\psi$ and $\bar\psi$ satisfying Hypothesis \ref{hypo on psi} and define the limits $\GMC_\infty^{(\alpha)}$ and $\bar{\GMC}_\infty^{(\alpha)}$, respectively. By Corollary \ref{cor:weak limit M}, $\langle\GMC_\infty^{(\alpha)},{\bf e}_k\rangle=\langle\bar{\GMC}_\infty^{(\alpha)},{\bf e}_k\rangle$ for any $k\in\mathbb{Z}^2$ almost surely, so $\Delta_j\GMC_\infty^{(\alpha)}=\Delta_j\bar{\GMC}_\infty^{(\alpha)}$ for any $j\ge-1$ almost surely. Hence $\GMC_\infty^{(\alpha)}=\bar{\GMC}_\infty^{(\alpha)}$ in $B_{p,p}^{-\beta}$ almost surely.
\end{proof}

\section{Wick exponentials of Ornstein--Uhlenbeck processes}\label{sec:OU}

For $\phi\in\mathcal{D}'(\Lambda)$ and an $L^2(\Lambda)$-cylindrical Brownian motion $W$,
let $X=X(\phi)$ be the unique solution of the initial value problem
\begin{align}\label{eq:OU}
\left\{
\begin{aligned}
\partial_tX_t&=\frac12(\triangle-1)X_t+\dot{W}_t,\qquad t>0,\\
X_0&=\phi.
\end{aligned}
\right.
\end{align}
In this section, we consider the Wick exponential of the infinite-dimensional 
Ornstein--Uhlenbeck (OU in short) process $X$.
First we recall the basic estimate of $X$ in \cite{HKK19}.

\begin{prop}\label{prop:aprioriOU}
For $\varepsilon>0$, $\delta\in(0,1)$, $m\in\mathbb{N}$, and $T>0$, there exists a constant $C>0$ such that one has the a priori estimate
\begin{align}\label{ineq:aprioriOU}
\mathbb{E}{\Big [}  \|X(\phi)\|_{C([0,T];H^{-\varepsilon})\cap C^{\delta/2}([0,T];H^{-\varepsilon-\delta})}^m {\Big ]}
\le C(1+\|\phi\|_{H^{-\varepsilon}}^m).
\end{align}
Moreover, for any $\varepsilon>0$ and $\phi_1,\phi_2\in H^{-\varepsilon}$,
\begin{align}\label{stabilityOU}
\|X(\phi_1)-X(\phi_2)\|_{C([0,T];H^{-\varepsilon})}
\le\|\phi_1-\phi_2\|_{H^{-\varepsilon}}.
\end{align}
\end{prop}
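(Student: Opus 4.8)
The plan is to use the variation-of-constants representation
\[
X_t(\phi)=S_t\phi+Z_t,\qquad S_t:=e^{\frac t2(\triangle-1)},\qquad Z_t:=\int_0^t e^{\frac{t-s}2(\triangle-1)}\,\dd W_s,
\]
which decomposes $X(\phi)$ into a deterministic part depending linearly on $\phi$ and a $\phi$-independent Gaussian stochastic convolution $Z$. On Fourier modes $S_t$ acts by multiplication by $e^{-\frac t2(1+|k|^2)}$, so $S_t$ is a contraction on every $H^s(\Lambda)$ for $t\ge0$. The stability estimate \eqref{stabilityOU} is then immediate, since $X_t(\phi_1)-X_t(\phi_2)=S_t(\phi_1-\phi_2)$, hence $\|X_t(\phi_1)-X_t(\phi_2)\|_{H^{-\varepsilon}}\le\|\phi_1-\phi_2\|_{H^{-\varepsilon}}$ uniformly in $t\in[0,T]$.

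For \eqref{ineq:aprioriOU} I estimate the two summands separately. For the deterministic part, the contraction property gives $\sup_{t\in[0,T]}\|S_t\phi\|_{H^{-\varepsilon}}\le\|\phi\|_{H^{-\varepsilon}}$, and for $0\le t'\le t\le T$ I write $S_t\phi-S_{t'}\phi=S_{t'}(S_{t-t'}-\id)\phi$ and use the elementary bound $|1-e^{-a}|\le a^{\delta/2}$ for $a\ge0$ with $a=\frac{t-t'}2(1+|k|^2)$; summing the resulting factor $(1+|k|^2)^{\delta}$ against $(1+|k|^2)^{-\varepsilon-\delta}|\widehat\phi(k)|^2$ yields $\|S_t\phi-S_{t'}\phi\|_{H^{-\varepsilon-\delta}}\lesssim|t-t'|^{\delta/2}\|\phi\|_{H^{-\varepsilon}}$, so that $\|S_\cdot\phi\|_{C([0,T];H^{-\varepsilon})\cap C^{\delta/2}([0,T];H^{-\varepsilon-\delta})}\lesssim\|\phi\|_{H^{-\varepsilon}}$ with a constant free of any randomness.

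For the stochastic convolution, write $Z_t=\sum_{k\in\mathbb Z^2}z_t^{(k)}e_k$ with $z_t^{(k)}=\int_0^t e^{-\frac{t-s}2(1+|k|^2)}\,\dd w_s^{(k)}$ a centered Wiener integral, so that $\mathbb E[(z_t^{(k)})^2]=\frac{1-e^{-t(1+|k|^2)}}{1+|k|^2}\le(1+|k|^2)^{-1}$. Since $\sum_{k\in\mathbb Z^2}(1+|k|^2)^{-1-\varepsilon}<\infty$ in two dimensions, this gives $\sup_{t\in[0,T]}\mathbb E\|Z_t\|_{H^{-\varepsilon}}^2\lesssim1$. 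For the time increment I split $z_t^{(k)}-z_{t'}^{(k)}=(e^{-\frac{t-t'}2(1+|k|^2)}-1)z_{t'}^{(k)}+\int_{t'}^t e^{-\frac{t-s}2(1+|k|^2)}\,\dd w_s^{(k)}$ into its two independent contributions and bound each using $|1-e^{-a}|\le a^{\delta/2}$ and $1-e^{-a}\le a^{\delta}$, obtaining $\mathbb E[(z_t^{(k)}-z_{t'}^{(k)})^2]\lesssim|t-t'|^{\delta}(1+|k|^2)^{\delta-1}$ and hence $\mathbb E\|Z_t-Z_{t'}\|_{H^{-\varepsilon-\delta}}^2\lesssim|t-t'|^{\delta}$. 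Since $Z_t$ and the increments $Z_t-Z_{t'}$ are Hilbert-space-valued elements of the first Wiener chaos, Gaussian hypercontractivity upgrades these $L^2(\mathbb P)$-bounds to $\mathbb E\|Z_t\|_{H^{-\varepsilon}}^m\lesssim_m1$ and $\mathbb E\|Z_t-Z_{t'}\|_{H^{-\varepsilon-\delta}}^m\lesssim_m|t-t'|^{m\delta/2}$ for every $m$; Kolmogorov's continuity criterion (or, to reach the endpoint exponent $\delta/2$ exactly, a Garsia--Rodemich--Rumsey type argument or the factorization method) then produces a version of $Z$ with $\mathbb E\|Z\|_{C([0,T];H^{-\varepsilon})\cap C^{\delta/2}([0,T];H^{-\varepsilon-\delta})}^m\lesssim_m1$. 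Combining this with the deterministic estimate and $(a+b)^m\lesssim_m a^m+b^m$ gives \eqref{ineq:aprioriOU}.

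This argument is essentially that of \cite{HKK19}, so the only point requiring a little care is extracting the sharp Hölder exponent $\delta/2$ in time: plain Kolmogorov gives only $\delta/2-1/m$, which is why one either lets $m\to\infty$ after relabelling $\delta$, or invokes a sharper continuity lemma. Uniformity of the constant in $\phi$ is automatic, since $\phi$ enters only through the contractive deterministic term $S_\cdot\phi$, while the genuinely stochastic estimates are $\phi$-independent.
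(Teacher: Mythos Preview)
Your proof is correct and follows exactly the approach the paper intends: the paper's own proof simply cites \cite[Proposition 2.1]{HKK19} for \eqref{ineq:aprioriOU} and remarks that \eqref{stabilityOU} follows from the mild form, and you have written out precisely that argument (mild decomposition $X_t(\phi)=S_t\phi+Z_t$, contraction of $S_t$ on $H^{-\varepsilon}$, Fourier-mode estimates on the stochastic convolution, hypercontractivity, and Kolmogorov with the standard relabelling to reach the endpoint H\"older exponent). Your handling of the endpoint issue is the right one: pick $\theta\in(\delta,\min(1,\varepsilon+\delta))$ so that $\sum_k(1+|k|^2)^{-\varepsilon-\delta+\theta-1}<\infty$, then take $m$ large enough that $\theta/2-1/m>\delta/2$.
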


\begin{proof}
See \cite[Proposition 2.1]{HKK19} for the proof of \eqref{ineq:aprioriOU}.
The estimate \eqref{stabilityOU} is obtained by writing down the mild form of \eqref{eq:OU}.
\end{proof}

It is known that the GFF measure $\mu_0$ is the invariant measure of the process $X$
(see e.g., \cite[Theorem 6.2.1]{DZ96}).
Therefore, the random variable
$$
\Omega\times \mathcal{D}'(\Lambda)\ni(\omega,\phi)\mapsto X_t(\phi)(\omega)\in\mathcal{D}'(\Lambda)
$$
is also a GFF under the probability measure $\mathbb{P}\otimes\mu_0$ for any $t>0$.
Thus the existence of the Wick exponential of $X$ is an immediate consequence of Theorem \ref{thm:expwick almostsure}.

\begin{thm}\label{thm:expwick OU}
Assume that $\psi$ satisfies Hypothesis {\rm{\ref{hypo on psi}}}.
Let $|\alpha|<\sqrt{8\pi}$ and choose parameters $p$ and $\beta$ as in \eqref{def p beta}.
Then the functions
\begin{align*}
\mathcal{X}_t^N(\phi)(x):=\exp\left(\alpha \big(P_NX_t(\phi)\big)(x)-\frac{\alpha^2}2C_N\right),\qquad
N\in\mathbb{N}
\end{align*}
are uniformly bounded in the space $L^p(\mathbb{P}\otimes\mu_0;L^p([0,T];B_{p,p}^{-\beta}))$ for any $T>0$.
Moreover, the function $\mathcal{X}^N$ converges as $N\to\infty$ in the space $L^p([0,T];B_{p,p}^{-\beta})$, $\mathbb{P}\otimes\mu_0$-almost surely and in $L^p(\mathbb{P}\otimes\mu_0)$.
The limits obtained by different $\psi$'s coincide with each other, $\mathbb{P}\otimes\mu_0$-almost surely.
\end{thm}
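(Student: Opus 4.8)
The plan is to reduce everything to Section~\ref{sec:Wick} via the observation, recorded just above the statement, that for each fixed $t>0$ the random variable $(\omega,\phi)\mapsto X_t(\phi)(\omega)$ is a Gaussian free field under $\mathbb{P}\otimes\mu_0$. Hence $\mathcal{X}_t^N$ has, for \emph{every} $t>0$, the same law on $\mathcal{D}'(\Lambda)$ as $\GMC_N^{(\alpha)}=\exp_N^\diamond(\alpha\GFF)$, and every distributional estimate proved in Section~\ref{sec:Wick} transfers to $\mathcal{X}_t^N$ with constants independent of $t$. Fix $T>0$ and $p,\beta$ as in \eqref{def p beta}. First I would record the uniform bound: by Tonelli's theorem, the law identification above, and Corollary~\ref{cor:unif int M},
$$
\mathbb{E}^{\mathbb{P}\otimes\mu_0}\Big[\int_0^T\|\mathcal{X}_t^N\|_{B_{p,p}^{-\beta}}^p\,dt\Big]
=\int_0^T\mathbb{E}^{\mathbb{P}\otimes\mu_0}\big[\|\mathcal{X}_t^N\|_{B_{p,p}^{-\beta}}^p\big]\,dt
=\int_0^T\mathbb{E}\big[\|\GMC_N^{(\alpha)}\|_{B_{p,p}^{-\beta}}^p\big]\,dt\le CT
$$
uniformly in $N$, which is the asserted uniform boundedness in $L^p(\mathbb{P}\otimes\mu_0;L^p([0,T];B_{p,p}^{-\beta}))$; joint measurability of $(t,\omega,\phi)\mapsto X_t(\phi)(\omega)$ (a continuous adapted $H^{-\varepsilon}$-valued process, by Proposition~\ref{prop:aprioriOU}) makes $\mathcal{X}^N$ a bona fide element of this space.

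For the convergence I would repeat, verbatim and $t$-wise, the interpolation argument in the proof of Theorem~\ref{thm:expwick almostsure}: the exponential estimate of Theorem~\ref{thm:asconv} produces a rate for $\mathcal{X}_t^{N+1}-\mathcal{X}_t^N$ in $B_{1,1}^{-\gamma}$ ($\gamma>2$), and interpolating this against the uniform bound in $B_{p_\varepsilon,p_\varepsilon}^{-\beta_\varepsilon}$ from Corollary~\ref{cor:unif int M} gives, for all $N\ge N_\delta$,
$$
\mathbb{E}^{\mathbb{P}\otimes\mu_0}\big[\|\mathcal{X}_t^{N+1}-\mathcal{X}_t^N\|_{B_{p,p}^{-\beta}}^p\big]\le C_\delta'\,2^{-c_\delta' N}
$$
with $C_\delta',c_\delta'>0$ independent of $t$. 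Integrating in $t$ over $[0,T]$,
$$
\mathbb{E}^{\mathbb{P}\otimes\mu_0}\big[\|\mathcal{X}^{N+1}-\mathcal{X}^N\|_{L^p([0,T];B_{p,p}^{-\beta})}^p\big]\le C_\delta'T\,2^{-c_\delta' N},
$$
which is summable in $N$. This yields convergence of $\{\mathcal{X}^N\}$ in $L^p(\mathbb{P}\otimes\mu_0;L^p([0,T];B_{p,p}^{-\beta}))$ directly, and, via Chebyshev together with the Borel--Cantelli lemma, $\mathbb{P}\otimes\mu_0$-almost-sure convergence in the Banach space $L^p([0,T];B_{p,p}^{-\beta})$; denote the limit by $\mathcal{X}$. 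Passing to a subsequence along which $\mathcal{X}_t^{N_k}\to\mathcal{X}_t$ in $B_{p,p}^{-\beta}$ for a.e.\ $t$ identifies $\mathcal{X}_t$ with the Wick exponential $\exp^\diamond(\alpha X_t(\phi))$ for a.e.\ $t$, by Theorem~\ref{thm:expwick almostsure}.

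Finally, independence of the limit in $\psi$ follows by the same Fubini bookkeeping: for each fixed $t$ the $B_{p,p}^{-\beta}$-limit of $\mathcal{X}_t^N$ is $\psi$-independent $\mathbb{P}\otimes\mu_0$-a.s.\ by the last assertion of Theorem~\ref{thm:expwick almostsure}, so for two admissible multipliers the two limits agree for $(\mathbb{P}\otimes\mu_0\otimes dt)$-a.e.\ $(\omega,\phi,t)$ and hence coincide in $L^p([0,T];B_{p,p}^{-\beta})$ for $\mathbb{P}\otimes\mu_0$-a.e.\ $(\omega,\phi)$; if one wants all $T>0$ simultaneously, intersect the null sets along $T\in\mathbb{N}$. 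I do not anticipate a genuine obstacle: the whole argument rests on the fact that the law of $X_t(\phi)$ under $\mathbb{P}\otimes\mu_0$ is $t$-independent, so the delicate GMC estimates of Section~\ref{sec:Wick} apply uniformly in $t$. The one point that genuinely requires care is that the almost-sure statement really uses the \emph{quantitative} rate $2^{-cN}$ of Theorem~\ref{thm:asconv}; mere $t$-wise convergence in probability would not suffice to control the $\mathbb{P}\otimes\mu_0$-null exceptional sets after integration over $t$ (this is the same reason, noted in the remark after Theorem~\ref{thm:expwick almostsure}, that the discrete approximation parameter is chosen).
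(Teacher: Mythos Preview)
Your proposal is correct and follows essentially the same route as the paper: reduce to the fixed-time case via the invariance of $\mu_0$ under $X_t$, import the exponential decay $\mathbb{E}\big[\|\GMC_{N+1}^{(\alpha)}-\GMC_N^{(\alpha)}\|_{B_{p,p}^{-\beta}}^p\big]\le C2^{-cN}$ obtained in the proof of Theorem~\ref{thm:expwick almostsure}, integrate over $[0,T]$, and conclude $L^p$ and almost-sure convergence exactly as there. The paper's own proof is a two-line reference to this estimate and to ``a similar way to the proof of Theorem~\ref{thm:expwick almostsure}''; you have simply spelled out the details (measurability, identification of the limit, independence of $\psi$) that the paper leaves implicit.
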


\begin{proof}
Using the invariance of $\mu_0$ with respect to $X_t$ and using Theorems \ref{thm:expwick almostsure} and \ref{thm:asconv}, we have the exponential decay
\begin{align*}
\mathbb{E}^{\mathbb{P}\otimes\mu_0}
\left[\left\|\mathcal{X}^{N+1}-\mathcal{X}^N\right\|_{L^p([0,T];B_{p,p}^{-\beta})}^p \right]
&=\int_{\mathcal{D}'(\Lambda)}\int_0^T
\mathbb{E}\left[\left\|\mathcal{X}_t^{N+1}(\phi)-\mathcal{X}_t^N(\phi)\right\|_{B_{p,p}^{-\beta}}^p \right]
{\dd}t\,\mu_0({\dd}\phi)\\
&=T\mathbb{E}\left[\left\|\exp_{N+1}^\diamond(\alpha\GFF)-\exp_N^\diamond(\alpha\GFF)\right\|_{B_{p,p}^{-\beta}}^p\right]\\
&\le TC 2^{-c N}
\end{align*}
for some positive constants $c$ and $C$,
where $\GFF$ is a GFF under the probability $\mathbb{P}$.
Then the assertion is obtained by a similar way to the proof of Theorem \ref{thm:expwick almostsure}.
\end{proof}

Denote by $\mathcal{X}^\infty:=\lim_{N\to\infty}\mathcal{X}^N$ the $\mathbb{P}\otimes\mu_0$-almost-sure limit.
The following result is an immediate consequence of the $\mathbb{P}\otimes\mu_0$-almost-sure convergence in Theorem \ref{thm:expwick OU}.

\begin{cor}\label{cor:expwick OU}
For $\mu_0$-almost every $\phi\in \mathcal{D}'(\Lambda)$, the random function $\mathcal{X}^N(\phi)$ converges to $\mathcal{X}^\infty(\phi)$ in the space $L^p([0,T];B_{p,p}^{-\beta})$ almost surely.
\end{cor}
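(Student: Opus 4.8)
The plan is to deduce the statement from Theorem \ref{thm:expwick OU} by a routine disintegration (Fubini--Tonelli) argument. Write $B:=L^p([0,T];B_{p,p}^{-\beta})$ and consider the event
\[
A:=\Big\{(\omega,\phi)\in\Omega\times\mathcal{D}'(\Lambda)\;:\;\lim_{N\to\infty}\big\|\mathcal{X}^N(\phi)(\omega)-\mathcal{X}^\infty(\phi)(\omega)\big\|_{B}=0\Big\}.
\]
First I would check that $A$ is $\mathcal{F}\otimes\mathcal{B}(\mathcal{D}'(\Lambda))$-measurable. By the explicit formula $\mathcal{X}^N_t(\phi)(x)=\exp\big(\alpha(P_NX_t(\phi))(x)-\tfrac{\alpha^2}{2}C_N\big)$ together with the joint measurability of $(\omega,\phi)\mapsto X(\phi)(\omega)$ coming from \eqref{eq:OU}, each map $(\omega,\phi)\mapsto\mathcal{X}^N(\phi)(\omega)\in B$ is jointly measurable; $\mathcal{X}^\infty$ is their $\mathbb{P}\otimes\mu_0$-a.e. pointwise limit (Theorem \ref{thm:expwick OU}), so the norm differences are measurable and $A$ is obtained by the usual countable intersections and unions of their preimages. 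Theorem \ref{thm:expwick OU} asserts exactly that $(\mathbb{P}\otimes\mu_0)(A)=1$.

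Then I would apply Tonelli's theorem to the nonnegative measurable function ${\bf 1}_A$:
\[
1=(\mathbb{P}\otimes\mu_0)(A)=\int_{\mathcal{D}'(\Lambda)}\mathbb{P}\big(A_\phi\big)\,\mu_0(\mathrm{d}\phi),\qquad A_\phi:=\{\omega\in\Omega:(\omega,\phi)\in A\}.
\]
Since $0\le\mathbb{P}(A_\phi)\le1$ for every $\phi$, this forces $\mathbb{P}(A_\phi)=1$ for $\mu_0$-almost every $\phi$, which is precisely the assertion: for $\mu_0$-a.e. $\phi$ one has $\mathcal{X}^N(\phi)\to\mathcal{X}^\infty(\phi)$ in $B$, $\mathbb{P}$-almost surely. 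The same argument applied to the (full $\mathbb{P}\otimes\mu_0$-measure) set on which the limit defining $\mathcal{X}^\infty$ exists also shows that $\mathcal{X}^\infty(\phi)$ is well-defined $\mathbb{P}$-a.s. for $\mu_0$-a.e. $\phi$, so the statement makes sense as written.

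There is no genuine obstacle here; the corollary is a soft consequence of the $\mathbb{P}\otimes\mu_0$-almost-sure convergence already established. The only point requiring (minor) care is the joint measurability of $(\omega,\phi)\mapsto\mathcal{X}^N(\phi)(\omega)$ and of the limit, so that $A$ is a bona fide product-measurable event to which Tonelli applies; as noted above this is immediate from the definitions and from the measurable dependence of the Ornstein--Uhlenbeck flow $X(\phi)$ on its initial datum.
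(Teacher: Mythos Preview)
Your proposal is correct and matches the paper's approach: the paper states the corollary as ``an immediate consequence of the $\mathbb{P}\otimes\mu_0$-almost-sure convergence in Theorem \ref{thm:expwick OU}'' without further proof, which is precisely the Fubini--Tonelli disintegration you spell out. Your attention to joint measurability is appropriate but routine, and the argument is complete.
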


In Section \ref{sec:stationary}, the following ``stability" result of $\mathcal{X}^{\infty}(\phi)$ with respect to $\phi$ makes an important role.

\begin{lem}\label{lem:contiexpOU}
Let $\varepsilon>0$, and let $\{\xi_N\}_{N\in\mathbb{N}\cup\{\infty\}}$ be $H^{-\varepsilon}$-valued random variables independent of $W$.
Assume that the law $\nu_N$ of $\xi_N$ is absolutely continuous with respect to $\mu_0$ for any $N\in\mathbb{N}\cup\{\infty\}$, and Radon-Nikodym derivatives 
$\left\{\frac{{\dd}\nu_N}{{\dd}\mu_0}\right\}_{N\in\mathbb{N}\cup\{\infty\}}$ are uniformly bounded.
If $\lim_{N\to\infty}\xi_N=\xi_\infty$ in $H^{-\varepsilon}$ almost surely, then for any $T>0$,
$$
\lim_{N\to\infty}\mathcal{X}^{\infty}(\xi_N)=
\mathcal{X}^{\infty}(\xi_\infty)
$$
in $L^p([0,T];B_{p,p}^{-\beta})$ in probability.
\end{lem}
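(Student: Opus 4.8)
The plan is to establish the convergence in probability by an ``$\eta/3$''-splitting that compares $\mathcal{X}^{\infty}(\xi_{N})$ and $\mathcal{X}^{\infty}(\xi_{\infty})$ through the regularised objects $\mathcal{X}^{M}$. Throughout, write $\|\cdot\|$ for the norm of $L^{p}([0,T];B_{p,p}^{-\beta})$, and let $A\subset\mathcal{D}'(\Lambda)$ be the $\mu_{0}$-full-measure set on which $\mathcal{X}^{\infty}(\phi)$ is defined as the $\mathbb{P}$-almost-sure limit of $\mathcal{X}^{N}(\phi)$ (Corollary \ref{cor:expwick OU}). Since $\nu_{N}\ll\mu_{0}$ for every $N\in\mathbb{N}\cup\{\infty\}$ we have $\nu_{N}(A)=1$, so $\xi_{N}\in A$ almost surely and $\mathcal{X}^{\infty}(\xi_{N})$ is a well-defined random variable. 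For any $M\in\mathbb{N}$,
\[
\bigl\|\mathcal{X}^{\infty}(\xi_{N})-\mathcal{X}^{\infty}(\xi_{\infty})\bigr\|
\le\bigl\|\mathcal{X}^{\infty}(\xi_{N})-\mathcal{X}^{M}(\xi_{N})\bigr\|
+\bigl\|\mathcal{X}^{M}(\xi_{N})-\mathcal{X}^{M}(\xi_{\infty})\bigr\|
+\bigl\|\mathcal{X}^{M}(\xi_{\infty})-\mathcal{X}^{\infty}(\xi_{\infty})\bigr\|.
\]
First I would make the two outer terms small uniformly in $N$ by choosing $M$ large; then, with $M$ fixed, let $N\to\infty$ to kill the middle term.

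For the outer terms, the point is to cash in the $\mathbb{P}\otimes\mu_{0}$-smallness of $\mathcal{X}^{\infty}-\mathcal{X}^{M}$ from Theorem \ref{thm:expwick OU} as a bound uniform over the laws $\nu_{N}$. Since $\xi_{N}$ is independent of $W$ and, for each fixed $\phi\in A$, both $\mathcal{X}^{\infty}(\phi)$ and $\mathcal{X}^{M}(\phi)$ are $\sigma(W)$-measurable, conditioning on $\xi_{N}$ together with the uniform Radon--Nikodym bound $C_{0}:=\sup_{N\in\mathbb{N}\cup\{\infty\}}\bigl\|\tfrac{{\dd}\nu_{N}}{{\dd}\mu_{0}}\bigr\|_{L^{\infty}}<\infty$ gives
\[
\mathbb{E}\Bigl[\bigl\|\mathcal{X}^{\infty}(\xi_{N})-\mathcal{X}^{M}(\xi_{N})\bigr\|^{p}\Bigr]
=\int_{\mathcal{D}'(\Lambda)}\mathbb{E}^{\mathbb{P}}\Bigl[\bigl\|\mathcal{X}^{\infty}(\phi)-\mathcal{X}^{M}(\phi)\bigr\|^{p}\Bigr]\nu_{N}({\dd}\phi)
\le C_{0}\,\mathbb{E}^{\mathbb{P}\otimes\mu_{0}}\Bigl[\bigl\|\mathcal{X}^{\infty}-\mathcal{X}^{M}\bigr\|^{p}\Bigr],
\]
and the same bound holds with $\xi_{\infty}$ in place of $\xi_{N}$. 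By Theorem \ref{thm:expwick OU} the right-hand side tends to $0$ as $M\to\infty$, so by Markov's inequality, given $\eta,\gamma>0$ there is $M_{0}$ such that, for all $M\ge M_{0}$ and all $N\in\mathbb{N}\cup\{\infty\}$, both $\mathbb{P}(\|\mathcal{X}^{\infty}(\xi_{N})-\mathcal{X}^{M}(\xi_{N})\|>\eta/3)<\gamma/3$ and $\mathbb{P}(\|\mathcal{X}^{M}(\xi_{\infty})-\mathcal{X}^{\infty}(\xi_{\infty})\|>\eta/3)<\gamma/3$.

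Fix such an $M$. For the middle term I would use that, for $\mathbb{P}$-almost every realisation of $W$, the map $\phi\mapsto\mathcal{X}^{M}(\phi)$ is continuous from $H^{-\varepsilon}$ into $L^{p}([0,T];B_{p,p}^{-\beta})$, with an exceptional $\mathbb{P}$-null set not depending on $\phi$. Indeed, by the mild formulation of \eqref{eq:OU} the map $\phi\mapsto X_{\cdot}(\phi)$ is affine and, by \eqref{stabilityOU}, $1$-Lipschitz from $H^{-\varepsilon}$ into $C([0,T];H^{-\varepsilon})$; since $P_{M}$ maps $H^{-\varepsilon}$ boundedly into $H^{1+\varepsilon}\subset C(\Lambda)$, the map $\phi\mapsto P_{M}X_{\cdot}(\phi)$ is Lipschitz from $H^{-\varepsilon}$ into $C([0,T];C(\Lambda))$; composing with the locally Lipschitz map $u\mapsto\exp(\alpha u-\tfrac{\alpha^{2}}{2}C_{M})$ on $C([0,T];C(\Lambda))$ and using the continuous embedding $C(\Lambda)\hookrightarrow B_{p,p}^{-\beta}$ yields the claim. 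Hence $\xi_{N}\to\xi_{\infty}$ in $H^{-\varepsilon}$ almost surely forces $\mathcal{X}^{M}(\xi_{N})\to\mathcal{X}^{M}(\xi_{\infty})$ almost surely in $L^{p}([0,T];B_{p,p}^{-\beta})$, hence in probability, so $\mathbb{P}(\|\mathcal{X}^{M}(\xi_{N})-\mathcal{X}^{M}(\xi_{\infty})\|>\eta/3)<\gamma/3$ for $N$ large. Combining the three bounds gives $\mathbb{P}(\|\mathcal{X}^{\infty}(\xi_{N})-\mathcal{X}^{\infty}(\xi_{\infty})\|>\eta)<\gamma$ for $N$ large enough, which is the asserted convergence in probability.

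I expect the main obstacle to be the pathwise continuity of $\phi\mapsto\mathcal{X}^{M}(\phi)$ and the measurability bookkeeping required to evaluate it at the random datum $\xi_{N}$; this is precisely why the argument is routed through the smooth cut-offs $P_{M}X_{\cdot}(\phi)$, which take values in $C(\Lambda)$, rather than through $\mathcal{X}^{\infty}$ directly, whose dependence on $\phi$ is only $\mu_{0}$-almost sure. The uniform absolute-continuity hypothesis enters exactly once, to transfer the $\mu_{0}$-averaged estimate of Theorem \ref{thm:expwick OU} to the laws $\nu_{N}$.
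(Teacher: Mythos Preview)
Your proposal is correct and follows essentially the same approach as the paper's proof: both split via the regularised $\mathcal{X}^{M}$, control the outer terms uniformly in $N$ by transferring the $L^{p}(\mathbb{P}\otimes\mu_{0})$-convergence of Theorem~\ref{thm:expwick OU} through the uniform Radon--Nikodym bound, and handle the middle term by the pathwise continuity of $\phi\mapsto\mathcal{X}^{M}(\phi)$ coming from \eqref{stabilityOU} and the smoothing property of $P_{M}$. The only cosmetic difference is that the paper packages the final step using the inequality $(a+b)\wedge1\le a+(b\wedge1)$ and the metric $\mathbb{E}[\,\cdot\wedge1]$, whereas you use an explicit $\eta/3$--$\gamma/3$ argument with Markov's inequality.
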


\begin{proof}
The proof is very similar to \cite[Lemma 2.5]{HKK19} and done by a slight modification.
For any fixed $M\in\mathbb{N}$, by the estimate \eqref{stabilityOU},
\begin{align*}
\|P_MX(\xi_N)-P_MX(\xi_\infty)\|_{C([0,T];C(\Lambda))}
&\lesssim_M\|X(\xi_N)-X(\xi_\infty)\|_{C([0,T];H^{-\varepsilon})}\\
&\lesssim\|\xi_N-\xi_\infty\|_{H^{-\varepsilon}}
\xrightarrow{N\to\infty}0,
\end{align*}
almost surely. In the first inequality, we use the fact that $P_M$ sends $H^{-\varepsilon}$ to $C(\Lambda)$, as mentioned after Hypothesis \ref{hypo on psi}.
Hence for any fixed $M\in\mathbb{N}$,
\begin{align*}
\lim_{N\to\infty}\|\mathcal{X}^M(\xi_N)-\mathcal{X}^M(\xi_\infty)\|_{C([0,T];C(\Lambda))}=0
\end{align*}
almost surely, from the definition of the Wick exponential $\mathcal{X}^M$.
On the other hand, since Radon-Nikodym derivatives 
$\frac{{\dd}\nu_N}{{\dd}\mu_0}$
are uniformly bounded, by using Theorem \ref{thm:expwick OU},
\begin{align*}
&\sup_{N\in\mathbb{N}\cup\{\infty\}}\mathbb{E}
\Big [
\|\mathcal{X}^M(\xi_N)-\mathcal{X}^\infty(\xi_N)\|_{L^p([0,T];B_{p,p}^{-\beta})}^p
\Big ]
\\
&\lesssim \sup_{N\in\mathbb{N}\cup\{\infty\}}
\mathbb{E} \left[ 
\int_{\mathcal{D}'(\Lambda)}\|\mathcal{X}^M(\phi)-\mathcal{X}^\infty(\phi)\|_{L^p([0,T];B_{p,p}^{-\beta})}^p \nu_N({\dd}\phi) \right]
\xrightarrow{M\to\infty}0.
\end{align*}
Hence, by using the inequality $(a+b)\wedge1\le a+(b\wedge1)$ for $a,b\ge0$, we have
\begin{align*}
&\mathbb{E}\left[\|\mathcal{X}^\infty(\xi_N)-
\mathcal{X}^\infty(\xi_\infty)\|_{L^p([0,T];B_{p,p}^{-\beta})}\wedge1\right]\\
&\le 2\sup_{N\in\mathbb{N}\cup\{\infty\}}\mathbb{E}
\Big [
\|\mathcal{X}^M(\xi_N)-\mathcal{X}^\infty(\xi_N)\|_{L^p([0,T];B_{p,p}^{-\beta})}
\Big ]
\\
&\quad+\mathbb{E}\left[\|\mathcal{X}^M(\xi_N)-
\mathcal{X}^M(\xi_\infty)\|_{L^p([0,T];B_{p,p}^{-\beta})}\wedge1\right].
\end{align*}
Letting $N\to\infty$ first and then $M\to\infty$, we have
$$
\lim_{N\to\infty}
\mathbb{E}\left[\|\mathcal{X}^\infty(\xi_N)-
\mathcal{X}^\infty(\xi_\infty)\|_{L^p([0,T];B_{p,p}^{-\beta})}\wedge1\right]=0.
$$
Thus we have the assertion.
\end{proof}


\section{Global well-posedness of the strong solution}\label{sec:wellposed}

In this section, we consider the approximating equation \eqref{expsqe1}:
\begin{equation*}
\left\{
\begin{aligned}
\partial _t \Phi_t^N &= \frac 12 (\triangle-1) \Phi_t^N 
- \frac\alpha2 \exp\left(\alpha \Phi_t^N-\frac{\alpha^2}2 C_N\right) + P_N\dot W_t ,\qquad t>0,\\
\Phi_0^N&=P_N\phi,
\end{aligned}
\right.
\end{equation*}
and prove Theorem \ref{mainthm1}. 
The proof goes in a similar way to \cite[Section 3]{HKK19} with a slight modification.
Similarly to the previous paper, we use the Da Prato--Debussche trick, 
that is, we decompose the solution of \eqref{expsqe1} by $\Phi^N=X^N+Y^N$, where $X^N$ and $Y^N$ solve
\begin{align}
\label{eq:X}
&\left\{
\begin{aligned}
\partial _t X_t^N &= \frac 12 (\triangle-1) X_t^N + P_N\dot W_t ,\qquad t>0,\\
X_0^N&=P_N\phi,
\end{aligned}
\right.\\[5pt]
\label{eq:Y}
&\left\{
\begin{aligned}
\partial _t Y_t^N &= \frac 12 (\triangle-1) Y_t^N
- \frac\alpha2 \exp(\alpha Y_t^N)\exp\left(\alpha X_t^N -\frac{\alpha^2}2C_N\right) ,\qquad t>0,\\
Y_0^N&=0.
\end{aligned}
\right.
\end{align}
Note that $X^N=P_NX(\phi)$, where $X(\phi)$ is the solution of \eqref{eq:OU} with the initial value $\phi$. Hence the renormalized exponential of $X^N$ in the latter equation \eqref{eq:Y} is equal to
$$
\exp\left(\alpha X_t^N -\frac{\alpha^2}2C_N\right)=\mathcal{X}_t^N(\phi).
$$
Since $\mathcal{X}^N$ converges to $\mathcal{X}^\infty$ in $L^p([0,T];B_{p,p}^{-\beta})$ as stated in Corollary \ref{cor:expwick OU}, we consider the solution map of the \emph{deterministic} equation
\begin{align*}
\partial_t\Upsilon_t&=\frac12(\triangle-1)\Upsilon_t-\frac\alpha2 e^{\alpha \Upsilon_t}\mathcal{X}_t,\qquad t\in[0,T]
\end{align*}
for any generic nonnegative $\mathcal{X}\in L^p([0,T];B_{p,p}^{-\beta})$.
\subsection{Products of continuous functions and nonnegative distributions}\label{sec:nnegdist}
Since any nonnegative distribution is regarded as a nonnegative Borel measure by Theorem \ref{thm:LL}, the product of a function $f\in C(\Lambda)$ and a nonnegative distribution $\xi\in\mathcal{D}'(\Lambda)$ is well-defined as a Borel measure.
\begin{defi}
For any $f\in C(\Lambda)$ and any nonnegative $\xi\in\mathcal{D}'(\Lambda)$, we define the signed Borel measure
$$
\mathcal{M}(f,\xi)({\dd}x):=f(x)\mu_\xi({\dd}x),
$$
where $\mu _\xi ({\dd}x)$ is the Borel measure associated with $\xi$, as in Theorem {\rm{\ref{thm:LL}}}.
\end{defi}
We recall some properties of the product map $\mathcal{M}$ from \cite[Section 3.1]{HKK19}. 
Recall that $B^{s,+}_{p,q}(\Lambda)$ denotes the set of nonnegative elements in $B^{s}_{p,q}(\Lambda)$.

\begin{thm}[{\cite[Theorems 3.4 and 3.5]{HKK19}}]\label{thm:stableM}
Let $s>0$ and $p,q\in[1,\infty]$. The map
$$
\mathcal{M}:C(\Lambda)\times B_{p,q}^{-s,+}\to B_{p,q}^{-s}
$$
is continuous, and bounded in the sense that
$$
\|\mathcal{M}(f,\xi)\|_{B_{p,q}^{-s}}\lesssim \|f\|_{C(\Lambda)}\|\xi\|_{B_{p,q}^{-s}}
$$
for any $f\in C(\Lambda)$ and $\xi\in B_{p,q}^{-s,+}$.
\end{thm}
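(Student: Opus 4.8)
\textbf{Proof proposal for Theorem \ref{thm:stableM}.}

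The plan is to reduce both the boundedness estimate and the continuity of $\mathcal M$ to the corresponding statements for the bilinear map $(f,\xi)\mapsto f\cdot\xi$ acting on $C(\Lambda)\times B^{-s}_{p,q}$, which are essentially the content of \cite[Theorems 3.4 and 3.5]{HKK19}; the only genuinely new point to verify here is that, on the nonnegative cone, the paraproduct-type product coincides with the measure-theoretic product $f(x)\,\mu_\xi(\mathrm dx)$ supplied by Theorem \ref{thm:LL}. First I would fix $\xi\in B^{-s,+}_{p,q}$ with $s>0$ and $f\in C(\Lambda)$, approximate $\xi$ by the smooth functions $\xi_M:=\sum_{j\le M}\Delta_j\xi$ (or by $P_M\xi$, which converges to $\xi$ in $B^{-s}_{p',q'}$ for $s'<s$ and hence, after a loss in the smoothness index, in a slightly worse Besov space), and approximate $f$ uniformly by a sequence $f_n\in C^\infty(\Lambda)$. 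For the smooth approximants the product $f_n\xi_M$ is an ordinary smooth function, and one has the standard bilinear estimate
\begin{equation}
\|f_n\xi_M\|_{B^{-s}_{p,q}}\lesssim \|f_n\|_{C(\Lambda)}\|\xi_M\|_{B^{-s}_{p,q}},
\nonumber
\end{equation}
which holds for $s>0$ because the resonant and low-high paraproducts are controlled by the $L^\infty$-norm of the $C(\Lambda)$-factor while only the high-low paraproduct $T_{\xi_M}f_n$ would require positive regularity of $f_n$ — and that term, having positive regularity index $-s+\varepsilon$ locally but here we keep it at level $-s$, is absorbed since $\xi_M\in B^{-s}_{p,q}$ and $f_n$ is bounded (see \cite[Section 3.1]{HKK19}). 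Passing to the limit in $n$ and $M$ using this uniform bound and the weak-$*$ lower semicontinuity of the Besov norm gives $\|\mathcal M(f,\xi)\|_{B^{-s}_{p,q}}\lesssim\|f\|_{C(\Lambda)}\|\xi\|_{B^{-s}_{p,q}}$, once we know the limit object is $\mathcal M(f,\xi)$.

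The identification of the limit is where nonnegativity enters. Since $\xi\ge0$, Theorem \ref{thm:LL} gives a finite Borel measure $\mu_\xi$ with $\langle\xi,\varphi\rangle=\int_\Lambda\varphi\,\mathrm d\mu_\xi$ for all $\varphi\in C^\infty(\Lambda)$, and by Theorem \ref{thm:LL} again the pairing extends to all of $C(\Lambda)$; moreover $\mu_{\xi_M}\rightharpoonup\mu_\xi$ weakly as measures (testing against $C^\infty$, and then density). Therefore, for any $\varphi\in C^\infty(\Lambda)$,
\begin{equation}
\langle f_n\xi_M,\varphi\rangle=\int_\Lambda f_n\varphi\,\mathrm d\mu_{\xi_M}
\ \xrightarrow[M\to\infty]{}\ \int_\Lambda f_n\varphi\,\mathrm d\mu_\xi
\ \xrightarrow[n\to\infty]{}\ \int_\Lambda f\varphi\,\mathrm d\mu_\xi
=\langle\mathcal M(f,\xi),\varphi\rangle,
\nonumber
\end{equation}
where the first limit uses weak convergence of the measures together with uniform boundedness of $f_n\varphi$ in $C(\Lambda)$ (and an $\varepsilon/3$ argument interchanging the two limits, legitimate because $\sup_M\mu_{\xi_M}(\Lambda)<\infty$). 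Combined with the uniform Besov bound, a subsequence of $f_n\xi_M$ converges weak-$*$ in $B^{-s}_{p,q}$, and the display above pins the limit down as $\mathcal M(f,\xi)$; this proves both membership $\mathcal M(f,\xi)\in B^{-s}_{p,q}$ and the quantitative estimate.

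For continuity, suppose $f_k\to f$ in $C(\Lambda)$ and $\xi_k\to\xi$ in $B^{-s,+}_{p,q}$. Write $\mathcal M(f_k,\xi_k)-\mathcal M(f,\xi)=\mathcal M(f_k-f,\xi_k)+\mathcal M(f,\xi_k-\xi)$. The first term is bounded in $B^{-s}_{p,q}$ by $\|f_k-f\|_{C(\Lambda)}\|\xi_k\|_{B^{-s}_{p,q}}\to0$ using the estimate just proved and boundedness of $\{\xi_k\}$. For the second term I would not expect $\xi_k-\xi$ to be nonnegative, so $\mathcal M(f,\cdot)$ cannot be applied to it directly as a cone map; instead this is precisely the linear continuity statement of \cite[Theorem 3.5]{HKK19} for the product by the fixed continuous function $f$, i.e. $\|\mathcal M(f,\eta)\|_{B^{-s}_{p,q}}\lesssim\|f\|_{C(\Lambda)}\|\eta\|_{B^{-s}_{p,q}}$ extended to signed $\eta$ in the closed subspace spanned by differences of nonnegative distributions, which gives $\|\mathcal M(f,\xi_k-\xi)\|_{B^{-s}_{p,q}}\lesssim\|f\|_{C(\Lambda)}\|\xi_k-\xi\|_{B^{-s}_{p,q}}\to0$. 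I expect the main obstacle to be this last point — namely making rigorous sense of $\mathcal M(f,\eta)$ for $\eta=\xi_k-\xi$ a \emph{signed} distribution and showing the bilinear estimate survives on that space — together with the interchange-of-limits argument in the identification step; both are handled by the paraproduct decomposition exactly as in \cite[Section 3.1]{HKK19}, so the proof here is mostly a matter of citing and lightly adapting those results rather than reproving the harmonic analysis.
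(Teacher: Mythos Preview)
The paper does not prove this statement; it is quoted from \cite[Theorems~3.4 and~3.5]{HKK19}. Your reconstruction, however, has a genuine gap at the very first step, and a second one in the continuity argument.

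\textbf{The bilinear estimate.} You claim that for $f\in C(\Lambda)$ and $\xi\in B^{-s}_{p,q}$ the product $f\xi$ obeys $\|f\xi\|_{B^{-s}_{p,q}}\lesssim\|f\|_{C(\Lambda)}\|\xi\|_{B^{-s}_{p,q}}$ by paraproduct decomposition, with nonnegativity entering only to identify the limit. This is false. In the Bony decomposition $f\xi=T_f\xi+T_\xi f+R(f,\xi)$, only $T_f\xi$ is controlled by $\|f\|_{L^\infty}\|\xi\|_{B^{-s}_{p,q}}$. The resonant term $R(f,\xi)$ requires the sum of regularities to be strictly positive, and a merely continuous $f$ carries no positive Besov regularity; likewise $T_\xi f$ needs either $\xi\in L^\infty$ or positive regularity on $f$ to land back in $B^{-s}_{p,q}$. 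So the uniform bound you want for $f_n\xi_M$ does not follow from paraproducts, and the limit argument collapses. Nonnegativity is not a bookkeeping device here---it is what makes the estimate true. The argument in \cite{HKK19} (and implicitly in this paper, cf.\ Proposition~\ref{prop:heat-besov}) goes through the heat-semigroup characterisation: since $\xi\ge0$ is a measure and the heat kernel $p_t\ge0$, one has the pointwise domination
\[
\bigl|e^{t\triangle}\mathcal M(f,\xi)(x)\bigr|
=\Bigl|\int_\Lambda p_t(x,y)f(y)\,\mu_\xi({\rm d}y)\Bigr|
\le\|f\|_{C(\Lambda)}\,(e^{t\triangle}\xi)(x),
\]
which transfers directly into $\|\mathcal M(f,\xi)\|_{B^{-s}_{p,q}}\lesssim\|f\|_{C(\Lambda)}\|\xi\|_{B^{-s}_{p,q}}$ via Proposition~\ref{prop:heat-besov}.

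\textbf{Continuity.} Your splitting $\mathcal M(f_k,\xi_k)-\mathcal M(f,\xi)=\mathcal M(f_k-f,\xi_k)+\mathcal M(f,\xi_k-\xi)$ fails for the same reason: $\xi_k-\xi$ need not be nonnegative, and the estimate you invoke for the second piece is exactly the one that does \emph{not} hold for signed distributions (if it did, the hypothesis $\xi\ge0$ would be superfluous). Writing $\xi_k-\xi$ as a difference of two nonnegative elements only yields a bound by $\|\xi_k\|+\|\xi\|$, not by $\|\xi_k-\xi\|$. The route taken in \cite{HKK19} is instead to approximate $f$ by smooth $f_n\in C^\infty(\Lambda)$: for each fixed $f_n$ the map $\xi\mapsto f_n\xi$ is a genuine bounded linear operator on $B^{-s}_{p,q}$ (now the paraproduct estimates do apply, since $f_n$ has arbitrarily high regularity), hence continuous in $\xi$; then the already-proved bound $\|\mathcal M(f-f_n,\,\cdot\,)\|_{B^{-s}_{p,q}}\le C\|f-f_n\|_{C(\Lambda)}\|\cdot\|_{B^{-s}_{p,q}}$ on the nonnegative cone lets you pass to the limit $n\to\infty$ uniformly over bounded sets of $\xi$.
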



\begin{thm}[{\cite[Theorem 3.6]{HKK19}}]\label{replaced keythm}
Let $s>0$, $p,q\in[1,\infty]$, and $r\in(1,\infty]$.
For any space-time functions $(Y,\mathcal{X}) \in L^1([0,T];C(\Lambda))\times L^r([0,T];B_{p,q}^{-s,+})$ and any function $f\in C_b^1(\mathbb{R})$, consider the time-dependent distribution
$$
\mathcal{M}(f(Y),\mathcal{X})(t)
:=\mathcal{M}(f(Y_t),\mathcal{X}_t).
$$
Then the correspondence $(Y,\mathcal{X})\mapsto\mathcal{M}(f(Y),\mathcal{X})$
is well-defined as a map
$$
L^1([0,T];C(\Lambda))\times L^r([0,T];B_{p,q}^{-s,+}) \to L^{r'}([0,T];B_{p,q}^{-s}).
$$
for any $r'\in[1,r]$. Moreover, if $r'<r$, this map is continuous.
\end{thm}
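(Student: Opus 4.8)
The plan is to verify, in turn, that $t\mapsto\mathcal{M}(f(Y_t),\mathcal{X}_t)$ is strongly measurable as a $B_{p,q}^{-s}$-valued map, that it has finite $L^{r'}$-norm, and finally that the assignment $(Y,\mathcal{X})\mapsto\mathcal{M}(f(Y),\mathcal{X})$ is continuous when $r'<r$. For the measurability, I would first note that $g\mapsto f\circ g$ is Lipschitz from $C(\Lambda)$ to $C(\Lambda)$ (since $f\in C_b^1(\mathbb{R})$), so $t\mapsto f(Y_t)$ is strongly measurable into $C(\Lambda)$; since $B_{p,q}^{-s,+}$ is a closed subset of $B_{p,q}^{-s}$ and $\mathcal{M}$ is continuous on $C(\Lambda)\times B_{p,q}^{-s,+}$ by Theorem \ref{thm:stableM}, the composition $t\mapsto\mathcal{M}(f(Y_t),\mathcal{X}_t)$ is then strongly measurable (using separability of $B_{p,q}^{-s}$ for $q<\infty$). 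When $q=\infty$ and separability fails, I would instead approximate $Y$ and $\mathcal{X}$ by simple functions and pass to the limit using the bound and continuity of $\mathcal{M}$, exactly as in \cite[Section 3.1]{HKK19}.

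For the $L^{r'}$-bound, Theorem \ref{thm:stableM} gives the pointwise estimate $\|\mathcal{M}(f(Y_t),\mathcal{X}_t)\|_{B_{p,q}^{-s}}\lesssim\|f(Y_t)\|_{C(\Lambda)}\|\mathcal{X}_t\|_{B_{p,q}^{-s}}\le\|f\|_\infty\|\mathcal{X}_t\|_{B_{p,q}^{-s}}$. Raising this to the power $r'$, integrating over $[0,T]$, and applying Hölder's inequality with exponents $r/r'$ and $r/(r-r')$ (legitimate since $r'\le r$ and $[0,T]$ has finite measure) yields
\[
\|\mathcal{M}(f(Y),\mathcal{X})\|_{L^{r'}([0,T];B_{p,q}^{-s})}\lesssim T^{\frac1{r'}-\frac1r}\,\|f\|_\infty\,\|\mathcal{X}\|_{L^r([0,T];B_{p,q}^{-s})},
\]
so the map indeed lands in $L^{r'}([0,T];B_{p,q}^{-s})$ for every $r'\in[1,r]$.

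For continuity when $r'<r$, suppose $(Y^n,\mathcal{X}^n)\to(Y,\mathcal{X})$ in $L^1([0,T];C(\Lambda))\times L^r([0,T];B_{p,q}^{-s,+})$; by the standard subsequence principle it suffices to show that from every subsequence one can extract a further subsequence along which convergence holds in $L^{r'}$. Along such a subsequence I may assume $Y^n_t\to Y_t$ in $C(\Lambda)$ and $\mathcal{X}^n_t\to\mathcal{X}_t$ in $B_{p,q}^{-s}$ (with all $\mathcal{X}^n_t,\mathcal{X}_t$ nonnegative) for a.e.\ $t$. Then $f(Y^n_t)\to f(Y_t)$ in $C(\Lambda)$ for a.e.\ $t$, and the \emph{joint} continuity of $\mathcal{M}$ on $C(\Lambda)\times B_{p,q}^{-s,+}$ from Theorem \ref{thm:stableM} gives $\mathcal{M}(f(Y^n_t),\mathcal{X}^n_t)\to\mathcal{M}(f(Y_t),\mathcal{X}_t)$ in $B_{p,q}^{-s}$ for a.e.\ $t$. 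Using the bound of Theorem \ref{thm:stableM} once more,
\[
\big\|\mathcal{M}(f(Y^n_t),\mathcal{X}^n_t)-\mathcal{M}(f(Y_t),\mathcal{X}_t)\big\|_{B_{p,q}^{-s}}^{r'}\lesssim\|f\|_\infty^{r'}\big(\|\mathcal{X}^n_t\|_{B_{p,q}^{-s}}^{r'}+\|\mathcal{X}_t\|_{B_{p,q}^{-s}}^{r'}\big),
\]
and the family $\{\|\mathcal{X}^n_\cdot\|_{B_{p,q}^{-s}}^{r'}\}_n$ is uniformly integrable on $[0,T]$ because $\mathcal{X}^n\to\mathcal{X}$ in $L^r$ forces $\|\mathcal{X}^n_\cdot\|_{B_{p,q}^{-s}}^{r'}\to\|\mathcal{X}_\cdot\|_{B_{p,q}^{-s}}^{r'}$ in $L^{r/r'}([0,T])$ with $r/r'>1$. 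Vitali's convergence theorem then gives $\mathcal{M}(f(Y^n),\mathcal{X}^n)\to\mathcal{M}(f(Y),\mathcal{X})$ in $L^{r'}([0,T];B_{p,q}^{-s})$ along the subsequence (for $r=\infty$ one replaces this by a uniform pointwise bound and dominated convergence), and undoing the subsequence extraction finishes the proof.

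The main obstacle is this last step, and it explains the shape of the hypotheses. One cannot estimate $\mathcal{M}(f(Y^n_t),\mathcal{X}^n_t)-\mathcal{M}(f(Y_t),\mathcal{X}_t)$ by splitting off a term carrying $\mathcal{X}^n_t-\mathcal{X}_t$, since multiplication of a sign-changing distribution of negative order by a merely continuous function is not a bounded operation on $B_{p,q}^{-s}$; this is precisely why the nonnegativity of $\mathcal{X}$ and the joint continuity statement in Theorem \ref{thm:stableM} are indispensable, and why one must work with the full difference and a.e.\ convergence rather than with linear estimates. The strict inequality $r'<r$ enters only to supply the equi-integrability needed to pass from a.e.\ convergence to $L^{r'}$-convergence, compensating for the fact that $Y^n$ converges merely in $L^1$ and $\mathcal{X}^n$ merely in $L^r$.
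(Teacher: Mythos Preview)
Your proposal is correct. Note that the present paper does not give a proof of this theorem at all: it is stated as a citation of \cite[Theorem 3.6]{HKK19}, so there is no in-paper argument to compare against. Your proof follows the natural route---pointwise bound from Theorem \ref{thm:stableM}, then a.e.\ convergence along subsequences combined with Vitali's theorem using the gap $r'<r$ for equi-integrability---and your closing remark correctly identifies why one cannot proceed by a linear splitting of the difference (the sign-indefinite factor $\mathcal{X}^n_t-\mathcal{X}_t$ would leave the domain of $\mathcal{M}$), which is precisely the mechanism behind the original proof in \cite{HKK19}.
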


\subsection{Global well-posedness of $\Upsilon$}

In this part, we can consider more general parameters
\begin{align}\label{weaker p beta}
p\in(1,\infty),\qquad \beta\in\left(0,\frac2p(p-1)\right),
\end{align}
than those in \eqref{def p beta}. We fix such parameters $p,\beta$ and the time interval $[0,T]$.
We consider the initial value problem
\begin{align}
\label{eq:Ups}
\left\{
\begin{aligned}
\partial_t\Upsilon_t&=\frac12(\triangle-1)\Upsilon_t-\frac\alpha2 \mathcal{M}(e^{\alpha \Upsilon_t},\mathcal{X}_t),\qquad t\in (0,T],\\
\Upsilon_0&=\upsilon,
\end{aligned}
\right.
\end{align}
for any given $\mathcal{X}\in L^p([0,T];B_{p,p}^{-\beta,+})$ and $\upsilon\in B_{p,p}^{2-\beta}$.
To solve the equation \eqref{eq:Ups}, 
we introduce the space
\begin{align*}
\mathscr{Y}_T&=\left\{\Upsilon\in L^p([0,T];C(\Lambda))\cap C([0,T];L^p)\ ;
e^{\alpha\Upsilon}\in L^\infty([0,T];C(\Lambda))
\right\}
\end{align*}
as a solution space.
The purpose of this section is showing the following theorem:

\begin{thm}\label{thm:solmapUps}
Assume that $p$ and $\beta$ satisfy \eqref{weaker p beta}.
Let $\mathcal{X}\in L^p([0,T];B_{p,p}^{-\beta,+})$ and $\upsilon\in B_{p,p}^{2-\beta}$.
Then there exists the unique mild solution $\Upsilon\in\mathscr{Y}_T$ of \eqref{eq:Ups}, that is, $\Upsilon$ satisfies the equation
\begin{align}\label{eq:mildUps}
\Upsilon_t=e^{t(\triangle-1)/2}\upsilon-\frac\alpha2\int_0^te^{(t-s)(\triangle-1)/2}\mathcal{M}(e^{\alpha\Upsilon_s},\mathcal{X}_s) 
\hspace{0.5mm}
{\dd}s
\end{align}
for any $t\in(0,T]$.
Moreover, this solution belongs to the space
$$
L^p([0,T];B_{p,p}^{2/p+\delta})\cap C([0,T];B_{p,p}^{\delta})
$$
for any $\delta\in(0,\frac{2}p(p-1)-\beta)$, and the mapping
$$
\mathcal{S}:B_{p,p}^{2-\beta}\times L^p([0,T];B_{p,p}^{-\beta,+})\ni(\upsilon,\mathcal{X})
\mapsto \Upsilon\in L^p([0,T];B_{p,p}^{2/p+\delta})\cap C([0,T];B_{p,p}^{\delta})
$$
is continuous.
\end{thm}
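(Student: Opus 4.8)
The plan is to solve \eqref{eq:mildUps} by a fixed-point argument in $\mathscr{Y}_T$, using the product estimates of Theorem \ref{thm:stableM} and Theorem \ref{replaced keythm} together with the smoothing properties of the semigroup $e^{t(\triangle-1)/2}$ on Besov scales. First I would define, for $\Upsilon\in\mathscr{Y}_T$, the map
$$
\Gamma\Upsilon_t:=e^{t(\triangle-1)/2}\upsilon-\frac\alpha2\int_0^te^{(t-s)(\triangle-1)/2}\mathcal{M}(e^{\alpha\Upsilon_s},\mathcal{X}_s)\,{\dd}s,
$$
and check that $\Gamma$ maps $\mathscr{Y}_T$ into itself. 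For the nonlinear term, since $e^{\alpha\Upsilon}\in L^\infty([0,T];C(\Lambda))$ and $\mathcal{X}\in L^p([0,T];B_{p,p}^{-\beta,+})$, Theorem \ref{replaced keythm} (with $f=\exp$ truncated suitably, or by a localization argument since $\Upsilon$ is bounded) gives $\mathcal{M}(e^{\alpha\Upsilon},\mathcal{X})\in L^{p}([0,T];B_{p,p}^{-\beta})$ with the quantitative bound $\|\mathcal{M}(e^{\alpha\Upsilon_s},\mathcal{X}_s)\|_{B_{p,p}^{-\beta}}\lesssim e^{\alpha\|\Upsilon_s\|_{C}}\|\mathcal{X}_s\|_{B_{p,p}^{-\beta}}$ from Theorem \ref{thm:stableM}. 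The Schauder-type estimate $\|e^{t(\triangle-1)/2}g\|_{B_{p,p}^{\sigma+2\theta}}\lesssim e^{-t/2}t^{-\theta}\|g\|_{B_{p,p}^{\sigma}}$, combined with the embedding $B_{p,p}^{2/p+\delta}\hookrightarrow C(\Lambda)$ from Proposition \ref{prop:besov embedding}, then lets me control $\|\Gamma\Upsilon\|_{L^p([0,T];C(\Lambda))}$ and, via $\sigma=-\beta$, $2\theta=2/p+\delta+\beta<2$, to land in $L^p([0,T];B_{p,p}^{2/p+\delta})\cap C([0,T];B_{p,p}^{\delta})$; here the constraint $\delta\in(0,\frac2p(p-1)-\beta)$ is exactly what makes the relevant time-integrability exponent on $t^{-\theta}$ finite. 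A Gronwall-type / short-time contraction argument handles the a priori bound on $\|\Upsilon\|_{C([0,T];C(\Lambda))}$, which must be obtained first so that $e^{\alpha\Upsilon}$ genuinely lies in $L^\infty([0,T];C(\Lambda))$.

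Next I would prove uniqueness and local-in-time contraction. Given two solutions $\Upsilon^{(1)},\Upsilon^{(2)}\in\mathscr{Y}_T$ with the same data, their difference satisfies a linear mild equation driven by $\mathcal{M}(e^{\alpha\Upsilon^{(1)}_s}-e^{\alpha\Upsilon^{(2)}_s},\mathcal{X}_s)$; using $|e^{\alpha a}-e^{\alpha b}|\le |\alpha|e^{|\alpha|(\|\Upsilon^{(1)}\|_\infty\vee\|\Upsilon^{(2)}\|_\infty)}|a-b|$ pointwise, Theorem \ref{thm:stableM} bounds the $B_{p,p}^{-\beta}$-norm of this difference by a constant times $\|\Upsilon^{(1)}_s-\Upsilon^{(2)}_s\|_{C(\Lambda)}\|\mathcal{X}_s\|_{B_{p,p}^{-\beta}}$. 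Feeding this through the semigroup estimate and using that $\int_0^t(t-s)^{-\theta}\|\mathcal{X}_s\|_{B_{p,p}^{-\beta}}\,{\dd}s\to0$ as $t\to0$ (by Hölder and absolute continuity of the integral) gives a contraction on a small interval $[0,T_0]$; iterating over $[0,T]$ — the a priori $C([0,T];C(\Lambda))$ bound guaranteeing the contraction constant does not blow up — yields global existence and uniqueness.

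Finally, continuity of $\mathcal{S}$ follows the same pattern: for $(\upsilon_n,\mathcal{X}_n)\to(\upsilon,\mathcal{X})$ I would write the mild equation for the difference $\Upsilon_n-\Upsilon$, split the nonlinear term as $\mathcal{M}(e^{\alpha\Upsilon_n^{}}-e^{\alpha\Upsilon},\mathcal{X}_n)+\mathcal{M}(e^{\alpha\Upsilon},\mathcal{X}_n-\mathcal{X})$, estimate the first piece as in the uniqueness step and the second using the \emph{continuity} (not just boundedness) of $\mathcal{M}$ from Theorem \ref{replaced keythm} — this is where $r'<r$, i.e. the strict inequality in $\delta<\frac2p(p-1)-\beta$ and the corresponding slack in integrability exponents, is used — and then close with Gronwall. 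The main obstacle I anticipate is not any single estimate but the bookkeeping of the three coupled exponents $(p,\beta,\delta)$: one must simultaneously keep $2\theta=2/p+\delta+\beta$ below $2$ so the semigroup gain is admissible, keep $\theta<1$ (equivalently $\delta<\frac2p(p-1)-\beta$, using $1/p+1/p'=1$-type arithmetic) so that $t\mapsto t^{-\theta}$ is $L^{p'}$ near $0$ and Hölder against $\mathcal{X}\in L^p$ closes, and retain enough room for the $C(\Lambda)$-embedding $2/p+\delta>2/p$; verifying that the interval for $\delta$ in the statement is nonempty and compatible with all three requirements is the delicate point, and is precisely why the weaker range \eqref{weaker p beta} (rather than \eqref{def p beta}) is the natural hypothesis here.
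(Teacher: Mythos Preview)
Your approach is genuinely different from the paper's, and it contains a real gap. The paper does \emph{not} use a fixed-point argument: existence (Lemma~\ref{lem:existence}) is obtained by taking smooth approximations $\mathcal{X}^N\to\mathcal{X}$, solving the classical equations for $\Upsilon^N$, deriving uniform a~priori bounds, and extracting a limit by the compact embedding of Lemma~\ref{Simon}; uniqueness (Lemma~\ref{lem:unique}) is an $L^p$-energy argument exploiting the \emph{monotonicity} $(e^{\alpha a}-e^{\alpha b})(a-b)\ge0$, not a contraction; and continuity of $\mathcal{S}$ is again compactness plus uniqueness, not Gronwall.

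The gap in your scheme is the sentence ``A Gronwall-type / short-time contraction argument handles the a~priori bound on $\|\Upsilon\|_{C([0,T];C(\Lambda))}$.'' For an exponential nonlinearity this does not close: feeding $\|e^{\alpha\Upsilon_s}\|_{C(\Lambda)}\le e^{|\alpha|\|\Upsilon_s\|_{C(\Lambda)}}$ into the mild formulation yields an inequality of the form $\|\Upsilon_t\|\lesssim\|\upsilon\|+\int_0^t(t-s)^{-\theta}e^{|\alpha|\|\Upsilon_s\|}\|\mathcal{X}_s\|\,{\dd}s$, which Gronwall cannot bound globally; the local existence time shrinks with the size of the solution and you cannot iterate to reach a fixed $T$. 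What actually produces the a~priori control is the \emph{comparison principle}, which uses precisely the hypothesis $\mathcal{X}\ge0$ you never invoke: since $-\frac{\alpha}{2}e^{\alpha\Upsilon}\mathcal{X}$ has the sign opposite to $\alpha$, one gets $\alpha\Upsilon_t\le|\alpha|\|\upsilon\|_{C(\Lambda)}$ for all $t$, hence $\|e^{\alpha\Upsilon}\|_{L^\infty([0,T];C(\Lambda))}\le e^{|\alpha|\|\upsilon\|_{C(\Lambda)}}$ (this is exactly the factor appearing in the paper's estimate~\eqref{eq:aprioriUps}). Note also that $\mathscr{Y}_T$ only requires $e^{\alpha\Upsilon}\in L^\infty([0,T];C(\Lambda))$, i.e.\ a \emph{one-sided} bound on $\alpha\Upsilon$; your proposal implicitly assumes a two-sided bound on $\Upsilon$, which is neither available nor needed. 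Once you insert the comparison-principle bound, a contraction scheme could in principle be made to work, but the paper's compactness route avoids the bookkeeping you anticipate and makes the role of nonnegativity of $\mathcal{X}$ transparent.
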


Recall the following Schauder estimates for the heat semigroup.

\begin{prop}[{\cite[Lemma 2.2]{KOT03} and \cite[Proposition 6]{MW17}}]\label{prop:heatsemigr}
Let $s\in\mathbb{R}$ and $p,q\in[1,\infty]$.
\begin{enumerate}
\item \label{prop:heatsemigr1} For every $\delta\ge0$, $\|e^{t(\triangle-1)/2}u\|_{B_{p,q}^{s+2\delta}}\lesssim t^{-\delta}\|u\|_{B_{p,q}^s}$ uniformly over $t>0$.
\item \label{prop:heatsemigr2} For every $\delta\in[0,1]$, $\|(e^{t(\triangle-1)/2}-1)u\|_{B_{p,q}^{s-2\delta}}\lesssim t^\delta\|u\|_{B_{p,q}^s}$ uniformly over $t>0$.
\end{enumerate}
\end{prop}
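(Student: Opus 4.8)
The plan is to reduce both estimates to $L^p\to L^p$ bounds for the semigroup multiplier localised at a single frequency scale, and then to resum over the Littlewood--Paley blocks. Write $h_t(k)=e^{-t(1+|k|^2)/2}$, so that $e^{t(\triangle-1)/2}=h_t(D)$, and extend $h_t$ to the smooth function $\xi\mapsto e^{-t(1+|\xi|^2)/2}$ on $\mathbb R^2$. Fix smooth compactly supported functions $\widetilde\rho_j$ on $\mathbb R^2$ ($j\ge-1$), each a fixed dyadic dilate of an annular profile $\widetilde\rho$ (for $j\ge0$) or of a ball profile (for $j=-1$), such that $\widetilde\rho_j\equiv1$ on $\operatorname{supp}\rho_j$ and $\operatorname{supp}\widetilde\rho_j$ lies in a set of the same scale, bounded away from the origin when $j\ge0$; then $\widetilde\rho_j\rho_j=\rho_j$, whence $\Delta_jh_t(D)=\sigma_{j,t}(D)\Delta_j$ and $\Delta_j(h_t(D)-\mathrm{Id})=\sigma^\flat_{j,t}(D)\Delta_j$ with $\sigma_{j,t}:=h_t\widetilde\rho_j$ and $\sigma^\flat_{j,t}:=(h_t-1)\widetilde\rho_j$. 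So it suffices to bound $\|\sigma_{j,t}(D)\|_{L^p\to L^p}$ and $\|\sigma^\flat_{j,t}(D)\|_{L^p\to L^p}$ and then sum.

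The main step is the single-block multiplier bound, uniform in $t>0$: for $j\ge0$,
\begin{equation*}
\|\sigma_{j,t}(D)\|_{L^p\to L^p}\lesssim e^{-c\,t2^{2j}},
\qquad
\|\sigma^\flat_{j,t}(D)\|_{L^p\to L^p}\lesssim (t2^{2j})^{\delta}\ \ (\delta\in[0,1]),
\end{equation*}
and $\|\sigma_{-1,t}(D)\|_{L^p\to L^p}\lesssim e^{-t/2}$, $\|\sigma^\flat_{-1,t}(D)\|_{L^p\to L^p}\lesssim t^{\delta}$. Each of these operators acts on $\Lambda$ by convolution with the torus-periodisation of the $\mathbb R^2$-kernel of its symbol, so its $L^p\to L^p$ norm is at most the $L^1(\mathbb R^2)$-norm of the inverse Fourier transform of that symbol; and since $\langle x\rangle^{-3/2}\in L^2(\mathbb R^2)$, this $L^1$-norm is bounded by the $H^2(\mathbb R^2)$-norm of the symbol, hence by its $C^2$-norm, the symbols being smooth with support of bounded volume. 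Applying the parabolic rescaling $\xi=2^j\eta$, $\tau:=t2^{2j}$, the symbol $\sigma_{j,t}$ becomes $e^{-t/2}e^{-\tau|\eta|^2/2}\widetilde\rho(\eta)$, supported where $|\eta|^2\ge c_0>0$, whose $C^2$-norm is $\lesssim e^{-t/2}e^{-c\tau}$ (differentiating the Gaussian only produces powers of $\tau$, absorbed by the exponential); since rescaling preserves the $L^1$-norm of the inverse transform, the first bound follows. Similarly $\sigma^\flat_{j,t}$ rescales to $(e^{-(t+\tau|\eta|^2)/2}-1)\widetilde\rho(\eta)$; using $|e^{-x}-1|\le x^{\delta}$ for $x\ge0$, $\delta\in[0,1]$ (a consequence of $|e^{-x}-1|\le\min(x,1)$) together with the boundedness of $\tau\mapsto\tau^m e^{-c\tau}$, a termwise check gives $C^2$-norm $\lesssim\tau^{\delta}$, which is the second bound. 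The $j=-1$ cases are identical with $\tau$ replaced by $t$.

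Granting these bounds, the resummation is routine. For (i), using $2^{2j\delta}e^{-ct2^{2j}}=t^{-\delta}(t2^{2j})^\delta e^{-ct2^{2j}}\lesssim_\delta t^{-\delta}$ and $\sup_{t>0}t^\delta e^{-t/2}<\infty$,
\begin{equation*}
\sum_{j\ge-1}2^{j(s+2\delta)q}\|\Delta_jh_t(D)u\|_{L^p}^q
\lesssim t^{-\delta q}\sum_{j\ge-1}2^{jsq}\|\Delta_ju\|_{L^p}^q
=t^{-\delta q}\|u\|_{B_{p,q}^s}^q,
\end{equation*}
while for (ii),
\begin{equation*}
\sum_{j\ge-1}2^{j(s-2\delta)q}\|\Delta_j(h_t(D)-1)u\|_{L^p}^q
\lesssim\sum_{j\ge-1}2^{j(s-2\delta)q}(t2^{2j})^{\delta q}\|\Delta_ju\|_{L^p}^q
=t^{\delta q}\|u\|_{B_{p,q}^s}^q;
\end{equation*}
the $j=-1$ terms are absorbed by the same scalar inequalities, the case $q=\infty$ follows by replacing sums with suprema, and the case $\delta=0$ is just the contractivity of $e^{t(\triangle-1)/2}$ on each $L^p$ and hence its uniform boundedness on $B^s_{p,q}$.

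I expect the only genuine difficulty to be the single-block multiplier estimate: transferring pointwise symbol bounds into $L^p\to L^p$ operator bounds on $\Lambda$, uniformly in $j$ and $t$. The periodisation argument above (in the spirit of \cite[Appendix A]{GIP15}) is the cleanest route; alternatively one may invoke a periodic H\"ormander--Mikhlin multiplier theorem with constants uniform in $j$ and $t$. Everything else --- the parabolic rescaling, the scalar inequalities $x^\delta e^{-cx}\lesssim1$ and $|e^{-x}-1|\le x^\delta$, and the dyadic bookkeeping --- is elementary, and the argument carries over verbatim with $\triangle$ in place of $\triangle-1$, which is the form of \cite[Lemma 2.2]{KOT03} and \cite[Proposition 6]{MW17}.
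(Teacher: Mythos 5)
Your argument is correct and is essentially the standard Littlewood--Paley proof used in the references the paper cites for this proposition (\cite[Lemma 2.2]{KOT03}, \cite[Proposition 6]{MW17}): factor the semigroup through a fattened annular multiplier on each block, bound the single-block operator in $L^p$ via the $L^1(\mathbb R^2)$ norm of the periodised kernel after the parabolic rescaling $\tau=t2^{2j}$, and resum using the scalar bounds $x^\delta e^{-cx}\lesssim 1$ and $|e^{-x}-1|\le x^\delta$. The paper itself gives no proof but defers to those references, and your reconstruction matches them (with the harmless extra factor $e^{-t/2}$ handled correctly).
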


\begin{rem}
We remark that, if $\triangle-1$ is replaced by $\triangle$, then
$$
\|e^{t\triangle/2}u\|_{B_{p,q}^{s+2\delta}}\lesssim (1+t^{-\delta})\|u\|_{B_{p,q}^s}
$$
is the right $t$-uniform estimate ({\rm{\cite[Lemma 2.2]{KOT03}}}). 
The constant $1$ comes from the bound of $e^{t\triangle/2}\Delta_{-1}u$.
In the above proposition, we can omit this constant by using the factor $e^{-t}$.
\end{rem}

\begin{prop}[{\cite[Proposition A.3]{HKK19}}]\label{prop:Schauder}
Let $\theta\in\mathbb{R}$, $p,q\in[1,\infty]$, and $r\in(1,\infty]$.
Let $U$ be an element of $L^r([0,T];B_{p,q}^\theta)$, and let $u$ be the mild solution of the equation
\begin{align*}
\partial_tu&=\frac12(\triangle-1)u+U,\qquad t>0,
\end{align*}
with initial value $u_0\in B_{p,q}^{\theta+2}$.
Then for any $\varepsilon>0$ and $\delta\in(0,2/r')$, one has
$$
\|u\|_{L^r([0,T];B_{p,q}^{\theta+2-\varepsilon})\cap C([0,T];B_{p,q}^{\theta+2/r'-\varepsilon})\cap C^{\delta/2}([0,T];B_{p,q}^{\theta+2/r'-\varepsilon-\delta})}
\lesssim
\|u_0\|_{B_{p,q}^{\theta+2}}+\|U\|_{L^r([0,T];B_{p,q}^{\theta})},
$$
where $r'\in[1,\infty)$ is such that $1/r+1/r'=1$.
\end{prop}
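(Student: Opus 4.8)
The plan is to argue directly from the mild (Duhamel) formula
$$u_t=e^{t(\triangle-1)/2}u_0+\int_0^te^{(t-s)(\triangle-1)/2}U_s\,{\dd}s=:u_t^{(0)}+u_t^{(1)},$$
estimating the homogeneous term $u^{(0)}$ and the Duhamel term $u^{(1)}$ separately, with no tool beyond the two heat-semigroup bounds of Proposition \ref{prop:heatsemigr} (the smoothing bound $\|e^{t(\triangle-1)/2}v\|_{B_{p,q}^{s+2a}}\lesssim t^{-a}\|v\|_{B_{p,q}^s}$ for $a\ge0$, and the difference bound $\|(e^{t(\triangle-1)/2}-1)v\|_{B_{p,q}^{s-2a}}\lesssim t^a\|v\|_{B_{p,q}^s}$ for $a\in[0,1]$) together with Young's and Hölder's inequalities in the time variable. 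A first reduction: it suffices to prove the estimate for $\varepsilon>0$ arbitrarily small, since enlarging $\varepsilon$ only weakens each norm on the left-hand side through the embedding $B_{p,q}^{s_1}\hookrightarrow B_{p,q}^{s_2}$ for $s_2\le s_1$. The degenerate case $r=1$ (for which the $C^{\delta/2}$-part is vacuous and the rest follows from the crude bound $\|u^{(1)}_t\|_{B_{p,q}^{\theta+2-\varepsilon}}\lesssim\int_0^t(t-s)^{-1+\varepsilon/2}\|U_s\|_{B_{p,q}^\theta}\,{\dd}s$ and Fubini) being elementary, I focus on $r>1$.

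For the homogeneous term $u^{(0)}$, boundedness of the semigroup on $B_{p,q}^{\theta+2}$ (the smoothing bound with $a=0$), combined with the embeddings $B_{p,q}^{\theta+2}\hookrightarrow B_{p,q}^{\theta+2-\varepsilon}$ and $B_{p,q}^{\theta+2}\hookrightarrow B_{p,q}^{\theta+2/r'-\varepsilon}$, gives at once the $L^\infty([0,T];\cdot)\subset L^r([0,T];\cdot)$ bound and the supremum bound by $\|u_0\|_{B_{p,q}^{\theta+2}}$. For the time regularity, write $u_t^{(0)}-u_{t_0}^{(0)}=(e^{(t-t_0)(\triangle-1)/2}-1)e^{t_0(\triangle-1)/2}u_0$ for $t_0<t$ (and symmetrically otherwise) and apply the difference bound with $a=\delta/2\in(0,1]$: this gains a factor $(t-t_0)^{\delta/2}$ at the cost of $\delta$ derivatives, landing in $B_{p,q}^{\theta+2-\delta}\hookrightarrow B_{p,q}^{\theta+2/r'-\varepsilon-\delta}$ and giving the $C^{\delta/2}$-bound; the same computation with $a=\varepsilon/4$ (losing only $\varepsilon/2$ derivatives) together with an embedding yields continuity into $B_{p,q}^{\theta+2/r'-\varepsilon}$.

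The substantive part is the Duhamel term $u^{(1)}$. The $L^r([0,T];B_{p,q}^{\theta+2-\varepsilon})$ bound follows from the smoothing bound with $a=1-\varepsilon/2$, giving $\|u_t^{(1)}\|_{B_{p,q}^{\theta+2-\varepsilon}}\le\int_0^t(t-s)^{-1+\varepsilon/2}\|U_s\|_{B_{p,q}^\theta}\,{\dd}s$, i.e.\ a convolution of $\|U_\cdot\|_{B_{p,q}^\theta}\in L^r([0,T])$ with the kernel $\tau^{-1+\varepsilon/2}\in L^1([0,T])$, so Young's inequality closes it. For the supremum bound into $B_{p,q}^{\theta+2/r'-\varepsilon}$ one uses the smoothing bound with $a=1/r'-\varepsilon/2$ followed by Hölder's inequality in $s$ with exponents $(r,r')$; the resulting time integral $\int_0^t(t-s)^{-1+\varepsilon r'/2}\,{\dd}s$ is finite precisely because $\varepsilon>0$, and produces a factor $t^{\varepsilon/2}$. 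Crucially, the same computation with $\varepsilon/2$ in place of $\varepsilon$ shows that $u_{t_0}^{(1)}$ in fact belongs to $B_{p,q}^{\theta+2/r'-\varepsilon/2}$, a space strictly finer than the target. To obtain Hölder-in-time, I would split for $t_0<t$
$$u_t^{(1)}-u_{t_0}^{(1)}=\int_{t_0}^te^{(t-s)(\triangle-1)/2}U_s\,{\dd}s+(e^{(t-t_0)(\triangle-1)/2}-1)u_{t_0}^{(1)};$$
the first summand is estimated exactly as the supremum bound but over the short interval $[t_0,t]$—choosing the regularity loss so as to leave a gain of $\tfrac\delta2$—and contributes $(t-t_0)^{\delta/2}\|U\|_{L^r([0,T];B_{p,q}^\theta)}$, while the second is handled by the difference bound with $a=\delta/2\le1$ starting from $u_{t_0}^{(1)}\in B_{p,q}^{\theta+2/r'-\varepsilon}$; this gives the $C^{\delta/2}([0,T];B_{p,q}^{\theta+2/r'-\varepsilon-\delta})$-estimate. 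For plain continuity into $B_{p,q}^{\theta+2/r'-\varepsilon}$ one runs the same splitting but feeds the finer inclusion $u_{t_0}^{(1)}\in B_{p,q}^{\theta+2/r'-\varepsilon/2}$ into the difference bound with $a=\varepsilon/4$, losing $\varepsilon/2$ derivatives and thus returning to $B_{p,q}^{\theta+2/r'-\varepsilon}$ with the vanishing factor $(t-t_0)^{\varepsilon/4}$, the integral over $[t_0,t]$ tending to $0$ since $\int_{t_0}^t\|U_s\|_{B_{p,q}^\theta}^r\,{\dd}s\to0$.

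I expect the only genuinely delicate point to be this last one—continuity into the \emph{higher-order} space $B_{p,q}^{\theta+2/r'-\varepsilon}$ rather than merely a Hölder bound in a lower-order space—because for $q=\infty$ the heat semigroup is not strongly continuous on $B_{p,\infty}^s$, so $e^{\tau(\triangle-1)/2}v\to v$ is unavailable; the fix is precisely the observation above that $u_t^{(1)}$ automatically carries $\varepsilon/2$ extra derivatives, which leaves just enough room to absorb the loss incurred by $(e^{\tau(\triangle-1)/2}-1)$. Everything else is bookkeeping: checking that each exponent $a$ fed into the difference bound stays in $[0,1]$ (which is why one never drops more than $\tfrac\delta2$ or $\tfrac\varepsilon4$ derivatives at a time, then uses a Besov embedding to reach the stated target space), and that each time integral converges, guaranteed by $\varepsilon>0$ and $\delta<2/r'$.
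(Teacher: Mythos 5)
Your proof is correct and follows the standard Duhamel-plus-Schauder strategy that the cited Proposition A.3 of [HKK19] also uses: split $u=u^{(0)}+u^{(1)}$, apply the smoothing bound of Proposition~\ref{prop:heatsemigr}(i) together with Young's (for the $L^r$-in-time norm) and H\"older's (for the $L^\infty$-in-time norm) inequalities, and obtain time regularity from the splitting $u^{(1)}_t-u^{(1)}_{t_0}=\int_{t_0}^t e^{(t-s)(\triangle-1)/2}U_s\,{\dd}s+(e^{(t-t_0)(\triangle-1)/2}-1)u^{(1)}_{t_0}$ combined with the difference bound of Proposition~\ref{prop:heatsemigr}(ii). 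You correctly flag the only genuinely delicate point, namely continuity with values in $B_{p,q}^{\theta+2/r'-\varepsilon}$ when $q=\infty$ (where the semigroup is not strongly continuous), and the fix you propose—that the $\varepsilon$-loss in the sup bound leaves $\varepsilon/2$ derivatives to spare, which absorbs the loss from the difference bound applied with $a=\varepsilon/4$—is exactly the right observation; the small-$\varepsilon$ reduction, the exponent bookkeeping ($a=1-\varepsilon/2$, $a=1/r'-\varepsilon/2$, $a=\delta/2<1/r'\le1$), and the convergence of each resulting time integral all check out.
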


We first show the uniqueness of the solution, by following \cite[Lemma 3.8]{HKK19}.
Since the function $x\mapsto|x|^p$ is not twice differentiable if $p<2$, we need to modify the previous argument.

\begin{lem}\label{lem:unique}
For any $\mathcal{X}\in L^p([0,T];B_{p,p}^{-\beta,+})$ and $\upsilon\in B_{p,p}^{2-\beta}$, there is at most one mild solution $\Upsilon\in\mathscr{Y}_T$ of the equation \eqref{eq:Ups}.
\end{lem}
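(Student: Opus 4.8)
The plan is to take two mild solutions $\Upsilon^{(1)},\Upsilon^{(2)}\in\mathscr{Y}_T$ of \eqref{eq:Ups} with the same data $(\upsilon,\mathcal{X})$, set $Z=\Upsilon^{(1)}-\Upsilon^{(2)}$, and show $Z\equiv0$ by a Gronwall-type argument in $L^p$. Subtracting the two mild formulations \eqref{eq:mildUps}, the initial-data terms cancel and one gets
\[
Z_t=-\frac\alpha2\int_0^te^{(t-s)(\triangle-1)/2}\big(\mathcal{M}(e^{\alpha\Upsilon^{(1)}_s},\mathcal{X}_s)-\mathcal{M}(e^{\alpha\Upsilon^{(2)}_s},\mathcal{X}_s)\big)\,{\dd}s.
\]
By linearity of $\mathcal{M}$ in its first argument (Theorem \ref{thm:stableM}), the difference inside equals $\mathcal{M}(e^{\alpha\Upsilon^{(1)}_s}-e^{\alpha\Upsilon^{(2)}_s},\mathcal{X}_s)$, and since $e^{\alpha\Upsilon^{(i)}_s}\in L^\infty([0,T];C(\Lambda))$ by membership in $\mathscr{Y}_T$, we may write $e^{\alpha\Upsilon^{(1)}_s}-e^{\alpha\Upsilon^{(2)}_s}=g_s Z_s$ where $g_s(x)=\int_0^1\alpha e^{\alpha(\theta\Upsilon^{(1)}_s(x)+(1-\theta)\Upsilon^{(2)}_s(x))}\,d\theta$ is a continuous function, uniformly bounded in $(s,x)$ by some constant $K$ depending on the two solutions.

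Next I would estimate in $L^p$. The point is that the heat semigroup acting on a measure gains regularity: by Theorem \ref{thm:stableM}, $\|\mathcal{M}(g_s Z_s,\mathcal{X}_s)\|_{B_{p,p}^{-\beta}}\lesssim K\|Z_s\|_{C(\Lambda)}\|\mathcal{X}_s\|_{B_{p,p}^{-\beta}}$; however $Z_s$ is only known to be in $C(\Lambda)$ for a.e.\ $s$ (the $L^p([0,T];C(\Lambda))$ component of $\mathscr{Y}_T$), so I must be careful to close the estimate in a norm that is controlled pointwise in time. The cleaner route is to keep everything in $L^p$: by Theorem \ref{thm:stableM} one also has the cruder bound $\|\mathcal{M}(g_s Z_s,\mathcal{X}_s)\|_{B_{p,p}^{-\beta}}\lesssim K\|Z_s\|_{L^\infty}\|\mathcal{X}_s\|_{B_{p,p}^{-\beta}}$, but to avoid the $L^\infty$ norm of $Z_s$ I instead exploit that $\mathcal{M}(g_sZ_s,\mathcal X_s)$ can be bounded, via the measure representation $\mathcal M(h,\xi)({\dd}x)=h(x)\mu_{\xi}({\dd}x)$, by $K\,\mu_{\mathcal X_s}(\Lambda)$ times a factor involving $\|Z_s\|_{L^p}$ only after applying the smoothing $\|e^{(t-s)(\triangle-1)/2}\nu\|_{L^p}\lesssim (t-s)^{-(\beta+2/p')/2}\|\nu\|_{B_{p,p}^{-\beta}}$ from Proposition \ref{prop:heatsemigr}\eqref{prop:heatsemigr1} (taking $s=-\beta$, target $L^p=B_{p,p}^0$, so $\delta=\beta/2$ suffices since $\beta<2/p(p-1)\le 2<2$, hence the exponent $\beta/2<1$ is integrable). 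Then
\[
\|Z_t\|_{L^p}\lesssim_{K}\int_0^t (t-s)^{-\beta/2}\,\|g_sZ_s\|_{C(\Lambda)}\,\|\mathcal X_s\|_{B_{p,p}^{-\beta}}\,{\dd}s,
\]
and here I use $\|g_sZ_s\|_{C(\Lambda)}\le K\|Z_s\|_{C(\Lambda)}$ together with the fact — following exactly the interpolation step of \cite[Lemma 3.8]{HKK19} — that $\|Z_s\|_{C(\Lambda)}$ can be reabsorbed: one bootstraps via the mild equation that $Z\in L^p([0,T];B_{p,p}^{2/p+\delta})$ with $B_{p,p}^{2/p+\delta}\hookrightarrow C(\Lambda)$ (Proposition \ref{prop:besov embedding}), so $\|Z\|_{C(\Lambda)}$ is controlled by a higher Besov norm that in turn is controlled by the right-hand side through Proposition \ref{prop:Schauder}.

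The main obstacle — and the reason the paper flags that the previous argument needs modification — is precisely that for $p<2$ the map $x\mapsto|x|^p$ is not $C^2$, so the $L^p$-energy method of \cite[Lemma 3.8]{HKK19} (which differentiated $\|Z_t\|_{L^p}^p$ twice) no longer applies directly. My plan to get around this is to work with the mild (integral) formulation rather than the differential one, using the Schauder smoothing estimates of Propositions \ref{prop:heatsemigr} and \ref{prop:Schauder} to trade the missing smoothness for time-regularization, and then close with a singular (Gronwall) inequality of the form $u(t)\le C\int_0^t(t-s)^{-\beta/2}u(s)\,{\dd}s$ with $\beta/2<1$, whose only solution with $u\in L^\infty$ (or $L^p$) near $0$ is $u\equiv0$ by iteration of the kernel. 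The delicate points to verify carefully are: (i) that all the norms appearing are finite for solutions in $\mathscr{Y}_T$ — in particular that $t\mapsto\|Z_t\|_{C(\Lambda)}$ lies in $L^p([0,T])$, which is part of the definition of $\mathscr{Y}_T$; (ii) that the Lipschitz constant $K$ of $x\mapsto e^{\alpha x}$ on the relevant range is genuinely finite, which again uses $e^{\alpha\Upsilon^{(i)}}\in L^\infty([0,T];C(\Lambda))$; and (iii) that the exponent $\beta/2$ stays below $1$ under the hypothesis \eqref{weaker p beta}, so the iterated kernel estimate forces $u\equiv 0$.
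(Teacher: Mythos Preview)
Your Gronwall/contraction scheme does not close, and this is a genuine gap rather than a technicality. The only product estimate available is Theorem \ref{thm:stableM}, which forces
\[
\|\mathcal{M}(g_sZ_s,\mathcal{X}_s)\|_{B_{p,p}^{-\beta}}\lesssim\|g_sZ_s\|_{C(\Lambda)}\|\mathcal{X}_s\|_{B_{p,p}^{-\beta}},
\]
so the right-hand side of any integral inequality for $Z$ necessarily carries $\|Z_s\|_{C(\Lambda)}$. To close, you must estimate $\|Z_t\|_{C(\Lambda)}$ on the left, which by Proposition \ref{prop:heatsemigr}\ref{prop:heatsemigr1} and the embedding $B_{p,p}^{2/p+\epsilon}\subset C(\Lambda)$ costs $(t-s)^{-\gamma}$ with $\gamma>\tfrac1p+\tfrac\beta2>\tfrac1p$. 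Since $\|\mathcal{X}_s\|_{B_{p,p}^{-\beta}}$ is only in $L^p([0,T])$, H\"older forces $\gamma p'<1$, i.e.\ $\gamma<\tfrac{p-1}{p}$; for $p<2$ this is incompatible with $\gamma>\tfrac1p$. Your attempt to ``reabsorb'' $\|Z_s\|_{C(\Lambda)}$ via a bootstrap in $L^p([0,T];B_{p,p}^{2/p+\delta})$ does not fix this: the Schauder estimate (Proposition \ref{prop:Schauder}) gives a norm-to-norm bound, not a pointwise-in-$t$ inequality that could feed back into a Gronwall loop. The vague appeal to ``a factor involving $\|Z_s\|_{L^p}$ only after applying the smoothing'' is likewise incorrect: smoothing a measure $g_sZ_s\,\mu_{\mathcal{X}_s}$ does not convert the dependence on $Z_s$ from sup-norm to $L^p$.

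The paper's proof bypasses Lipschitz estimates entirely by using the \emph{monotonicity} of the nonlinearity together with the nonnegativity of $\mathcal{X}$. After regularizing $Z^\varepsilon=e^{\varepsilon\triangle}Z$ and testing against a convex $C^2$ approximation $f_{\lambda,R}$ of $|x|^p$, one gets
\[
\int_\Lambda f_{\lambda,R}(Z^\varepsilon_t)\,{\dd}x\le\lambda^p|\Lambda|+\int_0^t\!\int_\Lambda f'_{\lambda,R}(Z^\varepsilon_s)\,e^{\varepsilon\triangle}D_s\,{\dd}x\,{\dd}s,
\]
and after letting $\varepsilon\to0$ the last term equals $-\tfrac{\alpha^2}{2}\int e^{A(\alpha\Upsilon_s,\alpha\Upsilon'_s)}Z_sf'_{\lambda,R}(Z_s)\,\mathcal{X}_s\,{\dd}x\le0$, because $xf'_{\lambda,R}(x)\ge0$ and $\mathcal{X}_s\ge0$. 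Hence $\int_\Lambda f_\lambda(Z_t)\,{\dd}x\le\lambda^p|\Lambda|$ and letting $\lambda\to0$ gives $Z=0$. The whole point is that no Gronwall constant (and hence no closing of norms) is needed: the sign of the interaction term does the work.
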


\begin{proof}
Let $\Upsilon,\Upsilon'\in\mathscr{Y}_T$ be two solutions of \eqref{eq:Ups} with the same $\mathcal{X}$ and $\upsilon$. Then $Z=\Upsilon-\Upsilon'$ solves the equation
\begin{align*}
\left\{\partial_t-\frac12(\triangle-1)\right\}Z_t
&=-\frac\alpha2\mathcal{M}(e^{\alpha \Upsilon_t}-e^{\alpha \Upsilon_t'},\mathcal{X}_t)
=:D_t,
\end{align*}
where $D\in L^p([0,T];B_{p,p}^{-\beta})$, because of definition of $\mathscr{Y}_T$ and Theorem \ref{thm:stableM}.
Let $\varepsilon>0$ and define $Z^\varepsilon=e^{\varepsilon\triangle}Z$. Then $Z^\varepsilon$ solves the equation
\begin{align*}
\left\{\partial_t-\frac12(\triangle-1)\right\}Z^\varepsilon
&=e^{\varepsilon\triangle}D.
\end{align*}
By the regularizing effect of the heat semigroup (Proposition \ref{prop:heatsemigr}), $e^{\varepsilon\triangle}D$ belongs to $L^p([0,T];C^\infty(\Lambda))$. 
Then by the Schauder estimate (Proposition \ref{prop:Schauder}), we have that $Z^\varepsilon$ belongs to $C([0,T];C^\infty(\Lambda))$.
Hence for any $f\in C^2(\mathbb{R})$, we have
\begin{align*}
&\int_\Lambda f(Z_t^\varepsilon(x)) 
\, {\dd}x
=f(0)|\Lambda|+\int_0^t\int_\Lambda f'(Z_s^\varepsilon(x))\partial_sZ_s^\varepsilon(x)
\, {\dd}x \hspace{0.2mm} {\dd}s
\\
&=f(0)|\Lambda|+\frac12\int_0^t\int_\Lambda f'(Z_s^\varepsilon(x))(\triangle-1)Z_s^\varepsilon(x)
\, {\dd}x \hspace{0.2mm} {\dd}s
+\int_0^t\int_\Lambda f'(Z_s^\varepsilon(x))e^{\varepsilon\triangle}D_s(x)
\, {\dd}x \hspace{0.2mm} {\dd}s
\\
&=f(0)|\Lambda|-\frac12\int_0^t\int_\Lambda f''(Z_s^\varepsilon(x))|\nabla Z_s^\varepsilon(x)|^2
\, {\dd}x \hspace{0.2mm} {\dd}s
-\frac12\int_0^t\int_\Lambda f'(Z_s^\varepsilon(x))Z_s^\varepsilon(x)
\, {\dd}x \hspace{0.2mm} {\dd}s
\\
&\quad+\int_0^t\int_\Lambda f'(Z_s^\varepsilon(x))e^{\varepsilon\triangle}D_s(x)
\, {\dd}x \hspace{0.2mm} {\dd}s,
\end{align*}
where the first equality is justified as a Riemann--Stieltjes integral, because 
$$\partial_sZ^\varepsilon=\frac12(\triangle-1)Z^\varepsilon+e^{\varepsilon\triangle}D\in L^p([0,T];C^\infty(\Lambda)).$$
For $\lambda >0$, let
$$
f_\lambda(x)=(\lambda^2+x^2)^{p/2},\qquad x\in\mathbb{R},
$$
and for $R>0$, let $\varphi_R\in C^\infty(\mathbb{R})$ be a nonnegative even smooth function such that
$$
\varphi_R(x)
=
\begin{cases}
1,&|x|<R,\\
0,&|x|>R+1.
\end{cases}
$$
Then we define $f_{\lambda,R} \in C^2(\mathbb{R})$ by the function determined by
$$
\left\{
\begin{aligned}
f_{\lambda,R}''(x)&=f_\lambda''(x)\varphi_R(x),\qquad x\in\mathbb{R},\\
f_{\lambda,R}'(0)&=0,\\
f_{\lambda,R}(0)&=\lambda^p.
\end{aligned}
\right.
$$
Since we easily have the properties
\begin{itemize}
\item $f_{\lambda,R}''\ge0$,
\item $f_{\lambda,R}'$ is bounded and $xf_{\lambda,R}'(x)\ge0$,
\item $f_{\lambda,R}(x)\uparrow f_\lambda(x)$ as $R\to\infty$,
\end{itemize}
we have the inequality
\begin{align*}
\int_\Lambda f_{\lambda,R}(Z_t^\varepsilon(x))
\hspace{0.5mm} {\dd}x
\le\lambda^p|\Lambda|
+\int_0^t\int_\Lambda f_{\lambda,R}'(Z_s^\varepsilon(x))e^{\varepsilon\triangle}D_s(x)
\hspace{0.5mm} {\dd}x \hspace{0.2mm} {\dd}s.
\end{align*}
Once we let $\varepsilon\to0$,
$e^{\varepsilon\triangle}D\to D$ in $L^p([0,T];B_{p,p}^{-\beta-\kappa})$ for any $\kappa>0$
by Proposition \ref{prop:heatsemigr}, 
and hence $Z^\varepsilon\to Z$ in $L^p([0,T];B_{p,p}^{2-\beta-2\kappa})$ by Proposition \ref{prop:Schauder}.
Since $B_{p,p}^{2-\beta-2\kappa}\subset C(\Lambda)$ for small $\kappa>0$,
by using Theorem \ref{replaced keythm} we have
\begin{align}\label{ineq:energy ineq}
\int_\Lambda f_{\lambda,R}(Z_t(x))
\hspace{0.5mm} {\dd}x
\le\lambda^p|\Lambda|
+\int_0^t\int_\Lambda f_{\lambda,R}'(Z_s(x))D_s(x)
\hspace{0.5mm} {\dd}x \hspace{0.2mm} {\dd}s
\end{align}
for almost every $t$.
Here, we used the boundedness of $f_{\lambda ,R}'$
and that $e^{\varepsilon \triangle}D$ is a difference of two nonnegative functions,
for the convergence of the second term of the right-hand side.
We can deduce the term as
\begin{align*}
\int_\Lambda f_{\lambda,R}'(Z_s(x))D_s(x)
\hspace{0.5mm} {\dd}x
&=-\frac\alpha2\int_\Lambda(e^{\alpha\Upsilon_s(x)}-e^{\alpha\Upsilon_s'(x)})
f_{\lambda,R}'(Z_s(x)) \mathcal{X}_s(x) 
\hspace{0.5mm} {\dd}x
\\
&=-\frac{\alpha^2}2\int_\Lambda e^{A(\alpha\Upsilon_s(x),\alpha\Upsilon_s(x))}
Z_s(x)f_{\lambda,R}'(Z_s(x)) \mathcal{X}_s(x) 
\hspace{0.5mm} {\dd}x
\le0,
\end{align*}
where $A(x,y)$ is a continuous function on $\mathbb{R}^2$ defined by
$$
A(x,y)=
\begin{cases}
\log\frac{e^x-e^y}{x-y},&x\neq y,\\
x&x=y.
\end{cases}
$$
Hence letting $R\to\infty$ in \eqref{ineq:energy ineq}, we have
\begin{align*}
\int_\Lambda f_\lambda(Z_t(x))
\hspace{0.5mm} {\dd}x
\le \lambda^p|\Lambda|.
\end{align*}
for almost every $t$.
Letting $\lambda\to0$, we have $\|Z_t\|_{L^p(\Lambda)}=0$, which implies $\Upsilon=\Upsilon'$ for almost every $(t,x)$,
thus $\Upsilon=\Upsilon'$ in $\mathscr{Y}_T$.
\end{proof}

Next we show the existence of the solution, by following \cite[Lemma 3.10]{HKK19}.
Since the only difference is that we use Besov spaces instead of Sobolev spaces, we omit some details in this part.
The following embedding theorem is frequently used below.

\begin{lem}[{\cite[Corollary~5]{JSimon}}]\label{Simon}
Let $\mathcal{A}\subset \mathcal{B}\subset \mathcal{C}$ be Banach spaces such that the inclusion $\mathcal{A}\hookrightarrow \mathcal{B}$ is compact. Then for any $p\in[1,\infty]$ and $s>0$, the embeddings
\begin{align*}
L^p([0,T];\mathcal{A})\cap C^s([0,T];\mathcal{C}) &\hookrightarrow L^p([0,T];\mathcal{B}),\\
C([0,T];\mathcal{A})\cap C^s([0,T];\mathcal{C}) &\hookrightarrow C([0,T];\mathcal{B})
\end{align*}
are compact.
\end{lem}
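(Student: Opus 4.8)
This is the Aubin–Lions–Simon compactness lemma, and I would reproduce Simon's argument. It suffices to prove that every sequence $(u_n)_n$ bounded in $L^p([0,T];\mathcal A)\cap C^s([0,T];\mathcal C)$, say with norm $\le M$, admits a subsequence converging in $L^p([0,T];\mathcal B)$, together with the analogous statement where $L^p$ is replaced by $C$ in the source and target. The two tools are an interpolation (Ehrling) inequality and the Arzel\`a--Ascoli theorem in $C([0,T];\mathcal C)$.

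\emph{Step 1: Ehrling's inequality and reduction.} Since $\mathcal A\hookrightarrow\mathcal B$ is compact and $\mathcal B\hookrightarrow\mathcal C$ is continuous, for every $\eta>0$ there is $C_\eta>0$ such that
\[
\|v\|_{\mathcal B}\le\eta\|v\|_{\mathcal A}+C_\eta\|v\|_{\mathcal C},\qquad v\in\mathcal A.
\]
This follows by the usual contradiction argument: a normalized sequence $v_k$ with $\|v_k\|_{\mathcal B}=1$ violating the bound would be bounded in $\mathcal A$, hence (along a subsequence) convergent in $\mathcal B$ to a limit of $\mathcal B$-norm $1$, while $\|v_k\|_{\mathcal C}\to0$ forces the limit to vanish, a contradiction. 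Applying this pointwise in $t$ to $u_n(t)-u_m(t)$ and taking the $L^p_t$ norm (resp.\ the supremum in $t$) yields
\[
\|u_n-u_m\|_{L^p([0,T];\mathcal B)}\le 2\eta M+C_\eta T^{1/p}\,\|u_n-u_m\|_{C([0,T];\mathcal C)},
\]
and likewise with $C([0,T];\mathcal B)$ on the left-hand side. Hence it is enough to extract a subsequence that is Cauchy in $C([0,T];\mathcal C)$: letting first $n,m\to\infty$ and then $\eta\to0$ shows the subsequence is Cauchy in $L^p([0,T];\mathcal B)$ (resp.\ in $C([0,T];\mathcal B)$), and these spaces are complete.

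\emph{Step 2: relative compactness in $C([0,T];\mathcal C)$.} Equicontinuity is immediate from the uniform $C^s([0,T];\mathcal C)$ bound, since $\|u_n(t)-u_n(t')\|_{\mathcal C}\le M|t-t'|^s$. For pointwise relative compactness we must check that $\{u_n(t):n\}$ is relatively compact in $\mathcal C$ for each fixed $t$. When the source space is $C([0,T];\mathcal A)$ this is clear, because $\{u_n(t)\}_n$ is then bounded in $\mathcal A$ and $\mathcal A\hookrightarrow\mathcal C$ is compact. When the source space is $L^p([0,T];\mathcal A)$ this is the one genuinely delicate point --- $L^p$-in-time boundedness gives no pointwise $\mathcal A$-bound --- and I would use an averaging device: for a window $[a,b]\ni t$ write $u_n(t)=\bar u_n^{[a,b]}+\bigl(u_n(t)-\bar u_n^{[a,b]}\bigr)$ with $\bar u_n^{[a,b]}:=\frac1{b-a}\int_a^b u_n(\tau)\,\dd\tau$; by H\"older $\|\bar u_n^{[a,b]}\|_{\mathcal A}\le M(b-a)^{-1/p}$, so $\{\bar u_n^{[a,b]}\}_n$ is relatively compact in $\mathcal C$ for each fixed window, while the $C^s$ modulus gives $\|u_n(t)-\bar u_n^{[a,b]}\|_{\mathcal C}\le M(b-a)^s$. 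Fixing $\varepsilon>0$ and a partition $0=t_0<\cdots<t_K=T$ with mesh so small that $M(\mathrm{mesh})^s<\varepsilon/3$, a finite diagonal extraction over the $K$ windows produces a subsequence along which $\|\bar u_n^{[t_{k-1},t_k]}-\bar u_m^{[t_{k-1},t_k]}\|_{\mathcal C}<\varepsilon/3$ for all $k$ and all large $n,m$; combining the three bounds gives $\sup_{t\in[0,T]}\|u_n(t)-u_m(t)\|_{\mathcal C}<\varepsilon$. A further diagonal argument over $\varepsilon=1/j$ yields a subsequence Cauchy in $C([0,T];\mathcal C)$. Combined with Step 1 this proves both embeddings are compact.

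The only real obstacle is the pointwise-in-time relative compactness in the $L^p$ case, which is precisely what the averaging-over-small-intervals argument circumvents; the remainder is a routine application of Ehrling's inequality and Arzel\`a--Ascoli. Alternatively, one may simply invoke \cite{JSimon} directly.
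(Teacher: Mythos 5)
The paper does not prove this lemma: it is invoked directly by citing Simon's Corollary~5, with no argument given. Your reconstruction is correct, and it follows what is essentially Simon's own strategy: an Ehrling interpolation inequality to reduce compactness in $L^p([0,T];\mathcal B)$ (or $C([0,T];\mathcal B)$) to Cauchy-ness in $C([0,T];\mathcal C)$, and then an Arzel\`a--Ascoli extraction in $C([0,T];\mathcal C)$. You correctly identify the only genuinely delicate point, namely pointwise-in-time relative compactness in the $L^p$ case, where an $L^p_t$ bound in $\mathcal A$ gives no pointwise $\mathcal A$ control, and your averaging device over small windows combined with the $C^s$ modulus is a valid way to repair this; the H\"older step $\|\bar u_n^{[a,b]}\|_{\mathcal A}\le M(b-a)^{-1/p}$ works for all $p\in[1,\infty]$, including the endpoints. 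The two routes buy different things: the paper's citation keeps the exposition short and defers to a standard reference, while your argument makes the lemma self-contained at the cost of length. There is no gap; your proposal is a correct substitute for the citation.
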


\begin{lem}\label{lem:existence}
For any $\mathcal{X}\in L^p([0,T];B_{p,p}^{-\beta,+})$ and $\upsilon\in B_{p,p}^{2-\beta}$, there is at least one mild solution $\Upsilon\in\mathscr{Y}_T$. Moreover, for any $\delta\in(0,\frac2p(p-1)-\beta)$, there exists a constant $C>0$ independent of $\mathcal{X}$ and $\upsilon$ such that one has the a priori estimate
\begin{align}\label{eq:aprioriUps}
\begin{aligned}
&\|\Upsilon\|_{L^p([0,T];B_{p,p}^{2/p+\delta})
\cap C([0,T];B_{p,p}^{\delta})
\cap C^{\delta/2}([0,T];L^p)}\\
&\qquad\le C\left\{\|\upsilon\|_{B_{p,p}^{2-\beta}}+e^{|\alpha|\|\upsilon\|_{C(\Lambda)}}
\|\mathcal{X}\|_{L^p([0,T];B_{p,p}^{-\beta})}\right\}.
\end{aligned}
\end{align}
\end{lem}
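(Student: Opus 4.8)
The plan is to adapt the proof of \cite[Lemma 3.10]{HKK19}: regularize the data, derive $n$-uniform bounds from a comparison principle combined with the Schauder estimate, and pass to the limit by compactness. Concretely, I would first approximate: choose $\mathcal{X}^{(n)}\in L^p([0,T];C^\infty(\Lambda))$ with $\mathcal{X}^{(n)}\ge0$ and $\mathcal{X}^{(n)}\to\mathcal{X}$ in $L^p([0,T];B_{p,p}^{-\beta})$ (e.g.\ by convolution with a nonnegative space-time kernel), and set $\upsilon^{(n)}:=e^{(1/n)\triangle}\upsilon\in C^\infty(\Lambda)$, so that $\upsilon^{(n)}\to\upsilon$ in $B_{p,p}^{2-\beta}$ and $\|\upsilon^{(n)}\|_{C(\Lambda)}\le\|\upsilon\|_{C(\Lambda)}\lesssim\|\upsilon\|_{B_{p,p}^{2-\beta}}$, using $B_{p,p}^{2-\beta}\hookrightarrow C(\Lambda)$ from Proposition \ref{prop:besov embedding} (valid since $\beta<\frac2p(p-1)$). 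For each $n$ the equation \eqref{eq:Ups} with data $(\upsilon^{(n)},\mathcal{X}^{(n)})$ has a smooth, locally-in-time classical solution $\Upsilon^{(n)}$ by standard parabolic theory, since $u\mapsto e^{\alpha u}$ is locally Lipschitz.

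The crucial a priori input is the one-sided $L^\infty$ bound coming from the sign of the nonlinearity. Since $\mathcal{X}^{(n)}\ge0$, the drift $-\frac\alpha2 e^{\alpha\Upsilon^{(n)}}\mathcal{X}^{(n)}$ has the sign $-\operatorname{sgn}\alpha$, so $\operatorname{sgn}(\alpha)\Upsilon^{(n)}$ is a subsolution of $\partial_tv=\frac12(\triangle-1)v$; the maximum principle yields $\operatorname{sgn}(\alpha)\Upsilon^{(n)}_t\le\|e^{t(\triangle-1)/2}\upsilon^{(n)}\|_{L^\infty}\le\|\upsilon^{(n)}\|_{C(\Lambda)}$ on $[0,T]\times\Lambda$, and hence $0\le e^{\alpha\Upsilon^{(n)}}\le e^{|\alpha|\|\upsilon^{(n)}\|_{C(\Lambda)}}$ uniformly in $n$ and $t$. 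This bound keeps the drift bounded in $L^p([0,T];C(\Lambda))$, which rules out blow-up, so $\Upsilon^{(n)}$ is in fact global. Next, by Theorem \ref{thm:stableM}, $\|\mathcal{M}(e^{\alpha\Upsilon^{(n)}_t},\mathcal{X}^{(n)}_t)\|_{B_{p,p}^{-\beta}}\lesssim\|e^{\alpha\Upsilon^{(n)}_t}\|_{C(\Lambda)}\|\mathcal{X}^{(n)}_t\|_{B_{p,p}^{-\beta}}$, so the drift is bounded in $L^p([0,T];B_{p,p}^{-\beta})$ by $Ce^{|\alpha|\|\upsilon^{(n)}\|_{C(\Lambda)}}\|\mathcal{X}^{(n)}\|_{L^p([0,T];B_{p,p}^{-\beta})}$. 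Feeding this into the Schauder estimate (Proposition \ref{prop:Schauder}) with $\theta=-\beta$, $r=p$, so that $2/r'=\frac2p(p-1)$, gives, for $\delta$ as in the statement and any small $\varepsilon\in(0,\frac2p(p-1)-\beta-\delta)$,
$$
\|\Upsilon^{(n)}\|_{L^p([0,T];B_{p,p}^{2-\beta-\varepsilon})\cap C([0,T];B_{p,p}^{\frac2p(p-1)-\beta-\varepsilon})\cap C^{\delta/2}([0,T];B_{p,p}^{\frac2p(p-1)-\beta-\varepsilon-\delta})}\lesssim\|\upsilon\|_{B_{p,p}^{2-\beta}}+e^{|\alpha|\|\upsilon\|_{C(\Lambda)}}\|\mathcal{X}\|_{L^p([0,T];B_{p,p}^{-\beta})},
$$
uniformly in $n$; the Besov embeddings then give \eqref{eq:aprioriUps} for each $\Upsilon^{(n)}$.

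Finally, these uniform bounds together with Lemma \ref{Simon} (applied to the compact embeddings $B_{p,p}^{2-\beta-\varepsilon}\hookrightarrow B_{p,p}^{2-\beta-\varepsilon'}$ and $B_{p,p}^{\frac2p(p-1)-\beta-\varepsilon}\hookrightarrow B_{p,p}^{\frac2p(p-1)-\beta-\varepsilon'}$ with $\varepsilon<\varepsilon'<\frac2p(p-1)-\beta$) produce a subsequence along which $\Upsilon^{(n)}\to\Upsilon$ in $L^p([0,T];C(\Lambda))$ and in $C([0,T];L^p)$; passing to a further subsequence, $\Upsilon^{(n)}_t\to\Upsilon_t$ in $C(\Lambda)$ for a.e.\ $t$ under the uniform $L^\infty$ bound, so $e^{\alpha\Upsilon}\in L^\infty([0,T];C(\Lambda))$ and thus $\Upsilon\in\mathscr{Y}_T$. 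To identify $\Upsilon$ as a mild solution of \eqref{eq:mildUps}, I replace $x\mapsto e^{\alpha x}$ by a function $\tilde f\in C_b^1(\mathbb{R})$ agreeing with it on a common bounded interval containing the ranges of all $\Upsilon^{(n)}$ and of $\Upsilon$; then Theorem \ref{replaced keythm} gives $\mathcal{M}(\tilde f(\Upsilon^{(n)}),\mathcal{X}^{(n)})=\mathcal{M}(e^{\alpha\Upsilon^{(n)}},\mathcal{X}^{(n)})\to\mathcal{M}(e^{\alpha\Upsilon},\mathcal{X})$ in $L^{r'}([0,T];B_{p,p}^{-\beta})$ for any $r'\in[1,p)$, and passing to the limit in \eqref{eq:mildUps} (the linear term via $\upsilon^{(n)}\to\upsilon$, the Duhamel term via the smoothing of $e^{t(\triangle-1)/2}$, Proposition \ref{prop:heatsemigr}) shows that $\Upsilon$ satisfies \eqref{eq:mildUps}; the estimate \eqref{eq:aprioriUps} for $\Upsilon$ then follows by lower semicontinuity of the norms. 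I expect the main obstacle to be the global-in-time control: the exponential nonlinearity defeats a naive contraction/Gr\"onwall argument, and it is precisely the comparison-principle $L^\infty$ bound — available only because $\mathcal{X}^{(n)}\ge0$ forces the favorable sign of the drift — that tames it; a secondary technical point is the truncation of $e^{\alpha\cdot}$ needed to invoke Theorem \ref{replaced keythm}, whose hypotheses demand a $C_b^1$ nonlinearity.
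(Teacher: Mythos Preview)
Your proof is correct and follows essentially the same route as the paper's: regularize $\mathcal{X}$ by nonnegative smooth approximants, obtain the one-sided bound $\alpha\Upsilon^{(n)}\le|\alpha|\|\upsilon\|_{C(\Lambda)}$ from the comparison principle, feed this into Theorem~\ref{thm:stableM} and the Schauder estimate (Proposition~\ref{prop:Schauder}), extract a limit via Lemma~\ref{Simon}, and identify it as a mild solution using Theorem~\ref{replaced keythm} after truncating the exponential. Two minor remarks: the paper does not regularize $\upsilon$ (since $\upsilon\in B_{p,p}^{2-\beta}\subset C(\Lambda)$ already suffices for the approximating equation), and your phrase ``a common bounded interval containing the ranges'' is slightly off---the comparison principle only bounds $\alpha\Upsilon^{(n)}$ from \emph{above}, so one should take $\tilde f\in C_b^1(\mathbb{R})$ agreeing with $e^x$ on a half-line $(-\infty,a]$, exactly as the paper does; this does not affect your argument.
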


\begin{proof}
As discussed in \cite[Lemma 3.10]{HKK19}, for any $\mathcal{X}\in L^p([0,T];B_{p,p}^{-\beta,+})$, there exists a family $\{\mathcal{X}^N\}_{N\in\mathbb{N}}$ of nonnegative continuous functions on $[0,T]\times\Lambda$ such that $\mathcal{X}^N\to\mathcal{X}$ in $L^p([0,T];B_{p,p}^{-\beta,+})$ as $N\to\infty$.
For such $\mathcal{X}^N$, we consider the classical global solutions of the approximating equations
$$
\left\{
\begin{aligned}
\partial_t\Upsilon_t^N&=\frac12(\triangle-1)\Upsilon_t^N-\frac\alpha2 e^{\alpha \Upsilon_t^N}\mathcal{X}_t^N,\\
\Upsilon_0^N&=\upsilon.
\end{aligned}
\right.
$$
Note that $\alpha \Upsilon_t^N\le|\alpha| \|\upsilon\|_{C(\Lambda)}$ follows from the comparison principle.
By applying the Schauder estimate (Proposition \ref{prop:Schauder}) and Theorem \ref{thm:stableM}, for any $\delta'\in(\delta,\frac2p (p-1)-\beta)$ we have
\begin{align*}
\|\Upsilon^N\|_{L^p([0,T];B_{p,p}^{2/p+\delta'})\cap C^{\delta'/2}([0,T];L^p)}
&\lesssim \|\upsilon\|_{B_{p,p}^{2-\beta}}+\|\mathcal{M}(e^{\alpha \Upsilon^N},\mathcal{X}^N)\|_{L^p([0,T];B_{p,p}^{-\beta})}\\
&\lesssim \|\upsilon\|_{B_{p,p}^{2-\beta}}+\|e^{\alpha \Upsilon^N}\|_{L^\infty([0,T];C(\Lambda))}\|\mathcal{X}^N\|_{L^p([0,T];B_{p,p}^{-\beta})}\\
&\lesssim \|\upsilon\|_{B_{p,p}^{2-\beta}}+e^{|\alpha|\|\upsilon\|_{C(\Lambda)}}\|\mathcal{X}^N\|_{L^p([0,T];B_{p,p}^{-\beta})}.
\end{align*}
By Lemma \ref{Simon}, the embeddings
\begin{align*}
L^p([0,T];B_{p,p}^{2/p+\delta'})\cap C^{\delta'/2}([0,T];L^p)&\hookrightarrow
L^p([0,T];B_{p,p}^{2/p+\delta}),\\
C([0,T];B_{p,p}^{\delta'})\cap C^{\delta'/2}([0,T];L^p)&\hookrightarrow
C([0,T];B_{p,p}^{\delta})
\end{align*}
are compact. Here, recall that the embedding $B_{p,p}^s\hookrightarrow B_{p,p}^{s'}$ is compact for any $s'<s$ (see \cite[Corollary 2.96]{BCD11}). Hence there exists a subsequence $\{N_k\}$ such that
$$
\Upsilon^{N_k}\to \Upsilon\qquad\text{in $L^p([0,T];B_{p,p}^{2/p+\delta})\cap C([0,T];B_{p,p}^{\delta})$}.
$$
This yields the bound \eqref{eq:aprioriUps} for $\Upsilon$,
thus in particular $\Upsilon\in\mathscr{Y}_T$.

We have that $\Upsilon$ solves the mild equation \eqref{eq:mildUps} by a similar argument to \cite[Lemma 3.10]{HKK19}. Since $\alpha\Upsilon^N$ is uniformly bounded from above, we can apply Theorem \ref{replaced keythm} to the function $f\in C_b^1(\mathbb{R})$ such that $f(x)=e^{x}$ on some half line $x\in (-\infty,a]$ and obtain
$$
\mathcal{M}(e^{\alpha \Upsilon^{N_k}},\mathcal{X}^{N_k})\to
\mathcal{M}(e^{\alpha \Upsilon},\mathcal{X})
\qquad\text{in $L^q([0,T];B_{p,p}^{-\beta})$}
$$
for any $q<p$.
Then letting $N_k\to\infty$ on both sides of \eqref{eq:mildUps} and applying the Schauder estimate (Proposition \ref{prop:Schauder}), 
we have that $(\Upsilon,\mathcal{X})$ solves the same equation in the space $C([0,T];B_{p,p}^\delta)$.
\end{proof}

By Lemmas \ref{lem:unique} and \ref{lem:existence}, the solution map $\mathcal{S}:(\upsilon,\mathcal{X})\mapsto\Upsilon$ is well-defined. 
The continuity of the map
$$
\mathcal{S}:B_{p,p}^{2-\beta}\times L^p([0,T];B_{p,p}^{-\beta,+})\ni(\upsilon,\mathcal{X})
\mapsto \Upsilon\in L^p([0,T];B_{p,p}^{2/p+\delta})\cap C([0,T];B_{p,p}^{\delta})
$$
follows from a similar compactness argument as above, and from uniqueness of the solution.
Indeed, by the a priori estimate \eqref{eq:aprioriUps}, any convergent sequence of $B_{p,p}^{2-\beta}\times L^p([0,T];B_{p,p}^{-\beta,+})$ is sent to a bounded sequence of $L^p([0,T];B_{p,p}^{2/p+\delta''})\cap C([0,T];B_{p,p}^{\delta''})$ by the map $\mathcal{S}$, for any $\delta''\in(\delta,\frac2p(p-1)-\beta)$.
This sequence is precompact in $L^p([0,T];B_{p,p}^{2/p+\delta})\cap C([0,T];B_{p,p}^{\delta})$.
By the same argument as before, we see that any accumulation point solves the equation \eqref{eq:mildUps}, which is unique. Hence this precompact sequence converges.
This completes the proof of Theorem \ref{thm:solmapUps}.

\subsection{Proof of Theorem \ref{mainthm1}}

From Theorem \ref{thm:solmapUps}, the first main result of this paper (Theorem \ref{mainthm1}) immediately follows.

\begin{proof}[{\bfseries Proof of Theorem \ref{mainthm1}}]
By the Da Prato--Debussche decomposition \eqref{eq:X}-\eqref{eq:Y}, the solution $\Phi^N(\phi)$ of the equation \eqref{expsqe1} satisfies
$$
\Phi^N(\phi)=P_NX(\phi)+\mathcal{S}(0,\mathcal{X}^N(\phi)).
$$
For $\mu_0$-almost every $\phi$, $X(\phi)\in C([0,T];H^{-\varepsilon})$ for any $\varepsilon>0$, in view of Proposition \ref{prop:aprioriOU}.
Hence the first term $P_NX(\phi)$ of the right-hand side converges almost surely to $X(\phi)$ in $C([0,T];H^{-\varepsilon})$ for any $\varepsilon>0$, under Hypothesis \ref{hypo on psi}. 
The second term $\mathcal{S}(0,\mathcal{X}^N(\phi))$ converges almost surely to $\mathcal{S}(0,\mathcal{X}^\infty(\phi))$ in $C([0,T];B_{p,p}^{\delta})$ (see Theorems \ref{thm:expwick OU} and \ref{thm:solmapUps}). 
Hence $\Phi^N(\phi)$ converges to
$$
\Phi(\phi)=X(\phi)+\mathcal{S}(0,\mathcal{X}^\infty(\phi))
$$
in the space $C([0,T];B_{p,p}^{-\varepsilon})$ for any $\varepsilon>0$ almost surely, for $\mu_0$-almost every $\phi$.
\end{proof}

\section{Stationary solution}\label{sec:stationary}

In this section, we prove Theorem \ref{mainthm2} and Corollary \ref{maincor} by assuming that $\psi$ satisfies Hypotheses \ref{hypo on psi} and \ref{hypo on P}.
Recall that $\widetilde{\Phi}^N=\widetilde{\Phi}^N(\phi)$ is a unique solution of the SPDE \eqref{expsqe2}:
\begin{equation*}
\left\{
\begin{aligned}
\partial_t\widetilde{\Phi}_t^N&=\frac12(\triangle-1)\widetilde{\Phi}_t^N
-\frac\alpha2 P_N\exp\left(\alpha P_N\widetilde{\Phi}_t^N-\frac{\alpha^2}2C_N\right)+\dot{W}_t,\qquad t>0,\\
\widetilde{\Phi}_0^N&=\phi\in\mathcal{D}'(\Lambda),
\end{aligned}
\right.
\end{equation*}
and $\Phi=\Phi(\phi)$ is the strong solution obtained by Theorem \ref{mainthm1}.
Since the nonlinear term of \eqref{expsqe2} is given by the log-derivative of the approximating measure $\mu_N^{(\alpha)}$ defined by \eqref{expNmeas}, it is easy to show that $\mu_N^{(\alpha)}$ is an invariant measure of the process $\widetilde{\Phi}^N$ (see \cite[Section 4]{HKK19} for details).
Therefore, if $\xi_N$ is a random variable with the law $\mu_N^{(\alpha)}$ and independent of $W$, then 
$$
\widetilde{\Phi}^{N,{\rm stat}}:=\widetilde{\Phi}^N(\xi_N)
$$
is a stationary process.
Let $\xi$ be a $\mathcal{D}'(\Lambda)$-valued random variable with the law $\mu^{(\alpha)}$ and independent of $W$, and define
$$
{\Phi}^{\rm stat}:=\Phi(\xi).
$$
The proof of Theorem \ref{mainthm2} consists of showing the following two facts:
\begin{enumerate}
\item $\{\widetilde{\Phi}^{N,{\rm stat}}\}_{N\in\mathbb{N}}$ is tight in the space $C([0,T];H^{-\varepsilon})$ for any $\varepsilon>0$.
\item $\widetilde{\Phi}^{N,{\rm stat}}$ converges in law to ${\Phi}^{\rm stat}$ in the space $C([0,T];B_{p,p}^{-\varepsilon})$ for any $\varepsilon>0$.
\end{enumerate}
Once they are proved, Theorem \ref{mainthm2} is obtained as follows:
(i) implies that there exists a subsequence $\{\widetilde{\Phi}^{N_k,{\rm stat}}\}_{k\in\mathbb{N}}$ converging in law to a stochastic process $\Psi$ in the space $C([0,T];H^{-\varepsilon})$. 
On the other hand, $\{\widetilde{\Phi}^{N_k,{\rm stat}}\}_{k\in\mathbb{N}}$ converges to ${\Phi}^{\rm stat}$ in $C([0,T];B_{p,p}^{-\varepsilon})$ by (ii).
Since $C([0,T];H^{-\varepsilon})$ is continuously embedded into $C([0,T];B_{p,p}^{-\varepsilon'})$ for any $\varepsilon'>\varepsilon$, the laws of $\Psi$ and ${\Phi}^{\rm stat}$ in $C([0,T];B_{p,p}^{-\varepsilon'})$ coincide. 
Since $H^{-\varepsilon}$ and $B_{p,p}^{-\varepsilon'}$ are separable, by Lusin-Souslin's theorem (cf. \cite[Theorem 15.1]{Kec95}), $C([0,T];H^{-\varepsilon})$ is a measurable subset of $C([0,T];B_{p,p}^{-\varepsilon'})$.
Therefore,
\begin{align*}
\mathbb{P}\left({\Phi}^{\rm stat}\in C([0,T];H^{-\varepsilon})\right)
=\mathbb{P}\left(\Psi\in C([0,T];H^{-\varepsilon})\right)=1,
\end{align*}
and hence $\Psi \overset{d}{=} {\Phi}^{\rm stat}$ in $C([0,T];H^{-\varepsilon})$.
This implies that the accumulation point of the laws of $\{\widetilde{\Phi}^{N,{\rm stat}}\}_{N\in\mathbb{N}}$ in $C([0,T];H^{-\varepsilon})$ is unique, therefore $\widetilde{\Phi}^{N,{\rm stat}}$ converges in law to ${\Phi}^{\rm stat}$ in the space $C([0,T];H^{-\varepsilon})$.
For any bounded continuous function $f$ on $H^{-\varepsilon}$, by Corollary \ref{cor:expmeas},
\begin{align*}
\mathbb{E} \big[f({\Phi}_t^{\rm stat}) \big]
=\lim_{k\to\infty}\mathbb{E} \big [f(\widetilde{\Phi}_t^{N_k, {\rm stat}}) \big]
=\lim_{k\to\infty}\int _{H^{-\varepsilon}} f(\phi) \mu_{N_k}^{(\alpha)}({\dd}\phi)
=\int  _{H^{-\varepsilon}} f(\phi) \mu^{(\alpha)}({\dd}\phi)
\end{align*}
for any $t\ge0$. This means that ${\Phi}_t^{\rm stat}$ has a law $\mu^{(\alpha)}$ for any $t>0$.

Corollary \ref{maincor} is obtained as follows.
Since
$$
\int_{\mathcal{D}'(\Lambda)} \mathbb{P}\left(\Phi(\phi)\in C([0,T];H^{-\varepsilon})\right) \mu^{(\alpha)}({\dd}\phi)
=\mathbb{P}\left({\Phi}^{\rm stat}\in C([0,T];H^{-\varepsilon})\right)=1,
$$
we have
$$
\mathbb{P}\left(\Phi(\phi)\in C([0,T];H^{-\varepsilon})\right)=1
$$
for $\mu^{(\alpha)}$-almost every $\phi\in\mathcal{D}'(\Lambda)$. 
Since $\mu^{(\alpha)}$ and $\mu_0$ are absolutely continuous with respect to each other (Corollary \ref{cor:expmeas}), ``$\mu^{(\alpha)}$-almost every $\phi$" can be replaced by ``$\mu_0$-almost every $\phi$".

We now turn to proofs of (i) and (ii). The proofs go in very similar ways to \cite[Section 4]{HKK19}.

\begin{proof}[{\bfseries Proof of (i)}]
By the definition \eqref{eq:OU} of the OU process $X$, we can decompose $\widetilde{\Phi}^{N,{\rm stat}}=X(\xi_N)+{\bf Y}^N$, where ${\bf Y}^N$ solves
\begin{align*}
\label{eq:statY}
&
\left\{
\begin{aligned}
\partial _t {\bf Y}_t^N&= \frac 12 (\triangle-1) {\bf Y}_t^N - \frac\alpha2 P_N\exp_N^\diamond(\alpha \widetilde{\Phi}_t^{N,{\rm stat}}),\\
{\bf Y}_0^N&=0.
\end{aligned}
\right.
\end{align*}
For $X(\xi_N)$, 
it is easy to check that
$$
\sup_{N\in\mathbb{N}}\mathbb{E}\left[ \left\| X_0(\xi_N) \right\| _{H^{-\varepsilon}} \right]
+\sup_{N\in\mathbb{N}} \mathbb{E}\left[ \sup _{s,t \in [0,T]} \frac{\left\| X_t(\xi_N)  - X_s(\xi_N) \right\| _{H^{-\varepsilon}}}{|t-s|^\delta} \right] <\infty
$$
for sufficiently small $\delta ,\varepsilon>0$,
by the a priori estimate of the OU process (Proposition \ref{prop:aprioriOU}) and the uniform bound of 
Radon-Nikodym derivatives $\frac{{\dd}\mu_N^{(\alpha)}}{{\dd}\mu_0}$ (Corollary \ref{cor:expmeas}).
For ${\bf Y}^N$, by the Schauder estimate (Proposition \ref{prop:Schauder}), the invariance of $\mu_N^{(\alpha)}$ under $\widetilde{\Phi}^N$, and Corollary \ref{log-derivative}, for any small $\delta>0$ we have
\begin{align*}
\mathbb{E}\big [  \|{\bf Y}^N\|_{C^{\delta}([0,T];L^2)}^2 \big ]
&\lesssim
\mathbb{E}\Big [  \left\|P_N\left\{\exp_N^\diamond(\alpha \widetilde{\Phi}^{N,{\rm stat}})\right\}\right\|^2_{L^2([0,T];H^{-s})}
\Big ]
\\
&\le
\mathbb{E}\Big [  \left\|\exp_N^\diamond(\alpha \widetilde{\Phi}^{N,{\rm stat}})\right\|_{L^2([0,T];H^{-s})}^2
\Big ]
\\
&=\int_{\mathcal{D}'(\Lambda)}\mu_N^{(\alpha)}({\dd}\phi) \int_0^T
\mathbb{E}\Big [  \left\|\exp_N^\diamond(\alpha \widetilde{\Phi}_t^N(\phi))\right\|_{H^{-s}}^2\Big ]
{\hspace{0.5mm}}{\dd}t\\
&=T\int_{\mathcal{D}'(\Lambda)} \|\exp_N^\diamond(\alpha\phi)\|_{H^{-s}}^2 \mu_N^{(\alpha)}({\dd}\phi) 
\lesssim 1.
\end{align*}
Then by a similar argument to \cite[Theorem 4.2]{HKK19}, we have the tightness of $\{\widetilde{\Phi}^{N,{\rm stat}}\}_{N\in\mathbb{N}}$ in $C([0,T];H^{-\varepsilon})$.
\end{proof}

\begin{proof}[{\bfseries Proof of (ii)}]
By a similar argument to the proof of \cite[Theorem 1.3]{HKK19},
we can assume that $\xi_N$ converges to $\xi$ in $H^{-\varepsilon}$ almost surely.
Then we can complete the proof of (ii) by showing that
$$
\widetilde{\Phi}^{N,{\rm stat}} \to {\Phi}^{\rm stat}
$$
in $C([0,T];B_{p,p}^{-\varepsilon})$, in probability.
To do this, we decompose 
$\widetilde{\Phi}^{N,{\rm stat}} =X(\xi_N)+{\bf Y}^N$, as in the proof of (i),
and decompose ${\Phi}^{\rm stat} =X(\xi)+{\bf Y}$, where ${\bf Y}=\mathcal{S}(0,\mathcal{X}^\infty(\xi))$.
Since
\begin{align*}
X(\xi_N)\to X(\xi),&\qquad\text{in } C([0,T];H^{-\varepsilon})\quad\text{almsot surely},\\
\mathcal{X}^N(\xi_N)\to\mathcal{X}^\infty(\xi),&\qquad\text{in } L^p([0,T];B_{p,p}^{-\beta})\quad\text{in probability},
\end{align*}
by
\eqref{stabilityOU} of Proposition \ref{prop:aprioriOU}
and Lemma \ref{lem:contiexpOU}, we consider the solution $\Upsilon^N=\mathcal{S}_N(0,\mathcal{X}^N)$ of the deterministic initial value problem
\begin{align*}
\left\{
\begin{aligned}
\partial_t\Upsilon_t^N&=\frac12(\triangle-1)\Upsilon_t^N-\frac\alpha2 P_N\left(e^{\alpha P_N\Upsilon_t^N}\mathcal{X}_t^N\right),\\
\Upsilon_0^N&=0
\end{aligned}
\right.
\end{align*}
 for any nonnegative functions $\{\mathcal{X}^N\}_{N\in\mathbb{N}}\subset C([0,T]\times\Lambda)$.
Then, the proof completes, once we show that; if
$$
\mathcal{X}^N\to\mathcal{X}\qquad\text{in $L^p([0,T];B_{p,p}^{-\beta})$},
$$
then
$$
\mathcal{S}_N(0,\mathcal{X}^N)
\to\mathcal{S}(0,\mathcal{X})
\qquad\text{in $C([0,T];B_{p,p}^{\delta})$}.
$$
This is obtained by a similar way to Lemma \ref{lem:existence}.
Indeed, the a priori estimate \eqref{eq:aprioriUps} holds for $\Upsilon^N$ uniformly over $N$, since $\{ P_N\}$ are nonnegative and uniformly bounded as operators on $B_{p,p}^{-\beta}$, in view of  Hypothesis \ref{hypo on P}.
If $\{\Upsilon^{N_k}\}_{k\in\mathbb{N}}$ is a convergent subsequence, then the limit $\Upsilon$ solves the equation \eqref{eq:mildUps} as a consequence of the continuity of $P_N$ as $N\to\infty$, which is assumed by Hypothesis \ref{hypo on P}.
\end{proof}
%
\section{Relation with Dirichlet form theory} \label{sec:DF}
In this section, we prove Theorem \ref{mainthm3}. Although the proof goes 
in a very similar way to one in \cite{HKK19}, we provide a
sketch of the proof for readers' convenience.

We fix the parameter $s\in(0,1)$ appearing in Corollary \ref{log-derivative} and set $D={\rm Span}\{e_{k};k\in {\mathbb Z}^{2} \}$, $H=L^{2}$ and $E=H^{-s}$.
In what follows, $\langle \cdot, \cdot \rangle$ stands for the pairing of $E$ and its dual space
$E^{*}=H^{s}$.
By Corollary \ref{log-derivative}, the
map $\phi\mapsto\exp^\diamond(\alpha\phi)$ can be regarded as a 
${\mathcal B}(E)/{\mathcal B}(E)$-measurable map.
Let 
$({\mathcal E}, {\mathfrak F}C^{\infty}_{b})$ be the 
pre-Dirichlet form defined by (\ref{DF-intro}), that is,
\begin{equation*}
{\cal E}(F,G)=
\frac{1}{2} 
\int_{E} \big( D_{H}F(\phi ), D_{H}G(\phi ) \big)_{H}
 \mu ^{(\alpha )}({\dd}\phi),\qquad F,G\in {\mathfrak F}C_{b}^{\infty}.
\end{equation*}
Then we obtain the following:
\begin{prop} \label{Prop-IbP}
It holds that
\begin{equation}
{\cal E}(F,G)=-\int_{E} {\cal L}F(\phi) G(\phi) \hspace{0.2mm} \mu^{(\alpha)}({\dd}\phi),
\qquad F,G\in {\mathfrak F}C_{b}^{\infty},
\label{eq:Prop-IbP}
\end{equation}
where ${\mathcal L}F\in L^{2}(\mu^{(\alpha)})$ is given by
\begin{align*}
\mathcal{L}F (\phi )
&=\frac12\sum_{i, j=1}^n \partial_i \partial_jf(\langle \phi,{l_1}\rangle,\dots,\langle \phi,{l_n}\rangle) \langle l_{i}, l_{j} \rangle
\\
&\quad-\frac12\sum_{j=1}^n \partial_jf(\langle \phi,{l_1}\rangle,\dots,\langle \phi,{l_n}\rangle)
\cdot \big \{
\big\langle (1-\triangle)\phi, l_{j}
\big\rangle
+\alpha
\big\langle
\exp^\diamond(\alpha \phi), {l_j} \big\rangle
\big \}
\end{align*}
for $F(\phi)=f(\langle \phi,{l_1}\rangle,\dots,\langle \phi,{l_n}\rangle)$
with $f\in C^{\infty}_{b}(\mathbb R^{n}), l_{1}, \ldots l_{n} \in 
D$.
\end{prop}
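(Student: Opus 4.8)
The plan is to obtain \eqref{eq:Prop-IbP} from the Cameron--Martin integration by parts formula for $\mu_0$ together with the explicit density $\frac{\dd\mu^{(\alpha)}}{\dd\mu_0}=(Z^{(\alpha)})^{-1}e^{-V}$, where $V(\phi):=\int_\Lambda\exp^\diamond(\alpha\phi)(x)\,\dd x$, which by Corollary \ref{cor:expmeas} is bounded and strictly positive. Writing $D_HF(\phi)=\sum_{j=1}^n\partial_jf(\langle\phi,l_1\rangle,\dots,\langle\phi,l_n\rangle)\,l_j$, for each $k\in\mathbb Z^2$ the directional derivative is $\partial_{e_k}F(\phi)=(D_HF(\phi),e_k)_H=\sum_j\partial_jf(\cdots)\langle e_k,l_j\rangle$, so that
$$
\mathcal E(F,G)=\frac12\sum_{k\in\mathbb Z^2}\int_E\partial_{e_k}F\,\partial_{e_k}G\,\dd\mu^{(\alpha)},
$$
the sum being finite since $l_1,\dots,l_n\in D$. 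Thus it suffices to establish an integration by parts formula along each coordinate direction $e_k$ with respect to $\mu^{(\alpha)}$, and then rearrange.

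First I would record the Gaussian integration by parts formula: since $\mu_0$ is centered Gaussian with covariance $(1-\triangle)^{-1}$ and $e_k$ lies in its Cameron--Martin space $H^1(\Lambda)$, translation of $\mu_0$ by $te_k$ produces the density $\exp(t\langle(1-\triangle)\phi,e_k\rangle-\tfrac{t^2}{2}(1+|k|^2))$, whence $\int_E\partial_{e_k}\Psi\,\dd\mu_0=\int_E\Psi\,\langle(1-\triangle)\phi,e_k\rangle\,\dd\mu_0$ for admissible $\Psi$. Next I would differentiate the density. Because $P_Ne_k\to e_k$ uniformly on $\Lambda$, the approximations in \eqref{eq:def of M_N} satisfy $\exp_N^\diamond(\alpha(\phi+te_k))=e^{\alpha tP_Ne_k}\exp_N^\diamond(\alpha\phi)$, and passing to the limit gives $\exp^\diamond(\alpha(\phi+te_k))=e^{\alpha te_k}\exp^\diamond(\alpha\phi)$ as Borel measures; differentiating in $t$ at $0$ yields $\partial_{e_k}V(\phi)=\alpha\langle\exp^\diamond(\alpha\phi),e_k\rangle$. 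Applying the $\mu_0$-formula to $\Psi=e^{-V}\partial_{e_k}F\cdot G$ and using the product rule then gives the $\mu^{(\alpha)}$-integration by parts identity
$$
\int_E\partial_{e_k}\Psi\,\dd\mu^{(\alpha)}
=\int_E\Psi\,\bigl(\langle(1-\triangle)\phi,e_k\rangle+\alpha\langle\exp^\diamond(\alpha\phi),e_k\rangle\bigr)\,\dd\mu^{(\alpha)}.
$$
Taking $\Psi=\partial_{e_k}F\cdot G$, writing $\partial_{e_k}F\,\partial_{e_k}G=\partial_{e_k}(\partial_{e_k}F\cdot G)-\partial_{e_k}^2F\cdot G$, summing over $k$, and collapsing $\sum_k\langle e_k,l_i\rangle\langle e_k,l_j\rangle=\langle l_i,l_j\rangle$, $\sum_k\langle e_k,l_j\rangle\langle(1-\triangle)\phi,e_k\rangle=\langle(1-\triangle)\phi,l_j\rangle$ and likewise for the exponential term, one arrives at \eqref{eq:Prop-IbP} with $\mathcal LF$ exactly as stated. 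As an alternative to differentiating the density directly, one may first prove the identity for the approximating measures $\mu_N^{(\alpha)}$ of \eqref{expNmeas}, whose densities are smooth functionals, and then pass to the limit $N\to\infty$ using Corollaries \ref{cor:expmeas} and \ref{log-derivative}; this is the route closest to \cite{HKK19}.

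The one point genuinely requiring care — and the only place where the present full $L^1$-regime differs essentially from \cite{HKK19} — is integrability. The terms $\langle l_i,l_j\rangle$ and $\langle(1-\triangle)\phi,l_j\rangle=\langle\phi,(1-\triangle)l_j\rangle$ are harmless, since $(1-\triangle)l_j$ is smooth and $\langle\phi,(1-\triangle)l_j\rangle$ lies in every $L^q(\mu_0)\subset L^q(\mu^{(\alpha)})$ because the density is bounded. However, the drift contribution $\alpha\langle\exp^\diamond(\alpha\phi),l_j\rangle$ is no longer in $L^2(\mu_0)$ once $\sqrt{4\pi}\le|\alpha|<\sqrt{8\pi}$, so the argument of \cite{HKK19} does not apply verbatim. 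Here one invokes Corollary \ref{log-derivative}: since $\exp^\diamond(\alpha\phi)\in L^2(\mu^{(\alpha)};H^{-s})$ and $l_j\in H^s=E^*$, the pairing $\langle\exp^\diamond(\alpha\phi),l_j\rangle$ lies in $L^2(\mu^{(\alpha)})$. This ensures both that all the integrals above are absolutely convergent — so that the $\mu_0$-integration by parts applied to $\Psi$ is legitimate after the usual approximation of $\Psi$ by finitely based smooth functions — and that $\mathcal LF\in L^2(\mu^{(\alpha)})$, which is part of the assertion. I expect this integrability bookkeeping, rather than any new conceptual ingredient, to be the main thing to execute carefully.
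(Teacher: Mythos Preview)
Your proposal is correct and you have in fact identified the paper's own argument: the paper takes precisely what you describe as the ``alternative'' route, applying the Gaussian integration by parts formula with the approximating density $\exp\bigl(-\int_\Lambda\exp_N^\diamond(\alpha\phi)\,\dd x\bigr)$ (choosing $\psi=\mathbf 1_{[-1,1]^2}$, so that $P_Nh=h$ for $h\in D$ once $N$ is large) and then passing to the limit $N\to\infty$ via Theorem~\ref{thm:expwick almostsure}, Corollary~\ref{log-derivative}, and $Z_N^{(\alpha)}\to Z^{(\alpha)}>0$. Your primary route via direct differentiation of the limiting density $e^{-V}$ is also sound; the shift identity $\exp^\diamond(\alpha(\phi+te_k))=e^{\alpha te_k}\exp^\diamond(\alpha\phi)$ you need is exactly Lemma~\ref{lem:wexplaw}, and its $\mu_0$-a.e.\ validity along the line $t\mapsto\phi+te_k$ follows from the Cameron--Martin quasi-invariance of $\mu_0$. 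The approximation route has the minor advantage that one never has to argue that $e^{-V}$ itself lies in the domain of the $\mu_0$-directional derivative, whereas your direct route makes the structure of the generator more transparent; either way, as you correctly emphasize, Corollary~\ref{log-derivative} is the one indispensable input for both the convergence of the drift term and the conclusion $\mathcal LF\in L^2(\mu^{(\alpha)})$.
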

\begin{proof}
Let $\psi = {\mathbf 1}_{[-1,1]^2}$, which satisfies Hypothesis \ref{hypo on psi}.
Applying the Gaussian integration by parts formula with respect to 
$\mu_{0}$ (see \cite[page 207]{GJ86}), we have
\begin{align*}
\int_{E}
& 
\big(D_{H}F(\phi),h \big)_{H} 
\exp \Big(-
\int_{\Lambda} \exp^{\diamond}_{N}(\alpha \phi)(x) {\hspace{0.5mm}}{\dd}x 
\Big)
\hspace{0.5mm}
\mu_{0}({\dd}\phi)
\\
&=
\int_{E}F(\phi)\Big ( \langle \phi, (1-\triangle)h \rangle
-\alpha \langle \exp_{N}^{\diamond}(\alpha \phi), P_N h \rangle
\Big )
\exp \Big(-
\int_{\Lambda} \exp_{N}^\diamond(\alpha \phi)(x) \hspace{0.5mm} {\dd}x 
\Big)
\hspace{0.5mm}
\mu_{0}({\dd}\phi)
\nonumber
\end{align*}
for all $F\in {\mathfrak F}C_{b}^{\infty}, h \in D$ and $N\in \mathbb N$.

Now we recall Theorem \ref{thm:expwick almostsure},
Corollary \ref{log-derivative} and 
$\lim_{N \to \infty}Z^{(\alpha)}_{N}=Z^{(\alpha)} >0$. Taking the limit $N\to \infty$ on 
both sides of the above equality, we obtain
\begin{equation*}
\int_{E}
\big(D_{H}F(\phi),h \big)_{H} 
\mu^{(\alpha)}({\dd}\phi)=
\int_{E}F(\phi)\Big ( \langle \phi, (1-\triangle)h \rangle
-\alpha \langle \exp^{\diamond}(\alpha \phi), h \rangle
\Big )
\mu^{(\alpha)}({\dd}\phi)
\end{equation*}
and this leads us to the desired integration by parts formula (\ref{eq:Prop-IbP}).
Besides, applying Corollary \ref{log-derivative} again, we obtain 
${\mathcal L}F\in L^{2}(\mu^{(\alpha)})$.
This completes the proof.
\end{proof}

Proposition \ref{Prop-IbP} implies that $({\mathcal E}, {\mathfrak F}C^{\infty}_{b})$ is closable on $L^{2}(\mu^{(\alpha)})$.
We denote the closure
by
$({\mathcal E}, {\mathcal D}({\mathcal E}))$.
As mentioned in Section 1.2, $({\mathcal E}, {\mathcal D}({\mathcal E}))$
is a quasi-regular Dirichlet form 
on $L^{2}(\mu^{(\alpha)})$, and thus we obtain an $E$-valued diffusion
process 
${\mathbb M}=
(\Theta, {\mathcal G}, ( {\mathcal G}_{t})_{t\geq 0},  (\Psi_{t})_{t\geq 0}, ({\mathbb Q}_{\phi} )_{\phi \in E})$
properly associated with $({\cal E}, {\cal D(E)})$. By recalling 
Corollary \ref{log-derivative}
and applying
\cite[Lemma 4.2]{AR91},
we have 
\begin{equation*}
{\mathbb E}^{{\mathbb Q}_{\phi}} \Big[
\int_{0}^{T} \big \Vert \exp^{\diamond}(\alpha \Psi_{t}) \big \Vert_{E} \, {\dd}t \Big ]<\infty, \qquad T>0, ~\mu^{(\alpha)}
\mbox{-a.e.}~\phi.
\label{Q-1}
\end{equation*}
In particular,
\begin{equation*}
{\mathbb Q}_{\phi} \Big( \int_{0}^{T} 
\big \Vert \exp^{\diamond}(\alpha \Psi_{t}) \big \Vert_{E} \, {\dd}t<\infty
\Big)=1, \qquad T>0,~ \mu^{(\alpha)}
\mbox{-a.e.}~\phi.
\label{Q-2}
\end{equation*}
Thus we are able to apply \cite[Lemma 6.1 and Theorem 6.2]{AR91}
and \cite[Theorem 13]{Ond04} as in \cite{HKK19}. As a result, 
there exists 
an $H$-cylindrical 
$( {\mathcal G}_{t})$-Brownian motion ${\mathcal W}=({\mathcal W}_{t})_{t\geq 0}$
defined on $(\Theta, {\mathcal G}, {\mathbb Q}_{\phi})$ such that
\begin{align}
{\Psi}_t&=
e^{t(\triangle-1)/2}\phi
-\frac\alpha2\int_0^te^{(t-s)(\triangle-1)/2}\exp^\diamond(\alpha{\Psi}_s) \, {\dd}s
\nonumber 
\\
&\quad
+\int_0^te^{(t-s)(\triangle-1)/2} 
\, {\dd} {\mathcal W}_s, \qquad t\geq 0, 
~{\mathbb Q}_{\phi}
 \mbox{-a.s.},~\mu^{(\alpha)}\mbox{-a.e. } \phi .
\nonumber
\end{align}


Now we are going to prove Theorem \ref{mainthm3}.
Precisely, we prove that the process $\Psi$ coincides with the strong solution $\Phi$ driven by the cylindrical Brownian motion ${\mathcal W}$.
We decompose $\Psi=\mathfrak{X}(\phi)+\mathfrak{Y}$, where
\begin{align}\label{eq:X+Y DF}
\begin{aligned}
\mathfrak{X}(\phi)_{t}&:=e^{t(\triangle-1)/2}\phi+\int_0^te^{(t-s)(\triangle-1)/2}
\, {\dd} \mathcal{W}_s,\\
\mathfrak{Y}_{t}&:=-\frac\alpha2\int_0^te^{(t-s)(\triangle-1)/2}\exp^\diamond(\alpha{\Psi}_s)
\, {\dd} s, \qquad t\geq 0.
\end{aligned}
\end{align}
From the Da Prato--Debussche trick as used in Section \ref{sec:wellposed}, it is sufficient to show that
$$
{\mathbb Q}_\phi\Bigl({\mathfrak Y}=\mathcal{S}(0,\exp^\diamond(\alpha{\mathfrak X}(\phi)))\Bigr)=1,
\qquad\text{$\mu^{(\alpha)}$-a.e. $\phi$.}
$$
We prepare the following lemma.

\begin{lem}\label{lem:wexplaw}
Assume that the mollifier $\psi$ satisfies Hypothesis {\rm{\ref{hypo on psi}}}.
Let $E_0$ be  the set of all $\phi\in E$ such that the convergence
$$
\exp^\diamond(\alpha\phi)
=\lim_{N\to\infty}\exp_N^\diamond(\alpha\phi)
$$
holds in $B_{p,p}^{-\beta}$. 
Then, for any $f\in H^{1+\varepsilon}$ and $\phi\in E_0$ such that $f+\phi\in E_0$, one has
$$
\exp^\diamond(\alpha(f+\phi))=\exp(\alpha f)\exp^\diamond(\alpha\phi).
$$
\end{lem}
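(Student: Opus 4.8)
The plan is to prove the identity first at the level of the regularized objects $\exp_N^\diamond$, where everything is a genuine continuous function, and then pass to the limit $N\to\infty$ using the definition of $E_0$. For the regularized objects the key algebraic fact is that $P_N$ is linear, so $P_N(f+\phi)=P_Nf+P_N\phi$, and hence
\begin{align*}
\exp_N^\diamond(\alpha(f+\phi))(x)
&=\exp\!\left(\alpha(P_Nf)(x)+\alpha(P_N\phi)(x)-\frac{\alpha^2}{2}C_N\right)\\
&=\exp\!\left(\alpha(P_Nf)(x)\right)\cdot\exp_N^\diamond(\alpha\phi)(x),
\end{align*}
an honest pointwise identity between continuous functions on $\Lambda$. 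So the content of the lemma is entirely in justifying that this identity survives the limit.

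First I would handle the easy factor: since $f\in H^{1+\varepsilon}\subset C(\Lambda)$ and, by the properties of $P_N$ recorded after Hypothesis \ref{hypo on psi} (namely $\|P_Nf-f\|_{H^s}\to0$ for every $s$, together with the embedding $H^{1+\varepsilon}\subset C(\Lambda)$), we get $P_Nf\to f$ uniformly on $\Lambda$, whence $\exp(\alpha P_Nf)\to\exp(\alpha f)$ in $C(\Lambda)$. Next, since $\phi\in E_0$ and $f+\phi\in E_0$ by hypothesis, the definition of $E_0$ gives $\exp_N^\diamond(\alpha\phi)\to\exp^\diamond(\alpha\phi)$ and $\exp_N^\diamond(\alpha(f+\phi))\to\exp^\diamond(\alpha(f+\phi))$, both in $B_{p,p}^{-\beta}$. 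It then remains to show that the product $\exp(\alpha P_Nf)\cdot\exp_N^\diamond(\alpha\phi)$ converges in (say) $B_{p,p}^{-\beta}$ or some weaker distributional topology to $\exp(\alpha f)\cdot\exp^\diamond(\alpha\phi)$, where the latter product is the one defined via $\mathcal M$ in Section \ref{sec:nnegdist} (both $\exp^\diamond(\alpha\phi)$ and its approximations are nonnegative, so $\mathcal M$ applies and the product is the measure $\exp(\alpha f)(x)\,\mu_{\exp^\diamond(\alpha\phi)}(dx)$).

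The main obstacle, and the step I would spend the most care on, is this last convergence of products: I want
$$
\mathcal M\!\left(\exp(\alpha P_Nf),\,\exp_N^\diamond(\alpha\phi)\right)\longrightarrow
\mathcal M\!\left(\exp(\alpha f),\,\exp^\diamond(\alpha\phi)\right)
$$
in $B_{p,p}^{-\beta}$. By Theorem \ref{thm:stableM} the map $\mathcal M:C(\Lambda)\times B_{p,p}^{-\beta,+}\to B_{p,p}^{-\beta}$ is jointly continuous (and bounded with $\|\mathcal M(g,\xi)\|_{B_{p,p}^{-\beta}}\lesssim\|g\|_{C(\Lambda)}\|\xi\|_{B_{p,p}^{-\beta}}$), so I would split
$$
\mathcal M(\exp(\alpha P_Nf),\exp_N^\diamond(\alpha\phi))-\mathcal M(\exp(\alpha f),\exp^\diamond(\alpha\phi))
=\mathcal M(\exp(\alpha P_Nf)-\exp(\alpha f),\exp_N^\diamond(\alpha\phi))+\mathcal M(\exp(\alpha f),\exp_N^\diamond(\alpha\phi)-\exp^\diamond(\alpha\phi)),
$$
bound the first term by $\|\exp(\alpha P_Nf)-\exp(\alpha f)\|_{C(\Lambda)}\sup_N\|\exp_N^\diamond(\alpha\phi)\|_{B_{p,p}^{-\beta}}$ (the supremum is finite because a convergent sequence is bounded), and bound the second by $\|\exp(\alpha f)\|_{C(\Lambda)}\|\exp_N^\diamond(\alpha\phi)-\exp^\diamond(\alpha\phi)\|_{B_{p,p}^{-\beta}}$; both go to $0$. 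Finally I must check that for continuous functions $\mathcal M(\exp(\alpha P_Nf),\exp_N^\diamond(\alpha\phi))$ really equals $\exp(\alpha P_Nf)\exp_N^\diamond(\alpha\phi)=\exp_N^\diamond(\alpha(f+\phi))$ as elements of $B_{p,p}^{-\beta}$ — this is immediate since for a continuous nonnegative $\xi$ the associated measure $\mu_\xi$ is $\xi(x)\,dx$ and $\mathcal M(g,\xi)=g\xi$ — so the left side converges to $\exp^\diamond(\alpha(f+\phi))$ while the right side converges to $\mathcal M(\exp(\alpha f),\exp^\diamond(\alpha\phi))=\exp(\alpha f)\exp^\diamond(\alpha\phi)$, and the two limits coincide, which is the claim.
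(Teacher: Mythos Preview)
Your proof is correct and follows essentially the same approach as the paper: establish the algebraic identity $\exp_N^\diamond(\alpha(f+\phi))=\exp(\alpha P_Nf)\exp_N^\diamond(\alpha\phi)$ at finite $N$, use $P_Nf\to f$ in $H^{1+\varepsilon}\subset C(\Lambda)$ to get $\exp(\alpha P_Nf)\to\exp(\alpha f)$ uniformly, then invoke the continuity of $\mathcal{M}$ from Theorem~\ref{thm:stableM} together with the definition of $E_0$ to pass to the limit. The paper's proof is more terse (it cites Theorem~\ref{thm:stableM} directly for the convergence of the product rather than writing out your splitting), but the content is the same.
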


\begin{proof}
Let $f\in H^{1+\varepsilon}$ and $\phi\in E_0$.
$P_Nf$ converges to $f$ in $H^{1+\varepsilon}$ by Hypothesis \ref{hypo on psi}.
Since $H^{1+\varepsilon}\subset C(\Lambda)$, $\exp(\alpha P_Nf)$ converges to $\exp(\alpha f)$ in $C(\Lambda)$.
Therefore, by Theorem \ref{thm:stableM},
\begin{align*}
\exp_N^\diamond(\alpha(f+\phi))=\exp(\alpha P_Nf)\exp_N^\diamond(\alpha\phi)
\xrightarrow{N\to\infty}\exp(\alpha f)\exp^\diamond(\alpha\phi)
\end{align*}
in $B_{p,p}^{-\beta}$.
If $f+\phi\in E_0$, $\exp_N^\diamond(\alpha(f+\phi))$ converges to $\exp^\diamond(\alpha(f+\phi))$.
From these convergences the assertion follows.
\end{proof}

\begin{proof}[{\bfseries Proof of Theorem \ref{mainthm3}}]
It is sufficient to check that $\mathfrak{Y}$ belongs to the space $\mathscr{Y}_T$ and solves the mild equation \eqref{eq:mildUps}.
By the invariance of $\mu^{(\alpha)}$ under ${\Psi}$ and Corollary \ref{log-derivative},
\begin{align*}
\int_E
{\mathbb E}^{{\mathbb Q}_{\phi}}
{\Big [}
\big \|\exp^\diamond(\alpha{\Psi}) \big \|_{L^2([0,T];H^{-s})}^2 
{\Big ]}
\mu^{(\alpha)}({\dd}\phi)
=\int_0^T
\, {\dd}t\int_E \big \|\exp^\diamond(\alpha\phi) \big \|_{H^{-s}}^2\mu^{(\alpha)}({\dd}\phi)
<\infty.
\end{align*}
In particular,
$$
{\mathbb Q}_\phi\Bigl (
\exp^\diamond(\alpha{\Psi})\in L^2([0,T];H^{-s})\Bigr )=1,
\qquad\text{$\mu^{(\alpha)}$-a.e. $\phi$.}
$$
By the Schauder estimate (Proposition \ref{prop:Schauder}),
\begin{align}\label{eq:Y in Y_T as}
{\mathbb Q}_\phi\Bigl (
\mathfrak{Y}\in L^2([0,T];H^{1+\kappa})\cap C([0,T];H^\kappa)
\Bigr )=1,
\qquad \mu^{(\alpha)}\mbox{-a.e.}\ \phi
\end{align}
for small $\kappa>0$.
Since $\alpha\mathfrak{Y}$ is nonpositive, 
we have
\begin{align*}
{\mathbb Q}_\phi
\big (
{\mathfrak Y}\in\mathscr{Y}_T\big )=1,
\qquad\text{$\mu^{(\alpha)}$-a.e. $\phi$.}
\end{align*}

Finally we show that $\mathfrak{Y}$ solves the mild equation \eqref{eq:mildUps} with $(\upsilon,\mathcal{X})=(0,\mathfrak{X})$. By the definition \eqref{eq:X+Y DF} of $\mathfrak{Y}$, it is sufficient to show that
\begin{align}\label{exp rule holds as}
{\mathbb Q}_\phi
\Big (
\exp^\diamond(\alpha{\Psi}_t)=e^{\alpha{\mathfrak Y}_t}\cdot\exp^\diamond(\alpha{\mathfrak X}_t),
\ \mbox{a.e.}\ t
\Big )=1,
\qquad \mu_0\mbox{-a.e.}\ \phi.
\end{align}
Recall the definition of the subset $E_0$ in Lemma \ref{lem:wexplaw}.
Then $\mu_0(E_0)=1$, so $\mu^{(\alpha)}(E_0)=1$ 
by the absolute continuity (see Corollary \ref{cor:expmeas}).
By using the invariance of $\mu^{(\alpha)}$ under ${\Psi}$,
\begin{align*}
\int_E{\mathbb E}^{{\mathbb Q}_{\phi}}
\left[\int_0^T\mathbf{1}_{E_0^c}({\Psi}_t)
\, {\dd}t \right]\mu^{(\alpha)}({\dd}\phi)
=\int_0^T 
\, {\dd}t \int_E\mathbf{1}_{E_0^c}(\phi)\mu^{(\alpha)}({\dd}\phi)
=T\mu^{(\alpha)}(E_0^c)=0.
\end{align*}
Similarly, by the invariance of $\mu_0$ under ${\mathfrak X}$,
\begin{align*}
\int_E
{\mathbb E}^{{\mathbb Q}_\phi}
\left[\int_0^T\mathbf{1}_{E_0^c}({\mathfrak X}_t)
\, {\dd}t
\right]\mu_0({\dd}\phi)
&=\int_0^T 
\, {\dd}t \int_E\mathbf{1}_{E_0^c}(\phi)\mu_0({\dd}\phi)
=T\mu_0(E_0^c)=0.
\end{align*}
From these equalities and \eqref{eq:Y in Y_T as}, we have
\begin{align*}
{\mathbb Q}_\phi
\Big(
{\Psi}_t\in E_0,\ {\mathfrak X}_t\in E_0,\ {\mathfrak Y}_t\in H^{1+\kappa},\ \text{a.e. $t$}
\Big )
=1
\end{align*}
for $\mu_0$-almost every $\phi$.
Therefore, Lemma \ref{lem:wexplaw} implies \eqref{exp rule holds as}.
\end{proof}
%
%

\renewcommand{\thesection}{\Alph{section}}

\setcounter{section}{0}

\section{Green functions and their approximation}\label{hypo on G app}
In this appendix, we provide some properties of Green functions and their approximation on the whole space and the torus.
In the end, we prove a proposition, which yields Proposition \ref{hypo on G}.

\subsection{Green function on the whole plane}
Recall that $\psi$ is a function satisfying Hypothesis \ref{hypo on psi}, $\psi_N=\psi(2^{-N}\cdot)$, and
\begin{align*}
G_{M,N}(x,y)
=\frac1{2\pi}\sum_{k\in\mathbb{Z}^2}\frac{\psi_M(k)\psi_N(k)}{1+|k|^2}{\bf e}_k(x-y),
\qquad M,N\in\mathbb{N}.
\end{align*}
We regard $G_{M,N}$ as a periodic function on $\mathbb{R}^2\times\mathbb{R}^2$, rather than a function on $\Lambda\times\Lambda$.
Then by the Poisson summation formula, we can write it as an infinite sum of decreasing functions
$$
G_{M,N}(x,y)=\sum_{k\in\mathbb{Z}^2}{K}_{M,N}(x-y+2\pi k),
\qquad
{K}_{M,N}
:=\frac1{2\pi}\mathcal{F}^{-1}\left(\frac{\psi_M \psi_N}{1+| \cdot |^2}\right).
$$
Hence we need to observe the behavior of ${K}_{M,N}$ for our purpose.
Setting $\rho_{M,N}=\frac1{2\pi}\mathcal{F}^{-1}\left(\psi_M\psi_N\right)$, we can write $K_{M,N}$ as a convolution 
$$
{K}_{M,N}(x)
=(1-\triangle_{\mathbb{R}^2})^{-1}\rho_{M,N}(x)=\int_{\mathbb{R}^2}{K}(x-y)\rho_{M,N}(y)
\, {\dd}y,
$$
where $\triangle_{\mathbb{R}^2}$ is the Laplacian on $\mathbb{R}^2$, and $K$ is the Green function of $1-\triangle _{{\mathbb R}^2}$.
\begin{prop}\label{prop:AGreenH1}
The function $K:\mathbb{R}^2\setminus\{0\}\to\mathbb{R}$ is positive and has the estimates
\begin{align}\label{prop:AGreenH11}
{K}(x)
\left\{
\begin{aligned}
&=-\frac1{2\pi}\log|x|+O(1), &|x|<1,\\
&\lesssim e^{-|x|/2}, &|x|\ge1.
\end{aligned}
\right.
\end{align}
\end{prop}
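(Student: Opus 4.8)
The goal is to establish the two-sided description of the Green function $K$ of $1-\triangle_{\mathbb{R}^2}$: that it is positive, behaves like $-\frac{1}{2\pi}\log|x|+O(1)$ near the origin, and decays like $e^{-|x|/2}$ at infinity. The plan is to use the explicit representation of $K$ in terms of the modified Bessel function of the second kind. First I would recall that the fundamental solution of $(1-\triangle_{\mathbb{R}^2})$ on $\mathbb{R}^2$ is given by
\[
K(x)=\frac1{2\pi}K_0(|x|),
\]
where $K_0$ is the modified Bessel function of the second kind of order zero; this can be checked either by a direct Fourier computation, $\mathcal{F}^{-1}\bigl((1+|\cdot|^2)^{-1}\bigr)$, passing to polar coordinates and recognizing the Hankel-type integral, or by noting that $K_0(|x|)$ solves the radial ODE $u''+\frac1r u'-u=0$ away from the origin and has the correct logarithmic singularity at $r=0$. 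Positivity of $K$ is then immediate from positivity of $K_0$ on $(0,\infty)$ (alternatively, positivity follows from the representation $K(x)=\int_0^\infty (4\pi t)^{-1}e^{-t}e^{-|x|^2/4t}\,{\dd}t$ as an integral of positive heat kernels, which also gives a self-contained proof avoiding Bessel functions entirely).

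For the small-$|x|$ estimate I would invoke the classical asymptotic expansion $K_0(r)=-\log(r/2)-\gamma+O(r^2\log r)$ as $r\to0$, where $\gamma$ is the Euler--Mascheroni constant. This gives $K(x)=\frac1{2\pi}\bigl(-\log|x|+\log 2-\gamma\bigr)+O(|x|^2\log|x|)$, and in particular $K(x)=-\frac1{2\pi}\log|x|+O(1)$ uniformly for $|x|<1$, which is exactly \eqref{prop:AGreenH11}. If one prefers to avoid citing Bessel asymptotics, the same bound follows by splitting the heat-kernel integral at $t=1$: the tail $\int_1^\infty$ is bounded, while $\int_0^1 (4\pi t)^{-1}e^{-t}e^{-|x|^2/4t}\,{\dd}t$ is, up to a bounded error, $\int_0^1 (4\pi t)^{-1}e^{-|x|^2/4t}\,{\dd}t$, and substituting $s=|x|^2/4t$ turns this into $\frac1{4\pi}\int_{|x|^2/4}^\infty s^{-1}e^{-s}\,{\dd}s=\frac1{4\pi}\Gamma(0,|x|^2/4)$, whose behavior as $|x|\to0$ is $-\frac1{2\pi}\log|x|+O(1)$ by the standard asymptotics of the incomplete Gamma function.

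For the large-$|x|$ estimate I would use the exponential decay $K_0(r)=\sqrt{\frac{\pi}{2r}}e^{-r}\bigl(1+O(1/r)\bigr)$ as $r\to\infty$; since $\sqrt{\pi/(2r)}\,r^{-1/2}$-type prefactors are bounded for $r\ge1$, this yields $K(x)\lesssim r^{-1/2}e^{-|x|}\lesssim e^{-|x|/2}$ for $|x|\ge1$, which is precisely the claimed bound (with plenty of room to spare in the exponent). Again, a Bessel-free route is available: for $|x|\ge1$, bound $\int_0^\infty (4\pi t)^{-1}e^{-t}e^{-|x|^2/4t}\,{\dd}t$ by optimizing the exponent $-t-|x|^2/4t$, whose minimum over $t>0$ is $-|x|$ attained at $t=|x|/2$; a Laplace-type estimate around this point gives $K(x)\lesssim e^{-|x|}$, hence $\lesssim e^{-|x|/2}$. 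The main obstacle is merely bookkeeping: one must make sure the constants in the $O(1)$ term near the origin are genuinely uniform in $|x|<1$ and that the transition at $|x|=1$ is handled cleanly, but no deep difficulty arises, since everything reduces to elementary properties of the heat kernel (or of $K_0$). I would present the heat-kernel proof as primary, since it is fully self-contained and keeps the paper's Green-function estimates at an elementary level.
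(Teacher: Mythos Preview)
Your proposal is correct; both the Bessel-function route and your heat-kernel alternative lead to the claimed estimates without difficulty. The paper also starts from the heat-kernel representation $K(x)=\frac{1}{4\pi}\int_0^\infty e^{-t-|x|^2/4t}\,t^{-1}\,{\dd}t$, but instead of splitting at $t=1$ or invoking Laplace's method, it exploits the symmetry $t\leftrightarrow |x|^2/4t$ of the integrand to write $K(x)=\frac{1}{2\pi}\int_1^\infty e^{-\frac{|x|}{2}(s+1/s)}\,s^{-1}\,{\dd}s=:\frac{1}{2\pi}g(|x|)$. From this single expression the large-$|x|$ bound $g(r)\lesssim e^{-r/2}$ is immediate (bound the integrand by $e^{-rs/2}$), and for small $r$ one decomposes $g(r)=\int_1^{1/r}s^{-1}\,{\dd}s+\text{(bounded corrections)}=-\log r+O(1)$ by elementary inequalities. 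Compared with your approach, the paper's symmetry trick avoids any reference to named special functions (Bessel, incomplete Gamma) or to Laplace-type saddle estimates, keeping everything at the level of calculus; your Bessel-function argument is shorter if one is willing to quote standard asymptotics, and your incomplete-Gamma variant is equally valid but slightly less streamlined than the paper's folding manoeuvre.
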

\begin{proof}
By the relation between the heat semigroup and the resolvent kernel, we have
\begin{equation}\label{eq:AGreenH99}
{K}(x) = \frac{1}{4\pi} \int _0^\infty \exp\left({-t-\frac{|x|^2}{4t}}\right) \frac{{\dd}t}t 
\nonumber
\end{equation}
for $x\neq 0$. Since the integral over $(0,|x|/2)$ and $(|x|/2,\infty)$ are equal in view of the change of variables by $s=|x|^2/4t$, we have
\begin{equation*}
{K}(x) = \frac{1}{2\pi} \int _{|x|/2}^\infty \exp\left({-t-\frac{|x|^2}{4t}}\right) \frac{{\dd}t}t 
= \frac{1}{2\pi} \int _1^\infty \exp\left({-\frac{|x|}2\left(t+\frac1t\right)}\right) \frac{{\dd}t}t.
\end{equation*}
Hence we observe the behavior of the function
\begin{align*}
g(r)=\int_1^\infty \exp\left({-\frac{r}2\left(s+\frac1s\right)}\right) \frac{{\dd}s}s,\qquad r\in(0,\infty).
\end{align*}
Since the integrand is bounded by $e^{-rs/2}$ on $s\ge1$, we have $g(r)\lesssim e^{-r/2}$ for $r\ge1$, so the latter part of \eqref{prop:AGreenH11} follows. 
To consider the estimate on $r<1$, we decompose
\begin{align*}
g(r)=\int_1^{1/r}\frac{{\dd}s}s
+\int_1^{1/r}\left\{\exp\left({-\frac{r}2\left(s+\frac1s\right)}\right)-1\right\} \frac{{\dd}s}s
+\int_{1/r}^\infty \exp\left({-\frac{r}2\left(s+\frac1s\right)}\right) \frac{{\dd}s}s.
\end{align*}
The first term is equal to $-\log r$. The other terms are $O(1)$, since
\begin{align*}
\int_1^{1/r}\left|\exp\left({-\frac{r}2\left(s+\frac1s\right)}\right)-1\right|\frac{{\dd}s}s
\le\int_1^{1/r}\frac{r}2\left(s+\frac1{s}\right)\frac{{\dd}s}s
\le\frac12,
\end{align*}
and
\begin{align*}
&\int_{1/r}^\infty \exp\left({-\frac{r}2\left(s+\frac1s\right)}\right)\frac{{\dd}s}s
\le\int_{1/r}^\infty r\exp\left({-\frac{r}2s}\right)
{\dd}s\le2e^{-1/2}.
\end{align*}
Thus we have the former part of \eqref{prop:AGreenH11}.
\end{proof}

Next we consider the convolution of ${K}$ and a function with sufficient decay.
\begin{lem}[{\cite[Lemma 4.1]{RV10}}]\label{lemma RV}
For any function $\rho$ on $\mathbb{R}^2$ such that 
\begin{align*}
|\rho(x)|\le C(1+|x|)^{-2-\gamma}
\end{align*}
for some $C>0$ and $\gamma>0$, one has
\begin{align*}
\sup_{|x|>1}\left|\int_{\mathbb{R}^2}|\rho(y)|\log\frac{|x|}{|x-y|}
\, {\dd}y\right|<\infty.
\end{align*}
\end{lem}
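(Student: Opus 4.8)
The final statement to prove is Lemma~\ref{lemma RV}: for a function $\rho$ on $\mathbb{R}^2$ with $|\rho(x)| \le C(1+|x|)^{-2-\gamma}$, the quantity $\sup_{|x|>1} \left| \int_{\mathbb{R}^2} |\rho(y)| \log\frac{|x|}{|x-y|}\, \mathrm{d}y \right|$ is finite.

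The plan is to split the domain of integration into three regions according to how $|y|$ compares with $|x|$, and estimate each piece separately using the decay of $\rho$. Fix $x$ with $|x| > 1$. First I would handle the region near the singularity $y \approx x$, say $\{|x-y| \le |x|/2\}$. There $|x| \asymp |y|$, so $\log\frac{|x|}{|x-y|} = \log|x| - \log|x-y| \le \log(2) - \log|x-y|$ when $|x-y| \le 1$ and is bounded otherwise; the bound $|\rho(y)| \lesssim (1+|x|)^{-2-\gamma}$ combined with $\int_{|x-y|\le 1} |\log|x-y||\, \mathrm{d}y < \infty$ (the log is locally integrable in dimension 2) controls this piece uniformly. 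Second, the region $\{|y| \le |x|/2\}$, where $|x-y| \ge |x|/2$, so $\log\frac{|x|}{|x-y|} \le \log 2$ is bounded above, and $\log\frac{|x|}{|x-y|} \ge \log\frac{|x|}{|x|+|y|} \ge \log\frac{1}{1+|y|/|x|}$, which is bounded below by a constant times $-\log(1+|y|)$ uniformly; then $\int |\rho(y)|\log(1+|y|)\, \mathrm{d}y < \infty$ by the $(1+|y|)^{-2-\gamma}$ decay. Third, the region $\{|y| > |x|/2\} \cap \{|x-y| > |x|/2\}$, which forces $|y|$ large (at least $|x|/2 > 1/2$): here $\log\frac{|x|}{|x-y|}$ could be large negative only if $|x-y|$ is huge, but $|x-y| \le |x| + |y| \le 3|y|$, so $\log\frac{|x|}{|x-y|} \ge \log\frac{|x|}{3|y|} \ge -\log(6|y|/|x|)$, and since $|y| > |x|/2$ we also need the upper bound $\log\frac{|x|}{|x-y|} \le \log\frac{|x|}{|x|/2} = \log 2$; combining, $\left|\log\frac{|x|}{|x-y|}\right| \lesssim 1 + \log|y|$ on this set, and again the integral $\int_{|y|>1/2} (1+|y|)^{-2-\gamma}(1 + \log|y|)\, \mathrm{d}y$ converges.

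Assembling the three bounds gives a constant depending only on $C$ and $\gamma$ (and the absolute geometric constants), independent of $x$, which is exactly the claimed uniform bound. The main obstacle is bookkeeping: getting clean two-sided control of $\log\frac{|x|}{|x-y|}$ on each region without losing uniformity in $|x|$, particularly in the third region where both $|x|$ and $|y|$ are large and one must avoid the temptation to bound things in a way that degrades as $|x| \to \infty$. The key technical facts used are merely (i) local integrability of $\log$ in $\mathbb{R}^2$, (ii) the elementary inequalities $|x|-|y| \le |x-y| \le |x|+|y|$, and (iii) the integrability of $(1+|y|)^{-2-\gamma}\log(1+|y|)$ over $\mathbb{R}^2$, which holds precisely because $\gamma > 0$.
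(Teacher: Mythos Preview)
The paper does not prove this lemma; it merely cites \cite[Lemma~4.1]{RV10}. So there is no ``paper's own proof'' to compare against, and your three-region decomposition is a perfectly standard way to establish the result.

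Your argument is essentially correct, but Region~1 contains a slip. You claim that on $\{|x-y|\le|x|/2\}$ one has $\log\frac{|x|}{|x-y|}\le\log 2-\log|x-y|$ when $|x-y|\le1$, and that $\log\frac{|x|}{|x-y|}$ is ``bounded otherwise''. The first inequality would force $|x|\le2$, and the second claim is false: for $1<|x-y|\le|x|/2$ the quantity $\log\frac{|x|}{|x-y|}$ ranges up to $\log|x|$, which is unbounded as $|x|\to\infty$. The repair is immediate and uses the very ingredient you already invoked: on Region~1 one has $|y|\ge|x|/2$, hence $|\rho(y)|\lesssim(1+|x|)^{-2-\gamma}$, and a direct computation in polar coordinates gives
\[
\int_{|x-y|\le|x|/2}\log\frac{|x|}{|x-y|}\,\mathrm{d}y
=2\pi|x|^2\int_0^{1/2}s\log\frac{1}{s}\,\mathrm{d}s
=c\,|x|^2,
\]
so the Region~1 contribution is $\lesssim(1+|x|)^{-2-\gamma}|x|^2\lesssim|x|^{-\gamma}$, uniformly bounded for $|x|>1$. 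With this correction your Regions~2 and~3 are fine as written (Region~2 can in fact be simplified: there $|x|/2\le|x-y|\le 3|x|/2$, so $|\log\frac{|x|}{|x-y|}|\le\log 2$ outright), and the proof goes through.
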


\begin{lem}\label{lem:K*rho}
Let $\rho$ be a function on $\mathbb{R}^2$ such that $\int_{\mathbb R^{2}}\rho(x)
\, {\dd}x=1$ and
\begin{align}\label{ineq:decay of rho}
|\rho(x)|\le C(1+|x|)^{-4-2\gamma}
\end{align}
for some $C>0$ and $\gamma>0$. Set $\rho_N=2^{2N}\rho(2^N\cdot)$ for $N\in\mathbb{N}$.
Then for any $|x|<1$ and $N\in\mathbb{N}$,
\begin{align}\label{prop:Kmn1}
K*\rho_N(x)&=-\frac1{2\pi}\log\left(|x|\vee2^{-N}\right)+O(1).
\end{align}
Moreover, for any $x\in\mathbb{R}^2$ and $N\in\mathbb{N}$,
\begin{align}
\label{prop:Kmn2}
|K*\rho_N(x)|&\lesssim |x|^{-2-\gamma}.
\end{align}
\end{lem}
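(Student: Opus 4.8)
The plan is to peel off from the whole-plane Green kernel $K$ its logarithmic singularity, convolve the two pieces separately against the rescaled mollifier $\rho_N$, and then compare with the lattice scale $2^{-N}$ by the change of variables $y=2^{-N}u$, $v:=2^{N}x$. Concretely, I would write $K=K_\flat+K_\sharp$ with $K_\sharp(z):=-\frac1{2\pi}(\log|z|)\,\mathbf 1_{\{|z|\le1\}}$ and $K_\flat:=K-K_\sharp$. By Proposition \ref{prop:AGreenH1}, $K_\flat$ is bounded on $\mathbb R^2$ and decays like $e^{-|z|/2}$ at infinity, so $K_\flat\in L^1(\mathbb R^2)\cap L^\infty(\mathbb R^2)$; since $\|\rho_N\|_{L^1}=\|\rho\|_{L^1}<\infty$ (because $4+2\gamma>2$), we get $\|K_\flat*\rho_N\|_{L^\infty}\le\|K_\flat\|_{L^\infty}\|\rho\|_{L^1}=O(1)$ uniformly in $N$. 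Thus \eqref{prop:Kmn1} reduces to the same estimate for $K_\sharp*\rho_N(x)=-\frac1{2\pi}\int_{|x-y|\le1}\log|x-y|\,\rho_N(y)\,dy$; and a routine splitting based on the tail bound $|\rho_N(y)|\lesssim2^{-N(2+2\gamma)}|y|^{-4-2\gamma}$ shows that extending the integration domain here to all of $\mathbb R^2$ changes the value by only $O(1)$ when $|x|<1$, so it suffices to evaluate $\int_{\mathbb R^2}\log|x-y|\,\rho_N(y)\,dy$.

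The heart of the argument is the rescaling $y=2^{-N}u$, which (using $\int\rho=1$) rewrites this integral as $-N\log2+\int_{\mathbb R^2}\log|v-u|\,\rho(u)\,du$ with $v=2^Nx$, followed by a two-regime analysis. If $|v|\le2$, i.e.\ $|x|\le2^{1-N}$, then $\int\log|v-u|\,\rho(u)\,du=O(1)$ directly, from $\|\rho\|_{L^\infty}<\infty$ near $u=v$ and $\log|v-u|\le\log(2+|u|)$ together with $\int\log(2+|u|)\,|\rho(u)|\,du<\infty$; this gives $K*\rho_N(x)=\frac{N\log2}{2\pi}+O(1)$, which matches $-\frac1{2\pi}\log(|x|\vee2^{-N})$ up to $O(1)$ since $|x|\vee2^{-N}\asymp2^{-N}$ in this range. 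If $|v|>2$, i.e.\ $2^{1-N}<|x|<1$, I would write $\log|v-u|=\log|v|+\log\frac{|v-u|}{|v|}$ and invoke the Robert--Vargas estimate (Lemma \ref{lemma RV}, whose hypothesis $|\rho(u)|\le C(1+|u|)^{-2-\gamma}$ follows from ours) to get $\int\log\frac{|v|}{|v-u|}\,\rho(u)\,du=O(1)$ uniformly for $|v|>1$; hence $\int\log|v-u|\,\rho(u)\,du=\log|v|+O(1)$ and $K*\rho_N(x)=-\frac1{2\pi}\log|x|+O(1)=-\frac1{2\pi}\log(|x|\vee2^{-N})+O(1)$. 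Combining the two regimes proves \eqref{prop:Kmn1}.

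For the global bound \eqref{prop:Kmn2} I would use the positivity of $K$ (Proposition \ref{prop:AGreenH1}) to write $|K*\rho_N(x)|\le\int K(x-y)\,|\rho_N(y)|\,dy$ and split this over $\{|x-y|>|x|/2\}$ and $\{|x-y|\le|x|/2\}$. On the first set, $K(x-y)\le\sup_{|z|\ge|x|/2}K(z)$, which is $\lesssim1+\log_+(1/|x|)$ in general and $\lesssim e^{-|x|/4}$ for $|x|\ge2$, hence $\lesssim|x|^{-2-\gamma}$ for every $x$, and the $|\rho_N|$-integral contributes only $\|\rho\|_{L^1}$. On the second set $|y|\ge|x|/2$, so $(1+2^N|y|)^{-4-2\gamma}\le(1+2^{N-1}|x|)^{-2-\gamma}(1+2^N|y|)^{-2-\gamma}$; bounding the surviving factor by $1$ and extending the domain to $\mathbb R^2$ (using $K\ge0$ and $\|K\|_{L^1}<\infty$) leaves $\lesssim2^{2N}(1+2^{N-1}|x|)^{-2-\gamma}$, and an elementary computation shows this is $\lesssim|x|^{-2-\gamma}$ uniformly over $N\ge1$, precisely because $\gamma>0$.

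The main obstacle is the regime $2^{-N}\lesssim|x|<1$ in \eqref{prop:Kmn1}: after rescaling, the contribution of the merely polynomially decaying tails of $\rho$ to $\int\log|v-u|\,\rho(u)\,du$ has to be pinned to $\log|v|+O(1)$ \emph{uniformly} over $|v|>1$, which is exactly what Lemma \ref{lemma RV} supplies. The remaining work is bookkeeping: checking that all the $O(1)$ errors above are uniform in both $N$ and $x$ over the indicated ranges.
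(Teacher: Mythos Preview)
Your argument for \eqref{prop:Kmn1} is correct and essentially identical to the paper's: the same decomposition $K=(\text{bounded part})-\tfrac1{2\pi}\log(|\cdot|\wedge1)$, the same reduction to $\int\log|x-y|\,\rho_N(y)\,dy$, the same rescaling $y=2^{-N}u$, the same two-regime split at $|2^Nx|\asymp1$, and the same appeal to Lemma~\ref{lemma RV} in the outer regime.

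For \eqref{prop:Kmn2} your proof is correct but organised differently. You split the convolution into the regions $\{|x-y|>|x|/2\}$ and $\{|x-y|\le|x|/2\}$, exploiting on the first that $K$ is small away from its singularity and on the second that $|y|\ge|x|/2$ forces $|\rho_N(y)|$ to be small; you then use only $K\in L^1$. The paper instead multiplies through by $|x|^{2+\gamma}$, uses the pointwise inequality $|x|^{2+\gamma}\lesssim|y|^{2+\gamma}+|x-y|^{2+\gamma}$, and controls the two resulting integrals via $\sup_z|z|^{2+\gamma}K(z)<\infty$ together with $K\in L^{q'}$ and $|\cdot|^{2+\gamma}\rho_N\in L^q$ for a suitable $q>1$. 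Your near/far splitting is slightly more elementary in that it avoids the $L^{q'}$--$L^q$ pairing, at the price of the small endgame computation bounding $2^{2N}(1+2^{N-1}|x|)^{-2-\gamma}$ by $|x|^{-2-\gamma}$; the paper's algebraic route makes the $|x|^{-2-\gamma}$ scaling appear immediately but needs the extra integrability of $K$. Either approach is standard.
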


\begin{proof}
First we prove \eqref{prop:Kmn1}. By Proposition \ref{prop:AGreenH1}, we can decompose
$$
K(x)=-\frac1{2\pi}\log(|x|\wedge1)+R(x),
$$
where $R$ is a bounded function with rapid decay as $|x|\to\infty$.
Since $R*\rho_N$ is bounded, it is sufficient to show that
\begin{align*}
\left(\rho_N*\log(|\cdot|\wedge1)\right)(x)=\log\left(|x|\vee2^{-N}\right)+O(1).
\end{align*}
We decompose
\begin{align}\label{proof:K*rho}
\left(\rho_N*\log(|\cdot|\wedge1)\right)(x)=\int_{\mathbb{R}^2}\rho_N(y)\log|x-y|
\, {\dd}y
-\int_{|x-y|>1}\rho_N(y)\log|x-y|
\, {\dd}y.
\end{align}
Since $|x|<1$, the second term of the right-hand side is bounded. Indeed, since $1<|x-y|\le1+|y|$,
\begin{align*}
\int_{|x-y|>1}\rho_N(y)\log|x-y|
\, {\dd}y
&\le \int_{\mathbb{R}^2} |\rho_N(y)|\log(1+|y|)
\, {\dd}y\\
&= \int_{\mathbb{R}^2} |\rho(z)|\log(1+2^{-N}|z|)
\, {\dd}z\\
&\le \int_{\mathbb{R}^2} |\rho(z)|\log(1+|z|)
\, {\dd}z<\infty.
\end{align*}
Consider the first term of the right-hand side of \eqref{proof:K*rho}. When $|x|>2^{-N}$, by Lemma \ref{lemma RV},
\begin{align*}
\int_{\mathbb{R}^2}\rho_N(y)\log|x-y|
\, {\dd}y
&=\log2^{-N}+\int_{\mathbb{R}^2}\rho(y)\log|2^Nx-y|
\, {\dd}y\\
&=\log2^{-N}+\log|2^Nx|+O(1)=\log|x|+O(1).
\end{align*}
When $|x|\le2^{-N}$, by the calculation
\begin{align*}
&\left|\int_{\mathbb{R}^2}\rho(y)\log|2^Nx-y|
\, {\dd}y\right|\\
&\le\left|\int_{|2^Nx-y|<1}\rho(y)\log|2^Nx-y|
\, {\dd}y\right|
+\left|\int_{|2^Nx-y|\ge1}\rho(y)\log|2^Nx-y|
\, {\dd}y\right|\\
&\le \left(\int_{\mathbb{R}^2}\rho^2(y)
\, {\dd}y\right)^{1/2}\left(\int_{|y|<1}\left(\log|y|\right)^2
\, {\dd}y\right)^{1/2}
+\int_{\mathbb{R}^2}\rho(y)\log(1+|y|)
\, {\dd}y<\infty,
\end{align*}
we have
\begin{align*}
\int_{\mathbb{R}^2}\rho_N(y)\log|x-y|
\, {\dd}y
=\log2^{-N}+O(1).
\end{align*}
Thus, we have \eqref{prop:Kmn1}.

Next we prove \eqref{prop:Kmn2}.
By Proposition \ref{prop:AGreenH1}, $K\in L^p(\mathbb{R}^2)$ for any $p\in[1,\infty)$ and
$$
\sup_{x\in\mathbb{R}^2}|x|^{2+\gamma}K(x)<\infty.
$$
Hence we have
\begin{align*}
|x|^{2+\gamma}|K*\rho_N(x)|
&\lesssim\int_{\mathbb{R}^2}|y|^{2+\gamma}K(y)|\rho_N(x-y)|
\, {\dd}y
+\int_{\mathbb{R}^2}|x-y|^{2+\gamma}K(y)|\rho_N(x-y)|
\, {\dd}y\\
&= \int_{\mathbb{R}^2}|y|^{2+\gamma}K(y)|\rho_N(x-y)|
\, {\dd}y
+\int_{\mathbb{R}^2}K(x-y)|y|^{2+\gamma}|\rho_N(y)|
\, {\dd}y\\
&\lesssim \int_{\mathbb{R}^2}|\rho_N(y)|
\, {\dd}y
+\left(\int_{\mathbb{R}^2} \left(|y|^{2+\gamma}|\rho_N(y)|\right)^q
\, {\dd}y\right)^{1/q}
\end{align*}
for any $q\in(1,\infty)$.
By the condition \eqref{ineq:decay of rho},
\begin{align*}
\int_{\mathbb{R}^2} \left(|y|^{2+\gamma}|\rho_N(y)|\right)^q
\, {\dd}y
=2^{-N(2+\gamma q)} \int_{\mathbb{R}^2} \left(|y|^{2+\gamma}|\rho(y)|\right)^q
\, {\dd}y
\lesssim 
\int_{\mathbb R^{2}}
(1+|y|)^{-(2+\gamma)q}
\, {\dd}y<\infty.
\end{align*}
Thus we have \eqref{prop:Kmn2}.
\end{proof}

\subsection{Green function on the torus}

We return to the proof of Proposition \ref{hypo on G}.

\begin{lem}\label{lem:psi barpsi}
Let $\psi$ be a function satisfying Hypothesis {\rm{\ref{hypo on psi}}}. 
Then there exists a smooth function $\bar\psi$ with the following properties:
\begin{itemize}
\item $\bar\psi$ satisfies Hypothesis {\rm{\ref{hypo on psi}}}.
\item $0\le\psi\le\bar\psi$ on $\mathbb{R}^2$.
\item For any $k\in\mathbb{N}^2$ there exists a constant $C_k$ such that
\begin{align}\label{barpsi decay}
|\partial^k\bar\psi(x)|\le C_k(1+|x|)^{-2-\kappa-|k|_1}
\end{align}
for any $x\in\mathbb{R}^2$, where $\kappa$ is a constant as in Hypothesis {\rm{\ref{hypo on psi}(ii)}} 
and $|k|_1:=|k_1|+|k_2|$ for each $k=(k_1,k_2)\in\mathbb{N}^2$.
\end{itemize}
\end{lem}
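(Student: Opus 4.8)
The goal is to produce, for a given multiplier $\psi$ satisfying Hypothesis \ref{hypo on psi}, a smooth dominating function $\bar\psi$ that is itself an admissible multiplier and, moreover, has controlled derivative decay $|\partial^k\bar\psi(x)|\lesssim (1+|x|)^{-2-\kappa-|k|_1}$. The plan is to build $\bar\psi$ by first dominating $\psi$ by a radial profile and then mollifying. Concretely, since $\psi\ge0$ and $\sup_x |x|^{2+\kappa}|\psi(x)|<\infty$, we have $\psi(x)\le c\,(1+|x|)^{-2-\kappa}$ for some constant $c>0$; moreover $\psi\le 1$ everywhere. So $\psi(x)\le h(|x|)$ where $h(r):=c\,(1+r)^{-2-\kappa}\wedge 1$ — well, to keep things smooth let me instead take a fixed smooth decreasing function $h:[0,\infty)\to(0,\infty)$ with $h(r)=1$ for $r\le 1$ and $h(r)= C(1+r)^{-2-\kappa}$ for $r$ large (interpolating smoothly in between), with $C$ chosen large enough that $h(r)\ge \psi(x)$ for all $|x|=r$; such $h$ exists because $\psi\le 1$ and $\psi(x)\lesssim |x|^{-2-\kappa}$.

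The candidate is then $\psi_0(x):=h(|x|)$, a smooth radial function with $\psi_0\ge\psi\ge0$, $\psi_0(0)=1$, $\psi_0(x)=\psi_0(-x)$, and by construction of $h$ the required derivative bounds $|\partial^k\psi_0(x)|\lesssim (1+|x|)^{-2-\kappa-|k|_1}$ (each differentiation of $(1+r)^{-2-\kappa}$ produces an extra factor of order $(1+r)^{-1}$, and near the origin all derivatives are bounded since $h\equiv 1$ there). The only property of Hypothesis \ref{hypo on psi} that is not immediate is (iii), $\sup_x |x|^{-\zeta}|\psi_0(x)-1|<\infty$ for some $\zeta>0$: but since $\psi_0\equiv 1$ on $B(0,1)$, we have $\psi_0(x)-1=0$ for $|x|<1$, and for $|x|\ge 1$ the quantity $|x|^{-\zeta}|\psi_0(x)-1|\le 2|x|^{-\zeta}$ is bounded; hence (iii) holds for any $\zeta>0$. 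Thus $\bar\psi:=\psi_0$ works, and no mollification is even needed because $h$ is already taken smooth.

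The main thing to be careful about is the smooth interpolation defining $h$ on the transition region, say $1\le r\le 2$: one must verify that $h$ can be chosen smooth, decreasing, with $h(r)\ge\psi(x)$ on $|x|=r$ (using $\psi\le 1$ on $r\le 2$ after enlarging the constant), and matching the tail $C(1+r)^{-2-\kappa}$ together with all derivatives at $r=2$. This is a standard one-dimensional construction via a cutoff (take $h(r)=\varphi(r)\cdot 1+(1-\varphi(r))\cdot C(1+r)^{-2-\kappa}$ with $\varphi$ a smooth decreasing cutoff equal to $1$ near $r=1$ and $0$ near $r=2$, then choose $C$ large so that monotonicity and domination hold), and I would only sketch it. I anticipate the sole minor obstacle is bookkeeping the derivative estimates \eqref{barpsi decay} uniformly: one checks them separately on $|x|\le 2$ (where everything is a fixed smooth function, hence all derivatives bounded, which suffices since $(1+|x|)^{-2-\kappa-|k|_1}\asymp 1$ there) and on $|x|\ge 2$ (where $\bar\psi(x)=C(1+|x|)^{-2-\kappa}$ and the chain rule through $r=|x|$ gives exactly the claimed powers, noting $\partial^k$ of the radial function $r\mapsto (1+r)^{-2-\kappa}$ in Cartesian coordinates produces terms bounded by $(1+|x|)^{-2-\kappa-|k|_1}$). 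With these checks the lemma follows.
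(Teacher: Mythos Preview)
Your proposal is correct and follows essentially the same construction as the paper: dominate $\psi$ by a smooth radial function that is identically $1$ on a ball around the origin and equals $C(1+|x|)^{-2-\kappa}$ outside a larger ball, interpolating smoothly in between. The paper's proof is in fact even terser than yours (it simply asserts that such a $\bar\psi$ exists and that ``obviously'' all properties hold), so your more explicit verification of Hypothesis~\ref{hypo on psi}(iii) and of the derivative bounds \eqref{barpsi decay} is welcome; the only residual bookkeeping---shared with the paper---is arranging the transition region so that $\bar\psi$ stays in $[0,1]$, which is handled by choosing the inner and outer radii $r<R$ with $R$ large enough that $C(1+R)^{-2-\kappa}\le 1$.
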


\begin{proof}
By Hypothesis \ref{hypo on psi}(ii),
$$
|\psi(x)|\le C(1+|x|)^{-2-\kappa}
$$
for some constants $C,\kappa>0$.
Then, there exists a radial smooth function $\bar\psi$ such that $0\le\psi\le\bar\psi$ on $\mathbb{R}^2$, $\bar\psi(x)=1$ on $x\in B(0,r)$ for some $r>0$, and
$$
\bar\psi(x)=C(1+|x|)^{-2-\kappa}
$$
on $x\in B(0,R)^c$ for some $R>r$.
Obviously, $\bar\psi$ satisfies all the required properties.
\end{proof}

Now we prove the following proposition, which yields Proposition \ref{hypo on G}.
The estimate \eqref{estimate:propA.G2} in the following proposition is better than \eqref{estimate:GN2}, because \eqref{estimate:propA.G2} is $L^p$-estimate for all $p\in [1,\infty )$.

\begin{prop}\label{propA.G}
Assume that $\psi$ satisfies Hypothesis {\rm{\ref{hypo on psi}}}.
Then for any $x,y\in\mathbb{R}^2$ with $|x-y|<1$ and any $M,N\in\mathbb{N}$,
\begin{align}\label{estimate:propA.G1}
G_{M,N}(x,y)&=-\frac1{2\pi}\log\left(|x-y|\vee2^{-M}\vee2^{-N}\right)+R_{M,N}(x,y),
\end{align}
where the remainder term $R_{M,N}(x,y)$ is uniformly bounded over $x,y,M,N$.
Moreover, for any $p\in [1,\infty )$, there exist constants $C>0$ and $\theta>0$ such that, for any $M,N\in\mathbb{N}$,
\begin{align}\label{estimate:propA.G2}
\iint_{\Lambda\times\Lambda}|G_{M,N+1}(x,y)-G_{M,N}(x,y)|^p 
\, {\dd}x
{\hspace{0.2mm}}{\dd}y\le C2^{-\theta N}.
\end{align}
\end{prop}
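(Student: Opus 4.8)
The plan is to prove the two assertions of Proposition \ref{propA.G} separately, and to reduce the logarithmic expansion \eqref{estimate:propA.G1} to the \emph{smooth} case via the majorant $\bar\psi$ of Lemma \ref{lem:psi barpsi}. The $L^p$-bound \eqref{estimate:propA.G2} is elementary and needs neither Poisson summation nor the majorant: since $G_{M,N}(x,y)$ depends only on $x-y$, the left-hand side is comparable to $\|h_N\|_{L^p(\Lambda)}^p$, where $h_N$ is the periodic function with Fourier coefficients $\frac1{2\pi}\frac{\psi_M(k)(\psi_{N+1}(k)-\psi_N(k))}{1+|k|^2}$. By Parseval, $\|h_N\|_{L^2(\Lambda)}^2\lesssim\sum_k\frac{|\psi_{N+1}(k)-\psi_N(k)|^2}{(1+|k|^2)^2}$ (using $\psi_M^2\le1$), and splitting the sum at $|k|=2^N$ and invoking $|\psi_{N+1}(k)-\psi_N(k)|\le|1-\psi(2^{-N-1}k)|+|1-\psi(2^{-N}k)|\lesssim(2^{-N}|k|)^\zeta$ for $|k|\le2^N$ (Hypothesis \ref{hypo on psi}(iii)) and $|\psi_{N+1}(k)-\psi_N(k)|\lesssim(2^{-N}|k|)^{-2-\kappa}$ for $|k|>2^N$ (Hypothesis \ref{hypo on psi}(ii)) bounds it by $C2^{-2\theta N}$ uniformly in $M$, after shrinking $\zeta$ below $1$ if necessary. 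This already gives \eqref{estimate:propA.G2} for $p\le2$; for $p>2$ I would interpolate with $\|h_N\|_{L^\infty(\Lambda)}\le\frac1{2\pi}\sum_k\frac{|\psi_{N+1}(k)-\psi_N(k)|}{1+|k|^2}=O(1)$, proved by the same splitting.

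For \eqref{estimate:propA.G1} I would first settle the diagonal. Assuming $M\le N$ by symmetry (the claimed effective scale is then $2^{-M}$), compare $C_{M,N}=\frac1{4\pi^2}\sum_k\frac{\psi_M(k)\psi_N(k)}{1+|k|^2}$ with $\frac1{4\pi^2}\sum_{|k|\le2^M}\frac1{1+|k|^2}=\frac{M\log2}{2\pi}+O(1)$. The two sums differ by $\sum_{|k|\le2^M}\frac{\psi_M(k)\psi_N(k)-1}{1+|k|^2}$, which is $O(1)$ because $|\psi_M(k)\psi_N(k)-1|\le|1-\psi(2^{-M}k)|+|1-\psi(2^{-N}k)|\lesssim(2^{-M}|k|)^\zeta$ and $\sum_{|k|\le2^M}(2^{-M}|k|)^\zeta(1+|k|^2)^{-1}\lesssim1$, plus $\sum_{|k|>2^M}\frac{\psi_M(k)\psi_N(k)}{1+|k|^2}$, which is $O(1)$ because there $\psi_M(k)\psi_N(k)\le\psi_M(k)\lesssim(2^{-M}|k|)^{-2-\kappa}$. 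The identical estimate applies to $\bar\psi$, so both $C_{M,N}$ and $\bar C_{M,N}$ equal $-\frac1{2\pi}\log2^{-M}+O(1)$; in particular $\bar C_{M,N}-C_{M,N}=O(1)$, uniformly in $M,N$.

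Off the diagonal I would handle $\bar\psi$ through the Poisson summation representation $\bar G_{M,N}(x,y)=\sum_{k\in\mathbb Z^2}\bar K_{M,N}(x-y+2\pi k)$ with $\bar K_{M,N}=K*\bar\rho_{M,N}$. Since \eqref{barpsi decay} makes every derivative of $\bar\psi$ integrable, $\bar\sigma:=\mathcal F^{-1}\bar\psi$ is rapidly decreasing; writing $\bar\rho_{M,N}=c\,\bar\sigma_M*\bar\sigma_N$ with $\bar\sigma_M=2^{2M}\bar\sigma(2^M\cdot)$ and rescaling by $2^{-M}$ exhibits it as $2^{2M}\Theta_{N-M}(2^M\,\cdot)$, where $\Theta_j=c\,\bar\sigma*\bigl(2^{2j}\bar\sigma(2^j\cdot)\bigr)$ satisfies $\int\Theta_j=1$ and $|\Theta_j(v)|\lesssim_K(1+|v|)^{-K}$ for every $K$, uniformly in $j\ge0$. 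Hence Lemma \ref{lem:K*rho} (applied with $\rho=\Theta_{N-M}$ and dilation parameter $M$) gives, uniformly, $\bar K_{M,N}(x)=-\frac1{2\pi}\log(|x|\vee2^{-M})+O(1)$ for $|x|<1$ together with $|\bar K_{M,N}(x)|\lesssim|x|^{-2-\gamma}$; as $|x-y|<1<2\pi$, the nonzero lattice terms sum to $\lesssim\sum_{k\ne0}|k|^{-2-\gamma}=O(1)$, so $\bar G_{M,N}(x,y)=-\frac1{2\pi}\log(|x-y|\vee2^{-M})+O(1)$. To transfer to $G_{M,N}$: $0\le\psi\le\bar\psi$ gives $0\le\psi_M\psi_N\le\bar\psi_M\bar\psi_N$, so $D_{M,N}:=\bar G_{M,N}-G_{M,N}$ has nonnegative Fourier coefficients, whence $|D_{M,N}(x,y)|\le D_{M,N}(x,x)=\bar C_{M,N}-C_{M,N}=O(1)$ by the diagonal step. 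Combining yields $G_{M,N}(x,y)=-\frac1{2\pi}\log(|x-y|\vee2^{-M}\vee2^{-N})+O(1)$ for $|x-y|<1$, i.e.\ \eqref{estimate:propA.G1}; together with \eqref{estimate:propA.G2} this proves Proposition \ref{hypo on G}, since \eqref{estimate:GN1} coincides with \eqref{estimate:propA.G1} and \eqref{estimate:GN2} is the case $p=1$ of \eqref{estimate:propA.G2}.

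The main obstacle is not any single inequality but keeping \emph{every} remainder bounded uniformly in the two independent parameters $M,N$ (and, for the majorant, in $j=N-M$). The decisive device is the splitting $G_{M,N}=\bar G_{M,N}-D_{M,N}$: it quarantines the non-smoothness of $\psi$ — which for instance for $\psi=\mathbf 1_{[-1,1]^2}$ makes $\rho_{M,N}$ non-integrable and blocks any direct whole-plane analysis — into the single error $D_{M,N}$ with nonnegative Fourier coefficients, whose size is pinned down purely on the diagonal by the frequency-space comparison. The remaining care lies in the uniform-in-$j$ rapid decay of $\Theta_j$ and in checking that the constants produced by Lemma \ref{lem:K*rho} depend on $\Theta_{N-M}$ only through its (uniform) decay constant and $\gamma$.
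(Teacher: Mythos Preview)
Your proof is correct and follows the paper's overall architecture---prove the logarithmic expansion first for the smooth majorant $\bar\psi$ via Poisson summation and Lemma~\ref{lem:K*rho}, then transfer to general $\psi$---but your transfer step is genuinely different from the paper's. The paper bounds $|G_{M,N}-\bar G_{M,N}|$ directly by splitting $\sum_k\frac{\bar\psi_M(k)\bar\psi_N(k)-\psi_M(k)\psi_N(k)}{1+|k|^2}$ at $|k|=2^M$ and estimating each piece via Hypothesis~\ref{hypo on psi}(ii)--(iii); this is essentially the same computation as your diagonal step, done a second time. Your argument is more structural: recognizing that $D_{M,N}=\bar G_{M,N}-G_{M,N}$ has nonnegative Fourier coefficients (from $0\le\psi\le\bar\psi$), you conclude $|D_{M,N}(x,y)|\le D_{M,N}(x,x)=\bar C_{M,N}-C_{M,N}$, and your diagonal computation already shows this is $O(1)$. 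This is cleaner and avoids repeating the frequency splitting. For \eqref{estimate:propA.G2} the paper instead bounds the $H^{1-\zeta}$ norm of $G_{M,N+1}(\cdot,0)-G_{M,N}(\cdot,0)$ by Plancherel (using only $|\psi(x)-1|\lesssim|x|^\zeta$ globally, without splitting) and then Sobolev-embeds $H^{1-\zeta}\hookrightarrow L^p$; your $L^2$ estimate plus $L^\infty$ estimate plus interpolation is an equivalent elementary variant. Both approaches give exponential decay uniformly in $M$, with a $p$-dependent rate.
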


\begin{proof}
First, we prove \eqref{estimate:propA.G1}.
Let $M\le N$ without loss of generality. 
First we assume that $\psi$ satisfies \eqref{barpsi decay} in addition to Hypothesis \ref{hypo on psi}.
By \eqref{barpsi decay}, the function $\rho_0=\frac1{2\pi}\mathcal{F}^{-1}\psi$ satisfies that for all 
$n\in {\mathbb N}$
\begin{align*}
(1+|x|^2)^n\vert\rho_0(x)\vert
&=\frac1{2\pi}\left\vert\mathcal{F}^{-1}\left\{(1-\triangle)^n\psi\right\}(x)\right\vert\\
&\le\frac1{2\pi}\int_{\mathbb{R}^2}\left\vert (1-\triangle)^n\psi(\xi) \right\vert 
\, {\dd}
\xi<\infty.
\end{align*}
Recall that $\rho_{M,N}=\frac1{2\pi}\mathcal{F}^{-1}(\psi_M\psi_N)$.
Let $\rho_N=\frac1{2\pi}\mathcal{F}^{-1}\psi_N$. Since $\rho_N=2^{2N}\rho_0(2^N\cdot)$, we have
$$
\rho_{M,N}=\rho_M*\rho_N=2^{2M}(\rho_0*\rho_{N-M})(2^M\cdot).
$$
Let $\widetilde{\rho}_{M,N}=\rho_0*\rho_{N-M}$. 
Then from the above estimate of $\rho_0$, we have the uniform estimates
$$
|\widetilde{\rho}_{M,N}(x)|\lesssim(1+|x|)^{-6}.
$$
Indeed, for $|x|<1$, since $\widetilde{\rho}_{M,N}$ is uniformly bounded, this estimate is obvious. For $|x|\ge1$,
\begin{align*}
|x|^6
&
|\widetilde{\rho}_{M,N}(x)|\\
&\lesssim\int_{\mathbb{R}^2}|y|^6|\rho_0(y)||\rho_{N-M}(x-y)|
\, {\dd}y
+\int_{\mathbb{R}^2}|x-y|^6|\rho _0(y)||\rho_{N-M}(x-y)|
\, {\dd}y\\
&\lesssim\int_{\mathbb{R}^2}|\rho_{N-M}(y)|
\, {\dd}y+\int_{\mathbb{R}^2}|y|^6|\rho_{N-M}(y)|
\, {\dd}y\\
&\lesssim \int_{\mathbb{R}^2}\left(|\rho_0(y)|+|y|^6|\rho _0(y)|\right)
{\dd}y<\infty.
\end{align*}
Since $\int_{\mathbb R^{2}} \widetilde{\rho}_{M,N}(x)
\, {\dd}x=1$, $\widetilde{\rho}_{M,N}$ satisfies the conditions of Lemma \ref{lem:K*rho} with $\gamma=1$.
Therefore, the estimates \eqref{prop:Kmn1} and \eqref{prop:Kmn2} yield
\begin{align*}
G_{M,N}(x,y)=\sum_{k \in\mathbb{Z}^2}(K*\rho_{M,N})(x-y+2\pi k)
=-\frac1{2\pi}\log\left( |x-y| \vee2^{-M}\right)+O(1)
\end{align*}
for $|x-y|<1$.

Next let $\psi$ be an arbitrary function satisfying Hypothesis \ref{hypo on psi}.
Let $\bar\psi$ be a smooth function in Lemma \ref{lem:psi barpsi}, and define the 
function $\bar G_{M,N}$ similarly to $G_{M,N}$ with replacement $\psi$ by $\bar \psi$.
As shown above, $\bar{G}_{M,N}$ satisfies the estimate
$$
\bar G_{M,N}(x,y)=-\frac1{2\pi}\log\left(|x-y|\vee2^{-M}\right)+O(1).
$$
Since $0\le\psi\le\bar\psi$,
\begin{equation}\label{eq:note2020.07.02a}
\begin{aligned}
|G_{M,N}(x,y) - \bar G_{M,N}(x,y)| 
&\leq \frac{1}{4\pi ^2} \sum _{k\in {\mathbb Z}^2; \,|k|\leq 2^M} \frac{\bar \psi _M(k) \bar \psi _N(k) - \psi _M(k) \psi _N(k)}{1+|k|^2} \\
&\quad+ \frac{1}{4\pi ^2} \sum _{k\in {\mathbb Z}^2; \,|k|>2^M} \frac{\bar \psi _M(k) \bar \psi _N(k) - \psi _M(k) \psi _N(k)}{1+|k|^2} .
\end{aligned}
\end{equation}
Hypothesis \ref{hypo on psi}(iii) and the property of $\bar \psi$ imply that for sufficiently small $\zeta >0$,
\[
|\psi (x)-1| + |\bar \psi (x) -1| \leq C |x|^\zeta , \qquad x\in {\mathbb R}^2,
\]
with a positive constant $C$.
Hence, by the boundedness of $\psi$ and $\bar\psi$,
\begin{align*}
\sum _{k\in {\mathbb Z}^2;\, |k|\leq 2^M} &
\frac{\bar \psi _M(k) \bar \psi _N(k) - \psi _M(k) \psi _N(k)}{1+|k|^2} \\
&\lesssim \sum _{k\in {\mathbb Z}^2;\, |k|\leq 2^M} \frac{|\bar \psi (2^{-M} k) \bar \psi (2^{-N}k) -1| + |\psi (2^{-M}k) \psi (2^{-N}k) -1|}{1+|k|^2} \\
&\lesssim \sum _{k\in {\mathbb Z}^2;\, |k|\leq 2^M} \frac{2^{-M\zeta}|k|^\zeta}{1+|k|^2} 
\\
&
\lesssim1.
\end{align*}
Besides, since $0\le\psi\le\bar\psi$ and $\bar\psi(x)\lesssim(1+|x|)^{-2-\kappa}$,
\begin{align*}
\sum _{k\in {\mathbb Z}^2;\, |k|>2^M}
\frac{\bar \psi _M(k) \bar \psi _N(k) - \psi _M(k) \psi _N(k)}{1+|k|^2}
%
&\leq \sum _{k\in {\mathbb Z}^2;\, |k|>2^M} \frac{\bar \psi _M(k)}{1+|k|^2} \\
&\lesssim \sum _{k\in {\mathbb Z}^2;\, |k|>2^M} \frac{1}{(1+|k|^2)(1+|2^{-M}k|)^{2+\kappa}} \\
&\lesssim \sum _{k\in {\mathbb Z}^2;\, |k|>2^M} \frac{2^{(2+\kappa)M}}{(1+|k|)^{4+\kappa}}\\
&\lesssim 2^{(2+\kappa)M}\int_{|x|>2^M}\frac1{|x|^{4+\kappa}}
\, {\dd}x\\
&\lesssim1.
\end{align*}
These inequalities and \eqref{eq:note2020.07.02a} yield the estimate \eqref{estimate:propA.G1} for $G_{M,N}$.
%

Finally,
we prove \eqref{estimate:propA.G2}.
Let $p\in[1,\infty)$.
In view of the shift invariance of $G_{M,N}(\cdot,\cdot)$ and the compactness of $\Lambda$, it is sufficient to show
\begin{align}\label{estimate:GN2a}
\int _{\Lambda} \left| G_{M,N+1}(x,0) - G_{M,N}(x,0) \right| ^p 
\, {\dd}x \lesssim 2^{-\theta N}
\end{align}
for some $\theta>0$.
Hypothesis \ref{hypo on psi}(iii) implies that for sufficiently small $\zeta >0$,
\[
|\psi (x)-1| \leq C |x|^\zeta , \qquad x\in {\mathbb R}^2,
\]
with another positive constant $C$.
Then, by Plancherel's formula we have
\begin{align*}
\left\| G_{M,N+1} (\cdot,0) - G_{M,N} (\cdot,0) \right\| ^2_{H^{1-\zeta}}
&\lesssim \sum _{k\in {\mathbb Z}^2} \frac{|\psi _{N+1} (k) - \psi _N(k)|^2}{(1+|k|^2)^{2(1-\zeta)}} \\
&\lesssim \sum _{k\in {\mathbb Z}^2} \frac{|\psi (2^{-N-1}k) - 1|^2 + |\psi (2^{-N}k) - 1|^2}{(1+|k|^2)^{2(1-\zeta )}} \\
&\lesssim 2^{-2N\zeta} \sum _{k\in {\mathbb Z}^2} \frac{|k|^{2\zeta}}{(1+|k|^2)^{2(1-\zeta )}} \\
&\lesssim 2^{-2N\zeta} \sum _{k\in {\mathbb Z}^2} \frac{1}{(1+|k|^2)^{2-3\zeta}} \\
&\lesssim 2^{-2N\zeta}
\end{align*}
for sufficiently small $\zeta$.
Since the Sobolev embedding theorem implies $H^{1-\zeta} \subset L^p$ for $\zeta \leq 2/p$, by talking $\zeta$ sufficiently small we have the estimate \eqref{estimate:GN2a}.
\end{proof}

\subsection{Approximations by averaging}\label{sec:average}

We introduce a class of approximations of the Gaussian free field, which contains the circle average (see e.g. \cite{Ber17, Ber16, DS11}), and show that the associated kernels also satisfy \eqref{estimate:GN1} and \eqref{estimate:GN2} in Proposition \ref{hypo on G}.
This implies that our construction of Wick exponentials of the Gaussian free field in Section \ref{sec:Wick} includes the circle averaging approximation.

Let ${\mathbb X}$ be the Gaussian free field on $\Lambda = {\mathbb T}^2$ as defined in Section \ref{sec:Wickmain}, and extend ${\mathbb X}$ on ${\mathbb R}^2$ periodically.
Let $\nu$ be a probability measure on ${\mathbb R}^2$ supported in the unit ball $B(0,1)$ such that
\begin{equation}\label{ass:circaverage}
\sup _{|x|\leq 2}\int _{{\mathbb R}^2} |\log (x-y) |\nu ({\dd}y) < \infty .
\end{equation}
For $N\in {\mathbb N}$ denote by $\nu _N$ the measure given by $\nu _N(A)= \nu (2^N A)$ for a Borel set $A$.
Define the approximation ${\mathbb X}_N$ of ${\mathbb X}$ by
\[
{\mathbb X}_N (x) := {\mathbb X}*\nu _N (x) := \int _{{\mathbb R}^2} {\mathbb X}(x-y) \nu _N ({\dd}y) .
\]
Then the random field ${\mathbb X}_N$ has the covariance function
\[
G_{M,N}(x,y) := {\mathbb E}[{\mathbb X}_M(x){\mathbb X}_N(y)] = \frac{1}{2\pi} \sum _{k\in {\mathbb Z}^2} \frac{({\mathcal F}\nu _M)(k) ({\mathcal F}\nu _N)(k)}{1+|k|^2} {\bf{e}}_k(x-y)
\]
for $M,N\in {\mathbb N}$, where 
\[
({\mathcal F}\mu )(\xi ):= \frac{1}{2\pi} \int _{{\mathbb R}^2} e^{-\sqrt{-1}\xi \cdot x} \mu ({\dd}x), \quad \xi \in {\mathbb R}^2
\]
for a probability measure $\mu$.

\begin{prop}\label{prop:circaverage}
The sequence $G_{M,N}(x,y)$ defined as above satisfies \eqref{estimate:GN1} and \eqref{estimate:GN2} in Proposition \ref{hypo on G}.
\end{prop}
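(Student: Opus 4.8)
The plan is to reduce Proposition \ref{prop:circaverage} to the corresponding statement already proved for Fourier-multiplier approximations, by exhibiting the averaging kernel as a convolution with the whole-plane Green function and then reusing the machinery of Appendix \ref{hypo on G app}. Concretely, write $G_{M,N}$ on $\mathbb{R}^2\times\mathbb{R}^2$ via the Poisson summation formula as
\begin{align*}
G_{M,N}(x,y)=\sum_{k\in\mathbb{Z}^2}K_{M,N}(x-y+2\pi k),\qquad
K_{M,N}:=(1-\triangle_{\mathbb{R}^2})^{-1}(\nu_M*\nu_N)=K*(\nu_M*\nu_N),
\end{align*}
where $K$ is the Green function of $1-\triangle_{\mathbb{R}^2}$ treated in Proposition \ref{prop:AGreenH1}. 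The point is that $\nu_M*\nu_N$ is a probability measure supported in $B(0,2^{-M}+2^{-N})\subset B(0,2^{1-M\wedge N})$, so this plays exactly the role of the mollifier $\rho_{M,N}$ in Lemma \ref{lem:K*rho}, except that it is a measure rather than an $L^1$-function with polynomial decay.

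First I would establish a measure-version of Lemma \ref{lem:K*rho}: for a probability measure $\sigma$ supported in $B(0,2^{-L})$ one has $K*\sigma(x)=-\frac1{2\pi}\log(|x|\vee 2^{-L})+O(1)$ for $|x|<1$, uniformly in $L$ and $\sigma$, \emph{provided} $\sup_{|x|\le 2}\int|\log|x-y||\,\sigma(\dd y)<\infty$ uniformly. Using the decomposition $K(x)=-\frac1{2\pi}\log(|x|\wedge1)+R(x)$ from the proof of Lemma \ref{lem:K*rho}, with $R$ bounded and rapidly decaying, the contribution of $R*\sigma$ is bounded, and one is reduced to estimating $\int \log(|x-y|\wedge1)\,\sigma(\dd y)$. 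Splitting into $|x-y|<1$ and $|x-y|\ge1$, the far part is bounded by $\int\log(1+|y|)\,\sigma(\dd y)\le\log(1+2^{-L})\le\log 2$ since $\supp\sigma\subset B(0,2^{-L})$, and the near part, after the scaling $y=2^{-L}z$, becomes $\log 2^{-L}+\int\log|2^L x-y|\,\sigma_0(\dd y)$ where $\sigma_0=\nu_{L}\text{-type rescaling}$; here one uses the log-integrability hypothesis \eqref{ass:circaverage} when $|x|\le 2^{-L}$ and the analogue of Lemma \ref{lemma RV} (which must itself be stated for measures satisfying a uniform log-moment bound rather than for functions with polynomial decay) when $|x|>2^{-L}$. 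Applying this with $\sigma=\nu_M*\nu_N$ and $L=M\wedge N$, and summing the far tails $\sum_{k\ne0}K_{M,N}(x-y+2\pi k)=O(1)$ using the exponential decay of $K$ on $|x|\ge1$ from \eqref{prop:AGreenH11}, yields \eqref{estimate:GN1}.

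For \eqref{estimate:GN2} the cleanest route is to imitate the end of the proof of Proposition \ref{propA.G}: estimate $\|G_{M,N+1}(\cdot,0)-G_{M,N}(\cdot,0)\|_{H^{1-\zeta}}^2$ on the torus via Plancherel, which equals $\frac1{(2\pi)^2}\sum_{k}(1+|k|^2)^{-2(1-\zeta)}|(\mathcal{F}\nu_M)(k)|^2|(\mathcal{F}\nu_{N+1})(k)-(\mathcal{F}\nu_N)(k)|^2$. Since $|\mathcal{F}\nu_M|\le 1$ and, writing $(\mathcal{F}\nu_N)(k)=(\mathcal{F}\nu)(2^{-N}k)$, we have $|(\mathcal{F}\nu)(\xi)-1|\le\int|e^{-\sqrt{-1}\xi\cdot x}-1|\,\nu(\dd x)\lesssim|\xi|$ because $\nu$ is supported in $B(0,1)$, it follows that $|(\mathcal{F}\nu_{N+1})(k)-(\mathcal{F}\nu_N)(k)|\lesssim (2^{-N}|k|)\wedge 1\lesssim (2^{-N}|k|)^{\zeta}$ for any $\zeta\in[0,1]$. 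This gives the bound $2^{-2N\zeta}\sum_k(1+|k|^2)^{-2+3\zeta}\lesssim 2^{-2N\zeta}$ for small $\zeta$, and then the Sobolev embedding $H^{1-\zeta}\subset L^1(\Lambda)$ (valid once $\zeta\le 1$, with room to spare for $p=1$) yields \eqref{estimate:GN2} with $\theta=2\zeta$, indeed the $L^p$-strengthening for all $p<\infty$ exactly as in Proposition \ref{propA.G}.

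The main obstacle I anticipate is the measure-theoretic upgrade of Lemmas \ref{lemma RV} and \ref{lem:K*rho}: the original statements exploit the polynomial decay $|\rho(x)|\lesssim(1+|x|)^{-2-\gamma}$ both to bound $\int|\rho(y)|\log(|x|/|x-y|)\,\dd y$ and to control $|K*\rho_N|$ by $|x|^{-2-\gamma}$, and neither is available for a general measure $\nu$ — in particular $\nu$ may have atoms or a singular part (the circle average corresponds to $\nu=$ uniform measure on the unit circle). The remedy is that $\nu$ is compactly supported in $B(0,1)$, so $\nu_M*\nu_N$ lives in a ball of radius $\le 2^{1-M\wedge N}$ shrinking to the origin; hence the ``far field'' estimates that previously needed polynomial decay are now trivial (the measure is zero outside a small ball), and the only genuinely new input needed is the uniform log-integrability hypothesis \eqref{ass:circaverage}, which handles the logarithmic singularity of $K$ near its source when $x$ is inside the support of the mollifying measure. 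Once this localization observation is made, every step above is a routine transcription of the arguments in Appendix \ref{hypo on G app}, and the $B^{s}_{p,p}$-valued construction of the Wick exponential in Section \ref{sec:Wick} applies verbatim by Remark \ref{rem:approx}.
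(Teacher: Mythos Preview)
Your treatment of \eqref{estimate:GN2} is essentially identical to the paper's: both use Plancherel on $H^{1-\zeta}$, the bound $|\mathcal{F}\nu|\le C$, and the Lipschitz (hence $\zeta$-H\"older) continuity of $\mathcal{F}\nu$ coming from the compact support of $\nu$, followed by Sobolev embedding. One cosmetic point: with the paper's convention $(\mathcal{F}\nu)(\xi)=\frac{1}{2\pi}\int e^{-\sqrt{-1}\xi\cdot x}\nu(\dd x)$ one has $(\mathcal{F}\nu)(0)=\frac{1}{2\pi}$, not $1$; what you actually use (and what the paper uses) is $|(\mathcal{F}\nu)(\xi_1)-(\mathcal{F}\nu)(\xi_2)|\lesssim|\xi_1-\xi_2|$, which suffices.

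For \eqref{estimate:GN1} the paper takes a different route: it simply invokes \cite[Lemma 3.5]{Ber17} and does not redo the analysis. Your plan to adapt Lemma~\ref{lem:K*rho} to compactly supported probability measures is a genuine, self-contained alternative. It works, but two details deserve care. First, after rescaling by $2^{M\wedge N}$ the measure $\sigma_0$ is not $\nu$ itself but $\nu*\nu_{|N-M|}$, supported in $B(0,2)$; you therefore need the log-integrability bound $\sup_{|x|\le c}\int|\log|x-y||\,\sigma_0(\dd y)<\infty$ uniformly in $N-M$, which does follow from \eqref{ass:circaverage} by writing the double integral and noting that for $|w|\ge 2$ the integrand $|\log|w-y||$ is bounded on $\supp\nu$. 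Second, your dichotomy ``$|x|\le 2^{-L}$ versus $|x|>2^{-L}$'' should be adjusted to ``$|2^L x|\le 2$ versus $|2^L x|>2$'' so that the former case is covered by \eqref{ass:circaverage} and the latter by the trivial bound $|2^L x-z|\ge |2^L x|/2$ on $\supp\sigma_0$; this replaces your appeal to a measure-version of Lemma~\ref{lemma RV}, which is not actually needed here since compact support makes the far-field estimate elementary.
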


\begin{proof}
The estimate \eqref{estimate:GN1} is obtained in \cite[Lemma 3.5]{Ber17}.
We show the estimate \eqref{estimate:GN2}.
It is easy to see 
\begin{align*}
({\mathcal F}\nu _N)(k)&= ({\mathcal F}\nu )(2^{-N}k) , \quad k\in {\mathbb Z}^2,\\
|({\mathcal F}\nu )(\xi )| &\leq \frac{1}{2\pi} \int _{{\mathbb R}^2} \nu ({\dd}y), \quad \xi \in {\mathbb R}^2,\\
|({\mathcal F}\nu )(\xi _1) - ({\mathcal F}\nu )(\xi _2)| &\leq 
\frac{1}{2\pi} \int |e^{-\sqrt{-1} \xi _1\cdot y} - e^{-\sqrt{-1} \xi _2\cdot y}| \nu ({\dd}y) \\
&\lesssim |\xi _1 -\xi _2| \int _{|y|\leq 1} |y|\nu ({\dd}y), \quad \xi _1, \xi _2 \in {\mathbb R}^2.
\end{align*}
From these inequalities it follows that $\mathcal{F}\nu$ is bounded and $\zeta$-H\"older continuous for any $\zeta \in (0,1]$. Hence, the estimate \eqref{estimate:GN2} is obtained in the same way as the proof of \eqref{estimate:propA.G2} in Proposition \ref{propA.G}.
\end{proof}

\vspace{4mm}
\noindent
{\bf Acknowledgements.}
The authors are grateful to Professor Makoto Nakashima for helpful discussions on Gaussian 
multiplicative chaos,
and to Professor Ryo Takada for helpful comments on the references on Besov spaces.
They also thank anonymous two referees for helpful suggestions that improved the quality of the present paper.
This work was partially supported by JSPS KAKENHI Grant Numbers 
17K05300, 17K14204, 19K14556, 20K03639 and 21H00988.


\end{document}